\pgfplotsset{compat=1.10}
\numberwithin{equation}{section} 
\newtheorem{theorem}{Theorem}[section]
\newtheorem{corollary}[theorem]{Corollary}
\newtheorem{lemma}[theorem]{Lemma}
\newtheorem{proposition}[theorem]{Proposition}
\theoremstyle{definition} 
\newtheorem{definition}[theorem]{Definition}
\newtheorem{remark}[theorem]{Remark}
\newtheorem{assumption}[theorem]{Assumption}
\newcommand{\R}{\mathbb{R}}	
\newcommand{\N}{\mathbb{N}} 
\newcommand{\dx}{\,\mathrm{d}x}	
\newcommand{\ds}{\,\mathrm{d}S}	
\newcommand{\dr}{\,\mathrm{d}r}	
\renewcommand{\d}{\,\mathrm{d}}
\newcommand{\weak}{\rightharpoonup}
\newcommand{\nnu}{\bm{\nu}}  
\newcommand{\norm}[1]{\left\lVert #1 \right\lVert}
\newcommand{\abs}[1]{\left| #1 \right|}
\newcommand{\sub}{\subseteq}
\newcommand\restr[1]{\raisebox{-.5ex}{$|$}_{#1}}
\DeclareMathOperator{\supp}{\mathrm{supp}}
\DeclareMathOperator{\dist}{\mathrm{dist}}
\DeclareMathOperator{\dive}{\mathrm{div}}
\newenvironment{bvp}{\left\{\begin{aligned}  }{\end{aligned}\right.}
\begin{document}
	
	\title[Boundary regularity of the free interface]{Boundary regularity of the free interface in spectral optimal partition problems}

	\author[R. Ognibene]{Roberto Ognibene}
	\address{Roberto Ognibene
		\newline \indent Dipartimento di Matematica
		\newline \indent Universit\`a di Pisa
		\newline\indent  Largo Bruno Pontecorvo, 5, 56127 Pisa, Italy}
	\email{roberto.ognibene@dm.unipi.it}
	
	\author[B. Velichkov]{Bozhidar Velichkov}
	\address{Bozhidar Velichkov
		\newline \indent Dipartimento di Matematica
		\newline \indent Universit\`a di Pisa
		\newline\indent  Largo Bruno Pontecorvo, 5, 56127 Pisa, Italy}
	\email{roberto.ognibene@dm.unipi.it}

	\keywords{Optimal partitions, segregated systems, boundary regularity, boundary unique continuation, frequency function, monotonicity formulas, epiperimetric inequality}
	\subjclass{
		35R35,   
		35B40,   
		35J57,   
		49Q10   
	}
	
	\pagestyle{plain}

	\begin{abstract}
		We consider the problem of optimal partition of a domain with respect to the sum of the principal eigenvalues and we prove for the first time regularity results for the free interface up to fixed boundary. All our results are quantitative and, in particular, we obtain fine estimates on the continuity of the solutions and the oscillation of the free interface (in terms of the modulus of continuity of the normal vector of the fixed boundary), even in the case of domains with low (Dini-type) regularity. Our analysis is based on an Almgren-type monotonicity formula at boundary points and an epiperimetric inequality at points of low frequency, which, together, yield an explicit rate of convergence for blow-up sequences and the boundary strong unique continuation property. Exploiting our quantitative blow-up analysis, we manage to prove clean-up results near one-phase and two-phase points. We define  the notion of free interface inside the fixed boundary, and we prove that the subset of points of minimal frequency is regular and that the interior free interface is approaching the boundary orthogonally in a smooth way. 
		
	\end{abstract}
	
	\maketitle
	
	\tableofcontents
	
	\section{Introduction and state of the art}

	Let $d\in\mathbb{N}$, $d\geq 2$ and let $D\sub\R^d$ be an open, bounded set.
	For any fixed number $N\in\mathbb{N}$, with $N\geq 2$, we consider the family of $N$-partitions of $D$, i.e.
	\begin{equation*}
		\mathcal{P}_N(D):=\Big\{(\Omega_1,\dots,\Omega_N)\colon \Omega_i\sub D~\text{is open, bounded and connected}, ~\Omega_i\cap \Omega_j=\emptyset~\text{for }i\neq j\Big\}.
	\end{equation*}
	Given an $N$-partition $(\Omega_1,\dots,\Omega_N)\in\mathcal{P}_N(D)$ of $D$, we consider the first eigenvalue of the Dirichlet-Laplacian on each subdomain $\Omega_i$, $i=1,\dots,N$, that is
	\[
	\lambda_1(\Omega_i):=\inf\left\{\frac{\displaystyle \int_{\Omega_i}\abs{\nabla u}^2\dx}{\displaystyle \int_{\Omega_i}u^2\dx}\colon u\in H^1_0(\Omega_i)\setminus\{0\} \right\}.
	\]
	In this paper we study partitions which are optimal for the sum of the principal frequencies of the $N$ subdomains. Precisely, we consider the following variational problem
	\begin{equation}\label{eq:optimal_part}
		\inf\left\{ \sum_{i=1}^N\lambda_1(\Omega_i)\colon (\Omega_1,\dots,\Omega_N)\in\mathcal{P}_N(D) \right\},
	\end{equation}
	which has been studied in several frameworks such as dynamics of populations (see for instance \cite{CTV2003, CTV2005advances,CTV2005indiana, CTVFucick2005}), harmonic maps with values in singular spaces (see \cite{CL2007,CL2008,CL2010}) and shape optimization (\cite{BBH1998}). In the rest of the introduction, we fix some notation and we briefly recall the known results on the optimal partition problem, while the main results of the present paper are given in Section \ref{sec:main_results}.

	\subsection*{Existence of optimal partitions} 
	
	In \cite{BBH1998}, within a wider framework, Bucur, Buttazzo and Henrot proved existence of an optimal partition in the class of partitions made of quasi-open, pairwise disjoint subdomains, and their techniques are essentially based on direct minimization and $\gamma$-convergence methods. Nevertheless, completing such result by proving that the optimal partition is made of open sets is not a trivial task. 
	This was achieved by Conti, Terracini and Verzini in \cite{CTVFucick2005} (see also \cite{CTV2002,CTV2003,CTV2005indiana}) and by Caffarelli and Lin in \cite{CL2007}, where the authors considered a suitable relaxed formulation of problem \eqref{eq:optimal_part} (see \eqref{eq:opt_part_variat}), which is equivalent to the minimization among quasi-open, pairwise disjoint sets, and then, through PDEs methods, recovered existence of a solution to \eqref{eq:optimal_part}. 
	In order to be more precise on that, let us introduce the functional setting. As in \cite{CTVFucick2005,CL2007}, we work in the space $H^1_{0,N}(D)$ of $N$ segregated Sobolev densities which vanish on $\partial D$. Precisely, for any open $\mathcal{O}\sub\R^d$ we define\footnote{For sake of simplicity, in the present paper we do not distinguish, for what concerns the notation, between scalar-valued and vector-valued functions. }
	\[
	H^1_{0,N}(\mathcal{O}):=H^1_{s,N}(\mathcal{O})\cap (H^1_0(\mathcal{O}))^N,
	\]
	where $H^1_{s,N}$ denotes the space of $N$-vectors of segregated $H^1$-functions, that is
	\[
	H^1_{s,N}(\mathcal{O}):=\{u\in (H^1(\mathcal{O}))^N\colon u_iu_j=0,~\text{a. e. in }\mathcal{O}~\text{for all }i,j=1,\dots,N,~i\neq j\},
	\]
	and we point out that $H^1_{0,N}(D)$ coincides with the space $H^1_0(D,\Sigma_N)$ introduced in \cite{CL2007}, where
	\begin{equation*}
		\Sigma_N:=\left\{X\in\R^N\colon X_iX_j=0,~\text{for all }i,j=1,\dots, N,~i\neq j\right\}.
	\end{equation*}
	For any open $\mathcal{O}\sub \R^d$ and any $u
	\in H^1_{s,N}(\mathcal{O})$ such that $u_i\not\equiv 0$ for all $i=1,\dots,N$, we set
	\[
	J_N(u,\mathcal{O}):=\sum_{i=1}^N\frac{\displaystyle\int_\mathcal{O}\abs{\nabla u_i}^2\dx}{\displaystyle\int_\mathcal{O} u_i^2\dx},
	\]
	and we consider the minimization problem
	\begin{equation}\label{eq:opt_part_variat}
		\inf\left\{ J_N(u,D)\colon u\in H^1_{0,N}(D),~ u_i\geq 0, ~u_i\not\equiv 0~\text{for all }i=1,\dots,N \right\}.
	\end{equation}
	We fix the following notation, which will be employed throughout the whole paper. We assume
	\begin{equation*}
		u=(u_1,\dots,u_N)\in H^1_{0,N}(D)~\text{to be a minimizer of \eqref{eq:opt_part_variat}}
	\end{equation*}
	and we denote by
	\begin{equation}\label{eq:Omega_i}
		\Omega_i=\Omega_i^u:=\{x\in D\colon u_i(x)>0\},\quad\text{for }i=1,\dots,N.
	\end{equation}
	Since every positive multiple of $u$ is still a minimizer, it is not restrictive to assume that
	\[
	\int_D u_i^2\dx=\int_{\Omega_i}u_i^2\dx=1\quad\text{for all }i=1,\dots,N.
	\]
	Then problem \eqref{eq:optimal_part} turns out to be equivalent to \eqref{eq:opt_part_variat}. This is not trivial and it is, in fact, one of the contributions of \cite{CTVFucick2005,CL2007}, where the authors prove Lipschitz continuity of minimizers. Precisely, we have the following theorem, whose proof can be found in \cite[Theorem 2.2]{CTVFucick2005} and \cite[Proposition 3]{CL2007}.

	\begin{theorem}[Regularity of eigenfunctions, \cite{CTVFucick2005,CL2007}]\label{thm:int_eigen}
		Let $u=(u_1,\dots,u_N)\in H^1_{0,N}(D)$ be a minimizer of \eqref{eq:opt_part_variat} and let $\Omega_i$ be as in \eqref{eq:Omega_i}. Then, for all $i=1,\dots,N$, $u_i\in C^{0,1}_{\textup{loc}}(D)$ and there holds
		\[
		\begin{bvp}
			-\Delta u_i&=\lambda_1(\Omega_i)u_i, &&\text{in }\Omega_i, \\
			u_i &=0, &&\text{on }\partial \Omega_i,
		\end{bvp}
		\]
		in a weak sense. In particular $u_i\in C^\infty(\Omega_i)$. Moreover, if $D$ is of class $C^1$, then $u_i\in C^{0,1}(D)$ for all $i=1,\dots,N$.
		
	\end{theorem}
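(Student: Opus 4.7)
The plan is to handle the three claims of the theorem separately: the Euler-Lagrange equation together with interior smoothness, the interior Lipschitz regularity, and the global Lipschitz regularity under the $C^1$ assumption on $D$. For the first, I fix $i\in\{1,\dots,N\}$, take $\varphi\in C^\infty_c(\Omega_i)$, and consider the inner variation obtained by replacing $u_i$ with $u_i+t\varphi$ while leaving the other components unchanged. For $|t|$ small this variation still lies in $H^1_{0,N}(D)$, since the segregation constraint is automatic where the other $u_j$'s already vanish. Only the $i$-th summand of $J_N$ depends on $t$; differentiating at $t=0$ and using the normalization $\int_D u_i^2\dx=1$ gives
\[
\int_{\Omega_i}\nabla u_i\cdot\nabla\varphi\dx=\lambda_1(\Omega_i)\int_{\Omega_i}u_i\varphi\dx,
\]
which is the weak form of the eigenvalue PDE. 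Standard elliptic regularity then upgrades this to $u_i\in C^\infty(\Omega_i)$.

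The core technical content is the Lipschitz regularity across the free interface. A first observation is that each $u_i\ge 0$ satisfies $-\Delta u_i\le\lambda_1(\Omega_i)u_i$ in all of $D$ in the distributional sense: the contribution of $\Delta u_i$ supported on $\partial\Omega_i\cap D$ is a nonpositive measure, since $u_i$ vanishes outside $\Omega_i$ and its inward normal derivative from $\Omega_i$ is nonnegative. Hence $(u_1,\dots,u_N)$ is a system of segregated, almost-subharmonic nonnegative functions. The Alt-Caffarelli-Friedman monotonicity formula applied to each pair $(u_i,u_j)$, $i\ne j$, asserts that (up to controllable lower-order corrections produced by the $\lambda_1(\Omega_k)u_k$ terms) the quantity
\[
\Phi_{ij}(r,x_0)=\prod_{k\in\{i,j\}}\frac{1}{r^2}\int_{B_r(x_0)}\frac{\abs{\nabla u_k}^2}{\abs{x-x_0}^{d-2}}\dx
\]
is nondecreasing in $r\in(0,\dist(x_0,\partial D))$. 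Combined with a standard blow-up/Liouville argument — any blow-up profile is a pair of nonnegative harmonic functions with disjoint positivity sets and at most linear growth, which forces each component to be affine — this yields the uniform gradient bound $\abs{\nabla u_i}\le C$ on compact subsets of $D$, hence $u_i\in C^{0,1}_{\textup{loc}}(D)$. This is precisely the route of \cite{CTVFucick2005,CL2007}.

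For the global statement when $D\in C^1$, extend each $u_i$ by zero outside $D$ (allowed since $u_i\in H^1_0(D)$) and fix a boundary point $x_0\in\partial D$. After flattening $\partial D\cap B_r(x_0)$ through a local $C^1$ diffeomorphism, one works in a half-ball $B_r^+$, where the extended $u_i$ still vanishes on the flat portion $\{x_d=0\}$ and still solves an elliptic equation with bounded, continuous coefficients. A boundary version of the ACF formula — or, equivalently, an odd reflection of the system across $\{x_d=0\}$, which recasts the problem as an interior one for an enlarged segregated family — produces a uniform gradient bound up to the flat part. A finite covering of $\partial D$ together with the interior estimate then gives $u_i\in C^{0,1}(D)$.

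The main obstacle is clearly the Lipschitz estimate across the free interface $\partial\Omega_i\cap D$, where the Euler-Lagrange equation alone supplies no information and the segregation constraint is the only structural ingredient; this is what forces the use of ACF (or an Almgren-type frequency) monotonicity. Once Lipschitz continuity is in hand, the $\Omega_i$'s are automatically open, $u_i\in H^1_0(\Omega_i)$ vanishes pointwise on $\partial\Omega_i$, and the PDE shows that $\lambda_1(\Omega_i)$ in \eqref{eq:opt_part_variat} coincides with the first Dirichlet eigenvalue of $\Omega_i$; therefore $(\Omega_1,\dots,\Omega_N)\in\mathcal{P}_N(D)$ realizes the infimum in \eqref{eq:optimal_part}, establishing the equivalence of the two formulations.
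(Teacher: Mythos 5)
The paper does not actually prove this theorem: it is recalled verbatim from the literature, with the proof delegated to \cite[Theorem 2.2]{CTVFucick2005} and \cite[Proposition 3]{CL2007}. Measured against that route, your interior argument is the right one in outline: the outer variation $u_i\mapsto u_i+t\varphi$ with $\varphi\in C^\infty_c(\Omega_i)$ (note this is an \emph{outer}, not an inner, variation in the paper's terminology, and it presupposes that $\Omega_i$ is open, i.e.\ that continuity has already been established) yields the weak eigenvalue equation, and the Alt--Caffarelli--Friedman almost-monotonicity formula for pairs of segregated nonnegative subsolutions, combined with a blow-up/Liouville argument, is indeed how local Lipschitz continuity is obtained in the cited works. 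Two caveats there: your justification of $-\Delta u_i\le\lambda_1(\Omega_i)u_i$ via the sign of the inward normal derivative on $\partial\Omega_i$ is circular (it assumes regularity of the free boundary that is not yet known); the correct derivation is by one-sided outer variations, as in \Cref{lemma:extermaliti}. And the blow-up limits are two-plane profiles $a(x\cdot\bm{e})^{\pm}$ rather than affine functions, though this does not affect the conclusion.

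The genuine gap is in the last step, the global Lipschitz bound for $C^1$ domains. After flattening $\partial D$ with a $C^1$ diffeomorphism, the operator becomes a divergence-form operator whose coefficients are merely \emph{continuous}; odd reflection across $\{x_d=0\}$ (even when arranged so that the reflected matrix stays continuous) then reduces the problem to an interior one for an equation with continuous coefficients, and for such equations interior gradient bounds simply fail — one only gets $C^{0,\alpha}$ for every $\alpha<1$. Lipschitz estimates up to a flat boundary require Dini continuity of the coefficients, equivalently a $C^{1,\mathrm{Dini}}$ boundary; this is precisely why the present paper's own boundary analysis imposes the Dini-type conditions of \Cref{ass:domain} and works with the Adolfsson--Escauriaza diffeomorphism rather than a naive flattening. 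A barrier comparison with the torsion function of $D$ runs into the same obstruction, since the torsion function of a merely $C^1$ domain need not have bounded gradient. So as written, your argument does not establish $u_i\in C^{0,1}(\overline D)$ from the $C^1$ hypothesis alone; you would need either to invoke the boundary estimate of \cite{CTVFucick2005,CL2007} as a black box (which is what the paper does) or to supply the additional mechanism that makes the $C^1$ hypothesis suffice, which the flattening-plus-reflection sketch does not provide.
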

	We notice that a single component $u_i$ of a minimizer of \eqref{eq:opt_part_variat} does not satisfy an equation on the full domain $D$: indeed the space $H^1_{0,N}(D)$ does not allow arbitrary outer variations of the form $u+t\varphi$, for $\varphi\in C_c^\infty(D)$, $t\in\R$, since the segregation property is not preserved for the competitor. Nevertheless, inner (and some suitable outer) variations can still be  performed, and this allows to deduce a series of extremality conditions, which have been obtained in \cite{CTVFucick2005} and which we recall in \Cref{sec:prelim}. We remark that, among the class of inner variations, one can consider (localized) dilations: these play a central role, since they lead to the so called Pohozaev identity, which has a crucial role in the analysis of free boundary regularity.
	
	\subsection*{Interior regularity of the free interfaces}	
	
	Once existence of a minimizer to \eqref{eq:optimal_part} is established, one is naturally led to investigate the geometrical properties of the interfaces separating the optimal subdomains $\Omega_i$, and these turn out to be deeply entangled with the regularity properties of the eigenfunctions corresponding to $\lambda_1(\Omega_i)$, with the equations they satisfy and with the reflection properties between eigefunctions of touching subdomains.

	Let us consider the interface which separates the segregated densities $u_i$
	\begin{equation}\label{eq:free_boundary}
		\mathcal{F}(u):=\bigcup_{i=1}^N\partial \Omega_i\cap D
	\end{equation}
	and let us also take into consideration the zero set of the minimizer $u$, that is
	\begin{equation}\label{eq:zero_set}
		\mathcal{Z}(u):=\left\{ x\in D\colon u_i(x)=0,~\text{for all }i=1,\dots,N \right\}.
	\end{equation}
	Summing up the results obtained in \cite{CTV2005indiana} and \cite{CL2007}, we have the following theorem describing the regularity of the free boundaries in the interior of $D$.
	
	\begin{theorem}[Regularity of the free boundary, \cite{CTV2005indiana,CL2007}]\label{thm:int_free_bound}
		Let $u=(u_1,\dots,u_N)\in H^1_{0,N}(D)$ be a minimizer of \eqref{eq:opt_part_variat} and let $\mathcal{F}(u)$ be as in \eqref{eq:free_boundary} and $\mathcal{Z}(u)$ as in \eqref{eq:zero_set}. Then the free boundary $\mathcal{F}(u)$ coincides with the zero set $\mathcal{Z}(u)$, i.e. 
		$
		\mathcal{F}(u)=\mathcal{Z}(u),
		$
		and it can be decomposed into two disjoint sets $\mathcal{R}(u)$ and $\mathcal{S}(u)$
		\[
		\mathcal{F}(u)=\mathcal{R}(u)\cup \mathcal{S}(u),
		\]
		where $\mathcal{R}(u)$ is, locally, a $(d-1)$-dimensional manifold of class $C^{2,\alpha}$ (for some $0<\alpha<1$) and $\mathcal{S}(u):=\mathcal{F}(u)\setminus\mathcal{R}(u)$ is a closed set (in the topology of $\mathcal{F}(u)$) with Hausdorff dimension not exceeding $d-2$. Moreover, in a neighborhood of any $x_0\in \mathcal{R}(u)$ there are exactly two components of the optimal partition, that is, 
		there are $j,k\in\{1,\dots,N\}$, with $j\neq k$, and $r_0>0$ such that
		\begin{align*}
			&\Omega_j\cap B_r(x_0)\neq \emptyset,\quad\Omega_k\cap B_r(x_0)\neq \emptyset, \\
			&\Omega_i\cap B_r(x_0)=\emptyset~\text{for all }i\neq j
		\end{align*}
		for all $r\leq r_0$.
	\end{theorem}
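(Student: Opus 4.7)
My approach follows the classical Almgren--Caffarelli--Lin strategy adapted to segregation problems, and I split the proof into four stages.

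\textbf{Stage 1: Extremality and monotonicity.} The starting point is to promote the variational characterization of $u$ to pointwise/integral identities. Using outer variations of the form $u+t(\varphi,-\varphi,0,\dots,0)$ (with $\varphi\in C_c^\infty(D)$, $\varphi\geq 0$) and inner dilations, one obtains that each $u_i$ is subharmonic (away from its vanishing set) and that the differences $v_{ij}:=u_i-u_j$ are harmonic across the common interface, together with a Pohozaev-type identity at every $x_0\in\mathcal F(u)$. From these, I would establish the classical Almgren-type frequency
\[
N(x_0,r):=\frac{r\,\int_{B_r(x_0)}\bigl(|\nabla u|^2-\sum_i\lambda_1(\Omega_i)u_i^2\bigr)\dx}{\int_{\partial B_r(x_0)}|u|^2\dH^{d-1}}
\]
and show it is monotone nondecreasing in $r$ (up to an exponential correction accounting for the lower order terms $\lambda_1(\Omega_i)u_i$). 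Monotonicity implies that $N(x_0,0^+)=:N(x_0)$ exists, is upper semicontinuous in $x_0$, and is equal to the homogeneity degree of any blow-up limit at $x_0$.

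\textbf{Stage 2: Blow-up analysis and classification.} For $x_0\in\mathcal F(u)$, consider $u_{x_0,r}(x):=u(x_0+rx)/\rho(r)$ with $\rho(r)^2=r^{-d}\int_{\partial B_r(x_0)}|u|^2$. Thanks to the Lipschitz bound of \Cref{thm:int_eigen} and the monotonicity of $N$, the family $\{u_{x_0,r}\}$ is precompact in $H^1_{\mathrm{loc}}\cap C^{0,\alpha}_{\mathrm{loc}}$. Passing to the limit in the extremality conditions (and using that $\lambda_1(\Omega_i)u_i$ is a lower order term under rescaling), any blow-up $U=(U_1,\dots,U_N)$ is an $N(x_0)$-homogeneous, segregated, nonnegative vector such that each $U_i$ is subharmonic and all differences $U_i-U_j$ are harmonic. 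A standard Liouville-type classification (or direct analysis of the Almgren quotient on the sphere, where the eigenvalues of the spherical Laplacian on the nodal partitions of $\S^{d-1}$ are involved) shows $N(x_0)\geq 1$, with equality if and only if exactly two components are nonzero and, up to rotation, $U(x)=\alpha_1 x_d^+ e_j+\alpha_2 x_d^- e_k$ for some $j\neq k$. This already yields $\mathcal F(u)=\mathcal Z(u)$: the vanishing of all $U_i$ at a blow-up point of positive density forces $N(x_0)\geq 1$, and conversely every zero of $u$ lies on some $\partial\Omega_i$.

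\textbf{Stage 3: Regular set.} Define $\mathcal R(u):=\{x_0\in\mathcal F(u):N(x_0)=1\}$. Near $x_0\in\mathcal R(u)$, upper semicontinuity of $N$ and the classification above show that only two components $j,k$ are nontrivial, yielding the last claim of the theorem. The function $w:=u_j-u_k$ is then harmonic in $B_{r_0}(x_0)$ by the reflection property, and $\mathcal F(u)\cap B_{r_0}(x_0)=\{w=0\}$. Since the blow-up is a nondegenerate linear function, $\nabla w(x_0)\neq 0$, so the implicit function theorem gives that $\mathcal R(u)$ is locally a $C^\infty$ manifold; quantitatively, one obtains $C^{1,\alpha}$ from a standard flatness--epiperimetric scheme and then bootstraps to $C^{2,\alpha}$ via Schauder estimates applied to $w$.

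\textbf{Stage 4: Singular set and dimension bound.} Set $\mathcal S(u):=\mathcal F(u)\setminus\mathcal R(u)$, i.e.\ the set of points where $N(x_0)\geq 1+\eta_0$ for some absolute gap $\eta_0>0$ coming from the classification on $\S^{d-1}$. Closedness follows from upper semicontinuity of $N$. The Hausdorff dimension bound is obtained by Federer's dimension reduction: any blow-up at a singular point is a homogeneous segregated minimizer with the same structural properties, and if its symmetry dimension were $d-1$, one would reduce to the one-dimensional classification which only admits $N=1$ and two phases, forcing the blow-up point to be regular, a contradiction. Hence the spine has dimension at most $d-2$, and the standard stratification argument yields $\dim_{\mathcal H}\mathcal S(u)\leq d-2$.

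The main obstacle is the classification of homogeneous blow-ups in Stage 2, specifically showing the sharp gap $N(x_0)=1$ or $N(x_0)\geq 1+\eta_0$; this relies on the Friedland--Hayman inequality applied to nodal partitions of $\S^{d-1}$ and is where the segregation really enters. Everything else is essentially a transcription of the Alt--Caffarelli--Friedman machinery adapted to the multi-phase, eigenfunction setting.
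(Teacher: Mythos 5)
Your roadmap is the standard Almgren--Caffarelli--Lin scheme and coincides with the strategy sketched in Section 3 of the paper (this theorem is quoted from \cite{CTV2005indiana,CL2007} rather than reproved here), but two steps as written do not hold up. First, in Stage 3 you assert that $w:=u_j-u_k$ is harmonic near a regular point ``by the reflection property'' and deduce via the implicit function theorem that $\mathcal R(u)$ is locally a $C^\infty$ manifold. This is false: the reflection law gives $-\Delta w=\lambda_1(\Omega_j)u_j-\lambda_1(\Omega_k)u_k=\lambda_1(\Omega_j)w^+-\lambda_1(\Omega_k)w^-$, whose right-hand side is Lipschitz but not smooth across $\{w=0\}$. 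Schauder theory then yields $w\in C^{2,\alpha}$ and hence a $C^{2,\alpha}$ interface (once $\nabla w\neq 0$ on $\{w=0\}$ is known, which comes from nondegeneracy of the blow-up), which is exactly the regularity claimed in the theorem; the $C^\infty$ claim is unjustified. Your later appeal to Schauder partially repairs this, but the premise ``harmonic'' must be removed.

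Second, and more seriously, the clean-up near a regular point --- the claim that $\Omega_i\cap B_r(x_0)=\emptyset$ for all $i\neq j,k$ --- does not follow from ``upper semicontinuity of $N$ and the classification above''. Upper semicontinuity plus the frequency gap only tells you that nearby free boundary points also have frequency $1$; it does not exclude a third component $\Omega_m$ accumulating at $x_0$ with vanishing relative density. The standard argument (see \cite[Lemma 5]{CL2007}, \cite[Proposition 5.4]{TerraciniTavares2012}, or the quantitative version in \Cref{lemma:clean_up_interior} of this paper) picks points of $\Omega_m$ converging to $x_0$, rescales at their projections onto $\partial\Omega_m$, and uses nondegeneracy together with control on the convergence of the blow-up sequence to reach a contradiction; this is precisely the step the paper singles out as ``the final, crucial point'' of the interior theory, so it cannot be dispensed with in one clause. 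Two smaller imprecisions: pairwise differences $U_i-U_j$ of blow-up components are harmonic only where the remaining components vanish (the extremality condition concerns $u_i-\sum_{j\neq i}u_j$), and the inclusion $\mathcal Z(u)\subseteq\mathcal F(u)$ requires showing that $\mathcal Z(u)$ has empty interior, which uses the doubling/unique-continuation consequence of the monotonicity formula rather than holding ``conversely'' for free.
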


	It is also worth mentioning that in \cite{Alper}, by exploiting the techniques introduced in \cite{NV1,NV2,dLMSV}, it was proved that the singular set $\mathcal{S}(u)$ has locally finite $(d-2)$-dimensional Hausdorff measure and is $(d-2)$-rectifiable, i.e. it can be covered by countably many $C^1$-manifolds of dimension $d-2$ up to a set of $(d-2)$-dimensional Hausdorff measure zero.

	\section{Main results}\label{sec:main_results}

	In the present paper we investigate for the first time the regularity of the free interfaces (arising in segregated problems) up to the fixed boundary $\partial D$. 
	As in \cite{CTVFucick2005,CL2007}, we choose as model problem the spectral optimal partition problem \eqref{eq:optimal_part} in its equivalent formulation \eqref{eq:opt_part_variat}. More precisely, we denote
	\begin{equation}\label{eq:free_bound_fixed}
		\mathcal{F}_{\partial D}(u):=\partial D\cap \overline{\mathcal{F}(u)},
	\end{equation}
	where $\mathcal F(u)$ is the interior interface defined in \eqref{eq:free_boundary}, and 
	we aim at understanding the local structure of $\mathcal{F}_{\partial D}(u)$ and describing how $\mathcal{F}(u)$ approaches $\mathcal{F}_{\partial D}(u)$. It is clear that, at this stage, the local structure of the fixed boundary $\partial D$ will strongly influence the behavior of $\mathcal{F}(u)$ and $\mathcal{F}_{\partial D}(u)$. In this regard, we assume $\partial D$ to be of class $C^1$, in the sense that $D$ can be locally described, near the boundary and up to a change of coordinates, as the epigraph of a $C^1$ function; moreover, we assume that the gradient of this function possesses a modulus of continuity satisfying certain integrability properties. In order to be more precise, let us introduce the following notation, which we adopt throughout the whole paper: for $x\in\R^d$ and $r>0$ we denote by $B_r(x)$ the $d$-dimensional ball of radius $r$ and center $x$ and
	\begin{equation*}
		B_r:=B_r(0),\quad B_r^+:=B_r\cap \R^d_+\quad\text{and}\quad B_r':=B_r\cap\partial\R^d_+,
	\end{equation*}
	where $\R^d_+:=\{(x_1,\dots,x_d)\colon x_d>0\}$. Moreover, for $x\in\R^d$ we may write $x=(x',x_d)$, with $x'=(x_1,\dots,x_{d-1})$. We now make the following assumption of the boundary of the domain $D$.

	\begin{assumption}\label{ass:domain}
		There exists a radius $R_{\partial D}>0$ such that the following holds.
		\begin{itemize}
			\item[(I)]\label{ass:domain_1} For any $x_0\in\partial D$ there exists an orthogonal matrix $\textbf{Q}=\textbf{Q}_{x_0}\in O(d)$ and a function $\varphi=\varphi_{x_0}\in C^1(B_{R_{\partial D}}')$ such that $\varphi(0)=|\nabla\varphi(0)|=0$ and
			\begin{align*}
				&D\cap B_R(x_0)=\{ \textbf{Q}x+x_0\colon x'\in B_R',~x_d>\varphi(x')  \}\cap B_R(x_0), \\
				&\partial D\cap B_R(x_0)=\{ \textbf{Q}x+x_0\colon x'\in B_R',~x_d=\varphi(x')  \}\cap B_R(x_0),
			\end{align*}
			for all $R\leq R_{\partial D}$.
			\item[(II)]\label{ass:domain_2} There exists a function $\sigma\colon[0,2R_{\partial D}]\to [0,+\infty)$ such that $\sigma\in C([0,2R_{\partial D}])$, $\sigma(0)=0$, $\sigma$ is non-decreasing and for all $x_0\in\partial D$ there holds
			\[
			\abs{\nabla\varphi_{x_0}(x')-\nabla\varphi_{x_0}(y')}\leq \sigma(\abs{x'-y'}) ~\text{for all }x',y'\in B_{R_{\partial D}}'.
			\]
			\item[(III)]\label{ass:domain_3}  There exists $\sigma_0\colon [0,2R_{\partial D}]\to[0,\infty)$
			\begin{align}
				& \sigma_0\in C^1(0,2R_{\partial D}),\quad(r^{-m_d}\sigma_0(r))'\leq 0 ~\text{for all }r\in (0,2R_{\partial D}), \label{eq:dini_hp_1}\\
				&\int_0^{2R_{\partial D}}\frac{\sigma_0(r)}{r}\d r<\infty\quad\text{and}\quad \int_0^{2R_{\partial D}}\frac{1}{r\sigma_0(r)}\int_0^r\frac{\sigma(t)}{t}\d t<\infty,\label{eq:dini_hp_2}
			\end{align}
			where ${m_d:=d\epsilon_{\textup{bd}} /4}$,  $\epsilon_{\textup{bd}}>0$ being the dimensional constant from \Cref{thm:epi2}.
		\end{itemize}
		Moreover, it is not restrictive (see \Cref{lemma:modulus}) to assume that
		\begin{itemize}
			\item[(IV)] $\sigma\in C^2(0,2R_{\partial D})$ and
			\[
			(\sigma(r)/r)'\leq 0,\quad\abs{\sigma'(r)}\leq \frac{2}{r}\sigma(r),\quad \abs{\sigma''(r)}\leq \frac{4}{r^2}\sigma(r)
			\]
			for all $r\in(0,2R_{\partial D})$.
		\end{itemize}
		
	\end{assumption}
	
	Essentially, \Cref{ass:domain} requires some integrability conditions on the modulus of continuity of the normal vector to $\partial D$. In particular, we remark that the second condition in \eqref{eq:dini_hp_1} says that $\sigma_0$ shall not grow faster than a power, while the second condition in \eqref{eq:dini_hp_2} implies that $\sigma$ must be of class $2$-Dini on $[0,2R_{\partial D}]$. Moreover, \Cref{ass:domain} is satisfied for some known classes of regular domains. In particular, if $\sigma\in C^{0,\alpha}(0,2R_{\partial D})$ (i.e. the domain $D$ is of class $C^{1,\alpha}$), then we can take $\sigma_0(r)=r^{\alpha_0}$ for any $0<\alpha_0<\min\{\alpha,m_d\}$. We are actually able to treat domains which are less then $C^{1,\alpha}$, by entering the class of $C^{1,\alpha\textup{-Dini}}$ domains: these are defined as domains whose boundary is locally described by $C^1$ functions whose gradient possesses a modulus of continuity which is $\alpha$-Dini. In turn, the notion of $\alpha$-Dini function, for a real $\alpha\geq 1$, is given in the following.
	
	\begin{definition}\label{def:alpha_dini}
		Let $R>0$ and $\alpha\in\R$, $\alpha\geq 1$. A function $f\colon [0,R]\to [0,+\infty)$ is said to be of \emph{class $\alpha$-Dini} in $[0,R]$ if it is continuous, non-decreasing, and satisfies
		\begin{equation*}
			\int_0^R\frac{f(r)|\log r|^{\alpha-1}}{r}\d r<\infty.
		\end{equation*}
	\end{definition}
	This definition naturally extends the notion of $j$-Dini function, with $j\in\N$, in view of \Cref{lemma:alpha-dini}. Hence, the least regular domain we are able to handle (without convexity assumptions) is a $C^{1,\alpha\textup{-Dini}}$ domain with $\alpha>3$; in this case, $\sigma$ is $\alpha$-Dini and in this case we can choose $\sigma_0$  to be $\sigma_0(r)=|\log r|^{-(1+\alpha_0)}$ for any $0<\alpha_0<\alpha-3$. We remark that, in case $D$ is convex, $C^1$-regularity could be enough in order to reach the same results as in the present paper.   \medskip
	
	Our first main result states that an asymptotic expansion of the minimizer $u$ holds true in a neighborhood of any boundary point. More precisely, we have the following (we refer to Figure \ref{fig:1} for a possible visualization of it in the two dimensional case). 
	
	\begin{theorem}[Taylor expansion]\label{thm:taylor}
		Let $u=(u_1,\dots,u_N)\in H^1_{0,N}(D)$ be a minimizer of \eqref{eq:opt_part_variat}. Let $x_0\in\partial D$ and let $\nnu(x_0)$ be the exterior normal to $\partial D$ at $x_0$. Then, exactly one of the following is satisfied.
		\begin{enumerate}
			\item[1)] There exist $j\in\{1,\dots,N\}$ and $a_{x_0,1}>0$ such that
			\begin{align*}
				&u_j(x)=a_{x_0,1}(-(x-x_0)\cdot\nnu(x_0))^++o(|x-x_0|)\quad\text{as }x\to x_0, \\
				&u_i(x)\equiv 0~\text{in a neighborhood of $x_0$ for all }i\neq j.
			\end{align*}
			In this case, we say that $x_0\in\omega_j$.
			\item[2)] There are $j\neq k\in\{1,\dots,N\}$, $a_{x_0,2}>0$ and $\bm{e}_{x_0}\in\partial B_1$ such that $\bm{e}_{x_0}\cdot\nnu(x_0)=0$ and
			\begin{align*}
				&u_j(x)=a_{x_0,2}((x-x_0)\cdot\bm{e}_{x_0})^+(-(x-x_0)\cdot\nnu(x_0))^++o(|x-x_0|^2)\quad\text{as }x\to x_0, \\
				&u_k(x)=a_{x_0,2}((x-x_0)\cdot\bm{e}_{x_0})^-(-(x-x_0)\cdot\nnu(x_0))^++o(|x-x_0|^2)\quad\text{as }x\to x_0.
			\end{align*}
			\item[3)] $u_i(x)=o(|x-x_0|^2)$ as $x\to x_0$ for all $i\in\{1,\dots,N\}$ and there exists $j,k\in\{1,\dots,N\}$, with $j\neq k$, such that $\Omega_j\cap B_r(x_0)\neq \emptyset$ and $\Omega_k\cap B_r(x_0)\neq \emptyset$ for all $r>0$.
		\end{enumerate}
	\end{theorem}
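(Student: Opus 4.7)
The plan is to reduce the trichotomy to a blow-up analysis at $x_0$, carried out via an Almgren-type boundary monotonicity formula and an epiperimetric inequality at low-frequency points -- the two tools advertised in the abstract. First I localize and straighten: by \Cref{ass:domain}, after the orthogonal change of coordinates $\textbf{Q}_{x_0}$ and the graph diffeomorphism $\Phi(x',x_d) = (x', x_d + \varphi_{x_0}(x'))$, the domain $D\cap B_R(x_0)$ is identified with the half-ball $B_R^+$, and the pulled-back minimizer $v = u\circ\Phi$ inherits the segregation $v_iv_j=0$ and the Dirichlet condition on $B_R'$. In the new variables, each $v_i$ solves $-\dive(A\nabla v_i) = \lambda_1(\Omega_i)\,J v_i$ in its positivity set, with $A$ and $J$ perturbations of the identity controlled by $|\nabla\varphi|$ and hence by $\sigma(|x|)$. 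The Dini-type conditions \eqref{eq:dini_hp_2} are exactly what is needed to ensure that the perturbative errors produced by $A - \textup{Id}$ in all subsequent monotonicity computations are integrable in $\dr/r$.

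Next I introduce the boundary-adapted Almgren frequency
\[
N(r) := \frac{r\int_{B_r^+}|\nabla v|^2\dx}{\int_{\partial B_r\cap \R^d_+}|v|^2\dH^{d-1}},
\]
suitably corrected by $A$ and by the terms $\lambda_1(\Omega_i)v_i^2$, and prove almost monotonicity: $r\mapsto e^{\Psi(r)} N(r)$ is non-decreasing for some bounded $\Psi$ with $\Psi'(r)\leq C\sigma(r)/r$. From this I extract the limit $\alpha := N(0^+)$, two-sided bounds $c\,r^{2\alpha}\leq r^{1-d}\int_{\partial B_r\cap\R^d_+}|v|^2\dH^{d-1}\leq C\,r^{2\alpha}$, and $H^1$-compactness of the rescalings $v_r(x) := v(rx)/\rho(r)$ with $\rho(r)^2 = r^{1-d}\int_{\partial B_r\cap\R^d_+}|v|^2\dH^{d-1}$. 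Every subsequential limit $v_0$ is an $\alpha$-homogeneous, non-negative, segregated $N$-tuple on $\R^d_+$, harmonic in each positivity component and vanishing on $\partial\R^d_+$. Odd reflection across $\partial\R^d_+$ turns each $v_{0,i}$ into an $\alpha$-homogeneous harmonic function on $\R^d$ changing sign across the hyperplane; classification of the associated spectral partition problem on the hemisphere (with Dirichlet conditions on the equator and on the nodal interfaces) then forces $\alpha=1$, with $v_0 = a_1 x_d$ on a single component, or $\alpha=2$, with the butterfly pair $a_2(x'\cdot\bm{e})^+ x_d$ and $a_2(x'\cdot\bm{e})^- x_d$ on exactly two components, or $\alpha\geq 2+\epsilon_{\textup{bd}}$, with $\epsilon_{\textup{bd}}$ the dimensional constant of \Cref{thm:epi2}.

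The decisive step promotes subsequential blow-up convergence to full convergence with an explicit rate. At the low-frequency values $\alpha\in\{1,2\}$ I invoke the epiperimetric inequality of \Cref{thm:epi2} for the Weiss-type boundary functional naturally associated to $N$. Combined with the almost-monotonicity of this functional and the Dini estimate \eqref{eq:dini_hp_2}, the epiperimetric inequality furnishes a summable differential inequality whose integration yields decay of $v_r - v_0$ in $L^2(\partial B_1\cap\R^d_+)$, and hence uniqueness of the blow-up together with an explicit modulus of continuity. Transplanting back through $\Phi^{-1}$, and recalling that the outer normal $\nnu(x_0)$ corresponds to $-\textbf{Q}_{x_0}\bm{e}_d$, the two limit profiles take exactly the form claimed in items 1) and 2); all remaining points, having $\alpha\geq 2+\epsilon_{\textup{bd}}$, fall into item 3) via the pointwise estimate $|v_i(x)|\leq C|x|^{\alpha} = o(|x|^2)$. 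The one-phase clean-up (upgrading ``other components are $o(|x-x_0|)$'' to their identical vanishing in a full neighborhood) is then obtained by combining the strict positivity $a_{x_0,1}>0$ of the surviving profile with the Lipschitz bound of \Cref{thm:int_eigen}, following the interior strategy of \cite{CTV2005indiana,CL2007}.

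The hardest part, I expect, is the derivation and sharp control of the boundary Almgren and Weiss monotonicity formulas in the low-regularity setting of \Cref{ass:domain}: the curved boundary contributes perturbation terms involving $\nabla\varphi$ and $\sigma(r)$ at essentially every step, and the precise thresholds in \Cref{ass:domain}(III)--(IV) have to be shown to be just sharp enough that the differential inequality resulting from the epiperimetric gain, once balanced against these errors, still integrates to the $o(|x-x_0|)$ and $o(|x-x_0|^2)$ remainders claimed. A secondary difficulty is establishing the boundary epiperimetric inequality at the butterfly profile of frequency $2$: the competitor must simultaneously respect segregation, vanish on a flat boundary piece, and produce a strict energy gain over the homogeneous extension, which requires a delicate explicit construction rather than an abstract compactness argument.
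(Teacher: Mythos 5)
Your overall architecture (boundary Almgren monotonicity, classification of homogeneous blow-ups, epiperimetric inequality at frequencies $1$ and $2$, quantitative rate of convergence and uniqueness of the blow-up, clean-up) is the one the paper follows, but there is a genuine gap at the first and most delicate step: the derivation of the almost-monotonicity of the frequency. You flatten the boundary with $\Phi(x',x_d)=(x',x_d+\varphi_{x_0}(x'))$ and then assert that $r\mapsto e^{\Psi(r)}N(r)$ is monotone. The obstruction is that any lower bound on $N'(r)$ requires a Pohozaev--Rellich identity, and the minimizer is only Lipschitz and does not solve a PDE in all of $D$, so the identity cannot be obtained by testing an equation against $x\cdot\nabla v$ and integrating by parts (that route needs boundary gradient estimates which are unavailable here). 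The only variational substitute is a one-sided inner variation $x\mapsto x+t\xi(x)$ with $t\geq 0$, and for the deformed function to remain admissible the field $\xi$ must point outward on the transformed boundary; with $\xi$ approximating $\bm{\beta}\chi_{B_r}$, $\bm{\beta}=A x/\mu$, this means one needs $A(x)x\cdot\nnu(x)\geq 0$ there. After mere flattening this fails: on $\{x_d=0\}$ one has $A(x)x\cdot\nnu(x)\sim -(A(x)x)_d\sim x'\cdot\nabla\varphi_{x_0}(x')$, which has no sign. This is precisely why the paper does \emph{not} flatten but instead uses the Adolfsson-type perturbation $\Psi_0(x)=(x',x_d+3|x|\sigma(|x|))$, which pushes the boundary so that $A(x)x\cdot\nnu(x)\geq|x|\sigma(|x|)\geq 0$ on $\Gamma_r$ (\Cref{lemma:A}, \Cref{cor:starsh}); only then does the one-sided inner variation (\Cref{prop:inner_variations}) yield the Pohozaev inequality (\Cref{prop:pohozaev}) and hence \Cref{thm:N'}. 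As written, your monotonicity claim is unsupported.

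Two secondary issues. First, the lower bound $c\,r^{2\alpha}\leq r^{1-d}\int_{\partial B_r\cap\R^d_+}|v|^2$ does not follow from almost-monotonicity (only the upper bound does); it is the nondegeneracy statement, which in the paper requires the already-established quantitative rate of convergence and a two-scale contradiction argument (\Cref{prop:nondeg}), and it is exactly what makes $a_{x_0,1},a_{x_0,2}>0$ in cases 1) and 2) — you assert it before the epiperimetric inequality even enters, which is the wrong logical order. Second, odd reflection does not turn each component of the blow-up into a globally harmonic homogeneous function (it is harmonic only on the doubled positivity set, not across the interior interfaces), so the classification has to be run spectrally on the half-sphere as in \Cref{lemma:frequencies}; moreover the frequency gap constant $\delta_d$ appearing there is not the epiperimetric constant $\epsilon_{\textup{bd}}$, and case 3) also requires showing that at least two components survive at every scale, which comes from part (3) of that classification applied to the Almgren-normalized blow-up.
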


	\begin{figure}[h]
		\centering
		\begin{tikzpicture}
			\coordinate (O) at (0,0);
			\draw[fill=white, opacity=0.05] (0,0) circle [radius = 20mm];
			\draw[thick] (O) circle [very thick, radius=2cm, name path=c];
			
			\draw [thick, color=red, name path=1] plot [smooth] coordinates  {(-0.2,-0.2)  (-0.4,-0.8) (-0.2,-1.5) (-0.23,-1.97)};
			
			\draw [thick, color=red, name path=2] plot [smooth] coordinates  {(-0.2,-0.2)  (0.8,-0.1) (1.5,0.6) (1.73,1)};
			
			\draw [thick, color=red, name path=3] plot [smooth] coordinates  {(-0.2,-0.2)  (-0.8,0.5) (-1.5,0.6) (-1.73,1)};
			
			\draw [thick, color=red, name path=4] plot [smooth] coordinates  {(-1.73,1)  (-1.0,1.1) (0.2,0.9) (1.0,1.1) (1.73,1)};
			
			\draw[thick] (1.73,1) circle [very thick, radius=0.02cm];
			\draw[thick] (-1.73,1) circle [very thick, radius=0.02cm];
			\draw[thick] (-0.23,-1.98) circle [very thick, radius=0.02cm];

			\draw node at (1.9,1.2) {$Y$};
			\draw node at  (-1.9,1.2) {$Z$};
			\draw node at  (-0.0,-1.8)  {$X$};

			\draw node at (-1,-0.5) {$\Omega_1$};
			\draw node at (0.8,-1) {$\Omega_2$};
			\draw node at (0.1,1.5) {$\Omega_3$};
			\draw node at (0.3,0.4) {$\Omega_4$};
			
			
			\draw[thick] (O) circle [very thick, radius=2cm];
			\draw[densely dashed] (-0.23,-1.98) circle [radius = 4mm];
			\draw[very thick, ->] plot [smooth] coordinates {(0.17,-2.1) (1.6,-2.1) (2.8,-1.7) (3.6,-1.25)};
			
			\draw[densely dashed] (1.73,1) circle [radius = 4mm];
			\draw[very thick, ->] plot [smooth] coordinates {(2.1,1.1) (4.3,1.6) (5.8,1.7) (7.2,1.4) (8.8,0.55)};
			
			\begin{scope}[shift={(5.5,-1)}]
				\draw[densely dashed,  name path=g_arc_1] (2.2,0) arc [start angle=0, end angle = 90,x radius = 22mm, y radius = 22mm];
				\draw[densely dashed, name path=g_arc_2] (0,2.2) arc [start angle=90, end angle = 180,x radius = 22mm, y radius = 22mm];
				\draw [thick, red, name path=hor] plot [smooth] coordinates {(0,0) (0,2.2)};
				\draw [thick, name path=hor] plot [smooth] coordinates {(-2.2,0) (2.2,0)};
				\draw node at (-1.0,0.85) {$\Omega_1$};
				\draw node at (1.0,0.85) {$\Omega_2$};
				\draw [decorate,decoration={brace,mirror, amplitude=3pt}]
				(-2.17,-0.03) -- (-0.03,-0.03);
				\draw node at (-1.1,-0.3) {\small$\omega_1$};
				\draw [decorate,decoration={brace,mirror, amplitude=3pt}]
				(0.03,-0.03) -- (2.17,-0.03);
				\draw node at (1.1,-0.3) {\small$ \omega_2$};
				\draw [thick, ->] plot [smooth] coordinates {(0,0) (0,-0.6)};
				\draw node at (0.5,-0.6) {\small$\nnu(X)$};
				\draw[thick] (0,0) circle [very thick, radius=0.02cm];
			\end{scope}
			
			\begin{scope}[shift={(11,0)}, rotate=140]
				\draw[densely dashed, name path=g_arc_1] (2.2,0) arc [start angle=0, end angle = 90,x radius = 22mm, y radius = 22mm];
				\draw[densely dashed, name path=g_arc_2] (0,2.2) arc [start angle=90, end angle = 180,x radius = 22mm, y radius = 22mm];
				\draw [thick, red,name path=hor] plot [smooth] coordinates {(0,0) (1.1,1.9)};
				\draw [thick, red,name path=hor] plot [smooth] coordinates {(0,0) (-1.1,1.9)};
				\draw [thick, name path=hor] plot [smooth] coordinates {(-2.2,0) (2.2,0)};
				\draw node at (-1.3,0.65) {$\Omega_2$};
				\draw node at (1.3,0.65) {$\Omega_3$};
				\draw node at (0.05,1.4) {$\Omega_4$};
				\draw [decorate,decoration={brace,mirror, amplitude=3pt}]
				(-2.17,-0.03) -- (-0.03,-0.03);
				\draw node at (-1.1,-0.3) {\small$\omega_2$};
				\draw [decorate,decoration={brace,mirror, amplitude=3pt}]
				(0.03,-0.03) -- (2.17,-0.03);
				\draw node at (1.1,-0.3) {\small$ \omega_3$};
				\draw [thick, ->] plot [smooth] coordinates {(0,0) (0,-0.6)};
				\draw node at (-0.4,-0.6) {\small$\nnu(Y)$};
				\draw[thick] (0,0) circle [very thick, radius=0.02cm];
			\end{scope}
		\end{tikzpicture}
		\caption{A partition of $D=B_1$ in 4 domains (on the left); the boundary point $X$ is \textit{regular}, while $Y$ and $Z$ are \textit{singular}. The limit behavior (blow-up) of the free interface at $X$ and $Y$ is described on the pictures in the middle and on the right.}
		\label{fig:1}
	\end{figure}
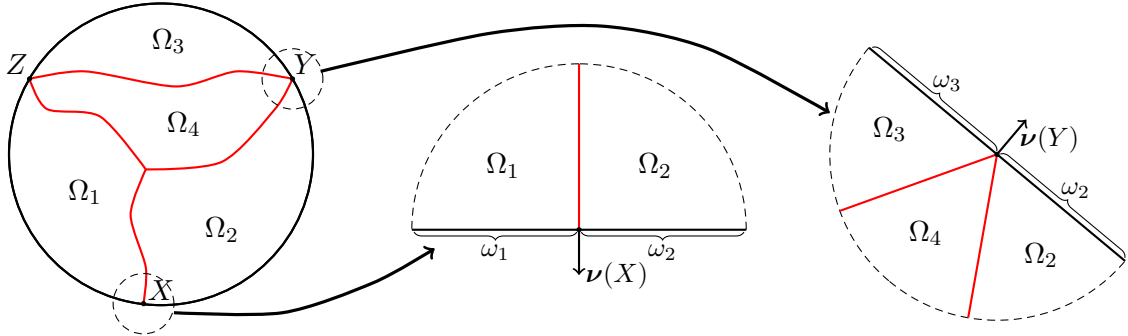
	
	\Cref{thm:taylor} allows us to identify the \enquote{trace} of the optimal partition $(\Omega_1,\dots,\Omega_N)$ on $\partial D$ and to provide a first characterization of $\mathcal{F}_{\partial D}(u)$. Namely, for any $j=1,\dots,N$, we define $\omega_j\sub\partial D$ as the set of points $x_0\in\partial D$ for which case \textit{1)} in \Cref{thm:taylor} is verified and we call $\omega_j$ the \emph{trace} of $\Omega_j$\footnote{We observe that $\omega_j$ could be empty.}. This theorem also implies that the functions $u_j$ are differentiable at all points of $\partial D$ and that  
	\begin{equation*}
		\omega_j=\left\{x\in\partial D\colon \partial_{\nnu} u_j(x)<0 \right\}.
	\end{equation*}
	Again in view of \textit{1)} of \Cref{thm:taylor} the sets $\omega_j$, $j=1,\dots,N$, are disjoint (relatively) open subsets of $\partial D$ satisfying 
	\begin{equation}\label{eq:characterizzation_omega_j-parte1}
		\omega_j\subseteq \mathrm{Int}_{\partial D}\,(\partial \Omega_j\cap \partial D).
	\end{equation}
	We point out that the previous inclusion may be strict: indeed, in a situation like the picture on the right in \Cref{fig:1}, with $(\Omega_3,\Omega_4,\Omega_2)$ being replaced by $(\Omega_2,\Omega_4,\Omega_2)$, in a neighborhood of $Y$ the set $\mathrm{Int}_{\partial D}\,(\partial \Omega_2\cap \partial D)$ is the whole segment, while $\omega_2=\mathrm{Int}_{\partial D}\,(\partial \Omega_2\cap \partial D)\setminus \{Y\}$.
	
	Furthermore, combining \Cref{thm:taylor} with some topological arguments and exploiting the unique continuation in the interior, we are able to provide a characterization of $\omega_j$ and of the free boundary $\mathcal{F}_{\partial D}(u)$, see \Cref{prop:topology}. In particular, we will show that
	\begin{equation}\label{eq:intro-omega-i-is-equal-to-Omega-i}
		\mathrm{Int}_{\partial D}(\overline{\omega_j})=\mathrm{Int}_{\partial D}(\partial \Omega_j\cap\partial D)\quad\text{for all }j=1,\dots,N,
	\end{equation}
	that $\mathcal{F}_{\partial D}(u)$ has empty interior, and that 
	\begin{equation}\label{eq:F-characterization-union-of-boundaries}
		\mathcal{F}_{\partial D}(u)=\bigcup_{j=1}^N\partial_{\partial D}\omega_j.
	\end{equation}
	We notice that \eqref{eq:intro-omega-i-is-equal-to-Omega-i} already excludes some wild behaving interfaces like the oscillating one pictured on Figure \ref{fig:2} (on the left). 
	
	\begin{figure}[h]
		\centering
		\includegraphics[scale=0.28]{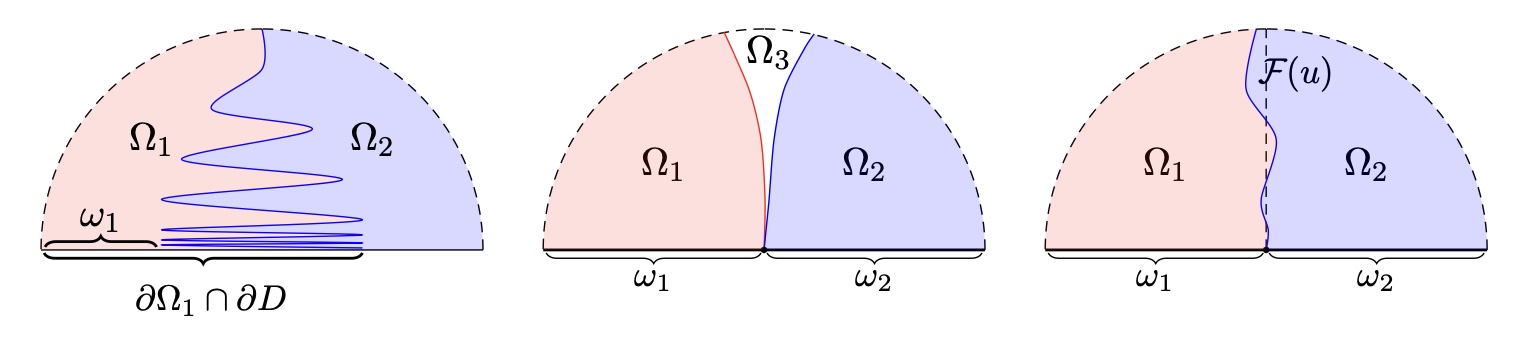}
		\caption{An oscillating free boundary (on the left), a cusp-like singularity (in the middle), and a regular free interface (on the right). We show that only the interface on the right can arise from an optimal partition.}
		\label{fig:2}
	\end{figure}
	
	We will next prove a regularity result for $\mathcal F_{\partial D}(u)$. We start by defining the \emph{regular} and \emph{singular} parts of $\mathcal{F}_{\partial D}(u)$ as follows. 
	
	\begin{definition}\label{def:reg_sing}
		If $x_0\in\mathcal{F}_{\partial D}(u)$, then in view of \eqref{eq:F-characterization-union-of-boundaries}, we have that either \textit{2)} or \textit{3)} of \Cref{thm:taylor} occurs: in the former case, we say that that $x_0$ is a \emph{regular point}, and we write $x_0\in\mathcal{R}_{\partial D}(u)$, while in the latter case we say that $x_0$ is a \emph{singular point}, and we write $x_0\in\mathcal{S}_{\partial D}(u)$. In view of \Cref{thm:taylor} the two sets $\mathcal{R}_{\partial D}(u)$ and $\mathcal{S}_{\partial D}(u)$ are disjoint. 
	\end{definition}
	
	\begin{remark}
		We notice that in the above definition whether a point $x_0\in \mathcal F_{\partial D}(u)$ is regular or singular is determined not by the smoothness of the sets $\omega_j\subset \partial D$, but by the behavior of the function $u:\overline D\to\R^N$ around $x_0$. For instance, on Figure \ref{fig:1}, the interfaces $X=\partial\omega_1\cap\partial\omega_2$ (in the middle) and $Y=\partial\omega_2\cap\partial\omega_3$ (on the right) are both isolated points, but still $X$ is regular, while $Y$ is singular. This situation is similar to the one in the thin-obstacle problem, where the boundary of a smooth set in the hyperplane can be composed of points of frequency $3/2$ (regular points), but it can also be entirely made of (singular) points of frequency $2m-1/2$ with $m\in\N$.
	\end{remark}

	The following is our main result about the free interfaces separating the segregated densities inside the fixed boundary $\partial D$.	
	\begin{theorem}\label{thm:fixed_free_bound}
		Let $u=(u_1,\dots,u_N)\in H^1_{0,N}(D)$ be a minimizer of \eqref{eq:opt_part_variat}, let $\omega_i\sub\partial D$ be as in \Cref{thm:taylor} for $i=1,\dots,N$, and let $\mathcal{F}_{\partial D}(u)$ be as in \eqref{eq:free_bound_fixed}. Then, 
	\[
	\mathcal{F}_{\partial D}(u)=\mathcal{R}_{\partial D}(u)\cup \mathcal{S}_{\partial D}(u),
	\]
	where $\mathcal{R}_{\partial D}(u)$ and $\mathcal{S}_{\partial D}(u)$ are as in \Cref{def:reg_sing} and satisfy the following: $\mathcal{S}_{\partial D}(u)$ is a relatively closed set and $\mathcal{R}_{\partial D}(u)$ is, locally, a $(d-2)$-dimensional submanifold of class $C^1$. Moreover, every regular point $x_0\in \mathcal{R}_{\partial D}(u)$ sees exactly two components of the trace of the optimal partition, that is: for any $x_0\in\mathcal{R}_{\partial D}(u)$ there exist $j,k\in\{1,\dots,N\}$, with $j\neq k$, and $r_0>0$ such that 
	\begin{align*}
		&\omega_j\cap B_r(x_0)\neq \emptyset,\quad\omega_k\cap B_r(x_0)\neq \emptyset, \\
		&\omega_i\cap B_r(x_0)=\emptyset~\text{for all }i\neq j\,,
	\end{align*}
	for all $r\leq r_0$.
\end{theorem}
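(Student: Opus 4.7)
The plan is to combine the boundary Taylor expansion of \Cref{thm:taylor} with the quantitative blow-up analysis announced in the abstract — the boundary Almgren-type monotonicity formula and the epiperimetric inequality (\Cref{thm:epi2}) at two-phase boundary points — in order first to split $\mathcal{F}_{\partial D}(u)$ into its regular and singular parts, and then to upgrade the pointwise asymptotic expansion into genuine $C^1$ regularity of $\mathcal{R}_{\partial D}(u)$.

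\textbf{Partition and closedness of the singular set.} That $\mathcal{F}_{\partial D}(u)=\mathcal{R}_{\partial D}(u)\cup\mathcal{S}_{\partial D}(u)$ with the two sets disjoint is immediate from the characterization \eqref{eq:F-characterization-union-of-boundaries} together with the mutual exclusivity of the three alternatives in \Cref{thm:taylor} (points of type \emph{1)} lie in some $\omega_j$, hence in $\mathrm{Int}_{\partial D}(\partial\Omega_j\cap\partial D)$ and not on $\mathcal{F}_{\partial D}(u)$). To see that $\mathcal{S}_{\partial D}(u)$ is relatively closed, I would invoke upper semicontinuity of the boundary Almgren frequency $x_0\mapsto N(x_0,r)$, a standard consequence of the boundary monotonicity formula once the Dini-type correction from \Cref{ass:domain} has been absorbed. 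Regular points correspond to frequency exactly $2$ (this is the homogeneity of the two-phase profile in case \emph{2)}), while at singular points the estimate $u_i(x)=o(|x-x_0|^2)$ in case \emph{3)} forces the frequency to be strictly larger than $2$; a gap theorem between admissible boundary frequencies produces a universal $\delta>0$ such that any singular point has frequency at least $2+\delta$. Upper semicontinuity then rules out a sequence $x_n\in\mathcal{S}_{\partial D}(u)$ converging to $x_0\in\mathcal{R}_{\partial D}(u)$.

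\textbf{Uniqueness of the blow-up direction and continuity.} Fix $x_0\in\mathcal{R}_{\partial D}(u)$. The boundary epiperimetric inequality at frequency $2$ gives a geometric decay of the Weiss-type boundary-adjusted energy along the scales, and the standard iteration yields a quantitative (power or Dini) rate of convergence of the rescalings $u_{x_0,r}$ to a unique two-phase blow-up of the form given in \emph{2)} of \Cref{thm:taylor}, parametrised by a unit vector $\bm{e}_{x_0}\in\partial B_1\cap\nnu(x_0)^{\perp}$. Both the rate and its constant are uniform as $x_0$ varies in a small neighborhood of any fixed regular point, since the frequency stays close to $2$; an elementary telescopic estimate then turns this uniformity into continuity of the map $x_0\mapsto\bm{e}_{x_0}$ on $\mathcal{R}_{\partial D}(u)$, with an explicit modulus of continuity inherited from $\sigma_0$ and from the moduli of $\nnu$.

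\textbf{Submanifold structure and two-phase property.} The clean-up lemma at two-phase boundary points (the boundary counterpart of the argument giving \Cref{thm:int_free_bound}, now that the blow-up has been identified as the two-phase profile of case \emph{2)}) implies that in a neighborhood of $x_0$ only two components $\Omega_j$ and $\Omega_k$ are present, and that the interior interface $\mathcal{F}(u)$ is locally a $C^1$ hypersurface $\Gamma$ tangent to $\bm{e}_{x_0}^{\perp}$ and meeting $\partial D$ orthogonally in a $C^1$ fashion (the smooth orthogonal attachment being exactly the content announced in the abstract). Intersecting the $C^1$ hypersurface $\Gamma$ with the $C^1$ surface $\partial D$ gives that $\mathcal{F}_{\partial D}(u)=\Gamma\cap\partial D$ is, near $x_0$, a $(d-2)$-dimensional $C^1$ submanifold with tangent space $\bm{e}_{x_0}^{\perp}\cap\nnu(x_0)^{\perp}$, and the two-phase assertion of the theorem follows at once. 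The main obstacle, and where the bulk of the work of the paper has to concentrate, is the quantitative blow-up step: producing a boundary monotonicity formula whose error terms are controlled by $\sigma$ and $\sigma_0$, together with a two-phase epiperimetric inequality at frequency $2$ that is robust enough to be iterated in the presence of those errors. The calibration of $m_d$ and the Dini integrability condition \eqref{eq:dini_hp_2} are precisely what is needed to close this iteration at the minimal $C^{1,\alpha\text{-Dini}}$ regularity of $\partial D$, rather than only at $C^{1,\alpha}$ level.
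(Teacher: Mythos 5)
Your first two steps match the paper's strategy: the decomposition and relative closedness of $\mathcal{S}_{\partial D}(u)$ follow exactly as you say from upper semicontinuity of the boundary frequency together with the gap $\gamma\in\{1,2\}\cup[2+\delta_d,\infty)$ of \Cref{lemma:frequencies}, and the uniqueness of the two-phase blow-up with a quantitative rate is precisely \Cref{prop:blow_up_rate} and \Cref{cor:blowup}, obtained from the epiperimetric inequality \Cref{thm:epi2}; the telescopic estimate giving continuity of $x_0\mapsto\bm{e}_{x_0}$ is \Cref{lemma:osc} (note that you also need the nondegeneracy $H_{x_0}\geq c>0$ of \Cref{prop:nondeg} to normalize the blow-ups before comparing directions).

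The last step, however, inverts the paper's logical order in a way that leaves the hardest point unaddressed. You propose to first establish a ``clean-up lemma at two-phase boundary points'' giving that only two components $\Omega_j,\Omega_k$ are present near $x_0$ and that the interior interface $\mathcal F(u)$ is a $C^1$ hypersurface up to $\partial D$, and then to recover $\mathcal{R}_{\partial D}(u)$ as the transversal intersection $\Gamma\cap\partial D$. But that full clean-up is exactly \Cref{thm:up_to_the_bound} (\Cref{prop:clean_up_2}), which in the paper is proved \emph{using} \Cref{thm:fixed_free_bound}: one first needs to know that the trace $\mathcal{Z}_2^{\partial D}(u)$ is a $C^1$ $(d-2)$-submanifold separating $\omega_j$ from $\omega_k$ in $\partial D$ before one can run, scale by scale, the combination of the boundary flatness lemma and the interior frequency-one clean-up that excludes a third phase creeping in from the interior (the cusp of Figure \ref{fig:2}). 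Treating the clean-up as ``the boundary counterpart of the interior argument'' hides precisely this difficulty, and as written your argument is circular. The paper instead proves the submanifold structure \emph{intrinsically in $\partial D$}: \Cref{lemma:flatness} (built on the frequency-one boundary clean-up \Cref{l:clean-up-boundary-1} applied at trace points near $x_0$) shows that two opposite cones around each $y\in\mathcal{Z}_2^{\partial D}(u)$ are contained in $\omega_j$ and $\omega_k$ respectively; combined with the oscillation estimate for $\bm{e}_y$ this yields a uniform cone condition, hence $\mathcal{Z}_2^{\partial D}(u)$ is locally a Lipschitz graph over $\bm{e}_{x_0}^{\perp}\cap\nnu(x_0)^{\perp}$ whose normal at each point is $\bm{e}_x$, and the continuity of $x\mapsto\bm{e}_x$ upgrades this to $C^1$ (\Cref{prop:reg}). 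You should replace your intersection argument with this intrinsic one, or else supply an independent proof of the full clean-up at frequency-two boundary points that does not presuppose the regularity of the trace.
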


\begin{remark}
	We also obtain the explicit modulus of continuity of the normal vector to $\mathcal{R}_{\partial D}(u)$. Precisely, if $\sigma_0$ is as in \Cref{ass:domain} and
	\[
	\Upsilon(r):=r^2\left(\int_0^r\frac{\sigma_0(t)}{t}\d t\right)^{\frac{1}{2}},
	\]
	which is invertible in its domain, then the modulus of continuity of the normal vector of $\mathcal{R}_{\partial D}(u)$ is
	\[
	\theta(r):=\left(\int_0^{\Upsilon^{-1}(r)}\frac{\sigma_0(t)}{t}\d t\right)^{\frac{1}{2}}.
	\]
	Moreover, thanks to \eqref{eq:sigma_0_power}, one can easily see that 
	\[
	\theta(r)\leq C_\theta\left(\int_0^{r^{\alpha_\theta}}\frac{\sigma_0(t)}{t}\d t\right)^{\frac{1}{2}},
	\]
	for some $C_\theta>0$ and $\alpha_\theta\in (0,1)$ depending only $\sigma_0(R_{\partial D})$ and $m_d$ (see \Cref{ass:domain}). Hence, we can observe that, if $\sigma(r)=r^{\alpha}$, then $\theta(r)\leq C_1r^{\alpha'}$ for some $0<\alpha'<\alpha$ and $C_1>0$, while if $\sigma$ is $\alpha$-Dini with $\alpha>3$, then $\theta(r)\leq C_2|\log r|^{-\alpha'}$ for some $0<\alpha'<\alpha-3$ and $C_2>0$.
\end{remark}

Nevertheless, even when the traces $\omega_j$ are smooth subsets of $\partial D$, some non-smooth situations may still occur, a priori: for instance, there could be a boundary point $x_0\in\mathcal{R}_{\partial D}(u)$ which sees exactly two smooth boundary components in $\partial D$ (in view of \Cref{thm:fixed_free_bound}), while a third domain is approaching it from the interior without touching $\partial D$, like on the middle picture of Figure \ref{fig:2}. We exclude this cuspidal behavior (at regular points) via clean-up results; this is contained in Theorem \ref{thm:up_to_the_bound} below, which concerns the behavior of the internal free boundary $\mathcal{F}(u)$ as it approaches the fixed boundary $\partial D$. 

\begin{theorem}\label{thm:up_to_the_bound}
	Let $u=(u_1,\dots,u_N)\in H^1_{0,N}(D)$ be a minimizer of \eqref{eq:opt_part_variat}, $\mathcal{F}(u)$ be as in \eqref{eq:free_boundary} and $\mathcal{F}_{\partial D}(u)$ be as in \eqref{eq:free_bound_fixed}. Moreover, we let $\mathcal{R}(u)$ and $\mathcal{R}_{\partial D}(u)$ be their regular parts, as in \Cref{thm:int_free_bound} and \Cref{def:reg_sing}, respectively.  Then, every regular point $x_0\in \mathcal{R}_{\partial D}(u)$ sees exactly two components of the optimal partition, that is: for any $x_0\in\mathcal{R}_{\partial D}(u)$ there exists $j,k\in\{1,\dots,N\}$, with $j\neq k$, and $r_0>0$ such that
	\begin{align*}
		&\Omega_j\cap B_r(x_0)\neq \emptyset,\quad\Omega_k\cap B_r(x_0)\neq \emptyset, \\
		&\Omega_i\cap B_r(x_0)=\emptyset~\text{for all }i\neq j
	\end{align*}
	for all $r\leq r_0$.    
	Moreover, $\overline{\mathcal{R}(u)}\cap B_{r_0}(x_0)$ is of class $C^1$ up to $\partial D$ and 
	\begin{equation*}
		\mathcal{R}_{\partial D}(u)\cap B_{r_0}(x_0)=\overline{\mathcal{R}(u)}\cap \partial D\cap B_{r_0}(x_0).
	\end{equation*}
	Furthermore, in this case, $\mathcal{R}(u)$ approaches $\partial D$ in an orthogonal way, in the sense that, if $\bm{e}_x\in\partial B_1$ denotes a unit normal vector for $\mathcal{R}(u)\cap B_{r_0}(x_0)$ at the point $x\in \mathcal{R}(u)\cap B_{r_0}(x_0)$ and $\nnu(x_0)$ denotes the unit outer normal to $\partial D$ at $x_0$, then
	\[
	\lim_{\substack{x\to x_0 \\ x \in D}}\bm{e}_x\cdot\nnu(x_0)=0.
	\]
	
\end{theorem}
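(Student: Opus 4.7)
The plan is to exploit the quantitative boundary blow-up that underpins \Cref{thm:fixed_free_bound} in order to (i) exclude a cuspidal third component of $\Omega_i$ accumulating at $x_0$, and then (ii) read off the orthogonality and the $C^1$-matching of the interior regular interface to $\partial D$. Since $x_0\in\mathcal{R}_{\partial D}(u)$, case \textit{2)} of \Cref{thm:taylor} occurs with indices $j\neq k$, unit tangent $\bm{e}_{x_0}\perp\nnu(x_0)$ and amplitude $a_{x_0,2}>0$. The Almgren-type monotonicity formula at boundary points and the epiperimetric inequality at low-frequency points (announced in the abstract and already used to prove \Cref{thm:fixed_free_bound}) upgrade this pointwise Taylor expansion to a quantitative rate of convergence of the rescalings $u_{x_0,r}(y):=u(x_0+ry)/r^2$ to the explicit two-plane profile $U_{x_0}$ supported only on components $j,k$. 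In particular, with the rate controlled by the modulus $\theta$ from the remark after \Cref{thm:fixed_free_bound}, there is a function $\eta(r)\downarrow 0$ as $r\downarrow 0$ such that $\|u_i\|_{L^\infty(B_r(x_0))}\le \eta(r)\,r^2$ for every $i\notin\{j,k\}$, and the interior free boundary $\mathcal{R}(u)\cap B_r(x_0)$ lies in an $\eta(r)\,r$-Hausdorff neighbourhood of the half-hyperplane $x_0+\{y\cdot\bm{e}_{x_0}=0,\ -y\cdot\nnu(x_0)>0\}$.

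For the clean-up, I argue by contradiction: assume some $\Omega_i$ with $i\neq j,k$ meets $B_{r_n}(x_0)$ along a sequence $r_n\downarrow 0$. Combining the quantitative decay $\|u_i\|_{L^\infty(B_{2r}(x_0))}\le\eta(2r)\,(2r)^2$ with the Hopf-type non-degeneracy of the positive first Dirichlet eigenfunction $u_i$ on the connected set $\Omega_i$, namely $u_i(y)\ge c_i\,\mathrm{dist}(y,\partial\Omega_i)$ for $y\in\Omega_i\cap B_{r_0}(x_0)$, one infers $\mathrm{dist}(y,\partial\Omega_i)\lesssim\eta(r)\,r^2$ on $\Omega_i\cap B_r(x_0)$. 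By \Cref{thm:fixed_free_bound} we have $\omega_i=\emptyset$ in a neighbourhood of $x_0$, so this piece of $\Omega_i$ is detached from $\partial D$ and trapped in a slab whose width shrinks strictly faster than $r$; a Faber--Krahn/eigenvalue monotonicity argument on such shrinking slabs forces $\lambda_1(\Omega_i)\to\infty$, a contradiction with $\lambda_1(\Omega_i)$ being a finite, fixed number. Equivalently, the above display gives infinite-order vanishing of $u_i$ at $x_0$ along the relevant sequences, so the boundary strong unique continuation (another of the main tools developed in this paper) forces $u_i\equiv 0$ near $x_0$, again contradicting $\Omega_i\cap B_{r_n}(x_0)\neq\emptyset$.

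Having excluded every third component, \Cref{thm:int_free_bound} gives that $\mathcal{R}(u)\cap B_{r_0}(x_0)$ is a $C^{2,\alpha}$ hypersurface separating $\Omega_j$ from $\Omega_k$, with unit normal $\bm{e}_x$. For any $x_n\in\mathcal{R}(u)\cap B_{r_0}(x_0)$ with $x_n\to x_0$, the uniqueness of the boundary blow-up at $x_0$ (a consequence of the quantitative rate above) together with the Hausdorff convergence of $\mathcal{R}(u)\cap B_r(x_0)$ to the half-hyperplane with normal $\bm{e}_{x_0}$ forces $\bm{e}_{x_n}\to\bm{e}_{x_0}$ up to sign; since $\bm{e}_{x_0}\cdot\nnu(x_0)=0$, the orthogonality $\lim_{x\to x_0}\bm{e}_x\cdot\nnu(x_0)=0$ is immediate. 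The same argument shows that $x\mapsto\bm{e}_x$ extends continuously to $\overline{\mathcal{R}(u)}\cap\partial D\cap B_{r_0}(x_0)$, which, together with the interior $C^1$-regularity with modulus $\theta$, promotes the interior hypersurface to a $C^1$-manifold up to $\partial D$; the identity $\mathcal{R}_{\partial D}(u)\cap B_{r_0}(x_0)=\overline{\mathcal{R}(u)}\cap\partial D\cap B_{r_0}(x_0)$ then follows from $\mathcal{F}(u)=\mathcal{Z}(u)$ (\Cref{thm:int_free_bound}), the definition \eqref{eq:free_bound_fixed} and the fact that by \Cref{thm:fixed_free_bound} regular boundary points are precisely those where exactly two traces meet. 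The main obstacle is the clean-up: the raw Taylor expansion yields only $u_i(x)=o(|x-x_0|^2)$, and upgrading this to the quantitative rate $\eta(r)\,r^2$ strong enough to contradict the interior non-degeneracy (or, alternatively, to activate boundary SUCP) is precisely where the full strength of the monotonicity formula and the epiperimetric inequality at low-frequency points is required.
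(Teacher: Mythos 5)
Your high-level architecture (quantitative boundary blow-up rate from the Almgren monotonicity formula plus the epiperimetric inequality, then a clean-up of the third components, then orthogonality and $C^1$-matching) is the same as the paper's, but the step you yourself single out as the main obstacle --- the clean-up --- is exactly where your argument has genuine gaps, and none of the three mechanisms you propose for it works as stated. First, the uniform Hopf bound $u_i(y)\ge c_i\,\dist(y,\partial\Omega_i)$ on $\Omega_i\cap B_{r_0}(x_0)$ is not available: it is essentially equivalent to the non-degeneracy you are trying to prove, and it fails a priori precisely in the cuspidal scenario to be excluded (an eigenfunction on a thin tentacle decays much faster than the distance to $\partial\Omega_i$ deep inside the tentacle). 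Second, the Faber--Krahn/monotonicity argument does not apply, because $\lambda_1(\Omega_i)$ is a global quantity: the piece of $\Omega_i$ trapped in the shrinking slab is connected to the bulk of $\Omega_i$ outside $B_{r_0}(x_0)$, and domain monotonicity only gives $\lambda_1(\Omega_i\cap B_r)\ge\lambda_1(\Omega_i)$, which yields no contradiction. Third, strong unique continuation cannot be applied to a single component: $u_i$ solves an equation only inside $\Omega_i$, the frequency function of the paper is attached to the full vector $u$ (whose vanishing order at $x_0$ equals $2$ here), and a single component supported on a cusp can vanish to infinite order at $x_0$ without vanishing identically nearby --- again, this is the very configuration to be ruled out. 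Two smaller points: the bound $\|u_i\|_{L^\infty(B_r(x_0))}\le\eta(r)\,r^2$ does not follow from the $L^2$ rate by the interpolation of \Cref{lemma:Linfty_blowup}, since the Lipschitz constant of $u^{r,x_0,2}$ scales like $1/r$; and Hausdorff convergence of $\mathcal{R}(u)\cap B_r(x_0)$ to a half-hyperplane does not by itself give convergence of the normals $\bm{e}_x$ --- for that one needs the quantitative interior statement of \Cref{lemma:clean_up_interior}.

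What the paper actually does in \Cref{prop:clean_up_2} is a multi-scale covering argument that avoids any pointwise non-degeneracy of the third components. Fix $z\in\mathcal{Z}_2^{\partial D}(u)$ near $x_0$ and a scale $r$: \Cref{lemma:flatness} (built on the boundary clean-up at frequency-$1$ points, \Cref{l:clean-up-boundary-1}, applied at the projections onto $\partial D$) shows that the thin collar $\{\dist(\cdot,\partial D)<r\rho_{\textup{flat}}\}$ intersected with $\{|(x-z)\cdot\bm{e}_z|>r/4\}$ is filled by $\Omega_j\cup\Omega_k$; the quantitative interior clean-up at frequency-$1$ points (\Cref{lemma:clean_up_interior}) fills the region $\{\dist(\cdot,\partial D)>r\rho_{\textup{flat}}\}$ over the boundary interface; and the rate of convergence of the $2$-homogeneous rescalings is then used to shrink the remaining slab $\{|(x-z)\cdot\bm{e}_z|< r/2\}\cap\{\dist(\cdot,\partial D)<r\rho_{\textup{flat}}\}$ to nothing. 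Since this holds for every $r$ and every such $z$, all components other than $\Omega_j,\Omega_k$ miss a full ball around $x_0$, and the $C^1$-regularity up to $\partial D$ and the orthogonality then follow from \Cref{lemma:clean_up_interior} and \Cref{lemma:osc} as you indicate. If you wish to keep your contradiction scheme, the Hopf/Faber--Krahn/SUCP step must be replaced by this kind of scale-by-scale application of the frequency-$1$ clean-up lemmas.
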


Finally, as a consequence of our analysis, we obtain a complete description of the free interfaces in dimension two.

\begin{theorem}\label{thm:up_to_the_bound_2D}
	Let $d=2$ and let $u=(u_1,\dots,u_N)\in H^1_{0,N}(D)$ be a minimizer of \eqref{eq:opt_part_variat}, $\mathcal{F}(u)$ be as in \eqref{eq:free_boundary} and $\mathcal{F}_{\partial D}(u)$ be as in \eqref{eq:free_bound_fixed}. Then, the set $\mathcal{F}_{\partial D}(u)$ is finite and the free interface $\mathcal{F}(u)\cup\mathcal{F}_{\partial D}(u)$ is composed of finite number of $C^1$ arcs meeting at multiple points, at which they form equal angles (as on Figure \ref{fig:1}).
\end{theorem}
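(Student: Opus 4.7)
The plan is to combine the interior free-interface regularity of \Cref{thm:int_free_bound} with the up-to-the-boundary theorems just obtained (\Cref{thm:fixed_free_bound} and \Cref{thm:up_to_the_bound}), and to specialize them to $d=2$, where every stratum has dimension at most $1$ inside $D$ and at most $0$ on $\partial D$. The main work will be to show that $\mathcal{S}(u)\cup \mathcal{F}_{\partial D}(u)$ is finite; after that, the $C^1$-arc structure with equal angles at multiple points will follow by gluing the interior structure to the boundary at regular points and by a blow-up analysis at multiple points.

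The finiteness step goes as follows. In the interior, $\mathcal{S}(u)$ has Hausdorff dimension zero by \Cref{thm:int_free_bound}, and the classical two-dimensional classification of homogeneous blow-ups at interior multiple points yields admissible singular frequencies $k/2$ with $k\geq 3$ and rigid $k$-fold symmetric profiles of the form $c\,r^{k/2}|\sin(k\theta/2)|$ distributed on $k$ equal sectors; the interior Almgren monotonicity and strong unique continuation then preclude accumulation. On the boundary, \Cref{thm:fixed_free_bound} already provides that $\mathcal{R}_{\partial D}(u)$ is a $0$-dimensional $C^1$-submanifold of $\partial D$ and hence discrete. For $\mathcal{S}_{\partial D}(u)$ I would invoke the boundary Almgren monotonicity formula, the epiperimetric inequality at low-frequency points, and the boundary strong unique continuation developed earlier in the paper: these yield an explicit convergence rate for boundary blow-ups and a spectral gap between the regular boundary frequency (corresponding to case~(2) of \Cref{thm:taylor}) and the lowest admissible singular boundary frequency, which prevents accumulation of singular boundary points. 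Compactness of $\overline{D}$ then promotes local finiteness to outright finiteness.

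Once the singular set is known to be finite, \Cref{thm:int_free_bound} decomposes $\mathcal{R}(u)$ into at most countably many disjoint $C^{2,\alpha}$ arcs whose two endpoints must lie in $\mathcal{S}(u)\cup \mathcal{F}_{\partial D}(u)$, so the collection is finite. At a regular boundary endpoint $x_0\in \mathcal{R}_{\partial D}(u)$, \Cref{thm:up_to_the_bound} says that exactly one arc of $\mathcal{R}(u)$ reaches $x_0$, that $\overline{\mathcal{R}(u)}$ is $C^1$ up to $\partial D$ there, and that it meets $\partial D$ orthogonally, so the gluing produces a $C^1$ arc closing up on $\partial D$. At an interior multiple point, the $k$-fold symmetric blow-up together with the $C^{2,\alpha}$ regularity of each adjacent arc pins the tangent directions and forces equal angles $2\pi/k$. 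At a singular boundary point (case~(3) of \Cref{thm:taylor}), the boundary blow-up is a symmetric profile on a half-disk with rays orthogonal to $\partial D$, and the orthogonal-approach statement of \Cref{thm:up_to_the_bound}, applied along each adjacent arc, yields equal angles at $x_0$.

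The main obstacle is the discreteness of $\mathcal{S}_{\partial D}(u)$: a dimensional argument alone does not exclude a Cantor-type singular set on $\partial D$ in dimension two, so one really needs the quantitative boundary blow-up analysis. This is precisely where the boundary Almgren formula, the epiperimetric inequality and the boundary strong unique continuation developed in the paper become decisive; once that discreteness is established, the finite $C^1$-arc structure with equal angles at multiple points follows from the bookkeeping sketched above.
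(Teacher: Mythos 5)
Your overall reduction (finiteness of the singular set plus a local description at each of its points) has the right shape, but two steps are genuinely broken. First, the discreteness of $\mathcal{S}_{\partial D}(u)$: the spectral gap between the frequencies $2$ and $2+\delta_d$ only prevents singular points from accumulating at \emph{regular} points; by upper semicontinuity of the frequency, a limit of singular points is again singular, so nothing you have written rules out a Cantor-type set of singular boundary points. You acknowledge this and defer to ``the quantitative boundary blow-up analysis,'' but the epiperimetric inequality (\Cref{thm:epi2}) and hence the convergence rate of \Cref{cor:blowup} are only available for $\gamma\in[1,2]$; there is no rate, and a priori no uniqueness of blow-up, at boundary points of frequency greater than $2$, so the tool you invoke does not exist at exactly the points where you need it. The paper closes this gap by an argument special to $d=2$: at any $x_0\in\mathcal F_{\partial D}(u)$, every blow-up is a segregated vector of $\gamma$-homogeneous functions, each harmonic and positive on a cone of the half-plane and vanishing on its boundary, which forces every such cone to have opening $\pi/\gamma$ and hence $\gamma$ to be an integer, with the half-plane split into exactly $\gamma$ equal sectors; uniqueness of the blow-up is then obtained by a topological continuity argument (two distinct blow-ups would give two sequences of radii along which different components are positive in a fixed direction, hence a third sequence along which $u$ vanishes there, contradicting that the nodal set of any blow-up has zero measure). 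Isolatedness of $\mathcal F_{\partial D}(u)$ then follows by feeding this rigid sector picture into the clean-up lemmas (\Cref{l:clean-up-boundary-1}, \Cref{lemma:clean_up_interior}).

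Second, your description of the local geometry at a singular boundary point is wrong. At $x_0$ of frequency $\gamma\ge 3$ the blow-up consists of $\gamma$ sectors of opening $\pi/\gamma$ in the half-plane, so the $\gamma-1$ interior rays of $\mathcal F(u)$ emanating from $x_0$ make angles $k\pi/\gamma$, $k=1,\dots,\gamma-1$, with $\partial D$; they are \emph{not} all orthogonal to $\partial D$. Moreover, the orthogonal-approach statement of \Cref{thm:up_to_the_bound} is proved only at points of $\mathcal R_{\partial D}(u)$ (frequency $2$), so it cannot be ``applied along each adjacent arc'' at a singular point --- if it could, two distinct arcs would both meet $\partial D$ orthogonally at $x_0$ and the angle between them would be zero. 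The equal angles at a boundary multiple point are $\pi/\gamma$ (with the two boundary arcs of $\partial D$ counted among the rays), and they come from the sector classification above, not from \Cref{thm:up_to_the_bound}.
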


\begin{remark}
	A description of the interior interface $\mathcal F_D(u)\subset D$ was obtained in \cite{CTV2003}. We complete the proof of Theorem \ref{thm:up_to_the_bound_2D} in Section \ref{sub:proof-of-theorem-2D}, by showing that the points of $\mathcal F_{\partial D}(u)$ are isolated and by proving a regularity result for $\mathcal F_D(u)$ around these points. 
\end{remark}

\subsection*{Further remarks and possible applications} The spectral optimal partition problem \eqref{eq:opt_part_variat} 
is just an instance of a large family of problems whose main feature is the segregation of 
a fixed number of densities; we stress that our techniques are general and can be exploited also in the framework of other optimal partition problems. 
Without aiming at giving a thorough list, here we mention some of the main works on the subject.
In \cite{TerraciniTavares2012} the authors investigate regularity of the free interface for more general segregated systems which do not necessarily satisfy a minimality condition, but rather just some extremality conditions as differential inequalities and reflection laws across the free boundary. Many of the interior regularity results we mentioned have been extended to the case of higher Dirichlet eigenvalues in \cite{RTT}. It is also worth mentioning \cite{CPQ} and \cite{STTZ2018}, where the authors analyzed the case of segregation with positive distance between the components. Finally, we remark that solutions to segregated problems (hence, with emerging free boundaries) arise also as singular limits of systems with strong competition \cite{CTV2002,CTV2003,CTV2005indiana}
(see for instance \cite{CL2008,CTV2005advances,SZ2017}).

\section{Sketch of the proof}
The approach we employ in order to prove our main results \Cref{thm:taylor}, \Cref{thm:fixed_free_bound}, \Cref{thm:up_to_the_bound} and \Cref{thm:up_to_the_bound_2D} relies on the variational structure of the problem and is based on a version of the Almgren's monotonicity formula at boundary points. Let us first introduce it at interior points. We point out that what we are now going to describe is, essentially, a classical fact in the case of (scalar) elliptic equations with sufficiently smooth coefficients. For what concerns systems with segregated densities, like the ones this paper deals with, we refer among others to \cite{CL2007} and \cite{TerraciniTavares2012} for the proofs of the results described below.

For any $x_0\in D$, any $r<\dist(x_0,\partial D)$ and any minimizer $u=(u_1,\dots,u_N)\in H^1_{0,N}(D)$ of \eqref{eq:opt_part_variat}, we define the \emph{energy} function
\begin{equation*}
	E(u,r,x_0):=\frac{1}{r^{d-2}}\sum_{i=1}^N\int_{B_r(x_0)}(\abs{\nabla u_i}^2-\lambda_1(\Omega_i)u_i^2)\dx
\end{equation*}
and the \emph{height} function
\begin{equation*}
	H(u,r,x_0):=\frac{1}{r^{d-1}}\sum_{i=1}^N\int_{\partial B_r(x_0)}u_i^2\ds.
\end{equation*}
Now, whenever $H(u,r,x_0)>0$, we consider the \emph{frequency} function
\begin{equation}\label{eq:frequency_int}
	\mathcal{N}(u,r,x_0):=\frac{E(u,r,x_0)}{H(u,r,x_0)}.
\end{equation}
Resembling what happens in the scalar case, the function $\mathcal{N}(u,r,x_0)$, for small values of $r$, provides information on the local behavior of the minimizer $u$ near the point $x_0$. In particular, we have that $H(u,r,x_0)>0$ for all $r<\dist(x_0,\partial D)$ and there exists $C=C(d,D,N)>0$ such that, for any $x_0\in D$, the function
\[
r\mapsto e^{C r}\mathcal{N}(u,r,x_0)
\]
is nondecreasing, for $r<\dist(x_0,\partial D)$. It is well known that this monotonicity property carries many consequences concerning the local behavior of $u$. For instance, one obtains that the zero set
\[
\mathcal{Z}(u)=\{x\in D\colon u(x)=(0,\dots,0)\}
\]
has empty interior, which, together with continuity of minimizers, yields that
\[
\mathcal{F}(u)=\mathcal{Z}(u),
\]
with $\mathcal{F}(u)$ as in \eqref{eq:free_boundary}, and that the number
\begin{equation*}
	\gamma(u,x_0):=\lim_{r\to 0}\mathcal{N}(u,r,x_0)
\end{equation*}
is well defined for any $x_0\in D$. Moreover, one can prove that
\begin{equation*}
	\text{either}\quad\gamma(u,x_0)=1\quad\text{or}\quad\gamma(u,x_0)\geq 1+\delta_d,
\end{equation*}
for some $\delta_d>0$ depending only on the dimension. One can then define
\begin{equation*}
	\mathcal{R}(u)=\{x\in D\colon \gamma(u,x)=1\}\quad\text{and}\quad\mathcal{S}(u)=\{x\in D\colon \gamma(u,x)>1\},
\end{equation*}
with $\mathcal{R}(u)$ and $\mathcal{S}(u)$ being as in \Cref{thm:int_free_bound}, so the next step is to prove local regularity of $\mathcal{R}(u)$. We here try to explain the process in few words. For $x_0\in\mathcal{R}(u)$, one can perform a blow-up analysis of the sequence
\[
\frac{u(rx+x_0)}{\sqrt{H(u,r,x_0)}}\quad\text{as }r\to 0,
\]
and prove that it converges to a $1$-homogeneous function $P^{x_0,1}$ of the form
\[
P_j^{x_0,1}(x)=a_{x_0,1}(x\cdot\bm{e}_{x_0})^+,\quad P_k^{x_0,1}(x)=a_{x_0,1}(x\cdot\bm{e}_{x_0})^-,\quad P_i^{x_0,1}(x)=0~\text{for }i\neq j,k,
\]
for some $j,k\in\{1,\dots,N\}$, $a_{x_0,1}>0$ and $\bm{e}_{x_0}\in\partial B_1$. The final, crucial, point is then to prove that, since in the blow-up limit only two non-zero components are left, then sufficiently close to $x_0$, the minimizer $u$ possesses only two non-zero components (obviously, with the same indices as the blow-up limit). In the present paper, we might refer to this phenomenon as \enquote{clean-up}, see e.g. \cite[Lemma 5]{CL2007} or \cite[Proposition 5.4]{TerraciniTavares2012}. Once it is known that, near $x_0$ we have
\[
u_j,u_k\not\equiv 0\quad\text{and}\quad u_i\equiv 0~\text{for all }i\neq j,k,
\]
we define $u^*:=u_j-u_k$, so that the free boundary coincides with the nodal set of $u^*$. From the extremality conditions (see \Cref{lemma:extermaliti}) we derive that
\[
-\Delta u^*=f,\quad\text{in a neighborhood of }x_0,
\]
with
$$f:=\lambda_1(\Omega_j)u_j-\lambda_1(\Omega_k)u_k\in C^{0,1}.$$ 
Hence, from classical regularity theory one deduces that $u^*\in C^{2,\alpha}$ and, once established that $\nabla u^*\neq 0$ on the nodal set, from the implicit function theorem we can obtail the desired regularity of the free boundary.

Therefore, it seems natural to follow the same roadmap when looking at boundary regularity of the free interface. Indeed, one can trivially extend the definition of frequency function \eqref{eq:frequency_int} when it is centered at boundary points, just by assuming the minimizer $u$ to be extended by zero outside $D$. However, right in the very beginning of the argument, i.e. when computing $\mathcal{N}'$, one faces non-trivial troubles. In particular, when trying to prove the so called Pohozaev identity (which is known to be a key step in the proof of monotonicity of $\mathcal{N}$), one realizes that the minimizer $u$ lacks of sufficient regularity for the usual proofs to work, being not more than Lipschitz continuous. Let us be more precise on that. A possible proof of the Pohozaev identity (in case of segregated systems) entails, as a first step, performing inner variations of the type $x+t\xi(x)$ with a smooth, compactly supported vector field $\xi\colon D\to\R^d$, that is considering
\begin{equation*}\label{eq:u_t}
	u^t(x):=u(x+t\xi(x))
\end{equation*}
as a competitor for the criticality of $u$ with respect to the functional $J_N$ and computing the first variation as $t\to 0$. Then, one obtains the Pohozaev identity by letting $\xi$ approximate $(x-x_0)\,\chi_{B_r(x_0)}(x)$, if $x_0\in D$, or $(x-x_0)\,\chi_{D\cap B_r(x_0)}(x)$, if $x_0\in \partial D$. Unfortunately, passing to the limit inside the integrals when doing this approximation at boundary points requires estimating terms involving the gradient of $u$ at the boundary of $\partial D$, see e.g. \cite{Tolsa}; in our case this is a difficult task since $u$ is not a solution to a PDE inside $D$. 
Hence, we overcome this issue avoiding the derivation of gradient estimates and following a simple geometric intuition. Indeed, we observe that in order $u^t$ to be an admissible competitor, $\xi$ being compactly supported in $D$ is not a necessary condition, while the only requirement is that $u^t(x)=0$ whenever $x\in \R^d\setminus D$ and this is equivalent to ask that
\begin{equation}\label{eq:int_ext_condition}
	x+t\xi(x)\in\R^d\setminus D\quad\text{for any }x\in \R^d\setminus D
\end{equation}
for $t$ sufficiently small (not depending on $x$). This condition is fulfilled, for instance, when
\begin{equation}\label{eq:int_starshaped_xi}
	\frac{\xi(x)}{|\xi(x)|}\cdot \nnu(x)>0\quad\text{for }x\in\partial D\cap \overline{B_r(x_0)}~\text{and }t> 0~\text{sufficiently small},
\end{equation}
where $x_0\in\partial D$ is the boundary point we are centering at and $\nnu$ is the outer unit normal of $\partial D$. Now, if \eqref{eq:int_starshaped_xi} (hence \eqref{eq:int_ext_condition}) is satisfied, being $u$ a minimizer, we have that
\begin{equation}\label{eq:min_condition}
	\frac{\d}{\d t}J_N(u^t)\restr{t=0^+}\geq 0.
\end{equation}
By explicitly computing this, we obtain an integral inequality depending on $\xi$, which is essentially, a one-sided inner variation formula. We notice that condition \eqref{eq:int_starshaped_xi} can be relaxed to 
\begin{equation*}
	\xi(x)\cdot \nnu(x)\geq 0\quad\text{for }x\in\partial D\cap B_r(x_0)~\text{and }t> 0~\text{sufficiently small},
\end{equation*}
see \Cref{prop:inner_variations}. At this point, since the last step for the Pohozaev inequality is to let $\xi$ approximate $(x-x_0)\,\chi_{B_r(x_0)}(x)$ (now no integrals over $\partial D$ are involved), it is needed that
\begin{equation}\label{eq:int_sharshaped}
	(x-x_0)\cdot\nnu(x)\geq 0\quad\text{for }x\in\partial D\cap B_r(x_0),
\end{equation}
which is a restrictive geometric condition on $\partial D$ (starshapedness of $D$ with respect to $x_0$). Luckily, the issue of avoiding such geometric assumption has already been faced and smartly overcome in the literature. In particular, we adopt a successful idea introduced in the breakthrough \cite{Adolfsson1997}, which has been exploited in several works since then (see e.g. \cite{KZ}). Slightly more in detail, we introduce a diffeomorphism which locally perturbs $\partial D$ and produces the following effects:
\begin{itemize}
	\item[-] the coefficients of the differential operator driving the problem change. In particular, if one starts with the Laplacian, ends up with a second-order elliptic differential operator in divergence form, with variable coefficients;
	\item[-] a suitable geometric condition analogous to \eqref{eq:int_sharshaped} holds true.
\end{itemize}
Now, we can adjust the argument outlined above in order to obtain a Pohozaev-type \emph{inequality} for the perturbed functional, which is still sufficient for the purpose of proving almost-monotonicity of (the analogous of) the frequency function. We point out that, in order to obtain estimates from below for the derivative of the frequency in terms of integrable terms, the lightest assumption on $\partial D$ is to be $1$-Dini. We refer to \Cref{sec:equivalent} and for the details, see also \cite[Section 2]{Adolfsson1997} and \cite[Section 4]{KZ}. Summing up, in a nutshell, for any point $x_0\in\partial D$ there exists a $C^1$ diffeomorphism $\Psi_{x_0}\colon \R^d\to\R^d$ (defined in \Cref{sec:equivalent}, see \eqref{def:Psi}) such that, if $u\in H^1_{0,N}(D)$ is a minimizer of \eqref{eq:opt_part_variat}, then the quantity
\[
r\mapsto \mathcal{N}(u,r,x_0)=\frac{E(u,r,x_0)}{H(u,r,x_0)}
\]
is almost monotone near $0$, where $E$ and $H$ are defined as
\[
E(u,r,x_0):=\frac{1}{r^{d-2}}\sum_{i=1}^N\int_{\Psi_{x_0}(B_r)\cap D}(\abs{\nabla u_i}^2-\lambda_1(\Omega_i)u_i^2)\dx
\]
and
\[
H(u,r,x_0)=\frac{1}{r^{d-1}}\sum_{i=1}^N\int_{\partial\Psi_{x_0}(B_r)\cap D}u_i^2\ds.
\]
\begin{remark}
	Finding a way of justifying the validity of a Pohozaev-type inequality is a common issue when dealing with variational problems whose solutions lacks of boundary regularity, and we believe the argument we outlined above in broad terms actually applies to many of them. In fact, only two crucial conditions need to be fulfilled:
	\begin{itemize}
		\item[-] $C^1$ regularity of the boundary;
		\item[-] minimality of the solution.
	\end{itemize}
	On one hand, we observe that the former can be relaxed in case the domain already satisfies the starshapedness condition
	\begin{equation}\label{eq:rmk_star}
		(x-x_0)\cdot\nnu(x_0)\geq 0
	\end{equation}
	in a neighborhood of $x_0$. Indeed, $C^1$ regularity of $\partial D$ is needed for gaining $C^1$ regularity of the diffeomorphism introduced in \cite{Adolfsson1997}, which allows to recover the starshapedness condition. On the other hand,
	the latter condition could be recovered by solutions which are critical points of coercive functionals (hence, local minimizers). Whether critical points of \eqref{eq:opt_part_variat} are local minimizers is an open question. Finally, a more technical observation. In sufficiently regular settings, for which a true Pohozaev identity holds, a terms of the type
	\begin{equation}\label{eq:rmk_star1}
		\int_{\partial D\cap B_r(x_0)}|\partial_{\nnu} u|^2 (x-x_0)\cdot\nnu\ds
	\end{equation}
	appears and the geometric condition \eqref{eq:rmk_star} is required (or gained) in order to get rid of it when estimating $\mathcal{N}'$ from below. On the other hand, in a non-regular framework like the one in the present paper, the term \eqref{eq:rmk_star1} does not explicitly appear in the computations, and the geometric condition \eqref{eq:rmk_star} is somehow hidden in the \enquote{variational structure} of the problem and essentially expresses into \eqref{eq:min_condition}.
\end{remark}

Like in the interior case, the value of the almost-monotone quantity $\mathcal{N}(u,r,x_0)$ for small $r>0$ captures some geometric information on the minimizer $u$ near the boundary point $x_0$ and this suggests us to classify the points of $\partial D$ in terms of
\[
\gamma(u,x_0):=\lim_{r\to 0}\mathcal{N}(u,r,x_0).
\]
In the following, we may refer to $\gamma(u,x_0)$ as the \emph{frequency} of $u$ at the point $x_0$. First of all\footnote{For sake of simplicity, let us assume here that $\nnu(x_0)=-\bm{e}_d$.}, through a blow-up procedure, in view of the almost-monotonicity of the modified frequency function, we are able to show that the normalized sequence (which we might call \emph{Almgren rescaling})
\[
r\mapsto \frac{u(rx+x_0)}{\sqrt{H(u,r,x_0)}}
\]
converges, up to subsequences, to a nontrivial limit profile $(U_1^{x_0},\dots,U_N^{x_0})\in( H^1_{\textup{loc}}(\overline{\R^d_+}))^N$, which is $\gamma(u,x_0)$-homogeneous, satisfies $U_i^{x_0}U_j^{x_0}\equiv 0$ in $\R^d_+$ for all $i\neq j$ and is a local minimizer for the Dirichlet energy, thus implying that
\[
\begin{bvp}
	U_i^{x_0}&\geq 0 &&\text{in }\R^d_+, \\
	U_i^{x_0}&=0 &&\text{on }\partial \R^d_+, \\
	-\Delta U_i^{x_0}&=0,&&\text{in }\{x\in\R^d_+\colon U_i^{x_0}(x)>0\}, 
\end{bvp}
\]	
for all $i=1,\dots,N$. Thanks to these properties, we are now able to prove that one of the following happens
\[
\gamma(u,x_0)=1\quad\text{or}\quad \gamma(u,x_0)=2\quad\text{or}\quad \gamma(u,x_0)\geq 2+\delta_d,
\]
for some $\delta_d>0$ depending only on the dimension, and for any minimizer $u$ and any $x_0\in\partial D$, see \Cref{lemma:frequencies}. Hence, we are naturally led to classify the boundary points in term of their frequency, that is
\begin{equation*}
	\mathcal{Z}_\gamma^{\partial D}(u):=\{x\in\partial D\colon \gamma(u,x_0)=\gamma\}.
\end{equation*}
Moreover, since the set $\mathcal{Z}_1^{\partial D}(u)$ is expected to contain the \enquote{traces} of the positivity sets $\Omega_i$s, we define the regular part of the free boundary as the set of boundary points of minimal frequency (higher than $1$), i.e.
\begin{equation}\label{eq:def_regular}
	\mathcal{R}_{\partial D}(u):=\mathcal{Z}_2^{\partial D}(u)
\end{equation}
and the singular part as its complement
\begin{equation}\label{eq:def_singular}
	\mathcal{S}_{\partial D}(u):=\bigcup_{\gamma\geq 2+\delta_d}\mathcal{Z}_\gamma^{\partial D}(u).
\end{equation}
In view of upper semicontinuity of the function
\begin{align*}
	\partial D&\to \R \\
	x_0&\mapsto \gamma(u,x_0),
\end{align*}
as a first consequence, we obtain that
\begin{align*}
	&\mathcal{Z}_1^{\partial D}(u)\quad\text{is open in }\partial D, \\
	&\mathcal{R}_{\partial D}(u)\quad\text{is open in }\partial D\setminus \mathcal{Z}_1^{\partial D}(u), \\
	&\mathcal{S}_{\partial D}(u)\quad\text{is closed in }\partial D\setminus \mathcal{Z}_1^{\partial D}(u).
\end{align*}
Since, in wide terms, the value $\gamma(u,x_0)$ coincides with the \enquote{vanishing order} of $u$ at the point $x_0$, the next step is to examine the blow-up sequences
\begin{equation}\label{eq:int_blow_up}
	u^{r,x_0}(x):=\frac{u(rx+x_0)}{r^{\gamma(u,x_0)}}.
\end{equation}
As one may expect, it is a hard problem to prove strong convergence at any point; nevertheless, we able to do it at points belonging to $Z_1^{\partial D}(u)$ or $\mathcal{Z}_2^{\partial D}(u)$. In particular, our approach for this part is somehow opposite to the one employed so far in the literature for the problem of optimal partitions (see e.g. \cite{CL2007} or \cite{TerraciniTavares2012}). Indeed, we first prove convergence of blow-up sequences \eqref{eq:int_blow_up} to a homogeneous limit profile, for points in $\partial D\setminus\mathcal{S}_{\partial D}(u)$; more importantly, this convergence comes together with an explicit rate of convergence. As a consequence, we obtain regularity of $\mathcal{R}_{\partial D}(u)$ and clean-up lemmas, which establish a connection between the frequency of a point and the number of non-zero components in a neighborhood.

In order to obtain strong convergence of the blow-up sequence \eqref{eq:int_blow_up} and uniqueness of blow-up limits, we base ourselves on an epiperimetric inequality for the Weiss energy. In fact, the pivotal role of epiperimetric inequalities in the study of free boundary regularity is now well established and this tool has been successfully exploited in numerous situations. The idea was introduced in the pioneering work of Reifenberg \cite{Reif} in the field of minimal surfaces, and then adapted to other variational problems with emerging free boundaries, among which we find the classical obstacle problem (see \cite{weiss-obst,CSVobst}), the thin obstacle problem (see e.g. \cite{FSthin,CSVthin,GPGthin,Geraci-thin}) or the one-phase Bernoulli problem (see e.g. \cite{SVonephase,ESVonephase}). Let us briefly explain the idea in our framework. Let
\[
\widetilde{W}_\gamma(w):=\sum_{i=1}^N\left[\int_{B_1^+}|\nabla w_i|^2\dx-\gamma\int_{S_1^+}w_i^2\ds\right]
\]
be the normalized Weiss energy with homogeneity $\gamma=\gamma(u,x_0)>0$, defined for $w\in H^1_{s,N}(B_1^+)$, where $S_1^+:=\partial B_1\cap \R^d_+$. The epiperimetric inequality states that there exists $\epsilon\in (0,1)$ depending only on $d$ such that for any $\gamma$-homogeneous function $z\in H^1_{s,N}(B_1^+)$ satisfying $z=0$ on $B_1'$, there exists $w\in H^1_{s,N}(B_1^+)$ satisfying $w=z$ on $\partial B_1^+$ and
\begin{equation}\label{eq:int_epi}
	\widetilde{W}_\gamma (w)\leq (1-\epsilon)\widetilde{W}_\gamma(z).
\end{equation}
It is a standard fact that this, together with monotonicity of the Weiss energy (which comes as a consequence of the monotonicity of the Almgren frequency function) and minimality of the solution allows to deduce a Cauchy-type condition on the blow-up sequences, that is
\begin{equation}\label{eq:int_cauchy}
	\int_{S_1^+}|u^{r,x_0}-u^{s,x_0}|^2\ds\leq h(|r-s|), \quad\text{for $r,s>0$ sufficiently small}
\end{equation}
being $h$ an explicit modulus of continuity. We remark that it is only at this point that \Cref{ass:domain} is entirely needed. Indeed, so far, the $2$-Dini condition on $\sigma$, i.e.
\[
\int_0^{2R_{\partial D}}\frac{1}{r}\int_0^r\frac{\sigma(t)}{t}\d t<\infty
\]
would have been sufficient, while, in order to obtain the rate of decay of the Weiss functional and then \eqref{eq:int_cauchy}, we need \eqref{eq:dini_hp_1} and \eqref{eq:dini_hp_2}. Hence, the core consists in the proof of \eqref{eq:int_epi}, which we are able to obtain when $\gamma=1$ or $\gamma=2$. Our proof of the epiperimetric inequality is based upon building an explicit competitor $w$, which quantitatively lowers the Weiss energy with respect to the homogeneous function $z$, allowing then to obtain an explicit value of $\epsilon$ (see \Cref{sec:epi}). We also point out that, in order to apply the epiperimetric inequality, we once more need the starshapedness condition gained thanks to the local change of coordinates earlier introduced.

In order to conclude the proof of our main results, we still miss a few steps, which we now sum up. In essence, the key idea behind these last stages is the following: if one can control the norm of the difference of a blow-up sequence and its limit with an explicit rate, then the non-zero components of the blow-up sequence coincide with the non-zero components of its limit. This is what we mean by \emph{clean-up}. Hence, we proceed as follows:
\begin{itemize}
	\item[1.] clean-up at points of frequency $\gamma=1$ at the boundary $\partial D$, see \Cref{l:clean-up-boundary-1};
	\item[2.] regularity of $\mathcal{Z}_2^{\partial D}(u)=\mathcal{R}_{\partial D}(u)$ and boundary clean-up at points of frequency $\gamma=2$, see \Cref{prop:reg};
	\item[3.] quantitative clean-up at interior points of frequency $\gamma=1$, see \Cref{lemma:clean_up_interior};
	\item[4.] full clean-up at points of frequency $\gamma=2$ at $\partial D$, see \Cref{prop:clean_up_2}.
\end{itemize}
In particular, we use Step 1 to define and characterize the \enquote{traces} $\omega_j\sub\partial D$ of the domains $\Omega_j\sub D$ (see \Cref{prop:topology}).  
The combination of step 1 and the blow-up analysis is the content of \Cref{thm:taylor}. Then, through an essentially standard (once an explicit rate of convergence is available) procedure we are able to prove step 2. In order to prove step 3, we first prove an epiperimetric inequality for interior free boundary points of frequency $\gamma=1$, then derive from it an explicit rate of convergence for blow-up sequences; finally, we obtain a quantitative version of the interior clean-up (see e.g. \cite{CL2007,TerraciniTavares2012}). Now, step 4 is derived by applying the previous steps at any scale. We point out that step 4 
rules out a peculiar phenomenon that may a priori manifest at boundary points. Indeed, in step 4 we exclude that (near a point of frequency $\gamma=2$) there is a regular $(d-2)$-dimensional submanifold of $\partial D$, which locally divides $\partial D$ into two parts, say $\omega_1$ and $\omega_2$, but a third positivity set, say $\Omega_3$ is approaching such submanifold from the interior, without touching $\partial D$, see \Cref{fig:2} in the middle.
Finally, combining step 1, step 2, step 3 and step 4 we conclude the proofs of our main theorems.

\subsection{Plan of the paper}
In \Cref{sec:prelim} we collect some known facts about minimizers of $J_N$ and some preliminary results concerning regularity of $\partial D$. In \Cref{sec:equivalent} we introduce a diffeomorphism which allows us to prove a one-sided inner variation formula. We then exploit this, in \Cref{sec:almgren}, in order to prove an Almgren monotonicity formula. \Cref{sec:epi} is essentially self-sufficient, and contains the proof of the epiperimetric inequalities. In \Cref{sec:blowup} we perform a blow-up analysis at points for which the epiperimetric inequality is available and we conclude with \Cref{sec:regularity}, where we prove regularity of the free boundary and the clean-up results.
\subsection{Notation}
We collect here some notation we adopt throughout the whole paper.
\begin{itemize}
	\item $\R^d_+:=\{x=(x',x_d)\in\R^d\colon x_d>0\}$ is the upper half-space;
	\item for $x\in\R^d$ and $r>0$, we denote by $B_r(x):=\{y\in\R^d\colon |x-y|<r\}$ the ball of center $x$ and radius $r$ and
	\[
	B_r:=B_r(0),\quad B_r':=B_r\cap \partial \R^d_+,\quad B_r^+:=B_r\cap\R^d_+.
	\]
	Moreover, we denote $S_r^+:=\partial B_r\cap \R^d_+$;
	\item for sake of simplicity, we denote $\lambda_i:=\lambda_1(\Omega_i)$ for all $i=1,\dots,N$.
	
\end{itemize}

\section{Preliminaries}\label{sec:prelim}

In the present section, we collect some preliminary results, both concerning (basic) properties of the minimizer $u$, and the local geometry of the boundary of the domain.

\subsection{Properties of minimizers}

We start by recalling some known facts regarding the equations satisfied by the minimizer. We recall the following from \cite[Theorem 1.1]{CTVFucick2005}.	

\begin{lemma}[Extremality conditions]\label{lemma:extermaliti}
	Let $u\in H^1_{0,N}(D)$ be a minimizer of \eqref{eq:opt_part_variat}. Then, the following hold true in a weak sense
	\begin{gather*}
		-\Delta u_i\leq \lambda_i u_i,\quad\text{in }D, \\
		-\Delta\left(u_i-\sum_{ j\neq i} u_j\right)\geq \lambda_iu_i-\sum_{j\neq i}\lambda_j u_j,\quad\text{in }D.
	\end{gather*}
	In particular,
	\[
	-\Delta u_i=\lambda_i u_i,\quad\text{in }\Omega_i
	\]
	in a classical sense.
\end{lemma}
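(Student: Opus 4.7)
Both distributional inequalities are Euler--Lagrange conditions coming from inserting into the ratio functional $J_N$ specific one-sided perturbations that preserve the segregation constraint $u_iu_j=0$; the pointwise identity in $\Omega_i$ then follows by combining the two inequalities localized away from the free interface. For the first inequality, I fix $\varphi\in C_c^\infty(D)$ with $\varphi\geq 0$ and $t>0$ small, and replace $u_i$ by $(u_i-t\varphi)^+$, leaving the other components unchanged. Since $\supp((u_i-t\varphi)^+)\subseteq\{u_i>0\}=\Omega_i$, segregation is preserved, only the $i$-th Rayleigh quotient changes, and $J_N(\tilde u^t)\geq J_N(u)$ holds. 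Computing the derivative of this quotient at $t=0^+$ on $\{u_i>t\varphi\}$ (using the Lipschitz continuity of $u_i$ from \Cref{thm:int_eigen} and coarea to handle the shrinking integration domain) yields $\int\nabla u_i\cdot\nabla\varphi\leq \lambda_i\int u_i\,\varphi$ for every $\varphi\geq 0$, which is the weak form of $-\Delta u_i\leq \lambda_iu_i$ in $D$.

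\textbf{Main step: the second inequality.} Set $u^{\ast}:=u_i-\sum_{j\neq i}u_j$. For $\varphi\in C_c^\infty(D)$ with $\varphi\geq 0$ and $t>0$ small, I would use the \emph{mass-transfer} perturbation
\[
\tilde u_i^t:=(u^{\ast}+t\varphi)^+,\qquad \tilde u_j^t:=(u_j-t\varphi)^+\quad\text{for }j\neq i.
\]
A case analysis on $\Omega_i$, on each $\Omega_k$ ($k\neq i$), and on the free interface $\mathcal{Z}(u)$ shows that segregation is preserved: on $\Omega_k$, the supports of $\tilde u_i^t$ and $\tilde u_k^t$ are the disjoint sets $\{u_k<t\varphi\}$ and $\{u_k>t\varphi\}$, respectively, so $\tilde u^t\in H^1_{0,N}(D)$ is admissible. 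The obstacle I expect to be most delicate is the first-order expansion of $J_N(\tilde u^t)$: on the thin strips $\Omega_k\cap\{0<u_k<t\varphi\}$, which have Lebesgue measure $O(t)$ by coarea, both $\int|\nabla\tilde u_i^t|^2$ and $\int|\nabla\tilde u_k^t|^2$ pick up boundary-type contributions that, at order $t$, coincide up to sign with $\int_{\partial\Omega_k}|\nabla u_k|\,\varphi\,d\mathcal{H}^{d-1}$. The key point is that these opposite-sign contributions cancel exactly when one sums the Rayleigh quotients to form $J_N(\tilde u^t)$, and it is this cancellation that lets the right-hand derivative at $t=0^+$ be well defined and produce genuine two-sided information. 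Once the cancellation is in place, $J_N(\tilde u^t)\geq J_N(u)$ and a routine $t\to 0^+$ expansion give
\[
\int\nabla u^{\ast}\cdot\nabla\varphi\geq \lambda_i\int u_i\,\varphi-\sum_{j\neq i}\lambda_j\int u_j\,\varphi\qquad\text{for all }\varphi\geq 0,
\]
which is the second weak inequality.

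\textbf{Pointwise equation in $\Omega_i$.} If $\varphi\in C_c^\infty(\Omega_i)$ is nonnegative, then $u_j\equiv 0$ on $\supp\varphi$ for every $j\neq i$, and the second inequality reduces to $\int\nabla u_i\cdot\nabla\varphi\geq \lambda_i\int u_i\,\varphi$. Combining with the first inequality, this forces $-\Delta u_i=\lambda_iu_i$ weakly in $\Omega_i$; standard elliptic regularity then upgrades this to a classical identity and, in particular, gives $u_i\in C^\infty(\Omega_i)$, completing the proof.
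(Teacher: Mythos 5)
The paper does not prove this lemma at all: it is quoted verbatim from \cite[Theorem 1.1]{CTVFucick2005}, so there is no internal proof to compare against. Your reconstruction is essentially the standard Conti--Terracini--Verzini argument, and it is correct: the perturbations $(u_i-t\varphi)^+$ and $\big((u^*+t\varphi)^+,(u_j-t\varphi)^+\big)$ are exactly the admissible one-sided competitors used in the literature, the segregation check is right, and the first-variation computation closes. Two refinements are worth recording. First, for the first inequality you do not need the Lipschitz continuity of \Cref{thm:int_eigen} (whose proof in \cite{CTVFucick2005} in fact \emph{uses} these extremality conditions, so invoking it here risks circularity): the only delicate term is $-\int_{\{0<u_i\le t\varphi\}}|\nabla u_i|^2\le 0$, which has the favorable sign and can simply be discarded, so no regularity beyond $u_i\in H^1$ is required. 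Second, in the mass-transfer step the cancellation should not be phrased via surface integrals $\int_{\partial\Omega_k}|\nabla u_k|\varphi\,d\mathcal H^{d-1}$ — identifying the strip contributions with such boundary terms would require regularity of $\partial\Omega_k$ that is not yet available. The correct statement is that the bulk quantity $\int_{\Omega_k\cap\{0<u_k<t\varphi\}}|\nabla u_k|^2$ enters the numerator of the $i$-th Rayleigh quotient with a $+$ sign and that of the $k$-th with a $-$ sign; under the standing normalization $\int_D u_m^2=1$ the two quotients have the same denominator at $t=0$, so these contributions cancel up to a factor $\frac{1}{D_i(t)}-\frac{1}{D_k(t)}=O(t)$, and it then suffices that $\int_{\{0<u_k<t\varphi\}}|\nabla u_k|^2=o(1)$, which follows from dominated convergence — no $o(t)$ rate, coarea formula, or nondegeneracy is needed. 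With these adjustments your argument is complete and matches the cited source.
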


Second, we state here Lipschitz continuity (up to the boundary) of a minimizer. This can be found in \cite[Theorem 2.2]{CTVFucick2005} (actually based on \cite[Remark 8.1 and Theorem 8.2]{CTV2005indiana}) and \cite[Proposition 3 and Remark 2]{CL2007}.
\begin{proposition}[Lipschitz estimates]\label{prop:lipschitz}
	If $u\in H^1_{0,N}(D)$ is a minimizer of \eqref{eq:opt_part_variat},
	\[
	\sup_{i\in\{1,\dots,N\}}\sup_{x,y\in\overline{D}\cap B_r(x_0)}\frac{|u_i(x)-u_i(y)|}{|x-y|}\leq C_L\sum_{i=1}^N\norm{u_i}_{H^1(D\cap B_{2r}(x_0))}^2
	\]
	for some $C_L=C_L(d,D)>0$, for all $x_0\in\overline{D}$ and all $r>0$. In particular, $u_i\in W^{1,\infty}(D)$ and $\partial_{\nnu} u_i\in L^\infty(\partial D)$ for all $i=1,\dots,N$.
\end{proposition}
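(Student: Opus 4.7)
My strategy is based on the Alt--Caffarelli--Friedman (ACF) monotonicity formula applied to pairs of segregated components, in the spirit of \cite{CTVFucick2005,CL2007,CTV2005indiana}; I would first establish interior Lipschitz regularity and then extend it up to the fixed boundary using the $C^1$-regularity of $\partial D$ granted by \Cref{ass:domain}. For the interior bound, fix $x_0\in D$ with $B_{2r}(x_0)\sub D$. By \Cref{lemma:extermaliti} each $u_i$ satisfies $-\Delta u_i\leq \lambda_i u_i$, hence is subharmonic up to a bounded zeroth-order correction (recall $u_i\in L^\infty$ by \Cref{thm:int_eigen}). Since $u_iu_j=0$ a.e.\ for $i\neq j$, I would apply the two-phase ACF formula to each pair $(u_i,u_j)$: for a constant $C>0$ depending only on $d$ and $\max_k\lambda_k$, the map
\begin{equation*}
\rho\mapsto e^{C\rho}\left(\frac{1}{\rho^2}\int_{B_\rho(x_0)}\frac{|\nabla u_i|^2}{|x-x_0|^{d-2}}\dx\right)\left(\frac{1}{\rho^2}\int_{B_\rho(x_0)}\frac{|\nabla u_j|^2}{|x-x_0|^{d-2}}\dx\right)
\end{equation*}
should be non-decreasing on $(0,2r)$. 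Comparing $\rho\to 0$ (via the Lebesgue differentiation theorem at a point of differentiability of $\nabla u$) with $\rho=2r$ produces $|\nabla u_i(x_0)|^2|\nabla u_j(x_0)|^2\leq C\,\norm{u_i}_{H^1(B_{2r})}^2\norm{u_j}_{H^1(B_{2r})}^2$. At interface points two components are simultaneously non-trivial, so this bounds both gradients; elsewhere at most one component is non-zero in a neighbourhood and classical interior elliptic regularity for $-\Delta u_i=\lambda_i u_i$ on $\Omega_i$ yields the same estimate (with the quadratic $H^1$-factor appearing from the Caccioppoli inequality used to translate an $L^\infty$ bound into an $H^1$ bound).

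To reach $\partial D$, I would use the flattening diffeomorphism from \Cref{ass:domain}: near any $x_0\in\partial D$, $D$ is locally the $C^1$ epigraph of $\varphi_{x_0}$. Extending each $u_i$ by zero outside $D$ and combining the subharmonicity $-\Delta u_i\leq \lambda_iu_i$ with a comparison argument against an explicit harmonic function in the flattened half-space produces an upper barrier of the form $|u_i(x)|\leq C\,\dist(x,\partial D)$ in a neighbourhood of $x_0$. A matching lower bound is built from the second extremality inequality applied to $u_i-\sum_{j\neq i}u_j$. Together with the interior Lipschitz estimate and a chaining argument along any segment joining a boundary point to an interior point, this yields Lipschitz continuity of $u_i$ on $\overline{D}\cap B_r(x_0)$, and in particular $\partial_\nnu u_i\in L^\infty(\partial D)$.

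\emph{Main obstacle.} The main technical difficulty is justifying the ACF-type monotonicity in the presence of the zeroth-order term $\lambda_iu_i$, which destroys exact subharmonicity: the correction terms in $\frac{d}{d\rho}\Phi_{ij}$ must be absorbed into the exponential factor $e^{C\rho}$, a delicate computation that relies crucially on the $L^\infty$ bound from \Cref{thm:int_eigen}. A secondary obstacle is the boundary barrier, since a classical exterior ball condition is not available for $C^1$ domains in general; one must either combine the flattening of \Cref{ass:domain} with a reflection argument, or invoke the perturbative diffeomorphism $\Psi_{x_0}$ described in Section~3 to reduce to a starshaped situation where a Hopf-type comparison works directly.
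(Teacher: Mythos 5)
The paper does not supply its own proof of \Cref{prop:lipschitz}: it cites \cite[Theorem 2.2]{CTVFucick2005}, \cite[Remark 8.1 and Theorem 8.2]{CTV2005indiana} and \cite[Proposition 3 and Remark 2]{CL2007}, all of which rely on the Alt--Caffarelli--Friedman (ACF) monotonicity formula, so your overall strategy matches the cited literature. However, the central step of your interior argument --- passing from the ACF bound to a pointwise Lipschitz estimate --- has a genuine gap.

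From the ACF monotonicity you deduce
\[
|\nabla u_i(x_0)|^2\,|\nabla u_j(x_0)|^2\leq C\,\norm{u_i}_{H^1(B_{2r})}^2\,\norm{u_j}_{H^1(B_{2r})}^2,
\]
and you then claim that at interface points ``this bounds both gradients,'' and elsewhere ``classical interior elliptic regularity \ldots\ yields the same estimate.'' Neither claim closes the argument. A bound on a product $ab\leq C$ does not bound $a$ without a lower bound on $b$; you have no quantitative lower bound on $|\nabla u_j(x_0)|$ at this stage. Moreover, at a free-boundary point both components vanish and $\nabla u(x_0)$ is not yet known to exist, so the Lebesgue-differentiation step uses the very regularity it is supposed to establish. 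The non-interface case is worse: if $x_0\in\Omega_i$ then interior elliptic estimates give $|\nabla u_i(x_0)|\leq C\norm{u_i}_{L^\infty(B_\rho(x_0))}/\rho$ only with $\rho=\dist(x_0,\partial\Omega_i)$, and this degenerates as $x_0\to\partial\Omega_i$. To obtain the Lipschitz bound you must first prove the linear-growth property $\norm{u_i}_{L^\infty(B_\rho(x_0))}\leq C\rho$, i.e., that $u_i$ grows at most linearly away from its zero set. That linear growth is the real content: in \cite{CTV2005indiana,CL2007} one centers the ACF quotient at the nearest free-boundary point $z_0\in\partial\Omega_i$, where a second component $u_j$ is active, and combines the ACF bound with subharmonicity ($-\Delta u_i\leq\lambda_iu_i$) and segregation to force $u_i$ to decay linearly towards $z_0$. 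Your sketch skips this step, which is where the actual work is.

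The boundary portion of your sketch --- flattening $\partial D$ via the local $C^1$ graph and building a linear barrier --- is the right idea and is essentially what \cite[Proposition 3 and Remark 2]{CL2007} does; once interior Lipschitz and a linear-growth bound near $\partial D$ are secured, the chaining argument you describe goes through.
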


\subsection{Properties of the blow-up limits}

We now introduce the class of blow-up limits, which consists of segregated homogeneous functions which vanish on $\partial \R^d_+$ and locally minimize the Dirichlet energy.

\begin{definition}\label{def:B_gamma}
	Let $\gamma\geq 0$ and $U\in L^1_{\textup{loc}}(\R^d_+)$. We say that $U\in\mathcal{B}_\gamma$ if
	\begin{enumerate}
		\item $U\restr{B_r^+} \in H^1_{s,N}(B_r^+)$ for all $r>0$.
		\item $U(x',0)=0$ for all $x'\in\partial\R^d_+$.
		\item $U(rx)=r^\gamma U(x)$ for all $r>0$ and all $x\in\R^d_+$.
		\item $U$ is a local minimizer for the Dirichlet energy, in the sense that for all $r>0$
		\[
		\sum_{i=1}^N\int_{B_r^+}\abs{\nabla U_i}^2\dx\leq \sum_{i=1}^N\int_{B_r^+}\abs{\nabla V_i}^2
		\]
		for all $V\in H^1_{s,N}(B_r)$ such that $U=V$ on $\partial B_r^+$.			
	\end{enumerate}
\end{definition}

We now state a crucial result, which amounts to a partial classification of the admissible homogeneities of the blow-up limits. Before going one, we introduce the notation for Dirichlet eigenfunctions on the half-sphere. In the whole paper, we denote by $\{\phi_n\}_{n\geq 1}\sub H^1_0(S_1^+)$ a fixed family of eigenfunctions of the Dirichlet-Laplacian on $S_1^+$. More precisely,
\begin{equation*}
	\begin{bvp}
		-\Delta_{\partial B_1}\phi_n&=n(n+d-2)\phi_n, &&\text{in }S_1^+, \\
		\phi_n&=0, &&\text{on }\partial S_1^+, \\
		\int_{S_1^+}\phi_n\phi_m\ds&=\delta_{nm} &&\text{for all }n,m\geq 1.
	\end{bvp}
\end{equation*}
In particular, we can take
\[
\phi_1(\theta)=\frac{\theta_d^+}{\sqrt{\int_{S_1^+}(\theta_d^+)^2\ds}}\quad\text{and}\quad\phi_{i+1}(\theta)=\frac{\theta_i\theta_d^+}{\sqrt{\int_{S_1^+}(\theta_i\theta_d^+)^2\ds}}~\text{for }i=1,\dots,d-1.
\]
In the following lemma, we state some of the possible frequencies of the blow-up limits and its form in some cases. In particular, there is a gap above frequency $2$, whose explicit value is an open problem.
\begin{lemma}\label{lemma:frequencies}
	Let $U\in\mathcal{B}_\gamma\setminus\{0\}$ for some $\gamma\geq0$. Then, one of the following holds:
	\begin{enumerate}
		\item $\gamma=1$ and $U_i=\alpha x_d^+$, for some $\alpha>0$ and some $i\in\{1,\dots,N\}$, while $U_j=0$ for all $i\neq j$;\label{item:N_1}
		\item $\gamma=2$, $U_i=\alpha(x'\cdot\bm{e})^+\,x_d^+$ and $U_j=\alpha(x'\cdot\bm{e})^-\,x_d^+$, for some $\alpha>0$, $\bm{e}\in \partial S_1^+$ and $i,j\in\{1,\dots,N\}$, with $i\neq j$, while $U_k=0$ for all $k\neq i,j$; \label{item:N_2}
		\item $\gamma\geq 2+\delta_d$, for some $\delta_d>0$, and there are $i\neq j$ such that $U_i\not\equiv 0$ and $U_j\not\equiv 0$.\label{item:N_3}
	\end{enumerate}
\end{lemma}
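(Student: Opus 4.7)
The plan is to reduce each case to a spectral problem on the half-sphere $S_1^+$, and then to combine a Friedland--Hayman type inequality with a compactness/rigidity argument to classify the admissible homogeneities.

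First I would separate variables. Each nontrivial component $U_i$ is $\gamma$-homogeneous and, by local minimality (the $\lambda_i$-terms are scaled away in the blow-up regime giving rise to $\mathcal{B}_\gamma$), harmonic in its positivity set. Hence $U_i(x)=|x|^{\gamma}\psi_i(x/|x|)$, where $\psi_i\geq 0$ is the principal Dirichlet eigenfunction of the spherical Laplacian on $\omega_i:=\{\psi_i>0\}\subset S_1^+$ with eigenvalue $\Lambda:=\gamma(\gamma+d-2)$. Segregation of the $U_i$'s forces pairwise disjointness of the $\omega_i$'s, and the inclusion $\omega_i\subset S_1^+$ combined with domain monotonicity gives $\Lambda\geq\lambda_1(S_1^+)=d-1$, so $\gamma\geq 1$.

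If exactly one component $U_i$ is nontrivial, I would run a harmonic-replacement competition on half-balls $B_r^+$: segregation is vacuous, and the harmonic extension of $U_i$ stays nonnegative by the maximum principle since its boundary values on $\partial B_r^+$ are nonnegative. Local minimality then forces $U_i$ to be harmonic in all of $\R^d_+$; by the strong maximum principle $\omega_i=S_1^+$, and $\psi_i$ coincides up to scaling with the first spherical Dirichlet eigenfunction $\phi_1$. This pins down $\gamma=1$ and $U_i=\alpha\,x_d^+$, giving case \ref{item:N_1}.

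When two or more components are nontrivial, the key is to upgrade the bound to $\Lambda\geq 2d$, equivalently $\gamma\geq 2$. I would odd-reflect each $\psi_i$ across the equator $\partial S_1^+$: the resulting antisymmetric function $\widetilde{\psi}_i$ is (as follows from Dirichlet boundary conditions and a short variational check) the first antisymmetric Dirichlet eigenfunction of the symmetric domain $\widetilde{\omega}_i:=\omega_i\cup\omega_i^{*}\subset S^{d-1}$, still at eigenvalue $\Lambda$. Disjointness of the $\omega_i$'s lifts to disjointness of the $\widetilde{\omega}_i$'s, and a Friedland--Hayman type inequality for the lowest antisymmetric eigenvalue on disjoint symmetric domains in $S^{d-1}$ --- together with the explicit computation that the smallest antisymmetric eigenvalue on a hemisphere of $S^{d-1}$ equals $2d$, attained by the degree-two harmonic polynomial $(\bm{e}\cdot x')\,x_d$ --- yields $\Lambda\geq 2d$. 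In the equality case $\gamma=2$, the same rigidity excludes three or more disjoint $\omega_i$'s, so only two components $U_1,U_2$ survive. I would then invoke the extremality conditions of \Cref{lemma:extermaliti}, applied to both $U_1-U_2$ and $U_2-U_1$ with all other components identically zero and the $\lambda_i$-terms scaled out: these give $-\Delta(U_1-U_2)\geq 0$ and $-\Delta(U_2-U_1)\geq 0$ in $\R^d_+$, so $W:=U_1-U_2$ is harmonic in $\R^d_+$. Being $2$-homogeneous and vanishing on $\partial\R^d_+$, $W$ odd-reflects across $\partial\R^d_+$ to a degree-two harmonic polynomial on $\R^d$ odd in $x_d$, forcing $W(x)=\alpha\,(\bm{e}\cdot x')\,x_d$; taking $U_1=W^+$ and $U_2=W^-$ gives case \ref{item:N_2}.

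Finally, for the quantitative gap $\gamma\geq 2+\delta_d$ of case \ref{item:N_3}, I would argue by contradiction and compactness. A hypothetical sequence $U^{(n)}\in\mathcal{B}_{\gamma_n}$ with $\gamma_n\searrow 2$ and $U^{(n)}$ not of the form in case \ref{item:N_2}, normalized so that $\int_{S_1^+}|U^{(n)}|^2=1$, would admit a locally uniformly convergent subsequence --- using the uniform Lipschitz bound coming from the locally minimizing structure (a blow-up analogue of \Cref{prop:lipschitz}) and closure of $\mathcal{B}_\gamma$ under locally uniform limits. The limit profile lies in $\mathcal{B}_2$ and, by the previous step, is the rigid two-component quadratic; a rigidity argument around this profile would then force the tail of the sequence to share its structure (in particular to have exactly two nontrivial components with the explicit form of case \ref{item:N_2}), contradicting the assumption. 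I expect this final compactness/rigidity step to be the principal technical obstacle, since it combines a sharp quantitative form of the half-sphere Friedland--Hayman inequality with a nondegeneracy analysis around the borderline two-phase profile.
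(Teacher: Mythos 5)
Your reduction to a spherical eigenvalue problem and your treatment of the one-component case are sound (your harmonic-replacement argument there is in fact cleaner than what the paper writes). But the two steps on which the whole classification hinges are left resting on unproven ingredients. First, the lower bound $\gamma\ge 2$ for two or more components: you invoke "a Friedland--Hayman type inequality for the lowest antisymmetric eigenvalue on disjoint symmetric domains". Odd reflection across the equator does not reduce this to the classical Friedland--Hayman inequality: the classical statement controls the \emph{first} eigenvalue of each reflected domain $\widetilde\omega_i$, and gives only $\widetilde\alpha_1+\widetilde\alpha_2\ge 2$, i.e. $\gamma\ge 1$ again, since the first eigenfunction of $\widetilde\omega_i$ need not be antisymmetric. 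The sharp statement you actually need (sum of exponents $\ge 4$ for disjoint subsets of $S_1^+$ with Dirichlet data on $\partial S_1^+$) is a genuinely harder boundary version that you neither prove nor cite, and the equality-case rigidity you use to exclude a third component is likewise asserted, not derived. The paper sidesteps all of this with an elementary device: since the $U_i$ have disjoint supports, $\tilde U:=U_1-cU_2$ still satisfies $\int_{S_1^+}|\nabla_{\partial B_1}\tilde U|^2=\gamma(\gamma+d-2)\int_{S_1^+}\tilde U^2$, and choosing $c$ so that $\tilde U\perp\phi_1$ in $L^2(S_1^+)$ forces the Rayleigh quotient to be at least the second eigenvalue $2d$, whence $\gamma\ge 2$, with the equality case identified by the second eigenspace.

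Second, the quantitative gap $\delta_d$: your compactness argument does not close. A sequence $U^{(n)}\in\mathcal{B}_{\gamma_n}$ with $\gamma_n\searrow 2$ and three or more nontrivial components can converge (after normalization) to a two-component profile in $\mathcal{B}_2$ simply because the extra components lose mass in the limit; the limit being the rigid quadratic therefore does not contradict the hypothesis on the approximating sequence. Ruling this out is exactly the "rigidity/clean-up around the borderline profile" you defer, which is the entire difficulty — and even if completed it would yield no explicit $\delta_d$. The paper instead argues directly: among three or more disjoint supports, two of them cover at most $\tfrac23\mathcal{H}^{d-1}(S_1^+)$; for the corresponding normalized difference $\tilde U\perp\phi_1$, Cauchy--Schwarz restricted to that support gives $|\int_{S_1^+}\phi_2\tilde U|\le q_d<1$, so the Rayleigh quotient is at least $c_2^2\cdot 2d+(1-c_2^2)\cdot 3(d+1)\ge 2d+(1-q_d^2)(d+3)$, which produces an explicit $\delta_d$. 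As written, your proposal has a genuine gap at both of these points.
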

\begin{proof}
	Since each $U_i$ is $\gamma$-homogeneous and $\Delta U_i=0$ in $\{U_i\neq 0\}$, we have: 
	\[
	\sum_{i=1}^N\int_{ S_1^+}|\nabla_{\partial B_1} U_i|^2\ds=\gamma(\gamma+d-2)\sum_{i=1}^N\int_{S_1^+} |U_i|^2\ds
	\]
	for all $U\in\mathcal{B}_\gamma$ and for all $\gamma\geq 0$.
	Let us now first assume that $U$ has only one nonzero component, i.e. $U=(U_1,0,\dots,0)$. Since $U_1\not\equiv 0$, it satisfies
	\[
	\int_{S_1^+}\abs{\nabla_{\partial B_1}U_1}^2\ds=\gamma(\gamma+d-2)\int_{S_1^+}\abs{U_1}^2\ds
	\]
	and since $U_1(\theta',0)=0$, then $\gamma\geq 1$. If $\gamma=1$, then clearly $U_1$ must be a multiple of the first eigenfunction $\phi_1$ and this proves \ref{item:N_1}. 
	
	Let us now assume that $U$ has two nonzero components, i.e. $U=(U_1,U_2,0,\dots,0)$ and let $\tilde{U}:=U_1-cU_2$, where
	\begin{equation}\label{eq:N_c}
		c:=\frac{\displaystyle \int_{S_1^+}U_1\phi_1\ds}{\displaystyle\int_{S_1^+}U_2\phi_1\ds}.
	\end{equation}
	We have that $\tilde{U}$ is $L^2(S_1^+)$-orthogonal to the first eigenfunction of the upper half-sphere and it satisfies
	\[
	\int_{S_1^+}|\nabla_{\partial B_1}\tilde{U}|^2\ds=\gamma(\gamma+d-2)\int_{S_1^+}|\tilde{U}|^2\ds.
	\]
	Therefore, we have that $\gamma\geq 2$. If $\gamma=2$, then $\tilde{U}$ is necessarily a second eigenfunction, i.e. $\tilde{U}=\alpha(\theta'\cdot\bm{e})\,\phi_1$, for some $\alpha>0$ and $\bm{e}\in\partial S_1^+$. Since $U_1$ and $U_2$ are normalized, then necessarily $c=1$ and this proves \ref{item:N_2}. 
	
	Let us finally assume that $U$ has more than two positive components. In this case, there exist two components, which we can assumed to be $U_1$ and $U_2$ (without loss of generality), such that
	\begin{equation}\label{eq:N_1}
		\mathcal{H}^{d-1}\left(\{\theta\in S_1^+\colon U_1(\theta)>0\}\right)+\mathcal{H}^{d-1}\left(\{\theta\in S_1^+\colon U_2(\theta)>0\}\right)\leq \frac{2}{3}\mathcal{H}^{d-1}\left(S_1^+\right).
	\end{equation}
	We now let
	\[
	\tilde{U}:=\frac{U_1-cU_2}{\norm{U_1-cU_2}_{L^2(S_1^+)}},
	\]
	with $c\neq 0$ as in \eqref{eq:N_c} and we consider its Fourier expansion on the upper half-sphere
	\[
	\tilde{U}=\sum_{n=1}^\infty c_n\phi_n,
	\]
	where $\{\phi_n\}_n$ is a fixed basis of $L^2(S_1^+)$ made by orthonormal eigenfunctions. We first observe that, by definition
	\begin{equation}\label{eq:N_3}
		\int_{S_1^+}|\nabla_{\partial B_1}\tilde{U}|^2\ds=\gamma(\gamma+d-2).
	\end{equation}
	On the other hand, since
	\[
	\int_{S_1^+}\abs{\nabla \phi_2}^2\ds=2d\quad\text{and}\quad\int_{S_1^+}\abs{\nabla \phi_n}^2\ds\geq 3(d+1)~\text{for }n\geq 3
	\]
	and since $c_1=0$ and
	\[
	\sum_{n=1}^\infty c_n^2=\|\tilde{U}\|_{L^2(S_1^+)}^2=1,
	\]
	we have that
	\begin{equation}
		\begin{aligned}
			\int_{S_1^+}|\nabla_{\partial B_1}\tilde{U}|^2\ds &=c_2^2\int_{S_1^+}\abs{\nabla\phi_2}^2\ds+\sum_{n=3}^\infty c_n^2\int_{S_1^+}\abs{\nabla \phi_n}^2\ds \\
			&\geq c_2^2\,2d+\left(\sum_{n=3}^\infty c_n^2\right)\,3(d+1) \\
			&=2d+(1-c_2^2)(d+3).
		\end{aligned}\label{eq:N_2}
	\end{equation}
	Furthermore, in view of \eqref{eq:N_1}
	\begin{equation}\label{eq:N_4}
		\abs{c_2}=\abs{\int_{S_1^+}\phi_2\tilde{U}\ds}\leq \max_{\substack{E\sub S_1^+ \\ \mathcal{H}^{d-1}(E)\leq \frac{2}{3}\mathcal{H}^{d-1}(S_1^+)}}\left(\int_E\phi_2^2\ds\right)^{\frac{1}{2}}=:q_d<1.
	\end{equation}
	Combining this fact with \eqref{eq:N_3} and \eqref{eq:N_2} we obtain that
	\begin{equation*}
		\gamma(d+\gamma-2)\geq 2d+(1-q_d^2)(d+3),
	\end{equation*}
	which, in turn, implies \Cref{item:N_3} with
	\begin{align*}
		\delta_d&=\frac{-(d+2)+\sqrt{(d+2)^2+4(1-q_d^2)(d+3)}}{2}.\qedhere
	\end{align*}
\end{proof}

We conclude with a remark concerning boundedness with universal constants of the eigenvalues of the partition.
\begin{remark}\label{rmk:sup_inf_lambda_i}
	One can easily observe that the eigenvalues $\lambda_i$ can be bounded from above and below by positive quantities depending only on $d$, $D$ and $N$. Indeed, on one hand
	\[
	\lambda_i\leq \sum_{i=1}^N\lambda_i=:\Lambda=\Lambda(d,D,N)\quad\text{for all }i=1,\dots,N
	\]
	and, on the other hand, from the Faber-Krahn inequality, we have that
	\[
	\lambda_i=\lambda_1(\Omega_i)\geq \lambda_1(B_1)|B_1|^{\frac{2}{d}}|\Omega_i|^{-\frac{2}{d}}\geq \lambda_1(B_1)|B_1|^{\frac{2}{d}}|D|^{-\frac{2}{d}}.
	\]
\end{remark}

\subsection{Comments on the geometry of the boundary}\label{sec:comments_bd}

In this section, we gather some results concerning the (local) geometry of the boundary $\partial D$, justifying and commenting some of the assumptions we made in \Cref{ass:domain}. We start by arguing that, when locally describing $\partial D$, considering the same modulus of continuity and the same radius for any point is not restrictive.
\begin{remark}\label{rmk:modulus_cont_1}
	When describing the boundary of the domain $D$, in principle, everything depends on the point $x_0\in\partial D$. However, assuming the radius $R_{\partial D}$ and the modulus of continuity $\sigma$ to be the same at every point does not lead to a loss of generality. In particular, for $x_0\in\partial D$, let $\textbf{Q}_{x_0}\in O(d)$, $R_{x_0}>0$ and $\varphi_{x_0}\colon B_{R_{x_0}}'\to \R$  be such that
	\begin{align*}
		&D\cap B_R(x_0)=\{ \textbf{Q}_{x_0}x+x_0\colon y'\in B_R',~x_d>\varphi_{x_0}(x')  \}\cap B_R(x_0), \\
		&\partial D\cap B_R(x_0)=\{ \textbf{Q}_{x_0}x+x_0\colon x'\in B_R',~x_d=\varphi(x')  \}\cap B_R(x_0),
	\end{align*}
	for all $R\leq R_{x_0}$ and let $\sigma_{x_0}\colon[0,2R_{x_0}]\to [0,\infty)$ be the modulus of continuity of $\nabla\varphi_{x_0}$. Since $\partial D$ is a compact set, there exists $n\in\mathbb{N}$ points $P_1,\dots,P_n\in\partial D$, $n$ radii $R_{P_1},\dots,R_{P_n}>0$ and $n$ functions $\varphi_{P_1},\dots,\varphi_{P_n}$, defined on $B_{R_{P_1}}',\dots,B_{R_{P_n}}'$ respectively, such that, up to rigid movements, $\partial D\cap B_{R_{P_i}}(P_i)$ can be represented as the graph of $\varphi_{P_i}\colon B_{R_{P_i}}'\to \R$ and
	\[
	\partial D\sub \bigcup_{i=1}^n B_{R_{P_i}}(P_i).
	\]
	We call $\sigma_i$ the modulus of continuity of $\nabla \varphi_{P_i}$, for $i=1,\dots,n$. Given this open covering of $\partial  D$, we know that there exists a radius $0<R_{\partial D}\leq \min\{R_{P_1},\dots,R_{P_n}\}$ such that for  all $x\in \partial D$, the set $B_{R_{\partial D}}(x)\cap \partial D$ is completely contained in one ball of the covering. Namely, there exists $i\in\{1,\dots,n\}$ such that $B_{R_{\partial D}}(x)\cap \partial D\sub B_{R_{P_i}}(P_i)$. Now, one can observe that, in $B_{R_{\partial D}}(x)$ the boundary $\partial D$ is described, up to rigid motions, by the function $\varphi_{P_i}$ and the corresponding modulus of continuity is still $\sigma_i$. Hence, we can just take $\sigma:=\max\{\sigma_1,\dots,\sigma_n\}$, which is defined on $[0,2R_{\partial D}]\sub[0,2\min\{R_{P_1},\dots,R_{P_n}\}]$ and non-decreasing. Moreover, $\sigma(0)=0$ and $\sigma_i\leq\sigma$ for all $i=1,\dots,n$, hence (II) in \Cref{ass:domain} still holds. Finally, the fact that
	\begin{equation*}
		\int_0^{2R_{\partial D}}\frac{1}{r\sigma_0(r)}\int_0^r\frac{\sigma(t)}{t}\d t<\infty
	\end{equation*}
	straightforwardly comes from the definition of $\sigma$ and the fact that $\sigma_1,\dots,\sigma_n$ are continuous.
\end{remark}

Now, we here justify the fact that (IV) in \Cref{ass:domain} is not restrictive.
\begin{lemma}\label{lemma:modulus}
	Let $f\colon[0,1]\to [0,+\infty)$ be such that $f\in C([0,1])$, $f(0)=0$ and $f$ non-decreasing. Then there exists $h\colon[0,1]\to [0,+\infty)$ such that $f\leq h$, $h\in C([0,1])\cap C^2(0,1)$, $h(0)=0$, $h'(r)\geq 0$, $(h(r)/r)'\leq 0$ and
	\begin{equation}\label{eq:modulus_th1}
		\abs{h'(r)}\leq \frac{2}{r}h(r)  \quad\text{and}\quad \abs{h''(r)}\leq\frac{4}{r^2}h(r).
	\end{equation}
	Moreover, if
	\begin{equation*}
		\int_0^1\frac{1}{rg(r)}\int_0^r\frac{f(t)}{t}\d t<\infty,
	\end{equation*}
	for some $g\in C([0,1])$, then
	\begin{equation*}
		\int_0^1\frac{1}{rg(r)}\int_0^r\frac{h(t)}{t}\d t<\infty.
	\end{equation*}
\end{lemma}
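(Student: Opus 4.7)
The plan is to proceed in two stages: first construct a continuous monotone majorant $\hat f$ of $f$ with the desired structural properties, then obtain $h$ by a smooth mollification of $\hat f$ performed on a logarithmic scale. The structural stage is the minimal version of a ``radial non-decreasing envelope''; the smoothing stage is delicate because we need the explicit constants $2$ and $4$ in \eqref{eq:modulus_th1}, which scale correctly under dilations.

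\emph{Stage one.} I would define
\[
\hat f(r):= r\,\sup_{s\in[r,1]}\frac{f(s)}{s},\qquad \hat f(0):=0,
\]
and check four things: (a) $\hat f\geq f$ (take $s=r$); (b) $r\mapsto \hat f(r)/r$ is non-increasing, directly from the definition; (c) $\hat f$ is continuous on $[0,1]$ (continuity at $0$ follows by splitting the supremum into a piece on $[r,\delta]$, bounded by $\sup_{[0,\delta]}f/s\cdot r$, and a piece on $[\delta,1]$, which contributes $O(r)$); and (d) $\hat f$ is non-decreasing, by case analysis on whether the supremum defining $\hat f(r_1)$ is attained on $[r_1,r_2)$ or on $[r_2,1]$ (in the first case $\hat f(r_1)\leq f(r_2)$ by monotonicity of $f$, and $\hat f(r_2)\geq f(r_2)$). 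Finally extend $\hat f$ to $[0,2]$ by $\hat f(r):=\hat f(1)\,r$ on $[1,2]$, preserving all four properties.

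\emph{Stage two.} Pick $L>0$ and a mollifier $\phi\in C_c^\infty((0,L))$ with $\phi\geq 0$ and $\int_0^L\phi\,ds=1$, and set
\[
h(r):=\int_0^L \hat f(re^s)\,\phi(s)\,ds,\qquad r\in(0,1],\quad h(0):=0.
\]
Passing to log variables $\bar h(t):=h(e^t)$, $\bar{\hat f}(t):=\hat f(e^t)$, this is the convolution $\bar h=\bar{\hat f}*\phi_-$, which is $C^\infty$ because $\phi\in C_c^\infty$. Since $\bar{\hat f}$ is non-decreasing, so is $\bar h$, hence $h'\geq 0$; since $s\mapsto \hat f(re^s)/(re^s)$ is non-increasing in $r$ for each fixed $s$, writing $h(r)/r=\int e^s[\hat f(re^s)/(re^s)]\phi(s)\,ds$ shows $h/r$ non-increasing. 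Together these two monotonicities give $0\leq h'\leq h/r$, and the first bound in \eqref{eq:modulus_th1} is immediate. The majorization $h\geq\hat f\geq f$ also follows because $\hat f$ is non-decreasing.

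\emph{The main obstacle} is the bound $|h''|\leq 4h/r^2$, which in log variables is equivalent to $|\bar h''(t)-\bar h'(t)|\leq 4\bar h(t)$. Using the support of $\phi$ and the identity $\int_0^L(\phi''-\phi')\,ds=0$, I would write
\[
\bar h''(t)-\bar h'(t)=\int_0^L\bigl[\bar{\hat f}(t+s)-\bar{\hat f}(t)\bigr](\phi''(s)-\phi'(s))\,ds,
\]
and use the crucial \emph{doubling estimate} $\hat f(re^L)\leq e^L\hat f(r)$, which is a direct consequence of $\hat f/r$ non-increasing. This yields $|\bar h''-\bar h'|(t)\leq (e^L-1)\|\phi''-\phi'\|_{L^1}\,\bar h(t)$. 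The whole point of convolving on a \emph{logarithmic} scale is that both sides of the target bound scale the same way in $r$: the small-$L$ behavior $e^L-1\sim L$ can be played against the $L$-dependence of the $L^1$-norms of the rescaled mollifier to force the constant $\leq 4$.

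\emph{Integrability preservation.} By construction $h(r)\leq \hat f(re^L)$, so the change of variable $t\mapsto te^L$ gives
\[
\int_0^r\frac{h(t)}{t}\,dt\leq \int_0^{re^L}\frac{\hat f(s)}{s}\,ds.
\]
Using $\hat f(s)/s=\sup_{u\in[s,1]}f(u)/u$, one bounds the integral on the right in terms of $\int_0^{Cr}f/u\,du$ plus a boundary term that is controlled by the continuity of $g$ at $0$; composing with the outer $\int_0^1 1/(rg(r))\,dr$ and using the hypothesis on $f$ then yields the stated finiteness. The only care needed is that the dilation factor $e^L$ produced in the upper limit does not accumulate under iteration; since $L$ is a fixed (small) constant and the outer weight $1/(rg(r))$ is integrated over a bounded interval, one absorption at the end suffices.
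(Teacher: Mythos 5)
Your stage one ($\hat f(r)=r\sup_{s\in[r,1]}f(s)/s$) coincides with the paper's $h_1$ and is correct. Stage two, however, takes a genuinely different route: the paper performs two iterated averagings $h_2(r)=2\int_{r/2}^r h_1(t)/t\,dt$ and $h(r)=2\int_{r/2}^r h_2(t)/t\,dt$, which in log variables amounts to convolving twice with the \emph{box} kernel $2\chi_{(0,\log 2)}$ (continuous $\Rightarrow$ $C^1$ $\Rightarrow$ $C^2$), whereas you mollify once with a \emph{smooth} $\phi\in C_c^\infty((0,L))$. The two are not interchangeable, and your stage two has a real gap in the second-derivative bound.

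First, a sign slip: with $\bar h(t)=\int_0^L\bar{\hat f}(t+s)\phi(s)\,ds$ and $\phi$ vanishing at $0,L$, integration by parts gives $\bar h'(t)=-\int_0^L\bar{\hat f}(t+s)\phi'(s)\,ds$ and $\bar h''(t)=\int_0^L\bar{\hat f}(t+s)\phi''(s)\,ds$, so $\bar h''-\bar h'=\int_0^L\bar{\hat f}(t+s)\bigl(\phi''(s)+\phi'(s)\bigr)\,ds$, with a plus sign, not $\phi''-\phi'$. More seriously, the heuristic ``the small-$L$ behavior $e^L-1\sim L$ can be played against the $L^1$-norms of the rescaled mollifier'' is backwards. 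If $\phi_L(s)=L^{-1}\psi(s/L)$ with $\psi\in C_c^\infty((0,1))$, $\int\psi=1$, then $\|\phi_L''\|_{L^1}=L^{-2}\|\psi''\|_{L^1}$, and since $\psi(0)=\psi'(0)=0$ forces $1=\int_0^1\psi=\tfrac12\int_0^1(1-u)^2\psi''(u)\,du\le\tfrac12\|\psi''\|_{L^1}$, one always has $\|\psi''\|_{L^1}\ge 2$; hence $(e^L-1)\|\phi_L''\|_{L^1}\gtrsim 2L^{-1}\to\infty$ as $L\to 0$, and small $L$ makes things worse, not better. In fact, writing $g=\phi''+\phi'$ and solving the ODE with $\phi(0)=\phi'(0)=0$, the constraint $\int_0^L\phi=1$ becomes $\int_0^L g(u)\,k(u)\,du=1$ with $k(u)=L-u-1+e^{u-L}$, so $\|g\|_{L^1}\ge 1/\|k\|_\infty=1/(L-1+e^{-L})$, and a short optimization gives $\min_{L>0}(e^L-1)/(L-1+e^{-L})\approx 4.67>4$. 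So the crude estimate
\[
|\bar h''(t)-\bar h'(t)|\le(e^L-1)\,\|\phi''+\phi'\|_{L^1}\,\bar h(t)
\]
\emph{cannot} produce the constant $4$ for \emph{any} choice of smooth $\phi$ and $L$ (and your budget $L<\log 2$ makes the lower bound even larger). To possibly recover the sharp constant one would have to exploit the cancellation $\int_0^L(\phi''+\phi')=0$ and split into positive and negative parts, which halves the above lower bound to about $2.3$; but you do not do this, nor verify that a concrete smooth $\phi$ then yields a constant $\le 4$. As written your argument would only produce $|h''|\le C h/r^2$ for some $C>4$ — which is actually all the paper uses downstream, but is not the statement you are proving. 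In contrast, the paper's explicit double box-average gives $r^2 h''=2\bigl[-h_2(r)+h_2(r/2)+2h_1(r)-4h_1(r/2)+2h_1(r/4)\bigr]$ and the estimates $h_1\le h_2\le h$, $h_2(r)\le 2h_1(r/2)$, $h_1(r)\le 2h_1(r/2)$ close the bound $|r^2h''|\le 4h$ by a direct two-line calculation. Either adopt the paper's iterated averaging, or weaken the claimed constants to unspecified universal ones and then carry out the split-sign estimate carefully.
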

\begin{proof}
	These facts are essentially already known in the literature. Hence, we only sketch the proof and we refer e.g. to \cite{AN2022} and reference therein for more details. First of all, we consider
	\[
	h_1(r):=r\sup_{t\in[r,1]}\frac{f(t)}{t},
	\]
	which clearly verifies the following:
	\[
	f(r)\leq h_1(r)~\text{for all }r\in[0,1]\quad\text{and}\quad\frac{h_1(r)}{r}~\text{is monotone non-increasing.}
	\]
	Moreover, in \cite[Remark 1.2]{AN2022} it is proved that $h_1$ is monotone non-decreasing and that $h_1(r)/r\in L^1(0,1)$. This also implies that $\lim_{r\to 0}h_1(r)=0$. Analogously, one can easily prove that 
	\begin{equation*}
		\int_0^1\frac{1}{rg(r)}\int_0^r\frac{h_1(t)}{t}\d t<\infty.
	\end{equation*}
	In order to finish the proof, we need to modify $h_1$ in order to make it $C^2$ regular, still maintaining the previous properties. In order to do this, we reason iteratively. Following \cite[Remark 1.3]{AN2022}, let us first consider the smoothed version
	\[
	h_2(r):=2\int_{r/2}^r \frac{h_1(t)}{t}\d t.
	\]
	One can observe that
	\[
	h_2\in C([0,2R])\cap C^1(0,1)\quad\text{and}\quad \lim_{r\to 0}h_2(r)=0
	\]
	and
	\begin{equation}\label{eq:modulus_1}
		h_1(r)\leq h_2(r)\leq 2h_1(r/2).
	\end{equation}
	Furthermore, $h_2$ is non-decreasing and $h_2(r)/r$ is non-increasing. Indeed, on one hand, since $h_1$ is non-decreasing, we have
	\begin{equation*}
		h_2'(r)=\frac{2}{r}\left(h_1(r)-h_1(r/2)\right)\geq 0.
	\end{equation*}
	On the other hand, by simple computations and the fact that $h_1(r)/r$ is non-increasing, we get
	\begin{equation*}
		\left(\frac{h_2(r)}{r}\right)'=\frac{2}{r^2}\left(h_1(r)-h_1(r/2)-\int_{r/2}^r\frac{h_1(t)}{t}\d t\right)\leq \frac{1}{r^2}\left(h_1(r)-2h_1(r/2)\right)\leq 0.
	\end{equation*}
	Finally, from \eqref{eq:modulus_1} and the fact that $h_1$ is non-decreasing, we have that
	\begin{equation*}
		\int_0^1\frac{1}{rg(r)}\int_0^r\frac{h_2(t)}{t}\d t\leq 2\int_0^1\frac{1}{rg(r)}\int_0^r\frac{h_1(t)}{t}\d t<\infty.
	\end{equation*}
	Now, in order to gain $C^2$ regularity, we just recursively define
	\[
	h(r):=2\int_{r/2}^r \frac{h_2(t)}{t}\d t.
	\]
	By the very same argument as in the previous step, we obtain that all the properties are maintained and $h\in C^2(0,1)$. Moreover, since
	\begin{equation*}
		h'(r)=\frac{2}{r}\left( h_2(r)-h_2(r/2)\right)
	\end{equation*}
	and
	\begin{align*}
		h''(r)&=-\frac{2}{r^2}(h_2(r)-h_2(r/2))+\frac{2}{r}\left[ \frac{2}{r}(h_1(r)-h_1(r/2))-\frac{2}{r}(h_1(r/2)-h_1(r/4))\right] \\
		&=\frac{2}{r^2}\left[ -h_2(r)+h_2(r/2)+2h_1(r)-4h_1(r/2)+2h_1(r/4)\right],
	\end{align*}
	by the estimates on $h_1$ and $h_2$ we prove \eqref{eq:modulus_th1} and this completes the proof.
\end{proof}

In the following result, we justify the notion of $\alpha$-Dini function, for real $\alpha\geq 1$, by proving that the $\alpha$-Dini condition coincides with the usual $j$-Dini condition whenever $\alpha$ is integer.
\begin{lemma}\label{lemma:alpha-dini}
	Let $f\colon [0,R]\to [0,+\infty)$ be a continuous function and let $n\geq 1$ and $\epsilon\geq 0$. Let us denote, for any $j=1,\dots,n$ and $r\in[0,R]$
	\[
	\mathscr{D}_f^0(r):=f(r)\quad \text{and}\quad \mathscr{D}_f^j(r):=\int_0^r \frac{\mathscr{D}^{j-1}(r)}{r}\d r.
	\]
	Then 
	\begin{equation}\label{eq:dini_equiv_th1}
		\int_0^R\frac{f(r)|\log r|^{n+\epsilon}}{r}\d r<\infty 
	\end{equation}
	if and only if
	\begin{equation}\label{eq:dini_equiv_th2}
		\int_0^R\frac{\mathscr{D}_f^n(r)|\log r|^\epsilon}{r}\dr <\infty.
	\end{equation}
\end{lemma}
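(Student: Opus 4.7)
The key idea is to derive a closed-form expression for $\mathscr{D}_f^n$ as a single integral (rather than $n$ iterated ones), and then apply Fubini to reduce both sides of the equivalence to a single weighted integral of $f$. The only non-bookkeeping ingredient will be a Beta-function asymptotic showing that the weight produced by Fubini is comparable to $|\log s|^{n+\epsilon}$ near $s=0$.

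First, by induction on $n\geq 1$ I would establish the identity
\[
\mathscr{D}_f^n(r)=\frac{1}{(n-1)!}\int_0^r \frac{f(s)}{s}\Bigl(\log\frac{r}{s}\Bigr)^{n-1}\,ds.
\]
The base case $n=1$ is the definition; the inductive step follows from Fubini together with the elementary primitive $\int_s^\rho r^{-1}\log^{n-1}(r/s)\,dr=n^{-1}\log^n(\rho/s)$. Inserting this formula into the left-hand side of \eqref{eq:dini_equiv_th2} and exchanging the order of integration, I obtain
\[
\int_0^R \frac{|\log r|^\epsilon}{r}\mathscr{D}_f^n(r)\,dr=\frac{1}{(n-1)!}\int_0^R \frac{f(s)}{s}\,I(s)\,ds,\qquad I(s):=\int_s^R \frac{|\log r|^\epsilon}{r}\log^{n-1}\frac{r}{s}\,dr.
\]

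Since $f$ is continuous on $[0,R]$, both conditions \eqref{eq:dini_equiv_th1} and \eqref{eq:dini_equiv_th2} depend, up to finite contributions from any interval $[R_0,R]\subset(0,R]$, only on the behaviour near $r=0$. It is therefore harmless to assume $R<1$, in which case $|\log r|=-\log r>0$ on $(0,R]$. Under the substitution $t=-\log r$, $a:=-\log s$, the kernel becomes
\[
I(s)=\int_{-\log R}^{a}t^\epsilon(a-t)^{n-1}\,dt.
\]
Extending the lower limit down to $0$ produces the Beta integral $\int_0^a t^\epsilon(a-t)^{n-1}\,dt=B(\epsilon+1,n)\,a^{n+\epsilon}$, while the discarded piece $\int_0^{-\log R}t^\epsilon(a-t)^{n-1}\,dt$ is $O(a^{n-1})=o(a^{n+\epsilon})$ as $a\to\infty$. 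Hence
\[
I(s)=\bigl(B(\epsilon+1,n)+o(1)\bigr)|\log s|^{n+\epsilon}\qquad\text{as }s\to 0^+,
\]
so that there exist $R_0\in(0,R)$ and constants $0<c<C<\infty$ with $c|\log s|^{n+\epsilon}\le I(s)\le C|\log s|^{n+\epsilon}$ for all $s\in(0,R_0)$.

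Combining the displayed Fubini identity with these two-sided bounds on $I$, the integral in \eqref{eq:dini_equiv_th2} is finite if and only if $\int_0^{R_0}f(s)|\log s|^{n+\epsilon}s^{-1}\,ds<\infty$, which (modulo the harmless contribution from $[R_0,R]$, where $f$ and $|\log\cdot|^{n+\epsilon}$ are both bounded) is precisely condition \eqref{eq:dini_equiv_th1}. This yields the desired equivalence. No genuine obstacle arises: the entire argument is organised around the induction of the opening step, and the only substantive computation is the Beta-function asymptotic for $I(s)$, which is standard.
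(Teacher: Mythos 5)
Your proof is correct, and it takes a genuinely different route from the paper. The paper performs repeated integration by parts, writing $\int_\rho^R f(r)|\log r|^{n+\epsilon}r^{-1}\,dr$ as the target integral $\int_\rho^R \mathscr{D}_f^n(r)|\log r|^\epsilon r^{-1}\,dr$ plus a telescoping sum of boundary terms of the form $|\log\rho|^{n-j+\epsilon}\mathscr{D}_f^{j+1}(\rho)$; one direction of the equivalence then follows from the sign of these terms, and the other requires proving that each boundary term vanishes as $\rho\to 0$, which the paper reduces (via a de l'H\^opital argument) to showing $|\log\rho|^{n+1+\epsilon}f(\rho)\to 0$ and then argues by contradiction. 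By contrast, you establish the Cauchy iterated-integral representation $\mathscr{D}_f^n(r)=\frac{1}{(n-1)!}\int_0^r \frac{f(s)}{s}\log^{n-1}(r/s)\,ds$, exchange the order of integration by Tonelli (licit since everything is nonnegative, so the exchange holds unconditionally in $[0,\infty]$), and compare the resulting kernel $I(s)$ to $|\log s|^{n+\epsilon}$ via the Beta-function asymptotic. Your route buys several things: it is symmetric in the two directions (no separate contradiction argument), it produces explicit two-sided constants rather than just the finiteness equivalence, and it avoids the delicate boundary-term analysis entirely. The paper's route is shorter on the page but conceals the real work in the claim that the boundary terms vanish; your version makes that work unnecessary. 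Both are valid, and yours is arguably the cleaner argument.
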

\begin{proof}
	The proof is essentially a direct consequence of the following integration by parts formula
	\begin{align*}
		\int_\rho^R \frac{f(r)|\log r|^{n+\epsilon}}{r}\d r &=|\log R|^{n+\epsilon}\int_0^R\frac{f(r)}{r}\d r-|\log \rho|^{n+\epsilon}\int_0^\rho \frac{f(r)}{r}\d r \\
		&\quad +(n+\epsilon)\int_\rho^R\frac{|\log r|^{n-1+\epsilon}}{r}\int_0^r\frac{f(t)}{t}\d t \d r \\
		&=|\log R|^{n+\epsilon}\mathscr{D}_f^1(R)- |\log \rho|^{n+\epsilon} \mathscr{D}_f^1(\rho) \\
		&\quad+(n+\epsilon)\int_\rho^R\frac{\mathscr{D}_f^1(r)|\log r|^{n-1+\epsilon}}{r}\d r
	\end{align*}
	holding for all $\rho\in (0,R)$, which, once iterated, implies that
	\begin{align*}
		\int_\rho^R \frac{f(r)|\log r|^{n+\epsilon}}{r}\d r&=\sum_{j=0}^{n-1}\left(\Pi_{k=0}^{j-1} (n-k+\epsilon)\right) \left(|\log R|^{n-j+\epsilon}\mathscr{D}_f^{j+1}(R)-|\log \rho|^{n-j+\epsilon}\mathscr{D}_f^{j+1}(\rho)\right) \\
		&\quad+\left(\Pi_{k=0}^{n-1} (n-k+\epsilon)\right)\int_\rho^R\frac{\mathscr{D}_f^n(r)|\log r|^\epsilon}{r}\d r.
	\end{align*}
	On one hand, since
	\begin{align*}
		\int_\rho^R \frac{f(r)|\log r|^{n+\epsilon}}{r}\d r&\leq \sum_{j=0}^{n-1}\left(\Pi_{k=0}^{j-1} (n-k+\epsilon)\right) |\log R|^{n-j+\epsilon}\mathscr{D}_f^{j+1}(R) \\
		&\quad+\left(\Pi_{k=0}^{n-1} (n-k+\epsilon)\right)\int_\rho^R\frac{\mathscr{D}_f^n(r)|\log r|^\epsilon}{r}\d r,
	\end{align*}
	for all $\rho\in (0,R)$, then \eqref{eq:dini_equiv_th2} immediately implies \eqref{eq:dini_equiv_th1}. In order to prove the converse, it is sufficient to prove that
	\begin{equation*}
		\lim_{\rho\to 0}|\log \rho|^{n-j+\epsilon}\mathscr{D}_f^{j+1}(\rho)=0\quad\text{for all }j=0,\dots,n-1
	\end{equation*}
	and, in view of de L'Hôpital's rule, this is equivalent to prove that
	\begin{equation*}
		\lim_{\rho\to 0}|\log \rho|^{n+1+\epsilon}f(\rho)=0.
	\end{equation*}
	Let us assume by contradiction that 
	\begin{equation*}
		|\log r|^{n+1+\epsilon}f(r)\geq C\quad\text{for }r\leq r_0,
	\end{equation*}
	for some $C>0$ and $r_0\in(0,R)$. As a consequence, we have that
	\begin{equation*}
		\int_\rho^R\frac{f(r)|\log r|^{n+\epsilon}}{r}\d r\geq \int_\rho^{r_0}\frac{f(r)|\log r|^{n+\epsilon}}{r}\d r\geq C \int_\rho^{r_0}\frac{1}{r|\log r|}\d r
	\end{equation*}
	but, since the last term diverges as $\rho \to 0$, we obtain a contradiction, thus concluding the proof.

\end{proof}

In the following lemma, we state some properties of the moduli $\sigma$ and $\sigma_0$ and some relations between them.
\begin{lemma}\label{lemma:sigma_nondeg}
	There exists $C_\sigma=C_\sigma(d,D)>0$ such that
	\begin{equation}\label{eq:sigma_nondeg_th1}
		\sigma(r)\geq C_\sigma r\quad\text{for all }r\leq 2R_{\partial D}.
	\end{equation}
	and
	\begin{equation}\label{eq:sigma_nondeg_th2}
		\frac{\sigma(r)}{r}\leq \frac{1}{r}\int_0^r\frac{\sigma(t)}{t}\d t,\quad\text{for all }r\leq 2R_{\partial D}.
	\end{equation}
	In addition,
	\begin{equation}\label{eq:sigma_nondeg_th3}
		\sigma(r)\leq C_{\sigma_0}\int_0^r\frac{\sigma_0(t)}{t}\d t,\quad\text{for all }r\leq 2R_{\partial D},
	\end{equation}
	where
	\begin{equation*}
		C_{\sigma_0}:=\frac{1}{4}\int_0^{2R_{\partial D}}\frac{\sigma(t)}{t\sigma_0(t)}\d t.
	\end{equation*}
	Finally,
	\begin{equation}\label{eq:sigma_0_power}
		\int_0^r\frac{\sigma_0(t)}{t}\d t\geq \tilde{C}_{\sigma_0}r^{m_d},\quad\text{for all }r\leq 2R_{\partial D},
	\end{equation}
	where $m_d>0$ is as in \Cref{ass:domain} and 
	\[
	\tilde{C}_{\sigma_0}:=\frac{\sigma_0(2R_{\partial D})}{m_d(2R_{\partial D})^{m_d}}.
	\]
\end{lemma}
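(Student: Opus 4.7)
The four estimates follow from the structural properties in (II)--(IV), and I would prove them in the order (1), (2), (4), (3), since the power-law lower bound derived from (III) is the key input for (3).

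For (1) and (2) the engine is the monotonicity $(\sigma(r)/r)'\leq 0$ from (IV), which says that $r\mapsto \sigma(r)/r$ is non-increasing on $(0,2R_{\partial D}]$. Thus $\sigma(r)/r\geq \sigma(2R_{\partial D})/(2R_{\partial D})$ for $r\leq 2R_{\partial D}$, yielding (1) with $C_\sigma:=\sigma(2R_{\partial D})/(2R_{\partial D})$. The degenerate case $\sigma\equiv 0$ corresponds to $\nabla\varphi_{x_0}\equiv 0$ at every boundary point, so $D$ is locally a half-space everywhere; one can then harmlessly replace $\sigma$ by $\max\{\sigma(r),Cr\}$ without affecting (II)--(IV). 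For (2) the same monotonicity gives $\sigma(t)/t\geq \sigma(r)/r$ for $t\leq r$, so $\int_0^r \sigma(t)/t\,\d t\geq \sigma(r)$, and dividing by $r$ gives (2).

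For (4) the condition $(r^{-m_d}\sigma_0(r))'\leq 0$ from \eqref{eq:dini_hp_1} says that $\sigma_0(r)/r^{m_d}$ is non-increasing, so for every $t\leq 2R_{\partial D}$
\[
\sigma_0(t)\geq \frac{\sigma_0(2R_{\partial D})}{(2R_{\partial D})^{m_d}}\,t^{m_d}=m_d\tilde C_{\sigma_0}\,t^{m_d},
\]
and integrating $\sigma_0(t)/t$ on $(0,r)$ gives (4) directly. I also record, for use in (3), that the same monotonicity gives $\sigma_0(t)\geq 2^{-m_d}\sigma_0(r)$ for $t\in[r/2,r]$, so setting $G(r):=\int_0^r \sigma_0(t)/t\,\d t$ one obtains $G(r)\geq 2^{-m_d}\log 2\,\sigma_0(r)$, i.e.\ $\sigma_0(r)\leq c_d\,G(r)$ for a purely dimensional constant $c_d$.

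The main step is (3), and the plan is to first bound $\sigma(r)$ by $\sigma_0(r)$ using a piece of the Dini integral defining $C_{\sigma_0}$, then convert the bound to $G(r)$ via the preceding inequality. For $s\in[r,2r]$, the monotonicity of $\sigma$ gives $\sigma(s)\geq \sigma(r)$, while the bound just derived gives $\sigma_0(s)\leq (s/r)^{m_d}\sigma_0(r)$; hence
\[
\int_r^{2r}\frac{\sigma(s)}{s\sigma_0(s)}\,\d s\geq \frac{\sigma(r)\,r^{m_d}}{\sigma_0(r)}\int_r^{2r}\frac{\d s}{s^{1+m_d}}=\frac{1-2^{-m_d}}{m_d}\cdot\frac{\sigma(r)}{\sigma_0(r)}.
\]
Dominating the left-hand side by the full integral, which equals $4C_{\sigma_0}$, gives $\sigma(r)\leq \kappa_d\,C_{\sigma_0}\,\sigma_0(r)$ for an explicit dimensional $\kappa_d$, and then $\sigma(r)\leq \kappa_d c_d\,C_{\sigma_0}\,G(r)$; the factor $1/4$ in the definition of $C_{\sigma_0}$ is what absorbs these combinatorial constants into the stated form. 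The genuine obstacle is precisely this last step: the interplay between the two distinct moduli $\sigma$ and $\sigma_0$ is what makes (3) nontrivial, and the idea that makes the argument work is that the doubling-scale piece $\int_r^{2r}\sigma/(t\sigma_0)\,\d t$ of the Dini integral already controls $\sigma(r)/\sigma_0(r)$, thanks to the almost-monotonicities furnished by (III)--(IV).
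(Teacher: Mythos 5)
Your arguments for \eqref{eq:sigma_nondeg_th1}, \eqref{eq:sigma_nondeg_th2} and \eqref{eq:sigma_0_power} are correct and essentially the paper's (for \eqref{eq:sigma_nondeg_th1} the paper argues that $\sigma(r)=o(r)$ would force $D$ to be a half-space; your reduction via the monotonicity of $\sigma(r)/r$ to the single value $\sigma(2R_{\partial D})$ is the same point in a cleaner form).

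For \eqref{eq:sigma_nondeg_th3} your doubling-scale argument is a genuinely different route, and it is structurally sound, but it does not prove the statement as written. First, the window $[r,2r]$ leaves $[0,2R_{\partial D}]$ as soon as $r>R_{\partial D}$, so the comparison with the full Dini integral is only available for $r\le R_{\partial D}$; this is fixable but must be addressed. Second, and more importantly, your final constant is not $C_{\sigma_0}$: chasing your inequalities gives
\[
\sigma(r)\ \le\ \frac{4\,m_d\,2^{m_d}}{(1-2^{-m_d})\log 2}\;C_{\sigma_0}\int_0^r\frac{\sigma_0(t)}{t}\,\d t,
\]
and since $m_d/(1-2^{-m_d})\ge 1/\log 2$, the prefactor is at least $4/(\log 2)^2>8$ for every $m_d>0$; the claim that ``the factor $1/4$ absorbs these combinatorial constants'' is therefore false. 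The paper gets the exact constant in one stroke by Cauchy--Schwarz: writing $\frac{\sqrt{\sigma(t)}}{t}=\sqrt{\tfrac{\sigma(t)}{t\sigma_0(t)}}\sqrt{\tfrac{\sigma_0(t)}{t}}$ gives $\bigl(\int_0^r\frac{\sqrt{\sigma(t)}}{t}\d t\bigr)^2\le \int_0^r\frac{\sigma(t)}{t\sigma_0(t)}\d t\int_0^r\frac{\sigma_0(t)}{t}\d t$, while the monotonicity of $\sigma(t)/t$ gives $\int_0^r\frac{\sqrt{\sigma(t)}}{t}\d t\ge \sqrt{\sigma(r)/r}\int_0^r t^{-1/2}\d t=2\sqrt{\sigma(r)}$; combining the two yields \eqref{eq:sigma_nondeg_th3} for all $r\le 2R_{\partial D}$ with the stated $C_{\sigma_0}$. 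Your version does establish $\sigma(r)\le C\,C_{\sigma_0}\int_0^r\sigma_0(t)t^{-1}\d t$ with a dimensional $C$, which would suffice for every use of the lemma in the paper, but as a proof of the lemma with its stated constant it has a gap at precisely the step you flagged as absorbing the constants.
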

\begin{proof}
	We have that \eqref{eq:sigma_nondeg_th1} follows from the fact that $\sigma$ is the modulus of continuity of the gradient of the graph of $\partial D$, at every point (up to rigid movements) and from the compactness of $D$. Indeed, if $\sigma(r)=o(r)$ as $r\to 0$, then $D$ must be the half-space. On the other hand, \eqref{eq:sigma_nondeg_th2} follows from the fact that $(\sigma(r)/r)'\leq 0$. Now, in order to prove \eqref{eq:sigma_nondeg_th3}, we first observe that, by monotonicity of $\sigma(r)/r$ there holds
	\begin{equation*}
		\int_0^r\sqrt{\frac{\sigma(t)}{t^2}}\d t\geq \sqrt{\frac{\sigma(r)}{r}}\int_0^r\frac{1}{\sqrt{t}}\d t\geq 2\sqrt{\sigma(r)}.
	\end{equation*}
	Combining this with Cauchy-Schwarz inequality, we obtain that
	\begin{equation*}
		\sigma(r)\leq \frac{1}{4}\left(\int_0^r\sqrt{\frac{\sigma(t)}{t^2}}\d t\right)^2\leq \frac{1}{4}\int_0^r\frac{\sigma(t)}{t\sigma_0(t)}\d t\int_0^r\frac{\sigma_0(t)}{t}\d t,
	\end{equation*}
	and this concludes the proof of \eqref{eq:sigma_nondeg_th3}. Finally, \eqref{eq:sigma_0_power} is a trivial consequence of the fact that, by assumption $(r^{-m_d}\sigma_0(r))'\leq 0$ and this finishes the proof.
\end{proof}

Finally, we here introduce an auxiliary modulus of continuity $\alpha$ and prove some properties, which will be useful in the proof of the Almgren monotonicity formula.
\begin{lemma}\label{lemma:alpha}
	Let
	\[
	\alpha(r):=3(r\sigma(r))'.
	\]
	Then, $\alpha\in C([0,2R))\cap C^1(0,2R)$,
	\begin{equation}\label{eq:alpha_th1}
		3\sigma(r)\leq \alpha(r)\leq 6\sigma(r)\quad\text{for all }r\in[0,2R_{\partial D}],
	\end{equation}
	and
	\begin{equation}\label{eq:alpha_th2}
		\abs{\alpha'(r)}\leq 24\frac{\sigma(r)}{r}\quad\text{for all }r\in(0,2R_{\partial D}). 
	\end{equation}
\end{lemma}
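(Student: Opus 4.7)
The plan is to compute $\alpha$ and its derivative explicitly and then read off the three claimed inequalities from the quantitative pointwise bounds on $\sigma$, $\sigma'$, and $\sigma''$ collected in part (IV) of Assumption \ref{ass:domain}. Unrolling the product rule gives
\begin{equation}
\alpha(r) \;=\; 3\sigma(r) + 3r\sigma'(r),\qquad \alpha'(r) \;=\; 6\sigma'(r) + 3r\sigma''(r),
\end{equation}
so all the required regularity and estimates reduce to elementary manipulations.

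For the bound \eqref{eq:alpha_th1}, the lower inequality $\alpha(r)\geq 3\sigma(r)$ is immediate from $\sigma'(r)\geq 0$, which in turn follows from the monotonicity of $\sigma$ (see Assumption \ref{ass:domain}(II)) together with $\sigma\in C^2(0,2R_{\partial D})$. For the upper inequality, I would use the condition $(\sigma(r)/r)'\leq 0$ from Assumption \ref{ass:domain}(IV), rewritten as $r\sigma'(r)\leq \sigma(r)$, so that $\alpha(r)\leq 3\sigma(r)+3\sigma(r)=6\sigma(r)$. (The condition $|\sigma'(r)|\le 2\sigma(r)/r$ from (IV) would yield only $9\sigma(r)$, whereas the slightly stronger monotonicity of $\sigma/r$ gives the sharper constant $6$.)

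For the bound \eqref{eq:alpha_th2} on $\alpha'$, I would simply plug the two remaining estimates from Assumption \ref{ass:domain}(IV) into the expression for $\alpha'$:
\begin{equation}
|\alpha'(r)| \;\leq\; 6|\sigma'(r)| + 3r|\sigma''(r)| \;\leq\; 6\cdot \frac{2\sigma(r)}{r} + 3r\cdot \frac{4\sigma(r)}{r^2} \;=\; \frac{24\sigma(r)}{r}.
\end{equation}
Finally, for the regularity assertion, the representation $\alpha=3\sigma+3r\sigma'$ together with $\sigma\in C^2(0,2R_{\partial D})$ gives $\alpha\in C^1(0,2R_{\partial D})$ directly; continuity at $0$ follows from $\sigma(0)=0$ and the estimate $|r\sigma'(r)|\le 2\sigma(r)\to 0$ as $r\to 0^+$, which allows us to extend $\alpha$ continuously by $\alpha(0)=0$. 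There is no significant obstacle here: the lemma is a purely technical calibration of the auxiliary modulus $\alpha$, and the only mild subtlety is making sure to exploit $(\sigma/r)'\le 0$ rather than the weaker bound $|\sigma'|\le 2\sigma/r$ in order to land on the constant $6$ in \eqref{eq:alpha_th1}.
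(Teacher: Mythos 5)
Your proof is correct and follows essentially the same route as the paper: both arguments expand $\alpha=3\sigma+3r\sigma'$, use $\sigma'\geq 0$ and $(\sigma/r)'\leq 0$ (equivalently $r\sigma'\leq\sigma$) for \eqref{eq:alpha_th1}, and plug the bounds $|\sigma'|\leq 2\sigma/r$, $|\sigma''|\leq 4\sigma/r^2$ into $\alpha'=6\sigma'+3r\sigma''$ for \eqref{eq:alpha_th2}. Your additional remarks on continuity at $0$ and on why $(\sigma/r)'\leq 0$ is needed to reach the constant $6$ are accurate refinements of what the paper leaves implicit.
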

\begin{proof}
	\eqref{eq:alpha_th1} directly follows by observing that
	\[
	\alpha(r)=3(\sigma(r)+r\sigma'(r))=3(2\sigma(r)+r^2(\sigma(r)/r)')
	\]
	and combining it with the fact that $\sigma'(r)\geq 0$ and $(\sigma(r)/r)'\leq 0$, while \eqref{eq:alpha_th2} follows by direct computations and the fact that $\abs{\sigma'(r)}\leq 2\sigma(r)/r$ and $\abs{\sigma''(r)}\leq 4\sigma(r)/r^2$, see \Cref{ass:domain}.
\end{proof}

\subsection{Diffeomorphisms and first variations}\label{sec:diffeo_variat}

Since a crucial tool for our arguments is the local diffeomorphism introduced in \cite{Adolfsson1997} in order to prove a boundary Almgren monotonicity formula, in this section we examine how a diffeomorphism affects the functional and the equations satisfied by the minimizer. 

For any $C^1$-diffeomorphism $\Phi\colon\R^d\to\R^d$ and any $x\in \R^d$, we denote
\begin{equation*}
	p_\Phi(x):=\abs{\det D\Phi(x)}\ ,\qquad A_\Phi(x):=\abs{\det D\Phi(x)}(D\Phi(x))^{-T}(D\Phi(x))^{-1}.
\end{equation*}
Moreover, for any open, bounded $\mathcal{O}\sub\R^d$ and any $w\in H^1_{0,N}(\mathcal{O})$, we let
\begin{equation*}
	J_{N,\Phi}(w,\mathcal{O}):=\sum_{i=1}^N\frac{\displaystyle \int_{\mathcal{O}}A_\Phi\nabla w_i\cdot\nabla w_i\dx}{\displaystyle \int_{\mathcal{O}}p_\Phi w_i^2\dx}.
\end{equation*}
Now, we let $D_\Phi:=\Phi^{-1}(D)$ and, for any $w\in H^1_{0,N}(D)$, we let
\begin{equation*}
	w^{\Phi}:=w\circ \Phi\in H^1_{0,N}(D_\Phi).
\end{equation*}
We collect below some properties which can be easily seen to hold true.
\begin{lemma}\label{lemma:generic_diffeo}
	The following holds:
	\begin{enumerate}
		\item \label{it:generic_diffeo_th0} the mapping
		\begin{align*}
			H^1_{0,N}(D)&\to H^1_{0,N}(D_\Phi), \\
			w&\mapsto w^\Phi
		\end{align*}
		is a bijection.
		\item \label{it:generic_diffeo_th1} $J_N(w,D)=J_{N,\Phi}(w^\Phi,D_\Phi)$ for any $w\in H^1_{0,N}(D)$.
		\item \label{it:generic_diffeo_th2} $u^\Phi\in H^1_{0,N}(D_\Phi)$ is a minimizer for $J_{N,\Phi}(\cdot,D_\Phi)\colon H^1_{0,N}(D_\Phi)\to\R$.
	\end{enumerate}
\end{lemma}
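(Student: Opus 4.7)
The three assertions are all consequences of the usual change-of-variables formula, and my plan is to set up the bijection carefully so that (2) reduces to a direct computation and (3) then follows for free.

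For (\ref{it:generic_diffeo_th0}), I would first verify that $w^\Phi=w\circ\Phi$ really lies in $H^1_{0,N}(D_\Phi)$ whenever $w\in H^1_{0,N}(D)$. The key observations are: the $C^1$-diffeomorphism $\Phi$ sends Lebesgue-null sets to null sets (and vice versa via $\Phi^{-1}$), so the segregation $w_i^\Phi w_j^\Phi = (w_iw_j)\circ\Phi=0$ a.e.\ on $D_\Phi$ follows directly from the segregation of $w$. The $H^1$ property is standard from the chain rule, since on any bounded subset $\Phi$ and $\Phi^{-1}$ are bi-Lipschitz, and the trace-zero condition on $\partial D_\Phi$ is preserved because $\Phi(\partial D_\Phi)=\partial D$. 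The inverse of the mapping is $v\mapsto v\circ\Phi^{-1}$, which by the same argument lands in $H^1_{0,N}(D)$, so the mapping is a bijection.

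For (\ref{it:generic_diffeo_th1}), I would perform the change of variables $x=\Phi(y)$ in each integral defining $J_N(w,D)$. The chain rule gives the transformation of the gradient under composition in terms of $D\Phi^{-1}$ (up to transpose convention), and combined with the Jacobian factor $|\det D\Phi|$ one obtains
\[
\int_D|\nabla w_i|^2\dx=\int_{D_\Phi}A_\Phi\,\nabla w_i^\Phi\cdot\nabla w_i^\Phi\dy
\quad\text{and}\quad
\int_D w_i^2\dx=\int_{D_\Phi}p_\Phi\,(w_i^\Phi)^2\dy,
\]
matching the definitions of $p_\Phi$ and $A_\Phi$ term by term. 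Summing over $i$ yields the claimed identity $J_N(w,D)=J_{N,\Phi}(w^\Phi,D_\Phi)$. This part is routine and the only thing to be careful with is the matrix algebra identifying the quadratic form $|(D\Phi)^{-T}\nabla v|^2\,|\det D\Phi|$ with $A_\Phi\nabla v\cdot\nabla v$ via symmetry.

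For (\ref{it:generic_diffeo_th2}), the minimizer $u$ is nonnegative and componentwise nontrivial, and these properties are preserved by composition with a diffeomorphism (nonnegativity pointwise; nontriviality since $\Phi$ has nonvanishing Jacobian, hence $u_i\circ\Phi\equiv0$ iff $u_i\equiv 0$). Combining (\ref{it:generic_diffeo_th0}) and (\ref{it:generic_diffeo_th1}), the bijection $w\mapsto w^\Phi$ identifies the admissible class for $J_N(\cdot,D)$ with the admissible class for $J_{N,\Phi}(\cdot,D_\Phi)$ and preserves the value of the functional; any competitor $v\in H^1_{0,N}(D_\Phi)$ with the right sign conditions corresponds to some $w=v\circ\Phi^{-1}$ in the admissible class for $J_N$, and $J_{N,\Phi}(v,D_\Phi)=J_N(w,D)\geq J_N(u,D)=J_{N,\Phi}(u^\Phi,D_\Phi)$, so $u^\Phi$ is indeed a minimizer. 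There is no real obstacle in this lemma beyond being careful with transpose/Jacobian conventions when verifying (\ref{it:generic_diffeo_th1}); the rest is bookkeeping.
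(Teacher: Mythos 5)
The paper offers no proof of this lemma (it is stated as a list of properties that ``can be easily seen to hold true''), and your argument is the standard, correct way to fill it in: preservation of segregation, $H^1$-regularity and the trace-zero condition under a bi-Lipschitz change of variables for (1), the change-of-variables formula for (2), and transfer of minimality through the measure-preserving-up-to-Jacobian bijection for (3). The one point worth flagging is the transpose order in (2): the direct computation produces the matrix $|\det D\Phi|\,(D\Phi)^{-1}(D\Phi)^{-T}$, whereas the paper defines $A_\Phi=|\det D\Phi|\,(D\Phi)^{-T}(D\Phi)^{-1}$; these are each symmetric but not equal in general, so your appeal to ``symmetry'' does not literally reconcile them --- this is a harmless inconsistency in the paper's definition (only symmetry, ellipticity and closeness to the identity are used downstream) rather than a gap in your proof, but you should state the matrix you actually obtain.
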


It is now time to understand how the equation satisfied by a minimizer $u\in H^1_{0,N}(D)$ changes under the diffeomorphism. By classical suitable outer variations applied to the functional, one can easily prove the following weak formulation and integration by parts formula, still holding even in presence of segregated minimizers.

\begin{lemma}[Outer variations and integration by parts formula]\label{lemma:int_by_parts}
	Let $u\in H^1_{0,N}(D)$ be a minimizer of $J(\cdot,D)$ and let $\Phi\colon\R^d\to\R^d$ be a $C^1$-diffeomorphism.  Then, $u^\Phi\in H^1_{0,N}(D_\Phi)$ satisfies
	\begin{equation}\label{eq:weak_formulation}
		\int_{\Phi^{-1}(\Omega_i)}(A_\Phi\nabla u_i^\Phi\cdot\nabla\varphi_i-p_i^\Phi u_i^\Phi\varphi_i)\dx=0
	\end{equation}
	for all $\varphi_i\in H^1_0(\Phi^{-1}(\Omega_i))$. Moreover, for all $x_0\in \overline{D}_\Phi$ and all $r>0$, there holds
	\begin{equation}\label{eq:int_by_parts_1}
		\int_{D_\Phi\cap B_r(x_0)}(A_\Phi\nabla u_i^\Phi\cdot\nabla \varphi_i-p_i^\Phi u_i^\Phi\varphi_i)\dx=\int_{D_\Phi\cap \partial B_r(x_0)}\varphi_i A_\Phi\nabla u_i^\Phi\cdot\nnu\ds,
	\end{equation}
	for all $\varphi_i\in H^1_0(\Phi^{-1}(\Omega_i))$ and all $i=1,\dots,N$.
	In particular,
	\begin{equation}\label{eq:int_by_parts_2}
		\int_{D_\Phi\cap B_r(x_0)}(A_\Phi\nabla u_i^\Phi\cdot\nabla u_i^\Phi-p_i^\Phi |u_i^\Phi|^2)\dx=\int_{D_\Phi\cap \partial B_r(x_0)}u_i^\Phi A_\Phi\nabla u_i^\Phi\cdot\nnu\ds,
	\end{equation}
	for all $i=1,\dots,N$.
\end{lemma}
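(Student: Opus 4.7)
The plan is to first establish \eqref{eq:weak_formulation} via admissible outer variations, and then to deduce \eqref{eq:int_by_parts_1} and \eqref{eq:int_by_parts_2} via a localization argument. For the weak formulation, fix $\varphi_i \in H^1_0(\Omega_i)$, extended by zero outside $\Omega_i$. Since $\supp \varphi_i \subset \overline{\Omega_i}$ and $u_j \equiv 0$ on $\Omega_i$ for $j \neq i$, the perturbation $(u_1, \ldots, u_i + t\varphi_i, \ldots, u_N)$ still belongs to $H^1_{0,N}(D)$ for every $t \in \R$, so it is an admissible competitor. Writing out the derivative at $t=0$ of the Rayleigh-quotient sum, using the normalization $\int_D u_i^2 \, dx = 1$ and the critical-point identity $\frac{d}{dt}\big|_{t=0} J_N = 0$, produces the Euler--Lagrange equation $\int_{\Omega_i}(\nabla u_i \cdot \nabla \varphi_i - \lambda_i u_i \varphi_i)\, dx = 0$. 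Pulling this back under the diffeomorphism $y = \Phi(x)$, using $\nabla w \circ \Phi = (D\Phi)^{-T}\nabla(w\circ\Phi)$ and $dy = p_\Phi\,dx$, converts it into \eqref{eq:weak_formulation}; the bijectivity of the pullback asserted in \Cref{lemma:generic_diffeo} ensures that $\varphi_i$ ranges over all of $H^1_0(\Phi^{-1}(\Omega_i))$.

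For \eqref{eq:int_by_parts_1}, the idea is to plug a localized version of $\varphi_i$ into \eqref{eq:weak_formulation}. Given $\varphi_i \in H^1_0(\Phi^{-1}(\Omega_i))$, I would take a family of radial cutoffs $\eta_\epsilon \in C_c^\infty(\R^d)$ approximating $\chi_{B_r(x_0)}$ from below, with gradients concentrating on $\partial B_r(x_0)$. The product $\varphi_i \eta_\epsilon$ is an admissible test function in \eqref{eq:weak_formulation}, and the product rule gives
\[
\int_{\Phi^{-1}(\Omega_i)} \eta_\epsilon\,\bigl(A_\Phi \nabla u_i^\Phi \cdot \nabla \varphi_i - p_i^\Phi u_i^\Phi \varphi_i\bigr)\, dx = -\int_{\Phi^{-1}(\Omega_i)} \varphi_i\, A_\Phi \nabla u_i^\Phi \cdot \nabla \eta_\epsilon\, dx.
\]
As $\epsilon \to 0^+$, dominated convergence drives the left-hand side to the bulk integral in \eqref{eq:int_by_parts_1}, while the coarea formula forces the right-hand side to converge to $\int_{\Phi^{-1}(\Omega_i) \cap \partial B_r(x_0)} \varphi_i\, A_\Phi \nabla u_i^\Phi \cdot \nnu \, dS$. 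Extending $\varphi_i$ by zero outside $\Phi^{-1}(\Omega_i)$ and noting that $u_i^\Phi \equiv 0$ on $D_\Phi \setminus \Phi^{-1}(\Omega_i)$ allows enlarging the domains of integration to $D_\Phi \cap B_r(x_0)$ and $D_\Phi \cap \partial B_r(x_0)$, which yields \eqref{eq:int_by_parts_1}.

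Finally, \eqref{eq:int_by_parts_2} is the specialization $\varphi_i = u_i^\Phi$: the Lipschitz continuity from \Cref{prop:lipschitz} combined with $u_i = 0$ on $\partial \Omega_i$ (since $\Omega_i = \{u_i > 0\}$ and $u_i$ is continuous on $\overline D$) places $u_i \in H^1_0(\Omega_i)$, hence $u_i^\Phi \in H^1_0(\Phi^{-1}(\Omega_i))$. The most delicate step is the $\epsilon \to 0$ passage in the boundary term, which requires $A_\Phi \nabla u_i^\Phi$ to admit well-defined boundary traces on almost every sphere around $x_0$; this is guaranteed by the Lipschitz regularity of $u$ up to $\partial D$ from \Cref{prop:lipschitz}, together with the $C^1$-smoothness of $\Phi$.
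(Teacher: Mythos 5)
Your proposal is correct and follows essentially the same route as the paper: an outer variation $u_i \mapsto u_i + t\varphi_i$ (performed by the authors directly in the transformed variables, by you in the original ones followed by a change of variables, which is only a cosmetic difference) yields \eqref{eq:weak_formulation}, then testing with $\varphi_i\rho_\epsilon$ for a cutoff approximating the characteristic function of the ball and passing to the limit gives \eqref{eq:int_by_parts_1}, and the choice $\varphi_i=u_i^\Phi$ gives \eqref{eq:int_by_parts_2}. The minor points you flag (traces on spheres, $u_i\in H^1_0(\Omega_i)$ via Lipschitz continuity) are handled at the same level of rigor as, or slightly more carefully than, the paper's own proof.
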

\begin{proof}
	The proof of \eqref{eq:weak_formulation} easily follows by considering variations of the type $(u^\Phi)^t_i:=u_i^\Phi+t\varphi_i$, with $\varphi_i\in C_c^\infty(\Phi^{-1}(\Omega_i))$ and $t\in(-1,1)$ and then conclude by density. In order to prove \eqref{eq:int_by_parts_1}, let $\rho_\epsilon=\rho_{\epsilon,r}\in C_c^\infty(D_\Phi)$ be a smooth approximation of the characteristic function $\chi_{D_\Phi\cap B_r(x_0)}$, i.e. $\rho_\epsilon\to \chi_{D_\Phi\cap B_r(x_0)}$ uniformly as $\epsilon\to 0$ and let $\varphi_i\in C_c^\infty(\Phi^{-1}(\Omega_i))$. Then, the proof easily follows by applying \eqref{eq:weak_formulation} with $\varphi_i\rho_\epsilon$ as a test function and letting $\epsilon\to 0$. Finally, by density of $C_c^\infty(\Phi^{-1}(\Omega_i))$ into $H^1_0(\Phi^{-1}(\Omega_i))$ we conclude the proof of \eqref{eq:int_by_parts_1}, while \eqref{eq:int_by_parts_2} follows by choosing $\varphi_i=u_i$ in \eqref{eq:int_by_parts_1}. The proof is thereby complete.
\end{proof}

We conclude the section with an inner variation formula for the perturbed functional $J_{N,\Phi}$, which is one of the main novelty of the present paper. Indeed, we consider inner variations of the type
\[
D_\Phi\ni x\mapsto x+t\xi(x)
\]
but without requiring the vector field $\xi\colon \R^d\to\R^d$ to be compactly supported in $D_\Phi$. Nevertheless, we require a geometric condition regarding how the vector field $\xi$ crosses the boundary $\partial D_\Phi$, that is
\[
\xi\cdot\nnu\geq 0 \quad\text{on }\partial D_\Phi.
\]
This essentially translates into a \enquote{variational} condition, since it is equivalent to require that
\[
x+t\xi(x)\in\R^d\setminus D_\Phi\quad\text{whenever}\quad x\in\R^d\setminus D_\Phi~\text{and }t\geq 0
\]
which is, in turn, equivalent to ask that $(u^\Phi)^t(x):=u^\Phi(x+t\xi(x))\in H^1_{0,N}(D_\Phi)$, i.e. $(u^\Phi)^t(x)$ to be an admissible competitor. Since we can only allow one-sided inner variations, that is with $t\geq 0$, from minimality of $u^\Phi$ we can only obtain the inequality
\[
\lim_{t\to 0^+}\frac{\d}{\d t} J_{N,\Phi}(v^t,D_{\Phi})\geq 0,
\]
which is still sufficient for our purposes. This result will actually lead to a Pohozaev-type inequality, see \Cref{prop:pohozaev}.	

\begin{proposition}[One-sided inner variations]\label{prop:inner_variations}
	There holds
	\begin{equation}\label{eq:innner_th1}
		\sum_{i=1}^N\int_{D_{\Phi}}\left(2D\xi A_\Phi-A_\Phi\dive \xi-\d A_\Phi[\xi]\right)\nabla u_i^\Phi\cdot\nabla u_i^\Phi\dx+\int_{D_{\Phi}}(\nabla p_i^\Phi\cdot \xi+p_i^\Phi\dive \xi)|u_i^\Phi|^2\dx\geq 0,
	\end{equation}
	for all $\xi\in C^1_c(\R^d,\R^d)$ such that
	\begin{equation}\label{eq:inner_hp1}
		\xi(x)\cdot\nnu(x)\geq 0\quad\text{for all }x\in\partial D_\Phi.
	\end{equation}
\end{proposition}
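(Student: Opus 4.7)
The plan is to use inner variations $\Psi_t(x):=x+t\xi(x)$ and take as competitor $v^t:=u^\Phi\circ\Psi_t$. Since $\xi\in C^1_c(\R^d,\R^d)$, for $|t|$ small $\Psi_t$ is a $C^1$-diffeomorphism of $\R^d$ with $D\Psi_t=I+tD\xi$ invertible and $\det D\Psi_t=1+t\,\dive\xi+O(t^2)$; the segregation $v^t_iv^t_j=0$ is automatic from that of $u^\Phi$ since it is preserved pointwise under composition. Once $v^t\in H^1_{0,N}(D_\Phi)$ is established, the minimality statement in \Cref{lemma:generic_diffeo} yields $J_{N,\Phi}(v^t,D_\Phi)\geq J_{N,\Phi}(u^\Phi,D_\Phi)$ for small $t\geq 0$, whence $\tfrac{d}{dt}\big|_{t=0^+}J_{N,\Phi}(v^t,D_\Phi)\geq 0$, and it remains only to compute this one-sided derivative explicitly.

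The delicate point is the admissibility of $v^t$: requiring $v^t=0$ on $\partial D_\Phi$ amounts to $\Psi_t(\R^d\setminus D_\Phi)\sub\R^d\setminus D_\Phi$ for small $t>0$. When the \emph{strict} inequality $\xi\cdot\nnu>0$ holds on $\supp(\xi)\cap\partial D_\Phi$, a first-order analysis of the signed distance to $\partial D_\Phi$ shows that $\Psi_t$ pushes the complement of $D_\Phi$ into itself for small $t>0$, so admissibility is clear. To treat the general case $\xi\cdot\nnu\geq 0$ in \eqref{eq:inner_hp1}, I would approximate $\xi$ by $\xi_\e:=\xi+\e\eta$, where $\eta\in C^1_c(\R^d,\R^d)$ is a smooth cut-off of a $C^1$-extension of the outward unit normal field of $\partial D_\Phi$ (available since $\partial D$, hence $\partial D_\Phi$, is $C^1$); then $\xi_\e\cdot\nnu\geq \e\,\eta\cdot\nnu>0$ strictly. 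Establishing \eqref{eq:innner_th1} for each $\xi_\e$ and letting $\e\to 0$ recovers it for $\xi$ by dominated convergence, as the integrands in \eqref{eq:innner_th1} depend continuously on $\xi$ in $C^1$ and have uniformly bounded support.

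The derivative of $J_{N,\Phi}(v^t,D_\Phi)$ at $t=0^+$ is then extracted by a change of variables. Setting $E_i(t):=\int_{D_\Phi}A_\Phi\nabla v^t_i\cdot\nabla v^t_i\dx$ and $H_i(t):=\int_{D_\Phi}p_\Phi|v^t_i|^2\dx$, the substitution $y=\Psi_t(x)$ (legitimate because the strict condition forces $\Psi_t(D_\Phi)\supset D_\Phi\supset\supp u^\Phi_i$ for small $t$) rewrites both integrals as integrals on $D_\Phi$ with integrands involving $A_\Phi\circ\Psi_t^{-1}$, $D\Psi_t\circ\Psi_t^{-1}$, $p_\Phi\circ\Psi_t^{-1}$ and $\det D\Psi_t\circ\Psi_t^{-1}$. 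A Taylor expansion in $t$, combined with the symmetry of $A_\Phi$ (which collapses $D\xi\,A_\Phi+A_\Phi D\xi^T$ into $2D\xi\,A_\Phi$ in the bilinear form), gives $E'_i(0)=\int_{D_\Phi}(2D\xi A_\Phi-A_\Phi\dive\xi-\d A_\Phi[\xi])\nabla u^\Phi_i\cdot\nabla u^\Phi_i\dx$ and $-H'_i(0)=\int_{D_\Phi}(\nabla p_\Phi\cdot\xi+p_\Phi\dive\xi)|u^\Phi_i|^2\dx$. Invoking the normalization $\int_Du_i^2=1$, which pulls back via $y=\Phi(x)$ to $H_i(0)=\int_{D_\Phi}p_\Phi|u^\Phi_i|^2=1$, the quotient rule yields $\tfrac{d}{dt}\big|_{t=0^+}J_{N,\Phi}(v^t,D_\Phi)=\sum_i\bigl(E'_i(0)-\lambda_iH'_i(0)\bigr)$, which coincides with the left-hand side of \eqref{eq:innner_th1} upon recalling $p^\Phi_i=\lambda_ip_\Phi$.

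The hard part is the admissibility: at boundary points where $\xi\cdot\nnu=0$, the composition $u^\Phi\circ\Psi_t$ is not manifestly in $H^1_0(D_\Phi)$, since $\Psi_t$ need not preserve $\R^d\setminus D_\Phi$ up to second order in $t$. The approximation by strictly outward-pointing $\xi_\e$ sidesteps this cleanly, using only the $C^1$-regularity of $\partial D$ to construct $\eta$; everything else is Taylor expansion and the change-of-variables formula, localized by the compact support of $\xi$.
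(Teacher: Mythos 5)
Your proposal is correct and follows essentially the same route as the paper: one-sided inner variations $v^t=u^\Phi\circ(\mathrm{id}+t\xi)$, admissibility first under strict transversality $\xi\cdot\nnu>0$, then approximation by $\xi+\e\eta$ with $\eta$ outward-pointing and passage to the limit by dominated convergence, with the first-variation computation yielding exactly the two integrands in \eqref{eq:innner_th1} (the paper leaves this computation as "standard"; yours is carried out correctly, including the use of the normalization $\int_{D_\Phi}p_\Phi|u_i^\Phi|^2=1$). The only cosmetic point is that the unit normal of a $C^1$ boundary is merely continuous, so a literal $C^1$-extension of $\nnu$ need not exist; what is needed (and what the paper builds by patching locally constant vectors $\nnu(p_i)$ with cut-offs) is just a $C^1_c$ field uniformly transverse to $\partial D_\Phi$ on $\supp\xi\cap\partial D_\Phi$, which a mollification of the normal field provides.
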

\begin{proof}
	In order to prove \eqref{eq:innner_th1}, we first fix a vector field $\xi\in C^1_c(\R^d,\R^d)$ such that \eqref{eq:inner_hp1} holds, a nonnegative $t\geq 0$ and we consider the map
	\[
	G_t(x):=x-t\xi (x).
	\]
	One can easily see that $G_t$ is a $C^1$-diffeomorphism in $\R^d$, for $t$ sufficiently small. Hence, we have that
	\[
	G_t^{-1}(x)=x+t\xi(x)+o(t),\quad\text{as }t\to 0^+,
	\]
	where the reminder $o(t)$ is uniform for $x\in\R^d$. We now let $v^t(x):=u^{\Phi}(G_t^{-1}(x))$, for $x\in\R^d$ (we extend $v$ to be zero in $\R^d\setminus D_\Phi$). Let us first assume that
	\begin{equation}\label{eq:pohoz_6}
		\frac{\xi(x)}{|\xi(x)|}\cdot\nnu(x)\geq C> 0\quad\text{for all }x\in\partial D_\Phi\cap\{\xi\neq 0\},
	\end{equation}
	for some $C>0$.	We have that $v_i^t\not\equiv 0$ for any $i=1,\dots,N$ and, in view of \eqref{eq:pohoz_6}, $v^t\in H^1_{0,N}(D_\Phi)$ for $t$ sufficiently small (depending on $C$). Therefore, $v^t$ is an admissible competitor for $J_{N,\Phi}(\cdot,D_{\Phi})$ and, being $u^{\Phi}=v^0$ a minimizer, we have that
	\begin{equation*}
		\lim_{t\to 0^+}\frac{\d}{\d t} J_{N,\Phi}(v^t,D_{\Phi})\geq 0.
	\end{equation*}
	Then, performing standard computations for first inner variations, we obtain \eqref{eq:innner_th1} in case \eqref{eq:pohoz_6} holds. At this point, we want to prove \eqref{eq:innner_th1} for $\xi\in C_c^1(\R^d,\R^d)$ only satisfying \eqref{eq:inner_hp1}. The idea is to approximate $\xi$ with a sequence $\{\xi_s\}_{s>0}\sub C_c^\infty(\R^d,\R^d)$ satisfying \eqref{eq:pohoz_6} and then pass to the limit as $s\to 0$ in \eqref{eq:innner_th1}.
	In order to to this, we first let $\eta\in C_c^\infty(\R^d)$ be such that $\eta=1$ in a neighborhood of $\supp\xi\cap\partial D_\Phi$. Then, being $D_\Phi$ of class $C^1$, one can easily see that there exists a finite number of points $p_1,\dots,p_n\in\partial D_\Phi\cap \supp\xi$ and a radius $r>0$ such that
	\[
	\supp\xi\cap \partial D_\Phi\sub\bigcup_{i=1}^n B_r(p_i)
	\]
	and such that, in each $\partial D_\Phi\cap B_r(p_i)$ is, up to a change of coordinates, the graph of a $C^1$ function. Moreover, it is possible to choose $n$ smooth cut-off functions $\eta_i\in C_c^\infty(B_r(p_i))$ such that 
	\begin{equation*}
		\sum_{i=1}^n\eta_i(x)\nnu(x)\cdot\nnu(p_i)>0\quad\text{for any }x\in\partial D_\Phi\cap \supp\xi.
	\end{equation*}
	Now, for any $s>0$, we let 
	\begin{equation*}
		\xi_s:=\xi+s\eta\sum_{i=1}^n \eta_i\nnu(p_i)
	\end{equation*}
	so that $\xi_s$ satisfies \eqref{eq:pohoz_6}; hence,  \eqref{eq:innner_th1} holds for $\xi_s$, for $s$ sufficiently small and we can then pass to the limit as $s\to 0$, thanks to dominated convergence theorem. This concludes the proof.
\end{proof}

\section{An equivalent problem on a starshaped domain}\label{sec:equivalent}

In order to derive an Almgren-type monotonicity formula at boundary points and still avoid (a posteriori) unnecessary geometric assumptions, such as convexity, we need to introduce a diffeomorphism which locally modifies the problem and ensures the right sign properties for the derivative of the corresponding frequency function. This idea was introduced in \cite{Adolfsson1997}. Hence, the aim of the present section is to introduce such diffeomorpshism, to state the main properties of the transformed problem and to derive a Pohozaev-type inequality, which is a key ingredient in the proof of the Almgren monotonicity formula.

In this section, we fix a point $x_0\in\partial D$ and we let $\textbf{Q}\in O(d)$, $R_{\partial D}>0$, $\varphi\colon B_{R_{\partial D}}'\to \R$ and $\sigma\colon[0,2R_{\partial D}]\to [0,\infty)$ be as in \Cref{ass:domain}. As a first step, we consider the nonlinear map
\begin{align*}
	\Psi_0\colon B_{R_{\partial D}}&\to \R^d \\
	x &\mapsto y=\Psi_0(x)
\end{align*}
defined as
\begin{equation*}
	\Psi_0(x):=(x',x_d+3\abs{x}\sigma(\abs{x})).
\end{equation*}
We here provide the expression and some basic properties of $D\Psi_0$.
\begin{lemma}\label{lemma:DPsi_0}
	For all $x\in B_{R_{\partial D}}\setminus\{0\}$ there holds
	\begin{equation*}
		D\Psi_0(x)=		\begin{pmatrix}
			1 &	0 &	\cdots & 0 & \frac{\alpha(\abs{x})}{\abs{x}} x_1 \\
			0 &	1 &	\cdots & 0	& \frac{\alpha(\abs{x})}{\abs{x}} x_2 \\
			\vdots & \vdots & \ddots & \vdots & \vdots \\
			0 & 0 & \cdots & 1	& \frac{\alpha(\abs{x})}{\abs{x}} x_{d-1} \\
			0 &
			0 &
			\cdots & 0 & 1+\frac{\alpha(\abs{x})}{\abs{x}} x_d
		\end{pmatrix}=\mathbf{I}+
		\begin{pmatrix}
			\bm{0}_{d\times(d-1)}& \frac{\alpha(\abs{x})}{\abs{x}}x
		\end{pmatrix}.
	\end{equation*}
	and
	\begin{equation*}
		\det D\Psi_0(x)=1+\frac{\alpha(\abs{x})}{\abs{x}} x_d.
	\end{equation*}
	Hence, by continuity, we can extend $D\Psi_0(0)=\mathbf{I}$. In particular, up to taking a smaller $R_{\partial D}$, we can assume that $\sigma(R_{\partial D})\leq 1/120$ and, hence, that $|\det D\Psi_0(x)-1|\leq 1/20$ for all $x\in B_{R_{\partial D}}$. In particular, we can assume that $\Psi_0\colon B_{R_{\partial D}}\to \Psi_0(B_{R_{\partial D}})$ is a $C^1$-diffeomorphism.
\end{lemma}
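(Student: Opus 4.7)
The statement is essentially a computation together with a perturbation argument, so the plan is to execute it in three short stages.

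\textbf{Step 1: direct differentiation.} The first $d-1$ components of $\Psi_0$ are just the coordinate projections $x\mapsto x_i$, so their partial derivatives give Kronecker deltas, producing the identity pattern on the first $d-1$ rows/columns. Everything nontrivial sits in the $d$-th component $(\Psi_0)_d(x)=x_d+3|x|\sigma(|x|)$. I would write $f(r):=3r\sigma(r)$ and note that by definition of $\alpha$ in \Cref{lemma:alpha} we have $f'(r)=3\sigma(r)+3r\sigma'(r)=\alpha(r)$ on $(0,2R_{\partial D})$. Then for $x\neq 0$ and $i=1,\dots,d$, the chain rule gives $\partial_i f(|x|)=f'(|x|)\tfrac{x_i}{|x|}=\tfrac{\alpha(|x|)}{|x|}x_i$. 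Collecting the entries yields exactly the matrix displayed in the statement, which can also be written in the compact form $D\Psi_0(x)=\mathbf{I}+u(x)\,e_d^{T}$ (or $e_d u^T$, according to the paper's convention) with $u(x):=\tfrac{\alpha(|x|)}{|x|}x$.

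\textbf{Step 2: determinant.} With the rank-one perturbation structure, the matrix determinant lemma gives $\det(\mathbf{I}+u e_d^T)=1+e_d^T u=1+\tfrac{\alpha(|x|)}{|x|}x_d$. Alternatively one may expand along the last column (or by Laplace along the first $d-1$ columns, which contain the identity block) and obtain the same result directly, without invoking the lemma.

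\textbf{Step 3: continuity at the origin and the diffeomorphism claim.} From \Cref{lemma:alpha} one has $\alpha(r)\le 6\sigma(r)$, so for every $i$
\[
\left|\frac{\alpha(|x|)}{|x|}x_i\right|\le \alpha(|x|)\le 6\sigma(|x|)\xrightarrow[|x|\to 0]{}0,
\]
since $\sigma$ is continuous with $\sigma(0)=0$. This lets one extend $D\Psi_0$ continuously to the origin by setting $D\Psi_0(0)=\mathbf{I}$; together with the continuity of $\alpha$ on $[0,2R_{\partial D}]$, this yields $\Psi_0\in C^1(B_{R_{\partial D}})$. Shrinking $R_{\partial D}$ so that $\sigma(R_{\partial D})\le 1/120$ (possible because $\sigma(0)=0$ and $\sigma$ is continuous and non-decreasing) gives $|\det D\Psi_0(x)-1|\le 6\sigma(|x|)\le 1/20$ throughout $B_{R_{\partial D}}$, hence $\det D\Psi_0$ is bounded away from $0$. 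Together with $C^1$ regularity, the inverse function theorem gives a local diffeomorphism at every point.

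The only remaining point is global injectivity on $B_{R_{\partial D}}$, which is the slightly delicate part of the argument but is easy here: if $\Psi_0(x)=\Psi_0(y)$, then $x'=y'$, so writing $\rho(t):=\sqrt{|x'|^2+t^2}$ it suffices to observe that $t\mapsto t+3\rho(t)\sigma(\rho(t))$ is strictly increasing, since its derivative equals $1+\tfrac{\alpha(\rho(t))}{\rho(t)}t\ge 1-6\sigma(R_{\partial D})>0$ by the bound from Step 3. This forces $x_d=y_d$ and concludes that $\Psi_0\colon B_{R_{\partial D}}\to\Psi_0(B_{R_{\partial D}})$ is a $C^1$ diffeomorphism.
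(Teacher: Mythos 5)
The paper states this lemma without a proof (the matrix, the determinant formula, and the diffeomorphism claim are simply asserted), so there is no argument in the paper to compare against. Your proposal fills in all the details correctly: the chain-rule computation of $D\Psi_0$ is right (with $f(r)=3r\sigma(r)$ giving $f'=\alpha$, consistent with the definition in \Cref{lemma:alpha}), the rank-one/matrix-determinant-lemma identity for the determinant is standard and correct, and the continuity/size estimates $|\alpha(r)/r\cdot x_i|\le\alpha(r)\le 6\sigma(r)$ give both the extension $D\Psi_0(0)=\mathbf{I}$ and the bound $|\det D\Psi_0-1|\le 1/20$ once $\sigma(R_{\partial D})\le 1/120$.

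Your Step~3 injectivity argument is a genuine addition: you reduce to monotonicity of $t\mapsto t+3\rho(t)\sigma(\rho(t))$ on each vertical line (where $\rho(t)=\sqrt{|x'|^2+t^2}$), and the derivative $1+\frac{\alpha(\rho)}{\rho}t\ge 1-6\sigma(R_{\partial D})>0$ makes this strictly increasing. This is cleaner and more elementary than the route the paper takes for the globally extended map $\Psi$ in \Cref{lemma:Psi}, where it invokes Hadamard's global inversion theorem; for the restriction $\Psi_0\restr{B_{R_{\partial D}}}$ your slice-by-slice argument suffices and avoids the global theorem entirely. The only small point worth making explicit (though your bounds already contain it) is that $\frac{\alpha(\rho)}{\rho}t$ remains well-defined and bounded near $t=0$ when $x'=0$, since $\left|\frac{\alpha(\rho)}{\rho}t\right|\le\alpha(\rho)\to 0$; this is in line with the continuous extension of $D\Psi_0$ to the origin that you already carried out.
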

Even if we are aimed at studying only \emph{local} regularity properties, it is still convenient for our purposes to work with a diffeomorphism which is defined in the whole $\R^d$, which should coincide with $\Psi_0$ in a neighborhood of the origin. To this end, we introduce the following. Let $\eta\in C_c^\infty([0,+\infty))$ be such that
\begin{equation*}
	\eta(t)=\begin{cases}
		1, &\text{for }t\leq R_{\partial D}/2, \\
		0, &\text{for }t\geq R_{\partial D}
	\end{cases}
\end{equation*}
and $\abs{\eta'(t)}\leq 4/R_{\partial D}$ for all $t\geq 0$. We define $\Psi\colon \R^d\to \R^d$ as
\[
\Psi(x):=(x',x_d+3\eta(\abs{x})\abs{x}\sigma(\abs{x})).
\]
In particular
\[
\Psi(x)=\begin{cases}
	\Psi_0(x), &\text{for }x\in B_{R_{\partial D}/2}, \\
	x, &\text{for }x\in \R^d\setminus B_{R_{\partial D}}.
\end{cases}
\]
Moreover, we have the following.
\begin{lemma}\label{lemma:Psi}
	$\Psi$ is a $C^1$-diffeomorsphism in the whole $\R^d$.
\end{lemma}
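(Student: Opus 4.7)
The plan is to verify three properties of $\Psi$ separately: $C^1$ regularity on all of $\R^d$, invertibility of the Jacobian $D\Psi$ everywhere with a uniform lower bound on $\det D\Psi$, and global bijectivity. Combined with the inverse function theorem, these give a global $C^1$-diffeomorphism.

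For the $C^1$ regularity, I would split $\R^d$ into the three pieces $B_{R_{\partial D}/2}$, the annulus $B_{R_{\partial D}}\setminus B_{R_{\partial D}/2}$, and $\R^d\setminus B_{R_{\partial D}}$. On the first, $\Psi=\Psi_0$, which is already known to be $C^1$ by \Cref{lemma:DPsi_0}. On the third, $\Psi$ is the identity. On the annulus, $\Psi$ is manifestly smooth since $|x|$ is smooth there, $\eta\in C^\infty$, and $\sigma\in C^2(0,2R_{\partial D})$ by assumption (IV). The only mild point is matching on the boundaries $\{|x|=R_{\partial D}/2\}$ and $\{|x|=R_{\partial D}\}$, which is immediate from the definition of $\eta$.

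Next, I would compute $D\Psi$ in the annular region. Writing $\Psi(x)=x+g(x)\,e_d$ with $g(x):=3\eta(|x|)|x|\sigma(|x|)$, only the last row of $D\Psi$ is perturbed and $\det D\Psi=1+\partial_d g$, where, for $x\neq 0$,
\[
\partial_d g(x)=\frac{x_d}{|x|}\Bigl[3\eta'(|x|)|x|\sigma(|x|)+\eta(|x|)\alpha(|x|)\Bigr],
\]
with $\alpha(r)=3(r\sigma(r))'$ the modulus from \Cref{lemma:alpha}. Using $|\eta|\leq 1$, $|\eta'|\leq 4/R_{\partial D}$, $\alpha(r)\leq 6\sigma(r)\leq 6\sigma(R_{\partial D})\leq 1/20$, and the bound $\sigma(R_{\partial D})\leq 1/120$ arranged in \Cref{lemma:DPsi_0}, I would estimate $|\partial_d g|\leq 1/10+1/20=3/20$ throughout the annulus. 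Together with the bound on $B_{R_{\partial D}/2}$ already recorded in \Cref{lemma:DPsi_0}, this gives $\det D\Psi\geq 17/20>0$ everywhere, so $D\Psi$ is invertible on all of $\R^d$ and $\Psi$ is a local diffeomorphism by the inverse function theorem.

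For global bijectivity, I would exploit the triangular structure: since the first $d-1$ components of $\Psi$ are the identity, $\Psi(x)=\Psi(y)$ forces $x'=y'$, and then the equation reduces to $t\mapsto t+g(x',t)$ being injective for each fixed $x'$. The derivative in $t$ of this scalar map is exactly $\det D\Psi\geq 17/20>0$, giving strict monotonicity and hence injectivity. Surjectivity follows from the same monotonicity, since $g$ is globally bounded (it is compactly supported in $|x|\leq R_{\partial D}$ with $|g|\leq 3R_{\partial D}\sigma(R_{\partial D})$), so for fixed $y'$ the map $t\mapsto t+g(y',t)$ is continuous, strictly increasing, and tends to $\pm\infty$ as $t\to\pm\infty$.

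The main obstacle, if any, is the derivative estimate in the annular region: one has to be careful that the chain-rule contribution from $\eta'$ (which scales as $1/R_{\partial D}$) is compensated by the smallness of $|x|\sigma(|x|)$ (which scales as $R_{\partial D}\sigma(R_{\partial D})$), and that the apparently non-smooth factor $x/|x|$ is harmless since it is multiplied by $\sigma(|x|)$ and $\alpha(|x|)$, both of which vanish at the origin at the right rate thanks to assumption (IV). Once the uniform bound on $|\partial_d g|$ is secured, the remaining steps are essentially bookkeeping.
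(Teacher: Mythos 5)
Your proof is correct, and the central computation---the formula $\partial_d g(x)=\frac{x_d}{|x|}\bigl[3\eta'(|x|)|x|\sigma(|x|)+\eta(|x|)\alpha(|x|)\bigr]$ and the resulting bound $|\partial_d g|\le 3/20$, hence $\det D\Psi\ge 17/20$---matches the paper's estimate $|\det D\Psi-1|\le 60\sigma(|x|)\le 1/2$. Where you genuinely diverge is in passing from local to global invertibility: the paper invokes Hadamard's Global Inversion Theorem at this point, whereas you exploit the triangular structure of $\Psi$ (only the last coordinate is perturbed) to reduce global bijectivity to strict monotonicity and properness of the scalar map $t\mapsto t+g(x',t)$ for each fixed $x'$, whose derivative is exactly $\det D\Psi>0$. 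Your route is more elementary and self-contained; it also quietly avoids a small imprecision in the paper, since the determinant bound alone is not literally the hypothesis of Hadamard's theorem (one needs a uniform bound on $\|(D\Psi)^{-1}\|$, which requires control of the full gradient $\nabla g$ and not only $\partial_d g$---available here because $g$ is $C^1$ with compact support, but left implicit). The attention you pay to $C^1$ regularity at the origin, where the factor $x/|x|$ is tamed by $\sigma(|x|)\to 0$ and $\alpha(|x|)\le 6\sigma(|x|)\to 0$, is likewise a point the paper delegates to \Cref{lemma:DPsi_0} without further comment. Both approaches buy the same conclusion; yours trades a named global theorem for a two-line monotonicity argument tailored to the specific form of $\Psi$.
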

\begin{proof}
	One easily observes that
	\[
	\det D\Psi(x)=1+\partial_{x_d}(3\eta(\abs{x})\abs{x}\sigma(\abs{x}))=1+3\eta'(\abs{x})x_d\sigma(\abs{x})+3\eta(\abs{x})\alpha(\abs{x})\frac{x_d}{\abs{x}}
	\]
	for all $x\in\R^d$. Hence, for $x\in B_{R_{\partial D}}$
	\[
	\abs{\det D\Psi-1}\leq 3(\eta'(\abs{x})\sigma(\abs{x})+\alpha(\abs{x}))\leq 60\sigma(|x|)\leq \frac{1}{2}.
	\]
	Hence the proof is concluded in view of Hadamard's Global Inversion Theorem.
\end{proof}
At this point, since for notational purposes it is always preferable to work centered at the origin, for all $x\in\R^d$, we introduce
\begin{equation}\label{def:Psi}
	\Psi_{x_0}(x):=\textbf{Q}_{x_0}\Psi(x)+x_0,
\end{equation}
which still is a $C^1$-diffeomorphism in $\R^d$ and satisfies $D\Psi_{x_0}=\textbf{Q}_{x_0}D\Psi$.	We then consider the \enquote{transformed} domain
\begin{equation}\label{eq:Omega}
	\mathcal{O}=\mathcal{O}^{x_0}:=\Psi^{-1}_{x_0}(\mathbf{Q}^{-1}_{x_0}(D-x_0)),
\end{equation}
and we let
\begin{equation*}
	\mathcal{O}_*=\mathcal{O}_*^{x_0}:=\Psi^{-1}_{x_0}(\textbf{Q}^{-1}_{x_0}(D\cap B_{R_{\partial D}}(x_0)-x_0)\cap\Psi(B_{R_{\partial D}/2}))\sub B_{R_{\partial D}/2}
\end{equation*}
while, for all $r\in (0,R_{\partial D}/2]$, we denote
\begin{equation*}
	\mathcal{O}_r=\mathcal{O}_r^{x_0}:=\mathcal{O}_*\cap B_r,\quad S_r=S_r^{x_0}:=\mathcal{O}_*\cap \partial B_r,\quad \Gamma_r=\Gamma_r^{x_0}:=\partial\mathcal{O}_*\cap B_r.
\end{equation*}	
The description of $\mathcal{O}_r^{x_0}$ can be easily given in terms of $\varphi_{x_0}$ and $\sigma$. More precisely, the following holds.
\begin{lemma}\label{lemma:Omega_Psi}
	Up to taking a smaller $R_{\partial D}$, we have that
	\begin{align*}
		\mathcal{O}_r^{x_0}&=\{ (x',x_d)\colon x'\in B_r',~\text{and } x_d>\varphi_{x_0}(x')-3\abs{x}\sigma(\abs{x})\}\cap B_r, \\
		\Gamma_r^{x_0}&=\{ (x',x_d)\colon x'\in B_r',~\text{and } x_d=\varphi_{x_0}(x')-3\abs{x}\sigma(\abs{x})\}\cap B_r,
	\end{align*}
	for all $r\leq R_{\partial D}/2$.
\end{lemma}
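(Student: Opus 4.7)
The plan is to directly unpack the definitions of $\mathcal{O}_r^{x_0}$ and $\Gamma_r^{x_0}$, using the explicit form of $\Psi_{x_0}$ on $B_{R_{\partial D}/2}$ together with the local graph description of $\partial D$ provided by \Cref{ass:domain}(I). The argument is essentially algebraic; the only point requiring care is that shrinking $R_{\partial D}$ once is enough to avoid any boundary or size issues.

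First I would fix $r\leq R_{\partial D}/2$ and $x\in B_r$. Since $x\in B_{R_{\partial D}/2}$, the cutoff $\eta$ used in the definition of $\Psi$ satisfies $\eta(|x|)=1$ on a neighborhood of $x$, so
\[
\Psi_{x_0}(x)=\mathbf{Q}_{x_0}(x',\,x_d+3|x|\sigma(|x|))+x_0.
\]
A direct estimate gives $|\Psi_{x_0}(x)-x_0|=|\Psi(x)|\leq |x|(1+3\sigma(|x|))$; by \Cref{lemma:DPsi_0} we may assume $\sigma(R_{\partial D})\leq 1/120$, so $|\Psi_{x_0}(x)-x_0|<R_{\partial D}$, i.e.\ $\Psi_{x_0}(x)\in B_{R_{\partial D}}(x_0)$. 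Hence the local representation in \Cref{ass:domain}(I) applies and yields that $\Psi_{x_0}(x)\in D$ (resp.\ $\Psi_{x_0}(x)\in\partial D$) if and only if $x_d+3|x|\sigma(|x|)>\varphi_{x_0}(x')$ (resp.\ $=\varphi_{x_0}(x')$).

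Next I would translate this into the desired description of $\mathcal{O}_r^{x_0}=\mathcal{O}_*^{x_0}\cap B_r$. Unpacking the definition of $\mathcal{O}_*^{x_0}$ in \eqref{eq:Omega} and just above, membership $x\in\mathcal{O}_*^{x_0}$ amounts to $x\in B_{R_{\partial D}/2}$ together with $\Psi_{x_0}(x)\in D\cap B_{R_{\partial D}}(x_0)$. For $x\in B_r\subset B_{R_{\partial D}/2}$ the first condition is automatic, and, by the previous paragraph, the second is equivalent to $x_d>\varphi_{x_0}(x')-3|x|\sigma(|x|)$. Combined with the elementary fact that $x\in B_r$ forces $x'\in B_r'$, this gives the claimed formula for $\mathcal{O}_r^{x_0}$.

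Finally, for $\Gamma_r^{x_0}=\partial\mathcal{O}_*^{x_0}\cap B_r$ I would note that the open ball $B_r$ with $r\leq R_{\partial D}/2$ does not meet $\partial B_{R_{\partial D}/2}$, so the portion of $\partial\mathcal{O}_*^{x_0}$ inside $B_r$ is exactly $\Psi_{x_0}^{-1}(\partial D)\cap B_r$; the calculation of the previous paragraph identifies it with the graph $\{x_d=\varphi_{x_0}(x')-3|x|\sigma(|x|)\}\cap B_r$. No real obstacle arises beyond a careful reading of the definitions and checking that $\eta\equiv 1$ on $B_{R_{\partial D}/2}$; the only substantive adjustment is the further shrinking of $R_{\partial D}$ at the outset to guarantee $\Psi_{x_0}(B_{R_{\partial D}/2})\subset B_{R_{\partial D}}(x_0)$, which is already arranged by \Cref{lemma:DPsi_0} and \Cref{lemma:Psi}.
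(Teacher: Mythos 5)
Your argument is correct: the paper states this lemma without proof, treating it as a direct unpacking of the definitions of $\Psi_{x_0}$, $\mathcal{O}_*^{x_0}$ and the graph representation in \Cref{ass:domain}(I), which is exactly what you carry out. The two points that actually need checking — that $\eta\equiv 1$ on $B_{R_{\partial D}/2}$ so $\Psi=\Psi_0$ there, and that $\Psi(B_{R_{\partial D}/2})\subset B_{R_{\partial D}}$ (so the graph description applies and $\partial B_{R_{\partial D}/2}$ contributes nothing to $\Gamma_r$ for $r\leq R_{\partial D}/2$) — are both handled correctly via the bound $\sigma(R_{\partial D})\leq 1/120$ from \Cref{lemma:DPsi_0}.
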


Summing up, for fixed $x_0\in\partial D$, we may associate to any $w\colon D\to \R$ the transformed function $w^{x_0}\colon \mathcal{O}^{x_0}\to \R$ defined as
\[
w^{x_0}(x):=w(\Psi_{x_0}(x)).
\]	
In particular, we hereafter denote
\begin{equation}\label{eq:def_v}
	v(x):=u^{x_0}(x)=u(\Psi_{x_0}(x)),\quad\text{for }x\in \mathcal{O}^{x_0},
	\	\end{equation}
with $u\in H^1_{0,N}(D)$ being a minimizer of \eqref{eq:opt_part_variat} normalized in $L^2$. In order to study obtain an almost-monotone Almgren-type frequency function centered at a point $x_0\in\partial D$, it is convenient to employ the following change of variables
\begin{equation*}
	y=\Psi_{x_0}(x)=\textbf{Q}_{x_0}(\Psi(x))+x_0
\end{equation*}
and to pass to the study of $v=u^{x_0}$ in a neighborhood of the origin. In the rest of the paper, we might drop the dependence on $x_0$ in the notation, whenever $x_0$ is fixed, but we still point out whenever the choice of the particular point comes into play.

The next step is to understand the \enquote{variational} properties of $v=u^{x_0}$ and, in order to do this, we appeal to \Cref{sec:diffeo_variat}. We let, for $x\in\mathcal{O}$,
\begin{equation}\label{eq:def_A_p}
	A(x):=(D\Psi_{x_0}(x))^{-T}(D\Psi_{x_0}(x))^{-1}\abs{\det D\Psi_{x_0}(x)}\quad\text{and}\quad p(x):=\abs{\det D\Psi_{x_0}(x)}.
\end{equation}
We observe that $A$ and $p$ do note depend on $x_0$. Indeed, since by definition $\textbf{Q}_{x_0}\in O(d)$, one immediately sees that
\begin{equation*}
	A(x)=(D\Psi(x))^{-T}(D\Psi(x))^{-1}\abs{\det D\Psi(x)}\quad\text{and}\quad p(x)=\abs{\det D\Psi(x)}.
\end{equation*}
Moreover, by definition
\begin{equation*}
	A\in C(\R^d,\R^{d\times d})\cap C^1(\R^d\setminus\{0\},\R^{d\times d})\quad\text{and}\quad p\in C(\R^d)\cap C^1(\R^d\setminus\{0\}).
\end{equation*}
We also let
\begin{equation}\label{eq:def_p_mu_beta}
	p_i(x):=\lambda_i p(x),\quad \mu(x):=\frac{A(x)x\cdot x}{\abs{x}^2},\quad\bm{\alpha}(x):=\frac{A(x)x}{\abs{x}}\quad\text{and}\quad \bm{\beta}(x):=\frac{A(x)x}{\mu(x)}
\end{equation}
More explicitly, for $x\in \mathcal{O}_*$, we have
\begin{align*}
	A_{ij}(x)&=\delta_{ij}\left(1+\frac{\alpha(\abs{x})}{\abs{x}}x_d\right)\quad\text{for }i,j=1,\dots,d-1, \\
	A_{id}(x)&=A_{di}(x)=-\frac{\alpha(\abs{x})}{\abs{x}}x_i,\quad\text{for }i=1,\dots,d-1, \\
	A_{dd}(x)&=\frac{\displaystyle 1+\frac{\alpha(\abs{x})}{\abs{x}}\abs{x'}^2}{\displaystyle1+\frac{\alpha(\abs{x})}{\abs{x}}x_d}, \\
	p(x)&=1+\frac{\alpha(\abs{x})}{\abs{x}} x_d.
\end{align*}
At this point, one can easily observe (see \Cref{lemma:generic_diffeo}) that $v\in H^1_{0,N}(\mathcal{O})$ is a minimizer of the transformed functional $J_{N,\Psi_{x_0}}(\cdot,\mathcal{O})\colon H^1_{0,N}(\mathcal{O})\to \R$ defined as
\begin{equation*}
	J_{N,\Psi_{x_0}}(w,\mathcal{O}):=\sum_{i=1}^N\frac{\displaystyle \int_\mathcal{O} A\nabla w_i\cdot\nabla w_i\dx}{\displaystyle\int_\mathcal{O} p w_i^2\dx}.
\end{equation*}
Moreover, there holds
\begin{equation*}\label{eq:lambda_i_transformed}
	\lambda_i=\frac{\displaystyle \int_\mathcal{O} A\nabla v_i\cdot\nabla v_i\dx}{\displaystyle\int_\mathcal{O} p v_i^2\dx}.
\end{equation*}
We state here the main properties of the coefficients of the transformed functional. The proof is essentially contained in \cite{Adolfsson1997}.
\begin{lemma}\label{lemma:A} Let $A$ be as in \eqref{eq:def_A_p} and $p_i$, $\mu$, $\bm{\alpha}$ and $\bm{\beta}$ be as in \eqref{eq:def_p_mu_beta}.
	There exists a constant $\kappa>0$, depending on $d$ and $D$, such that the following properties hold true for any $r\leq R_{\partial D}$ (up to reducing $R_{\partial D}$)
	\begin{enumerate}
		\item $\kappa\sigma(r)\leq \frac{1}{2}$. \label{it:diffeo_0}
		\item $A$ is symmetric and uniformly elliptic near the origin, that is
		\[
		\kappa\abs{\ell}^2\leq A(x)\ell\cdot \ell\leq \kappa^{-1}\abs{\ell}^2
		\]
		for all $\ell\in\R^d$ and all $x\in\overline{\mathcal{O}_r}$. \label{it:diffeo2}
		\item we have that $A(0)=I$,
		\[
		A(x)x\cdot\nnu(x)\geq  \abs{x}\sigma(\abs{x}),\quad\text{for all }x\in \Gamma_r
		\]
		and
		\[
		\bm{\beta}(x)\cdot\nnu(x)\geq \frac{\abs{x}\sigma(\abs{x})}{\mu(x)} \geq 0,\quad\text{for all }x\in\Gamma_r.
		\]
		
		\label{it:diffeo3}
		\item there holds
		\begin{align*}
			&\abs{A_{ij}(x)-\delta_{ij}}\leq \kappa \sigma(\abs{x}),\quad\abs{\nabla A_{ij}(x)}\leq \kappa\frac{\sigma(\abs{x})}{\abs{x}}~\text{for all }i,j=1,\dots,d, \\
			&\norm{A(x)-I}_{\mathcal{L}(\R^d)}\leq \kappa\sigma(\abs{x}), \quad \norm{\d A(x)}_{\mathcal{L}(\R^d;\mathcal{L}(\R^d))}\leq \kappa\sigma(\abs{x})
		\end{align*}
		and
		\[
		\abs{p_i(x)-\lambda_i}\leq \lambda_i\kappa\sigma(\abs{x})\quad\text{and}\quad \abs{\nabla p_i(x)}\leq\lambda_i \kappa\frac{\sigma(\abs{x})}{\abs{x}}~\text{for all }i=1,\dots,d,
		\]
		and for all $x\in\mathcal{O}_r$. \label{it:diffeo4}
		\item there holds
		\[
		\abs{\mu(x)-1}\leq \kappa \sigma(\abs{x}),\quad \abs{\nabla \mu(x)}\leq \kappa\frac{\sigma(\abs{x})}{\abs{x}},\quad \abs{\dive \bm{\alpha}(x)-(d-1)\frac{\mu(x)}{\abs{x}}}\leq \kappa\frac{\sigma(\abs{x})}{\abs{x}}
		\]
		and
		\[
		\abs{\bm{\beta}(x)-x}\leq \kappa\abs{x}\sigma(\abs{x}),\quad \abs{D\bm{\beta}(x)-I}\leq \kappa\sigma(\abs{x}),\quad \abs{\dive\bm{\beta}(x)-d}\leq \kappa\sigma(\abs{x})
		\]
		for all $x\in\mathcal{O}_r$. \label{it:diffeo5}
	\end{enumerate}
\end{lemma}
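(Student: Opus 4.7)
The plan is to read off all of the stated estimates from the explicit rank-one expression $D\Psi(x)=I+\frac{\alpha(|x|)}{|x|}\,x\otimes e_d$ obtained in \Cref{lemma:DPsi_0}, combined with the bounds $\alpha\leq 6\sigma$ and $|\alpha'|\leq 24\sigma/r$ from \Cref{lemma:alpha} and the growth conditions $|\sigma'|\leq 2\sigma/r$, $|\sigma''|\leq 4\sigma/r^2$ coming from \Cref{ass:domain}(IV). A first observation is that $A$ and $p$ do not actually depend on $x_0$: since $\mathbf{Q}_{x_0}$ is orthogonal, conjugating $D\Psi$ on the left by $\mathbf{Q}_{x_0}$ leaves both $(D\Psi)^{-T}(D\Psi)^{-1}|\det D\Psi|$ and $|\det D\Psi|$ invariant, so it suffices to analyse the model object $\Psi$. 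Property \Cref{it:diffeo_0} is then immediate by further shrinking $R_{\partial D}$, using that $\sigma$ is continuous with $\sigma(0)=0$.

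Next, applying the Sherman--Morrison formula to $D\Psi$ yields the explicit expressions
\[
(D\Psi(x))^{-1}=I-\frac{\alpha(|x|)}{|x|+\alpha(|x|)x_d}\,x\otimes e_d,\qquad \det D\Psi(x)=1+\frac{\alpha(|x|)}{|x|}x_d,
\]
and hence closed-form expressions for $A$ and $p$. From here the estimates in \Cref{it:diffeo4} on $|A_{ij}-\delta_{ij}|$, $|\nabla A_{ij}|$, $|p_i-\lambda_i|$ and $|\nabla p_i|$ follow by direct differentiation and substitution of the pointwise bounds on $\alpha$, $\alpha'$ and $\sigma$, $\sigma'$; symmetry of $A$ is built into its definition, and the ellipticity in \Cref{it:diffeo2} comes from $A(0)=I$ combined with $\|A(x)-I\|\leq\kappa\sigma(|x|)\leq 1/2$ in $B_{R_{\partial D}}$. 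The bounds for $\mu$, $\bm{\alpha}$, $\bm{\beta}$ and $\dive\bm{\beta}$ in \Cref{it:diffeo5} will follow from the same explicit formulas by a routine (if tedious) differentiation, after noting that $\mu(0)=1$ keeps $\bm{\beta}=A(x)x/\mu(x)$ regular at the origin.

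The one non-routine point, and the place where the specific coefficient $3$ in the definition of $\Psi$ is actually exploited, is the starshapedness-type inequality in \Cref{it:diffeo3}. The plan is to show that from the formula above one gets
\[
A(x)x = x - \frac{\alpha(|x|)|x|}{1+\alpha(|x|)x_d/|x|}\,e_d,
\]
while by \Cref{lemma:Omega_Psi}, $\Gamma_r$ is the zero set of $F(x):=x_d-\varphi_{x_0}(x')+3|x|\sigma(|x|)$, so $\nnu=-\nabla F/|\nabla F|$ with $\nabla F=(-\nabla_{x'}\varphi_{x_0}(x'),1)+\frac{\alpha(|x|)}{|x|}\,x$. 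We expect a remarkable algebraic cancellation in the pairing $A(x)x\cdot\nabla F(x)$: the contribution coming from the $e_d$-component of $A(x)x$ paired with $\tfrac{\alpha(|x|)}{|x|}x$, combined with the term $\tfrac{\alpha(|x|)}{|x|}\,A(x)x\cdot x$, will cancel exactly against the $\alpha$-correction in $A(x)x\cdot e_d$, leaving simply
\[
A(x)x\cdot\nabla F(x)= -x'\cdot\nabla_{x'}\varphi_{x_0}(x')+x_d = \varphi_{x_0}(x')-x'\cdot\nabla_{x'}\varphi_{x_0}(x')-3|x|\sigma(|x|)
\]
on $\Gamma_r$. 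Using the Taylor-type estimate $|\varphi_{x_0}(x')-x'\cdot\nabla_{x'}\varphi_{x_0}(x')|\leq|x'|\sigma(|x'|)\leq|x|\sigma(|x|)$---valid thanks to $\varphi_{x_0}(0)=|\nabla\varphi_{x_0}(0)|=0$ and $\sigma$ being the modulus of continuity of $\nabla\varphi_{x_0}$---one obtains $-A(x)x\cdot\nabla F(x)\geq 2|x|\sigma(|x|)$; since $|\nabla F(x)|\leq 1+\kappa\sigma(|x|)\leq 2$ after shrinking $R_{\partial D}$ once more, dividing gives $A(x)x\cdot\nnu(x)\geq|x|\sigma(|x|)$, and the bound for $\bm\beta\cdot\nnu$ is then immediate by dividing by $\mu>0$. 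The coefficient $3$ in the definition of $\Psi$ is chosen precisely so that the gain $3|x|\sigma(|x|)$ is strictly larger than the obstruction $|x|\sigma(|x|)$ coming from $\nabla\varphi_{x_0}$; this is the main obstacle and the only geometric input that is not formal.
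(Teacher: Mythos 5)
Your proposal is correct and follows essentially the same route as the paper, which delegates the crucial item~(3) to the computations in Adolfsson's Theorem~0.4; your Sherman--Morrison inverse, the identity $A(x)x=x-\frac{\alpha(|x|)|x|^2}{|x|+\alpha(|x|)x_d}\,e_d$, and the resulting cancellation $A(x)x\cdot\nabla F=\varphi_{x_0}(x')-x'\cdot\nabla\varphi_{x_0}(x')-3|x|\sigma(|x|)$ on $\Gamma_r$ are precisely the content of that reference, made explicit. A very minor remark: the independence of $A$ and $p$ from $x_0$ holds because (with the paper's Jacobian convention $D\Psi_{x_0}=D\Psi\,\mathbf{Q}_{x_0}^T$) the orthogonal factor drops out of $(D\Psi_{x_0})^{-T}(D\Psi_{x_0})^{-1}$, not because of a conjugation invariance, but this does not affect your argument.
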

\begin{proof}
	The proof of the crucial point \textit{(3)} is contained in the proof of Theorem 0.4 in \cite{Adolfsson1997} and strongly uses \Cref{lemma:alpha} and the fact that, in view of the mean value theorem, there holds
	\begin{align*}
		&\abs{\varphi_{x_0}(x')}\leq \abs{x'}\sigma(\abs{x'}) \quad\text{for all }x'\in B_{R_{\partial D}}', \\
		&\abs{x'\cdot\nabla \varphi_{x_0} (x')-\varphi_{x_0}(x')}\leq 2\abs{x'}\sigma(\abs{x'}) \quad\text{for all }x'\in B_{R_{\partial D}}',
	\end{align*}
	up to restricting $R_{\partial D}$. The rest of the proof follows by definition, direct computations and \Cref{lemma:alpha}.
\end{proof}

As a simple corollary, we obtain the following.
\begin{corollary}\label{cor:A}
	Let $A$ be as in \eqref{eq:def_A_p} and $p_i$, $\mu$ be as in \eqref{eq:def_p_mu_beta}. Then, there holds
	\begin{equation*}
		\frac{1}{2}\leq \mu(x)\leq \frac{3}{2},\quad \norm{A(x)}_{\mathcal{L}(\R^d)}\leq \frac{3}{2},\quad \frac{\lambda_1(B_1)|B_1|^{\frac{2}{d}}}{2|D|^{\frac{2}{d}}}\leq p_i(x)\leq \frac{3\Lambda}{2}
	\end{equation*}
	as well as
	\begin{equation*}
		\frac{2}{3}\leq \mu^{-1}(x)\leq 2,\quad \norm{A^{-1}(x)}_{\mathcal{L}(\R^d)}\leq 2,\quad \frac{2}{3\Lambda}\leq p_i^{-1}(x)\leq \frac{2|D|^{\frac{2}{d}}}{\lambda_1(B_1)|B_1|^{\frac{2}{d}}},
	\end{equation*}
	for all $x\in\mathcal{O}_r$, all $r\in(0,R_{\partial D}/2)$ and all $i=1,\dots,N$, where $\Lambda$ is as in \Cref{rmk:sup_inf_lambda_i}.
\end{corollary}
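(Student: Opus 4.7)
The statement is a direct consequence of the quantitative estimates from \Cref{lemma:A} together with the universal bounds on the eigenvalues $\lambda_i$ collected in \Cref{rmk:sup_inf_lambda_i}. My plan is to exploit the smallness assumption on $\sigma$ near $0$ (\Cref{lemma:A}(1)), which is precisely calibrated so that the perturbative terms do not exceed $1/2$ and hence allow trivial triangle-inequality bounds.

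First I would prove the bounds on $\mu$ and $A$. By \Cref{lemma:A}(4)-(5), for every $x\in \mathcal{O}_r$ with $r\leq R_{\partial D}$, one has
\[
|\mu(x)-1|\leq \kappa\sigma(|x|),\qquad \norm{A(x)-I}_{\mathcal{L}(\R^d)}\leq \kappa\sigma(|x|),
\]
and by \Cref{lemma:A}(1), $\kappa\sigma(|x|)\leq 1/2$. The triangle inequality then gives $\mu(x)\in [1/2, 3/2]$ and $\norm{A(x)}_{\mathcal{L}(\R^d)}\leq 3/2$, which also yields $\mu(x)^{-1}\in [2/3, 2]$ upon inversion. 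For the operator norm of $A^{-1}$, I would use that $A$ is symmetric with $\norm{A-I}\leq 1/2$, so its eigenvalues lie in $[1/2,3/2]$, whence the eigenvalues of $A^{-1}$ lie in $[2/3,2]$ and $\norm{A^{-1}}_{\mathcal{L}(\R^d)}\leq 2$.

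Next I would handle $p_i(x)=\lambda_i p(x)$. Again by \Cref{lemma:A}(4),
\[
|p_i(x)-\lambda_i|\leq \lambda_i\kappa\sigma(|x|)\leq \tfrac{\lambda_i}{2},
\]
so $p_i(x)\in [\lambda_i/2,\,3\lambda_i/2]$. Combining this with the universal upper bound $\lambda_i\leq \Lambda$ from \Cref{rmk:sup_inf_lambda_i} gives $p_i(x)\leq 3\Lambda/2$, and with the Faber–Krahn lower bound $\lambda_i\geq \lambda_1(B_1)|B_1|^{2/d}|D|^{-2/d}$ gives the claimed lower bound. Inverting the resulting two-sided bound on $p_i(x)$ yields both of the claimed bounds on $p_i^{-1}(x)$.

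There is no serious obstacle here: the whole content of the statement is packaged in \Cref{lemma:A}(1),(4),(5) and \Cref{rmk:sup_inf_lambda_i}, and the only mildly non-routine point is noticing that symmetry of $A$ lets one pass from $\norm{A-I}\leq 1/2$ to $\norm{A^{-1}}\leq 2$ without any further work.
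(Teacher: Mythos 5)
Your proof is correct and follows the same route as the paper, which simply cites \Cref{lemma:A} and \Cref{rmk:sup_inf_lambda_i} without further detail; your write-up just makes the triangle-inequality and spectral steps explicit. The observation that symmetry of $A$ together with $\norm{A-I}\leq 1/2$ yields $\norm{A^{-1}}\leq 2$ is exactly the intended (routine) argument.
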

\begin{proof}
	The proof easily follows from \Cref{lemma:A} and \Cref{rmk:sup_inf_lambda_i}.
\end{proof}

Finally, we can now state the main property of (and the reason why we introduced) the transformed problem. More precisely, since the transformed domain $\mathcal{O}$ satisfies the geometric property
\begin{equation*}
	A(x)x\cdot\nnu(x)\geq 0
\end{equation*}
in a neighborhood of the origin, this allows us to prove a Pohozaev-type inequality, which is, in turn, a main ingredient in the proof of the monotonicity of the Almgren functional. This result is strongly based on the crucial observation made in \Cref{prop:inner_variations}.

\begin{proposition}[Pohozaev inequality]\label{prop:pohozaev}
	Let $A$ be as in \eqref{eq:def_A_p} and $p_i$, $\mu$ and $\bm{\beta}$ be as in \eqref{eq:def_p_mu_beta}. Then, for a.e. $r\leq R_{\partial D}/2$, there holds
	\begin{multline}\label{eq:pohoz_th2}
		\sum_{i=1}^N r\int_{S_r}(A\nabla v_i\cdot \nabla v_i-p_i|v_i|^2)\ds\geq \sum_{i=1}^N\Bigg[ 2r\int_{S_r}\frac{(A\nabla v_i\cdot \nnu)^2}{\mu}\ds\\
		+\int_{\mathcal{O}_r}(A\dive\bm{\beta}
		+\d A[\bm{\beta}]-2D\bm{\beta} A)\nabla v_i\cdot\nabla v_i\dx 
		-\int_{\mathcal{O}_r}(\nabla p_i\cdot\bm{\beta}+p_i\dive\bm{\beta})|v_i|^2\dx\Bigg].
	\end{multline}
\end{proposition}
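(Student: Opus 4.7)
The plan is to derive the Pohozaev-type inequality \eqref{eq:pohoz_th2} by feeding a carefully designed one-parameter family of vector fields into the one-sided inner variation inequality of \Cref{prop:inner_variations} and then letting that family approximate the characteristic function of $\mathcal{O}_r$ times $\bm{\beta}$. Recall that $\bm{\beta}=Ax/\mu$ is precisely the vector field whose boundary behavior on $\Gamma_r$ was tailored in \Cref{lemma:A} (iii) so that $\bm{\beta}\cdot\nnu\geq 0$; this is the unique reason why the diffeomorphism $\Psi$ was introduced and this is exactly what makes the one-sided variation admissible.

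More concretely, for a given $r\in(0,R_{\partial D}/2]$ and $\e>0$ small, I would pick a smooth radial cutoff $\zeta_\e\in C^\infty_c([0,\infty))$ with $\zeta_\e\equiv 1$ on $[0,r-\e]$, $\zeta_\e\equiv 0$ on $[r,\infty)$, $\zeta_\e'\leq 0$, and set
\[
\xi_\e(x):=\zeta_\e(|x|)\,\bm{\beta}(x).
\]
Since $\bm{\beta}\in C^1(\R^d)$ by \Cref{lemma:A} (v) (the estimate $|D\bm{\beta}-I|\leq\kappa\sigma$ gives continuity of $D\bm{\beta}$ up to the origin), the field $\xi_\e$ lies in $C^1_c(\R^d,\R^d)$. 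On $\partial\mathcal{O}\cap B_r=\Gamma_r$ the required sign condition $\xi_\e\cdot\nnu\geq 0$ follows from \Cref{lemma:A} (iii), and outside $B_r$ the cutoff $\zeta_\e$ vanishes; hence $\xi_\e$ is admissible in \Cref{prop:inner_variations}.

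Next I would compute $D\xi_\e=\zeta_\e D\bm{\beta}+(\zeta_\e'(|x|)/|x|)\,\bm{\beta}\otimes x$, $\dive\xi_\e=\zeta_\e\dive\bm{\beta}+\zeta_\e'(|x|)|x|$ (using $\bm{\beta}\cdot x=|x|^2$), and $\d A[\xi_\e]=\zeta_\e\,\d A[\bm{\beta}]$, and plug these into the inequality of \Cref{prop:inner_variations}. Using symmetry of $A$ one gets
\[
\bigl((\bm{\beta}\otimes x)A\,\nabla v_i\bigr)\cdot\nabla v_i=(Ax\cdot\nabla v_i)(\bm{\beta}\cdot\nabla v_i)=\frac{(Ax\cdot\nabla v_i)^2}{\mu},
\]
so that the $\zeta_\e'$ contribution collects into
\[
\int\zeta_\e'(|x|)\Bigl[\tfrac{2}{|x|}\tfrac{(Ax\cdot\nabla v_i)^2}{\mu}-|x|\,A\nabla v_i\cdot\nabla v_i+|x|\,p_i|v_i|^2\Bigr]\dx,
\]
while the $\zeta_\e$ contribution converges to the bulk integrals on $\mathcal{O}_r$ appearing in \eqref{eq:pohoz_th2}. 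Applying the co-area formula and the Lebesgue differentiation theorem (which is where the restriction to a.e.\ $r$ enters), and using $x=r\,\nnu$ on $S_r$ so that $(Ax\cdot\nabla v_i)^2=r^2(A\nabla v_i\cdot\nnu)^2$, passing to the limit $\e\to 0^+$ yields precisely \eqref{eq:pohoz_th2} after rearranging. The Lipschitz regularity of $v$ from \Cref{prop:lipschitz} together with the bounds of \Cref{lemma:A} and \Cref{cor:A} justifies all dominated-convergence steps.

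The main obstacle is not the computation but ensuring admissibility of the variation: the inner variation inequality is only one-sided, so the vector field must push the boundary outward or stay tangential, i.e.\ $\xi_\e\cdot\nnu\geq 0$ on $\partial\mathcal{O}$. This is exactly the starshapedness-type condition that the curved profile of $\Gamma_r$ \emph{fails} to satisfy for the naive choice $\xi=x$ (which is what would give the classical Pohozaev identity). The reason we work with $\bm{\beta}=Ax/\mu$ rather than $x$, and the reason $\Psi$ was constructed in the first place, is precisely to recover this sign through \Cref{lemma:A} (iii); everything else is bookkeeping once the field is in hand.
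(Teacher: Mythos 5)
Your proposal is correct and follows essentially the same route as the paper: both test the one-sided inner variation inequality of \Cref{prop:inner_variations} with a $C^1_c$ approximation of $\chi_{B_r}\bm{\beta}$, invoke \Cref{lemma:A}\,(3) for the admissibility sign condition on $\Gamma_r$, and pass to the limit so that the derivative of the cutoff produces the surface terms on $S_r$ (using $\bm{\beta}\cdot x=|x|^2$ and $x=r\nnu$ there). The only cosmetic difference is that you cut off radially from inside $B_r$, whereas the paper multiplies $\bm{\beta}$ by a mollified characteristic function equal to $1$ on $B_r$ and supported in $B_{r+\epsilon}$; the computations and the identification of the limiting terms coincide.
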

\begin{proof}
	For fixed $r\leq R_{\partial D}/2$ we let $\rho_\epsilon=\rho_{\epsilon,r}\colon\R^d\to\R$ be a smooth approximation of the characteristic function $\chi_{B_r}$, that is $\rho_\epsilon\in C_c^\infty(B_{r+\epsilon})$, $\rho_\epsilon=1$ in $B_r$ and
	\begin{equation}\label{eq:pohoz_1}
		\begin{aligned}
			&\rho_\epsilon\to \chi_{B_r}\quad\text{pointwise, as }\epsilon\to 0,  \\
			&\nabla\rho_\epsilon\overset{*}{\weak}-\nnu\,\mathcal{H}^{d-1}\text{\huge$\llcorner$} \partial B_r\quad\text{as }\epsilon\to 0. 
		\end{aligned}
	\end{equation}
	We now let $\xi_\epsilon:=\rho_\epsilon\bm{\beta}$. Thanks to \textit{(3)} in \Cref{lemma:A} we have that
	\[
	\xi_\epsilon(x)\cdot\nnu(x)=\rho_\epsilon(x)\bm{\beta}(x)\cdot\nnu(x)\geq \rho_\epsilon(x)\frac{\abs{x}\sigma(\abs{x})}{\mu(x)},\quad\text{for all }x\in \Gamma_r,
	\]
	which implies \eqref{eq:inner_hp1} (notice that $\xi_\epsilon$ is supported in $B_{R_{\partial D}/2}$ for $\epsilon$ sufficiently small); hence, since $\bm{\beta}\in C^1(\R^d,\R^d)$, we can apply \Cref{prop:inner_variations}. We explicitly compute
	\begin{align}
		& D\xi_\epsilon=\nabla\rho_\epsilon\otimes\bm{\beta}+\rho_\epsilon D\bm{\beta}, \label{eq:pohoz_2}\\
		& \dive\xi_\epsilon=\nabla\rho_\epsilon\cdot\bm{\beta}+\rho_\epsilon\dive\bm{\beta}, \label{eq:pohoz_3}\\
		& \d A[\xi_\epsilon]=\rho_\epsilon\d A[\bm{\beta}] \label{eq:pohoz_4}
	\end{align}
	and observe that
	\begin{equation}\label{eq:pohoz_5}
		\bm{\beta}\cdot\nnu=r\quad\text{on }S_r.
	\end{equation}
	Let us now consider the terms appearing in \eqref{eq:innner_th1} one by one. First, thanks to \eqref{eq:pohoz_2}, \eqref{eq:pohoz_1} and \eqref{eq:pohoz_5} we have that
	\begin{align*}
		2\int_{\mathcal{O}_*}D\xi_\epsilon A\nabla v_i\cdot\nabla v_i\dx&= 2\int_{\mathcal{O}_*}(\nabla\rho_\epsilon\otimes\bm{\beta}+\rho_\epsilon D\bm{\beta})A\nabla v_i\cdot\nabla v_i\dx \\
		&\longrightarrow 2\int_{\mathcal{O}_r}D\bm{\beta}A\nabla v_i\cdot\nabla v_i\dx-2r\int_{S_r}\frac{(A\nabla v_i\cdot\nnu)^2}{\mu}\ds,
	\end{align*}
	as $\epsilon\to 0$. Second, \eqref{eq:pohoz_3}, \eqref{eq:pohoz_1} and \eqref{eq:pohoz_5} yield
	\begin{align*}
		-\int_{\mathcal{O}_*}A\nabla v_i\cdot\nabla v_i\dive\xi_\epsilon\dx &=-\int_{\mathcal{O}_*}A\nabla v_i\cdot\nabla v_i(\nabla\rho_\epsilon\cdot\bm{\beta}+\rho_\epsilon\dive\bm{\beta})\dx \\
		&\longrightarrow r\int_{S_r}A\nabla v_i\cdot\nabla v_i\ds-\int_{\mathcal{O}_r}A\nabla v_i\cdot\nabla v_i\dive\bm{\beta}\dx,
	\end{align*}
	and
	\begin{align*}
		\int_{\mathcal{O}_*}(\nabla p_i\cdot\xi_\epsilon+p_i\dive\xi_\epsilon)v_i^2\dx &= \int_{\mathcal{O}_*}(\rho_\epsilon\nabla p_i\cdot\bm{\beta}+p_i(\nabla\rho_\epsilon\cdot\bm{\beta}+\rho_\epsilon\dive\bm{\beta}))v_i^2\dx \\
		&\longrightarrow\int_{\mathcal{O}_r}(\nabla p_i\cdot\bm{\beta}+p_i\dive\bm{\beta})v_i^2\dx-r\int_{S_r}p_i v_i^2\ds,
	\end{align*}
	as $\epsilon\to 0$. Finally, from \eqref{eq:pohoz_4}, \eqref{eq:pohoz_1} and \eqref{eq:pohoz_5} it follows that
	\begin{equation*}
		\int_{\mathcal{O}_*}\d A[\xi_\epsilon]\nabla v_i\cdot\nabla v_i\dx=\int_{\mathcal{O}_*}\rho_\epsilon\d A[\bm{\beta}]\nabla v_i\cdot\nabla v_i\dx\to \int_{\mathcal{O}_r}\d A[\bm{\beta}]\nabla v_i\cdot\nabla v_i\dx,
	\end{equation*}
	as $\epsilon\to 0$. By  rearranging the terms we obtained and summing for $i=1,\dots,N$, we may conclude the proof. 
\end{proof}

One can observe that, locally, the transformed domain $\mathcal{O}_*$ enjoys a geometric property: close to the origin, it is starshaped with respect to the it, and this is contained in the following.

\begin{corollary}\label{cor:starsh}
	Up to reducing $R_{\partial D}$,  $\Gamma_{R_{\partial D}/2}$ is starshaped with respect to the origin. More precisely,
	\[
	x\cdot\nnu(x)\geq \frac{1}{2}\abs{x}\sigma(\abs{x})\geq 0\quad\text{for all }x\in\Gamma_{R_{\partial D}/2}.
	\]
\end{corollary}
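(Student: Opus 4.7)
The plan is to deduce the corollary directly from Lemma~\ref{lemma:A}(3), which already asserts $A(x)x\cdot\nnu(x)\geq |x|\sigma(|x|)$ on $\Gamma_r$, by bridging between $A(x)x$ and $x$ via an explicit computation based on the structure of the diffeomorphism $\Psi$. The key point is that the perturbation $(A(x)-I)x$ is not estimated in a generic way (which would lose all constants), but computed \emph{explicitly}, exploiting the rank-one nature of $D\Psi-I$.

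First I would compute $A(x)x$ in closed form. Since $\Psi(x)=(x',x_d+3|x|\sigma(|x|))$ gives $D\Psi(x)=I+\frac{\alpha(|x|)}{|x|}\,x\otimes e_d$, the Sherman--Morrison formula yields
$(D\Psi(x))^{-1}=I-\frac{\alpha(|x|)}{p(x)|x|}\,x\otimes e_d$ with $p(x)=1+\alpha(|x|)x_d/|x|$, and a short calculation from $A=p(D\Psi)^{-T}(D\Psi)^{-1}$ produces the striking simplification
\[
A(x)x=x-\frac{\alpha(|x|)|x|}{p(x)}\,e_d.
\]
In parallel, the outer normal to $\Gamma_r$ is $\nnu=-\nabla F/|\nabla F|$, where $F(x)=x_d-\varphi(x')+3|x|\sigma(|x|)$; since $(\nabla F)_d = p(x)$, one has $\nnu_d(x)=-p(x)/|\nabla F(x)|$. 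Combining these two identities gives the fundamental relation
\[
A(x)x\cdot\nnu(x)-x\cdot\nnu(x)=-\frac{\alpha(|x|)|x|}{p(x)}\,\nnu_d(x)=\frac{\alpha(|x|)|x|}{|\nabla F(x)|}.
\]

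From here I would use Lemma~\ref{lemma:A}(3) to bound $A(x)x\cdot\nnu(x)\geq |x|\sigma(|x|)$ from below and the elementary identity $\alpha-3\sigma=3r\sigma'$ from Lemma~\ref{lemma:alpha} to rewrite $(3\sigma(|x|)-\alpha(|x|))|x|=-3|x|^2\sigma'(|x|)$. The explicit form of $A(x)x\cdot\nnu\cdot|\nabla F|=x'\cdot\nabla\varphi-\varphi+3|x|\sigma(|x|)$, together with the mean-value estimate $|x'\cdot\nabla\varphi(x')-\varphi(x')|\leq 2|x'|\sigma(|x'|)$ (guaranteed by $\varphi(0)=|\nabla\varphi(0)|=0$ and the fact that $\sigma$ is the modulus of continuity of $\nabla\varphi$), carries enough buffer so that after cancelling $\alpha|x|/|\nabla F|$ the resulting quantity is still bounded below by $\tfrac12|x|\sigma(|x|)$, once $R_{\partial D}$ is taken sufficiently small that both $\kappa\sigma(R_{\partial D})$ and $|\nabla F-e_d|$ are small enough (which is ensured precisely by condition~\ref{it:diffeo_0} in Lemma~\ref{lemma:A} together with Assumption~\ref{ass:domain}(IV)).

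The main technical obstacle is keeping track of the constants throughout this last step: the coarse application of Lemma~\ref{lemma:A}(3) together with the upper bound $\alpha\leq 6\sigma$ is not by itself enough to reach $\tfrac12|x|\sigma(|x|)$, since the correction term $\alpha|x|/|\nabla F|$ is of the same order as the lower bound $|x|\sigma(|x|)$. The factor $3$ in the shift $3|x|\sigma(|x|)$ defining $\Psi$ is chosen for exactly this reason: it produces a strict excess over the symmetric mean-value estimate $2|x|\sigma(|x|)$ and creates the margin responsible for the explicit constant $\tfrac12$, after further shrinking $R_{\partial D}$ so that all $O(\sigma(R_{\partial D}))$ remainders are absorbed.
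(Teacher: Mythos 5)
Your explicit computations are correct and follow a genuinely different route from the paper's: Sherman--Morrison does give $A(x)x=x-\frac{\alpha(\abs{x})\abs{x}}{p(x)}\,\bm{e}_d$, and since $\partial_{x_d}F=p$ one obtains the exact relations $A(x)x\cdot\nnu(x)-x\cdot\nnu(x)=\alpha(\abs{x})\abs{x}/\abs{\nabla F(x)}$ and $A(x)x\cdot\nnu(x)\,\abs{\nabla F(x)}=x'\cdot\nabla\varphi(x')-\varphi(x')+3\abs{x}\sigma(\abs{x})$. The paper never computes $A(x)x$ in closed form; it writes $x\cdot\nnu=Ax\cdot\nnu+(I-A)x\cdot\nnu$, invokes Lemma~\ref{lemma:A}\textit{(3)}--\textit{(4)}, and --- this is the ingredient your argument does not use --- exploits that $\Gamma_{R_{\partial D}/2}$ is itself a $C^1$ graph $\psi$ with $\psi(0)=\abs{\nabla\psi(0)}=0$, so that $x\cdot\nnu$ is proportional to $x'\cdot\nabla\psi(x')-\psi(x')=o(\abs{x})$, and the error is then absorbed after shrinking $R_{\partial D}$.

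The gap is in your last step: the ``buffer'' you invoke does not exist. Combining your two identities with $\alpha(r)=3(\sigma(r)+r\sigma'(r))$ gives exactly
\[
x\cdot\nnu\,\abs{\nabla F}=\big(x'\cdot\nabla\varphi-\varphi\big)+3\abs{x}\sigma(\abs{x})-\alpha(\abs{x})\abs{x}=\big(x'\cdot\nabla\varphi-\varphi\big)-3\abs{x}^2\sigma'(\abs{x})\quad\text{on }\Gamma_{R_{\partial D}/2}.
\]
The mean-value estimate only yields $\abs{x'\cdot\nabla\varphi-\varphi}\le 2\abs{x}\sigma(\abs{x})$, and $3\abs{x}^2\sigma'(\abs{x})\ge 0$, so this route bounds the right-hand side from below only by $-2\abs{x}\sigma(\abs{x})-3\abs{x}^2\sigma'(\abs{x})$: the excess created by the factor $3$ is precisely what produces $Ax\cdot\nnu\ge\abs{x}\sigma(\abs{x})$, and subtracting $\alpha\abs{x}\ge 3\sigma(\abs{x})\abs{x}$ consumes all of it. This is not a matter of bookkeeping: for $d=2$, $\sigma(r)=r$ and $\varphi(x_1)=\tfrac12x_1\abs{x_1}$, your own formula gives $x\cdot\nnu\,\abs{\nabla F}=\tfrac12x_1^2-3\abs{x}^2<0$ near the origin, so no tracking of constants along your chain of inequalities can reach the lower bound $\tfrac12\abs{x}\sigma(\abs{x})$. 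To repair the argument you must confront your exact identity with the mechanism the paper actually uses --- in particular with the bound on $(I-A)x\cdot\nnu$ implicit in the first display of the paper's proof, which your computation shows has magnitude $\alpha\abs{x}/\abs{\nabla F}\approx\sigma(\abs{x})\abs{x}$ rather than $\sigma(\abs{x})\,\abs{x\cdot\nnu}=\sigma(\abs{x})\,o(\abs{x})$ --- since the two are in visible tension and that tension has to be resolved before the conclusion can be drawn.
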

\begin{proof}
	From \textit{(3)} and \textit{(4)} in \Cref{lemma:A}, we deduce that
	\begin{equation}\label{eq:starsh_1}
		x\cdot\nnu(x)=A(x)x\cdot\nnu(x)+(I-A(x))x\cdot\nnu(x)\geq  \abs{x}\sigma(\abs{x})\left(1-\kappa\frac{\abs{x\cdot\nnu(x)}}{\abs{x}}\right).
	\end{equation}
	Now, let $\psi\colon B_{R_{\partial D}/2}'\to \R$ be the graph describing $\Gamma_{R_{\partial D}/2}$, i.e. defined implicitly by
	\[
	\psi(x')=\varphi(x')-3\abs{x}\sigma(\abs{x}),
	\]
	where $x=(x',\psi(x'))$. Since $\psi\in C^1(B_{R_{\partial D}/2}')$ and since $\psi(0)=\abs{\nabla_{x'}\psi(0)}=0$, we have that
	\[
	x\cdot\nnu(x)=x'\cdot\nabla_{x'}\psi(x')-\psi(x')=o(\abs{x'})=o(\abs{x})\quad\text{as }\abs{x}\to 0.
	\]
	Plugging this fact into \eqref{eq:starsh_1} concludes the proof, since $|x|\leq R_{\partial D}/2$, up to restricting $R_{\partial D}$.
\end{proof}

\section{Almgren monotonicity formula}\label{sec:almgren}

In order to study the decay properties of the minimizer $u\in H^1_{0,N}(D)$ near a boundary point $x_0\in\partial D$, a fundamental tool is represented by the so called \emph{Almgren frequency function}. While trying to mimic the frequency function at interior points (see e.g. \cite{CL2007} or \cite{TerraciniTavares2012}), one might be led to define the boundary version as
\[
(w,r,x_0)\longmapsto \frac{\displaystyle r\sum_{i=1}^N\int_{D\cap B_r(x_0)} \abs{\nabla w_i}^2\dx}{\displaystyle \sum_{i=1}^N\int_{D\cap\partial B_r(x_0)}w_i^2\ds},
\]
where $w\in H^1_{0,N}(D)$ and $x_0\in\partial D$. However, it turns out that, even in a sufficiently regular setting, apart from the case in which $\partial D\cap B_r(x_0)$ is starshaped with respect to $x_0$, i.e.
\[
x\cdot\nnu(x)\geq 0\quad\text{for }x\in\partial D\cap B_r(x_0),
\]
proving the (almost) monotonicity of this function is highly non-trivial. In order to overcome this issue, we make use of the diffeomorphism introduced in \Cref{sec:equivalent}, which guarantees the right geometric property in the transformed domain and the validity of the Pohozaev-type inequality \Cref{prop:pohozaev}. For any $r>0$ and $w\in H^1_{s,N}(B_r)$, $w\neq 0$, we define
\begin{equation*}
	E_i(w,r):=\frac{1}{r^{d-2}}\int_{B_r}(A\nabla w_i\cdot\nabla w_i-p_i w_i^2)\dx,\qquad H_i(w,r):=\frac{1}{r^{d-1}}\int_{\partial B_r}w_i^2\mu\ds
\end{equation*}
for any $i=1,\dots,N$, where $A$, $p_i$ and $\mu$ are as in \eqref{eq:def_A_p}-\eqref{eq:def_p_mu_beta} and
\begin{equation*}
	E(w,r):=\sum_{i=1}^N E_i(w,r),\qquad H(w,r):=\sum_{i=1}^N H_i(w,r).
\end{equation*}
We now introduce the \emph{frequency function}, defined as
\begin{equation*}
	\mathcal{N}(w,r):=\frac{E(w,r)}{H(w,r)}.
\end{equation*}
In particular, in this section we always consider the frequency function associated to a transformed minimizer $v^{x_0}\in H^1_{0,N}(\mathcal{O}^{x_0})$, for some $x_0\in\partial D$. We recall that $v=v^{x_0}$ is as in \eqref{eq:def_v}, while $\mathcal{O}=\mathcal{O}^{x_0}$ is as in \eqref{eq:Omega}. With these choices and $r\leq R_{\partial D}/2$, we restrict the integrals to $\mathcal{O}_r$, thus having that
\begin{align*}
	&E(r):=E(v^{x_0},r)=\sum_{i=1}^N\frac{1}{r^{d-2}}\int_{\mathcal{O}_r}(A\nabla v_i\cdot\nabla v_i-p_i v_i^2)\dx, \\
	&H(r):=H(v^{x_0},r)=\sum_{i=1}^N\frac{1}{r^{d-1}}\int_{S_r}v_i^2\mu\ds
\end{align*}
and
\begin{equation*}
	\mathcal{N}(r):=\frac{E(r)}{H(r)}.
\end{equation*}
If one unravels these expressions, and write them in terms of the original minimizer $u$, one obtains
\[
E(r)=\sum_{i=1}^N\frac{1}{r^{d-2}}\int_{\Psi_{x_0}(B_r)\cap D}(|\nabla u_i|^2-\lambda_i u_i^2)\dx\quad\text{and}\quad H(r):=\sum_{i=1}^N\frac{1}{r^{d-1}}\int_{\partial \Psi_{x_0}(B_r)\cap D}u_i^2\ds,
\]
that is the usual frequency function but defined on perturbed balls, rather than true balls. Following the standard path, we now want to compute the derivative of $\mathcal{N}$  with respect to $r$.	We start by analyzing the derivative of the height function $H$.
\begin{lemma}\label{lemma:H'}
	We have that $H\in W^{1,1}(0,R_{\partial D}/2)$ and the following holds a.e. in $(0,R_{\partial D}/2)$
	\begin{align}
		&\abs{H'(r)-\sum_{i=1}^N\frac{2}{r^{d-1}}\int_{S_r}v_iA\nabla v_i\cdot\nnu\ds}\leq CH(r)\frac{\sigma(r)}{r}, \label{eq:H'_th2}\\
		&\abs{H'(r)-\frac{2}{r}E(r)}\leq CH(r)\frac{\sigma(r)}{r}\label{eq:H'_th3}
	\end{align}
	for some constant $C=C(d,D)>0$.
\end{lemma}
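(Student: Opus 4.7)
The plan is to parametrize $S_r$ radially to eliminate the $r^{-(d-1)}$ prefactor, differentiate under the integral, and then reconcile the resulting ``$\mu\nnu$'' flux with the desired ``$A\nnu$'' flux via a tangential integration by parts on $S_r$. Since \Cref{cor:starsh} guarantees that $\mathcal{O}_*$ is starshaped with respect to the origin, the radial function $\rho\colon S^{d-1}\to (0,+\infty]$ of $\partial\mathcal{O}_*$ is well defined, and with $\Sigma_r := \{\theta\in S^{d-1}\colon \rho(\theta)>r\}$ (a family monotonically decreasing in $r$), the change of variables $y=r\theta$ gives
\[
H_i(r) = \int_{\Sigma_r} v_i(r\theta)^2\,\mu(r\theta)\,\ds(\theta).
\]
Lipschitz regularity of $v_i$ (\Cref{prop:lipschitz}) and $C^1$ regularity of $\mu$ away from the origin (\Cref{lemma:A}) yield $H\in W^{1,1}_{\mathrm{loc}}(0,R_{\partial D}/2)$. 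Differentiating, the moving-boundary contribution on $\partial\Sigma_r$ vanishes because there $r\theta\in\Gamma_r$ and $v_i=0$, so converting back to integrals on $S_r$,
\[
H_i'(r) = \frac{1}{r^{d-1}}\int_{S_r}\bigl[\,2\,v_i\,\mu\,\nabla v_i\cdot\nnu + v_i^2\,\nabla\mu\cdot\nnu\,\bigr]\ds.
\]

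To pass from $\mu\,\nabla v_i\cdot\nnu$ to $A\nabla v_i\cdot\nnu$, introduce $W := \mu\nnu - A\nnu$. Since $A$ is symmetric and $\mu = A\nnu\cdot\nnu$, one has $W\cdot\nnu\equiv 0$ on $S_r$, i.e. $W$ is tangent to $\partial B_r$. Thus $\nabla v_i\cdot W = \nabla_{S_r}v_i\cdot W$ and $2v_i\nabla_{S_r}v_i=\nabla_{S_r}(v_i^2)$, so the surface divergence theorem yields
\[
\int_{S_r} 2\,v_i\,(\mu\nnu - A\nnu)\cdot\nabla v_i\,\ds = -\int_{S_r} v_i^2\,\dive_{S_r}W\,\ds,
\]
with vanishing boundary contribution on $\partial S_r\subset\Gamma_r$ because $v_i\equiv 0$ there. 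By \Cref{lemma:A} one has $|W|\leq \kappa\sigma(r)$, $|\nabla W|\leq C\sigma(r)/r$ and $|\nabla\mu|\leq \kappa\sigma(r)/r$, hence $|\dive_{S_r}W|\leq C\sigma(r)/r$; combining these estimates with $\mu\geq 1/2$ from \Cref{cor:A} and summing over $i$ proves \eqref{eq:H'_th2}.

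The second bound \eqref{eq:H'_th3} follows at once from \eqref{eq:H'_th2} via the integration-by-parts identity \eqref{eq:int_by_parts_2} of \Cref{lemma:int_by_parts}, which gives
\[
\int_{S_r} v_i A\nabla v_i\cdot\nnu\,\ds = \int_{\mathcal{O}_r}(A\nabla v_i\cdot\nabla v_i - p_i v_i^2)\dx = r^{d-2}E_i(r),
\]
so that $\frac{2}{r^{d-1}}\sum_i \int_{S_r} v_i A\nabla v_i\cdot\nnu\,\ds = \frac{2}{r}E(r)$.

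The main obstacle is the tangential integration-by-parts step: a naive pointwise bound $|v_i(\mu\nnu - A\nnu)\cdot\nabla v_i|\leq C\sigma(r)|v_i|\,|\nabla v_i|$ coming from the Lipschitz continuity of $v$ yields only an error of order $\sigma(r)\,H(r)$, missing the desired factor $1/r$. The observation rescuing the scaling is that $\mu\nnu - A\nnu$ is \emph{purely tangent} to $\partial B_r$, so one can trade $\nabla v_i$ for $\nabla_{S_r}(v_i^2)$ and integrate by parts on $S_r$; the boundary contribution on $\partial S_r$ vanishes thanks to $v_i|_{\Gamma_r}=0$, and the tangential divergence $\dive_{S_r}W$ supplies the missing factor $1/r$.
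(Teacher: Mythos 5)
Your proof is correct; it arrives at the same distributional identity for $H'$ as the paper and then handles the key estimate \eqref{eq:H'_th2} by a genuinely different, though closely related, mechanism. The paper's route is to rewrite $H(r)=r^{1-d}\sum_i\int_{\mathcal{O}_r}\dive(v_i^2\bm{\alpha})\dx$ with $\bm{\alpha}=Ax/\abs{x}$ --- valid because $\bm{\alpha}\cdot\nnu=\mu$ on $S_r$ and $v_i=0$ on $\Gamma_r$ --- and then to differentiate this solid integral; the flux $A\nabla v_i\cdot\nnu$ appears automatically from $\dive(v_i^2\bm{\alpha})$, and the leftover term $v_i^2\dive\bm{\alpha}$ cancels the $-\frac{d-1}{r}H(r)$ produced by the prefactor up to an error $CH(r)\sigma(r)/r$, thanks to the estimate $\abs{\dive\bm{\alpha}-(d-1)\mu/\abs{x}}\leq\kappa\sigma(\abs{x})/\abs{x}$ already recorded in item (5) of \Cref{lemma:A}. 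You instead differentiate the surface integral directly and then trade the $\mu$-flux for the $A$-flux by a tangential integration by parts on $S_r$, using that $W=\mu\nnu-A\nnu$ is tangent to $\partial B_r$; this is the same structural fact ($A\nnu\cdot\nnu=\mu$ together with the symmetry of $A$) exploited on the surface rather than in the bulk, and both arguments ultimately rest on the bounds of items (4)--(5) of \Cref{lemma:A}, the vanishing of $v_i$ on $\Gamma_r$, and \Cref{lemma:int_by_parts} for \eqref{eq:H'_th3}. The paper's version has the small advantage that the divergence estimate it needs is already stated in \Cref{lemma:A}, whereas you must additionally derive $\abs{\dive_{S_r}W}\leq C\sigma(r)/r$ from $\abs{\mu I-A}\leq C\sigma$, $\abs{\nabla\mu},\abs{\nabla A}\leq C\sigma/r$ and $\abs{\nabla(x/\abs{x})}\leq C/r$ --- straightforward, but worth writing out. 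Two minor points to tighten: the lemma asserts $H\in W^{1,1}$ on all of $(0,R_{\partial D}/2)$, not just locally, which follows at once from your formula since $v(0)=0$ and the Lipschitz bound give $\abs{H'(r)}\leq Cr$; and in the last displayed identity the boundary integral over $\Gamma_r$ in \eqref{eq:int_by_parts_2} vanishes precisely because $v_i=0$ there, which is worth saying explicitly.
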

\begin{proof}
	Standard computations (see e.g. \cite[Lemma 5.3]{FF2013}) yield
	\begin{equation}\label{eq:H'1}
		H_i'(r)=\frac{2}{r^{d-1}}\int_{S_r}v_i\partial_{\nnu} v_i \mu\ds+\frac{1}{r^{d-1}}\int_{S_r}v_i^2\nabla \mu\cdot\nnu\ds
	\end{equation}
	in a distributional sense. Therefore, since $v_i\in C^{0,1}(\mathcal{O})$ and in view of \Cref{it:diffeo5} in \Cref{lemma:A} we have that
	\[
	\abs{H'(r)}\leq Cr\sigma(r)\quad\text{a.e. in }(0,R_{\partial D}/2),
	\]
	for some constant $C>0$ depending on $\max_{i=1,\dots,N}\norm{v_i}_{C^{0,1}(\mathcal{O})}$; this implies that $H$ is in $W^{1,1}(0,R_{\partial D}/2)$. In order to prove \eqref{eq:H'_th2}, we first observe that, thanks to divergence theorem
	\begin{equation*}
		H(r)=\sum_{i=1}^N\frac{1}{r^{d-1}}\int_{S_r}v_i^2\mu\ds=\sum_{i=1}^N\frac{1}{r^{d-1}}\int_{S_r}v_i^2\bm{\alpha}\cdot\nnu\ds=\sum_{i=1}^N\frac{1}{r^{d-1}}\int_{\mathcal{O}_r}\dive(v_i^2\bm{\alpha})\ds,
	\end{equation*}
	since $\bm{\alpha}(x)=A(x)x/\abs{x}$ satisfies $\bm{\alpha}\cdot\nnu=\bm{\alpha}\cdot x/\abs{x}=\mu$ on $S_r$. Therefore, there holds
	\begin{align*}
		H'(r)&=-\frac{d-1}{r}H(r)+\sum_{i=1}^N\frac{1}{r^{d-1}}\int_{S_r}\dive(v_i^2\bm{\alpha})\ds \\
		&=-\frac{d-1}{r}H(r)+\sum_{i=1}^N\frac{1}{r^{d-1}}\int_{S_r}(2v_iA\nabla v_i\cdot\nnu+v_i\dive\bm{\alpha})\ds
	\end{align*}
	for a.e. $r\in(0,R_{\partial D}/2)$. Now, thanks to \textit{(5)} in \Cref{lemma:A}, we obtain \eqref{eq:H'_th2}. Finally, \Cref{lemma:int_by_parts} combined with \eqref{eq:H'_th2} implies \eqref{eq:H'_th3} and the proof is concluded.
\end{proof}

We recall a standard Poincaré-type inequality and a straightforward consequence of it, which consists in a coercivity property.
\begin{lemma}[Poincaré Inequality]\label{lemma:poincare}
	There holds
	\[
	\int_{B_r}w^2\ds\leq \frac{1}{d-1}\left[r^2\int_{B_r}\abs{\nabla w}^2\dx+r\int_{\partial B_r}w^2\ds\right]
	\]
	for all $w\in H^1(B_r)$ and all $r>0$.
\end{lemma}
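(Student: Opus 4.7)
The plan is a standard radial integration by parts using the divergence theorem applied to the vector field $F(x):=x\,w(x)^2$. Since $\dive(x) = d$, one has $\dive(F) = d\,w^2 + 2w(x\cdot\nabla w)$, and since $x\cdot\nnu = r$ on $\partial B_r$, the divergence theorem gives
\begin{equation*}
d\int_{B_r}w^2\dx = r\int_{\partial B_r}w^2\ds - 2\int_{B_r}w(x\cdot\nabla w)\dx.
\end{equation*}
I would first verify this identity for $w\in C^\infty(\overline{B_r})$ and then extend to $w\in H^1(B_r)$ by density.

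Next, I would estimate the cross term by Cauchy--Schwarz followed by Young's inequality with parameter equal to $1$:
\begin{equation*}
\abs{2\int_{B_r}w(x\cdot\nabla w)\dx} \leq 2r\left(\int_{B_r}w^2\dx\right)^{1/2}\!\left(\int_{B_r}\abs{\nabla w}^2\dx\right)^{1/2} \leq \int_{B_r}w^2\dx + r^2\int_{B_r}\abs{\nabla w}^2\dx.
\end{equation*}
Inserting this bound into the divergence identity and rearranging terms yields
\begin{equation*}
(d-1)\int_{B_r}w^2\dx \leq r\int_{\partial B_r}w^2\ds + r^2\int_{B_r}\abs{\nabla w}^2\dx,
\end{equation*}
which is the claimed inequality (the $\ds$ appearing under the first integral of the statement seems to be a typo for $\dx$, since the domain of integration is the solid ball $B_r$).

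There is no substantive obstacle: this is a classical one-line computation. The only real choice is the Young parameter, and taking it equal to $1$ is precisely what absorbs exactly one unit of $d$, producing the sharp constant $\tfrac{1}{d-1}$. I would note that this inequality is sharp on constants, as can be checked by testing $w \equiv 1$, so no improvement of the constant is possible via this technique.
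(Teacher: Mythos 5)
Your argument is exactly the paper's: integrate the identity $\dive(w^2 x)=2w\,\nabla w\cdot x+d\,w^2$ over $B_r$, apply the divergence theorem, and absorb the cross term by Cauchy--Schwarz and Young with parameter~$1$. You are also right that the $\ds$ in the first integral of the statement is a typo for $\dx$.
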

\begin{proof}
	The proof simply follows by integration of the following identity
	\[
	\dive(w^2 x)=2w\nabla w\cdot x+dw^2
	\]
	in $B_r$ and applying divergence theorem and Young's inequality.
\end{proof}

We hereafter denote
\begin{equation}\label{def:R_0}
	R_0:=\min\left\{\frac{R_{\partial D}}{2},\sqrt{\frac{d-1}{6\Lambda}}\right\}.
\end{equation}
\begin{corollary}[Coercivity]\label{cor:coerc}
	Let $R_0>0$ be as in \eqref{def:R_0}. Then
	\begin{equation}\label{eq:coerc_th1}
		\int_{\mathcal{O}_r}(A\nabla v_i\cdot\nabla v_i-p_iv_i^2)\dx+\frac{1}{r}\int_{S_r}v_i^2\mu\ds\geq \frac{1}{2}\left(\int_{\mathcal{O}_r}A\nabla v_i\cdot\nabla v_i\dx+\frac{1}{r}\int_{S_r}v_i^2\mu\ds\right)
	\end{equation}
	for all $r\in(0,R_0)$ and all $i=1,\dots,N$. In particular
	\begin{equation}\label{eq:coerc_th2}
		\int_{\mathcal{O}_r}p_i v_i^2\dx\leq \frac{3\lambda_ir^d}{d-1}(E_i(r)+H_i(r))
	\end{equation}
	for all $r\in(0,R_0)$ and all $i=1,\dots,N$.
\end{corollary}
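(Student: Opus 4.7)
Both inequalities are direct consequences of the Poincaré inequality of Lemma \ref{lemma:poincare} combined with the uniform bounds on $A$, $\mu$, $p_i$ provided by Corollary \ref{cor:A}. The choice of $R_0$ in \eqref{def:R_0} is tuned precisely so that Poincaré absorbs the $\int p_i v_i^2$ term by half of the two positive terms on the right-hand side of \eqref{eq:coerc_th1}.

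For \eqref{eq:coerc_th1}, since $v_i\in H^1_0(\mathcal{O})$ vanishes on $\Gamma_r$ (and thus also after extension by zero to $B_r$), applying Lemma \ref{lemma:poincare} to the extended function gives
\[
\int_{\mathcal{O}_r}v_i^2\dx \;\leq\; \frac{1}{d-1}\left[r^2\int_{\mathcal{O}_r}|\nabla v_i|^2\dx \;+\; r\int_{S_r}v_i^2\ds\right],
\]
because $\nabla v_i=0$ outside $\mathcal{O}_r$ and $v_i=0$ on $\partial B_r\setminus S_r$. Now insert the elementary pointwise bounds from Corollary \ref{cor:A}: $|\nabla v_i|^2 \leq 2\, A\nabla v_i\cdot\nabla v_i$ (from $\|A^{-1}\|\leq 2$), $v_i^2\leq 2 v_i^2\mu$ (from $\mu\geq 1/2$), and $p_i\leq 3\Lambda/2$. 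This produces
\[
\int_{\mathcal{O}_r}p_i v_i^2\dx \;\leq\; \frac{3\Lambda r^2}{d-1}\int_{\mathcal{O}_r} A\nabla v_i\cdot\nabla v_i\dx \;+\; \frac{3\Lambda r}{d-1}\int_{S_r}v_i^2\mu\ds .
\]
For $r\leq R_0\leq \sqrt{(d-1)/(6\Lambda)}$ the two prefactors are respectively $\leq 1/2$ and $\leq 1/(2r)$, so the right-hand side is dominated by $\tfrac12\int A\nabla v_i\cdot\nabla v_i+\tfrac{1}{2r}\int_{S_r}v_i^2\mu$. Rearranging yields \eqref{eq:coerc_th1}.

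For \eqref{eq:coerc_th2}, I would multiply \eqref{eq:coerc_th1} by $r^{d-2}$ and read it as
\[
r^{d-2}\bigl(E_i(r)+H_i(r)\bigr)\;\geq\; \tfrac12\int_{\mathcal{O}_r} A\nabla v_i\cdot\nabla v_i\dx \;+\; \tfrac{1}{2r}\int_{S_r}v_i^2\mu\ds ,
\]
which individually controls each positive summand by $2r^{d-2}(E_i+H_i)$ and $2r^{d-1}(E_i+H_i)$. Feeding these back into the Poincaré-based estimate for $\int p_i v_i^2$ derived above (using this time $p_i\leq\tfrac32\lambda_i$) gives the desired bound of the form $C\lambda_i r^d(E_i+H_i)/(d-1)$.

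The argument is entirely routine; there is no real obstacle beyond careful bookkeeping of the multiplicative constants coming from Corollary \ref{cor:A} (which can be tightened, if needed to match the stated factor $3$, by slightly shrinking $R_0$ so that $\sigma(R_0)$ is smaller and hence the bounds on $A$, $\mu$, $p_i$ are closer to their limiting values $I$, $1$, $\lambda_i$).
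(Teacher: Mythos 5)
Your proof of \eqref{eq:coerc_th1} is correct and is essentially the paper's own argument: the Poincaré inequality applied to the extension of $v_i$ by zero to $B_r$ (legitimate since $v_i=0$ on $\Gamma_r$), the pointwise bounds $|\nabla v_i|^2\le 2A\nabla v_i\cdot\nabla v_i$, $1\le 2\mu$, $p_i\le \tfrac32\Lambda$ from \Cref{cor:A}, and absorption using $r\le R_0$. For \eqref{eq:coerc_th2} your route (bounding each of the two positive summands in \eqref{eq:coerc_th1} separately by the full right-hand side and feeding back) produces the constant $12\lambda_i r^d/(d-1)$ rather than the stated $3\lambda_i r^d/(d-1)$, whereas the paper substitutes $\int_{\mathcal{O}_r}A\nabla v_i\cdot\nabla v_i\dx=r^{d-2}E_i(r)+\int_{\mathcal{O}_r}p_iv_i^2\dx$ directly into \eqref{eq:coerc_1} and absorbs the $p_iv_i^2$ term, which is sharper (though even that gives $6$ rather than $3$ with the stated $R_0$); since this constant is only ever absorbed into generic $C(d,D,N)$ later on, the discrepancy is harmless.
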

\begin{proof}
	From \Cref{lemma:poincare} and \Cref{cor:A} it follows that
	\begin{equation}\label{eq:coerc_1}
		\begin{aligned}
			\int_{\mathcal{O}_r}p_iv_i^2\dx \leq \frac{3\Lambda}{2}\int_{\mathcal{O}_r}v_i^2\dx 
			&\leq \frac{3\Lambda}{2(d-1)}\left[r^2\int_{\mathcal{O}_r}\abs{\nabla v_i}^2\dx+r\int_{S_r}v_i^2\ds\right] \\
			&\leq \frac{3\Lambda}{d-1}\left[r^2\int_{\mathcal{O}_r}A\nabla v_i\cdot\nabla v_i\dx+ r\int_{S_r}v_i^2\mu\ds\right],
		\end{aligned}
	\end{equation}
	which implies that
	\begin{equation*}
		\int_{\mathcal{O}_r}(A\nabla v_i\cdot\nabla v_i-p_iv_i^2)\dx+\frac{1}{r}\int_{S_r}v_i^2\mu\ds\geq \left[1-\frac{3\Lambda }{d-1}r^2\right]\left[\int_{\mathcal{O}_r}A\nabla v_i\cdot\nabla v_i\dx+\frac{1}{r}\int_{S_r}v_i^2\mu\ds\right].
	\end{equation*}
	The proof of \eqref{eq:coerc_th1} is complete by taking $r\leq R_0$. Finally, \eqref{eq:coerc_th2} follows from \Cref{cor:A} and \eqref{eq:coerc_1}. 
\end{proof}
With this result in our hand, we can prove that the Almgren frequency function is well defined at any boundary point.
\begin{lemma}\label{lemma:H>0}
	Let $R_0>0$ be as in \eqref{def:R_0}. We have that $H(r)>0$ for any $r\leq R_0$.
\end{lemma}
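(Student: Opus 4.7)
I would argue by contradiction, assuming $H(r_0) = 0$ for some $r_0 \in (0, R_0]$. Since $\mu \geq 1/2$ by \Cref{cor:A}, this forces $v_i = 0$ in the trace sense on $S_{r_0}$ for every $i = 1, \ldots, N$. Moreover, $v = u \circ \Psi_{x_0}$ belongs to $H^1_0(\mathcal{O})$ (using that $u \in H^1_0(D)$ and $\Psi_{x_0}$ is a $C^1$-diffeomorphism by \Cref{lemma:Psi}), so $v_i$ also vanishes in the trace sense on the fixed-boundary portion $\Gamma_{r_0}$.

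Next I would apply the integration-by-parts identity \eqref{eq:int_by_parts_2} from \Cref{lemma:int_by_parts} with $\Phi = \Psi_{x_0}$, centered at the origin. The boundary $\partial \mathcal{O}_{r_0}$ decomposes as $S_{r_0} \cup \Gamma_{r_0}$ and $v_i$ vanishes on each piece, so the boundary term on the right-hand side is zero, yielding
\[
\int_{\mathcal{O}_{r_0}} (A \nabla v_i \cdot \nabla v_i - p_i v_i^2) \, dx = 0, \quad \text{equivalently } E_i(r_0) = 0,
\]
for each $i$. Combining $E_i(r_0) = 0$ and $H_i(r_0) = 0$ with the coercivity estimate \eqref{eq:coerc_th1} (applicable since $r_0 \leq R_0$), uniform ellipticity of $A$ from \Cref{lemma:A} gives $\int_{\mathcal{O}_{r_0}} |\nabla v_i|^2 \, dx = 0$, hence $\nabla v_i \equiv 0$ a.e.\ in $\mathcal{O}_{r_0}$.

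To conclude, I would invoke the Lipschitz regularity of $v_i$ inherited from \Cref{prop:lipschitz} together with the epigraph description of $\mathcal{O}_{r_0}$ in \Cref{lemma:Omega_Psi}: the set $\mathcal{O}_{r_0}$ is connected, and $v_i$ vanishes on the nonempty boundary piece $\Gamma_{r_0}$, so $v_i \equiv 0$ on all of $\mathcal{O}_{r_0}$ for every $i$. Pulling this back through $\Psi_{x_0}$, all components $u_i$ vanish identically on the open set $\Psi_{x_0}(\mathcal{O}_{r_0})$, which has nonempty intersection with $D$. But then this open subset of $D$ lies inside $\mathcal{Z}(u) = \mathcal{F}(u)$, contradicting \Cref{thm:int_free_bound} (according to which $\mathcal{F}(u)$ is the union of an $(d-1)$-dimensional regular part and a lower-dimensional singular part, and so has empty interior). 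The only subtlety in the plan is handling both pieces $S_{r_0}$ and $\Gamma_{r_0}$ of $\partial \mathcal{O}_{r_0}$ correctly in the integration-by-parts step; once those two boundary contributions are seen to vanish, the coercivity step and the final interior-unique-continuation contradiction are essentially routine.
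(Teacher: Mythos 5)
Your proposal is correct and follows essentially the same route as the paper: assume $H(r_0)=0$, deduce $v_i=0$ on $S_{r_0}$, use the integration-by-parts identity \eqref{eq:int_by_parts_2} together with the coercivity estimate \eqref{eq:coerc_th1} to conclude $v_i\equiv 0$ in $\mathcal{O}_{r_0}$, and reach a contradiction with interior unique continuation. The only cosmetic difference is that you phrase the final contradiction via the empty interior of $\mathcal{Z}(u)=\mathcal{F}(u)$ rather than citing the unique continuation property of \cite{CL2007} directly, which is the same fact.
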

\begin{proof}
	Assume by contradiction that
	\[
	H(r)=\sum_{i=1}^N\frac{1}{r^{d-1}}\int_{S_r}v_i^2\mu\ds=0
	\]
	for some $r\leq R_0$. This implies that $v_i\equiv 0$ on $S_r$ for any $i=1,\dots,N$. If we combine this fact with \Cref{lemma:int_by_parts} and \eqref{eq:coerc_th1}, we find that $v_i\equiv 0$ in $\mathcal{O}_r$ for any $i=1,\dots,N$, which in contradiction with the unique continuation property at interior points, proved for instance in \cite{CL2007}. This concludes the proof.
\end{proof}

We now pass to the study of the derivative of the energy $E$. Here a crucial role is played by the Pohozaev inequality proved in \Cref{prop:pohozaev}.

\begin{lemma}\label{lemma:E'}
	Let $R_0>0$ be as in \eqref{def:R_0}. We have that $E\in W^{1,1}(0,R_0)$ and the following holds a.e. in $(0,R_0)$
	\begin{equation*}
		E'(r)\geq \sum_{i=1}^N \frac{2}{r^{d-2}}\int_{S_r}\frac{1}{\mu}(A\nabla v_i\cdot\nnu)^2\ds-C\,\frac{\sigma(r)}{r}(E(r)+H(r)),
	\end{equation*}
	for some $C=C(d,D,N)>0$.
\end{lemma}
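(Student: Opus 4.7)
The plan is to differentiate $E$ explicitly, apply the Pohozaev inequality \eqref{eq:pohoz_th2} to the spherical boundary term that arises, and absorb every remainder into $C\sigma(r)/r\cdot(E(r)+H(r))$ via the coefficient bounds of \Cref{lemma:A}, the coercivity of \Cref{cor:coerc}, and the nondegeneracy $\sigma(r)\geq C_\sigma r$ from \Cref{lemma:sigma_nondeg}. As a first step, Lipschitz continuity of $v$ up to $\partial D$ (\Cref{prop:lipschitz}) together with the boundedness of $A$ and $p_i$ (\Cref{cor:A}) yields, via the coarea formula, that $E\in W^{1,1}(0,R_0)$ with
\[
E'(r) = -\frac{d-2}{r}E(r) + \frac{1}{r^{d-2}}\sum_{i=1}^N\int_{S_r}\bigl(A\nabla v_i\cdot\nabla v_i - p_i v_i^2\bigr)\ds\quad\text{for a.e. }r\in(0,R_0).
\]

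Second, I would insert \eqref{eq:pohoz_th2} into the above identity. Setting $M := A\dive\bm{\beta} + \d A[\bm{\beta}] - 2D\bm{\beta}\,A$ and $Q_i := \nabla p_i\cdot\bm{\beta} + p_i\dive\bm{\beta}$, items (4)--(5) of \Cref{lemma:A} give the key decompositions $M = (d-2)A + R_M$ and $Q_i = d\,p_i + R_i$, with pointwise bounds $\|R_M(x)\| + |R_i(x)| \leq C\sigma(|x|)$. (The coefficient $(d-2)$ arises from $\dive\bm{\beta}\approx d$ and $D\bm{\beta}\approx I$; the remainder estimate uses in particular $|\d A[\bm{\beta}](x)|\leq C\sigma(|x|)$, which follows from $|\nabla A|\leq \kappa\sigma(|x|)/|x|$ paired with $|\bm{\beta}(x)|\leq C|x|$.) Hence
\[
\sum_i\int_{\mathcal{O}_r} M\nabla v_i\cdot\nabla v_i\dx - \sum_i\int_{\mathcal{O}_r} Q_i v_i^2\dx = (d-2)r^{d-2}E(r) - 2\sum_i\int_{\mathcal{O}_r} p_i v_i^2\dx + \mathcal{E}(r),
\]
with $|\mathcal{E}(r)|\leq C\sigma(r)\sum_i\int_{\mathcal{O}_r}(|\nabla v_i|^2+v_i^2)\dx$ by monotonicity of $\sigma$. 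The resulting $\frac{d-2}{r}E(r)$ piece exactly cancels its twin in the formula for $E'$ above, leaving the desired spherical Rayleigh quotient and two lower-order perturbations.

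Third, to estimate these perturbations I would invoke \Cref{cor:coerc}, which gives $\sum_i\int_{\mathcal{O}_r} p_iv_i^2\dx \leq Cr^d(E(r)+H(r))$ and $\sum_i\int_{\mathcal{O}_r} A\nabla v_i\cdot\nabla v_i\dx \leq Cr^{d-2}(E(r)+H(r))$; combined with \Cref{lemma:poincare} this also yields $\sum_i\int_{\mathcal{O}_r}v_i^2\dx\leq Cr^d(E(r)+H(r))$. Dividing by $r^{d-1}$, the two perturbation terms are bounded by $C(r+\sigma(r))(E(r)+H(r))$, and the stray factor $r$ is absorbed via $r\leq (R_0/C_\sigma)\,\sigma(r)/r$, a consequence of $\sigma(r)\geq C_\sigma r$ (\Cref{lemma:sigma_nondeg}). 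This delivers the claimed inequality.

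The main technical obstacle is the algebra in the second step: confirming the exact identities $M = (d-2)A + O(\sigma)$ and $Q_i = d\,p_i + O(\sigma)$, which are what make the $(d-2)E(r)/r$ cancellation happen. The apparently $O(1)$ term $\d A[\bm{\beta}](x)$ must in fact be shown to be $O(\sigma(|x|))$, and similarly $\nabla p_i\cdot\bm{\beta}$; any weaker control on the coefficient expansion would leave a leading-order unabsorbed term in $E'$ and destroy the almost-monotonicity picture needed downstream.
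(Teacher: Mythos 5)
Your proposal is correct and follows essentially the same route as the paper: differentiate $E$ via the coarea formula, substitute the Pohozaev inequality \eqref{eq:pohoz_th2} into the spherical term, expand $A\dive\bm{\beta}+\d A[\bm{\beta}]-2D\bm{\beta}A=(d-2)A+O(\sigma)$ and $\nabla p_i\cdot\bm{\beta}+p_i\dive\bm{\beta}=d\,p_i+O(\sigma)$ using \Cref{lemma:A}, cancel the $\frac{d-2}{r}E(r)$ term, and absorb the remainders through \Cref{cor:coerc} and $\sigma(r)\geq C_\sigma r$. The only difference is cosmetic: the paper states the one-sided bounds $(d-2-C\sigma(r))$ and $(d+C\sigma(r))$ directly rather than writing the exact decompositions $M=(d-2)A+R_M$, $Q_i=d\,p_i+R_i$, which is equivalent.
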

\begin{proof}
	In view of the definition of $E_i$, the following holds in a distributional sense
	\begin{equation}\label{eq:E'1}
		E_i'(r)=-\frac{d-2}{r}E_i(r)+\frac{1}{r^{d-2}}\int_{S_r}(A\nabla v_i\cdot\nabla v_i-p_iv_i^2)\ds
	\end{equation}
	and, since $v_i\in C^{0,1}(\mathcal{O})$, then
	\begin{equation*}
		\abs{E(r)}\leq C r\quad\text{a.e. in }(0,R_0),
	\end{equation*}
	for some $C>0$ depending on $\max_{i=1,\dots,N}\norm{v_i}_{C^{0,1}(\mathcal{O})}$: this implies that $E\in W^{1,1}(0,R_0)$. Now, combining \eqref{eq:E'1} with \Cref{prop:pohozaev} yields
	\begin{multline}\label{eq:E'2}
		E'(r)\geq -\frac{d-2}{r}E(r)+\sum_{i=1}^N\frac{1}{r^{d-1}}\Bigg[2r\int_{S_r}\frac{1}{\mu}(A\nabla v_i\cdot\nnu)^2\ds \\
		+\int_{\mathcal{O}_r}(A\dive\bm{\beta}
		+\d A[\bm{\beta}]-2D\bm{\beta} A)\nabla v_i\cdot\nabla v_i\dx 
		-\int_{\mathcal{O}_r}(\nabla p_i\cdot\bm{\beta}+p_i\dive\bm{\beta})v_i^2\dx\Bigg]
	\end{multline}
	In view of the estimates on $A$, $\bm{\beta}$ and $p_i$ obtained in \Cref{lemma:A} and \Cref{cor:A} we have that
	\begin{equation*}
		\int_{\mathcal{O}_r}(A\dive\bm{\beta}
		+\d A[\bm{\beta}]-2D\bm{\beta} A)\nabla v_i\cdot\nabla v_i\dx 
		\geq (d-2-C\sigma(r))\int_{\mathcal{O}_r}A\nabla v_i\cdot\nabla v_i\dx
	\end{equation*}
	and
	\begin{equation*}
		-\int_{\mathcal{O}_r}(\nabla p_i\cdot\bm{\beta}+p_i\dive\bm{\beta})v_i^2\dx\geq (d-C\sigma(r))\int_{\mathcal{O}_r}p_iv_i^2\dx,
	\end{equation*}
	for some $C>0$ depending only on $d$, $D$ and $N$, and any $r\in(0,R_0)$. Hence, combining \eqref{eq:E'2} with these two inequalities, we obtain that
	\begin{multline*}
		E'(r)\geq \sum_{i=1}^N\frac{1}{r^{d-1}}\Bigg[2r\int_{S_r}\frac{1}{\mu}(A\nabla v_i\cdot\nnu)^2\ds \\
		-C\left((1+\sigma(r))\int_{\mathcal{O}_r}p_iv_i^2\dx+\sigma(r)\int_{\mathcal{O}_r}A\nabla v_i\cdot\nabla v_i\dx\right)\Bigg].
	\end{multline*}
	Finally, by applying \eqref{eq:coerc_th2} and rearranging the terms, we obtain the thesis.
\end{proof}

At this point we have all the ingredients needed in order to prove almost-monotonicity of the Almgren frequency function. We point out that, for this result to be true, only the $1$-Dini condition on $\sigma$ is really needed.
\begin{theorem}[Monotonicity of the Almgren function]\label{thm:N'}
	Let $R_0>0$ be as in \eqref{def:R_0}. Then $\mathcal{N}\in W^{1,1}(0,R_0)$ and for a.e. $r\in(0,R_0)$ there holds
	\begin{equation}\label{eq:N'_th1}
		\mathcal{N}'(r)\geq -C_{\textup{A}}\,\frac{\sigma(r)}{r}\left(\mathcal{N}(r)+1\right),
	\end{equation}
	for some $C_{\textup{A}}>0$ depending only on $d$, $D$ and $N$. In particular,
	\begin{equation}\label{eq:N'_th2}
		\left(e^{C_{\textup{A}}\int_0^r\frac{\sigma(t)}{t}\d t}\left(\mathcal{N}(r)+1\right)\right)'\geq 0
	\end{equation}
	and there exists $\lim_{r\to 0}\mathcal{N}(r)\in[0,\infty)$.
\end{theorem}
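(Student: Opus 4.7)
The strategy is the classical Almgren-type computation: bound $\mathcal{N}'(r) = E'(r)/H(r) - E(r) H'(r)/H(r)^2$ from below, using the lower bound on $E'$ from Lemma~\ref{lemma:E'}, the expansion $H'(r) = 2 E(r)/r + O(\sigma(r) H(r)/r)$ from Lemma~\ref{lemma:H'}, and a Cauchy--Schwarz step that forces the two leading $2 E^2/(r H^2)$ contributions to cancel. Positivity of $H$ (Lemma~\ref{lemma:H>0}) together with the $W^{1,1}$ regularity of $E, H$ guarantees $\mathcal{N} \in W^{1,1}(0, R_0)$, so the quotient rule is valid a.e.

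The key ingredient is the integration-by-parts identity of Lemma~\ref{lemma:int_by_parts}, which reads $\sum_i \int_{S_r} v_i\, A\nabla v_i \cdot \nnu \, \mathrm{d}S = r^{d-2} E(r)$ for a.e.\ $r \in (0, R_0)$. Squaring and applying Cauchy--Schwarz on $S_r$ with weight $\mu$ yields $E(r)^2/(r H(r)) \leq r K(r)$, where $K(r) := r^{-(d-1)} \sum_i \int_{S_r} (A\nabla v_i \cdot \nnu)^2/\mu \, \mathrm{d}S$ is exactly the integral appearing in Lemma~\ref{lemma:E'}. Plugged into that lemma, this gives
\begin{equation*}
E'(r) \;\geq\; \frac{2 E(r)^2}{r H(r)} - C\, \frac{\sigma(r)}{r} \bigl( E(r) + H(r) \bigr).
\end{equation*}

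Substituting into the quotient rule, the two $2 E^2/(r H^2)$ contributions cancel identically, leaving
\begin{equation*}
\mathcal{N}'(r) \;\geq\; - C\, \frac{\sigma(r)}{r} \bigl( \mathcal{N}(r) + 1 \bigr) - \frac{E(r)\, \delta(r)}{H(r)^2},
\end{equation*}
where $|\delta(r)| \leq C H(r) \sigma(r)/r$. Coercivity (Corollary~\ref{cor:coerc}) forces $\mathcal{N}(r) + 1 \geq 1/2$, hence $|\mathcal{N}(r)| \leq \mathcal{N}(r) + 1$, and the residual $|E \delta/H^2| \leq C \sigma(r)/r \cdot |\mathcal{N}(r)|$ is absorbed into the admissible correction, proving~\eqref{eq:N'_th1}. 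The product rule then immediately delivers~\eqref{eq:N'_th2}.

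For the limit, the $1$-Dini condition on $\sigma$ (built into Assumption~\ref{ass:domain}) implies $\int_0^{R_0} \sigma(t)/t \, \mathrm{d}t < \infty$, so $e^{C_A \int_0^r \sigma(t)/t \, \mathrm{d}t}$ converges to a positive finite constant as $r \to 0^+$. The monotone nonnegative quantity $e^{C_A \int_0^r \sigma/t}(\mathcal{N}(r)+1)$ is bounded on $(0,R_0)$ and hence converges, and so does $\mathcal{N}(r)$. Nonnegativity of the limit is obtained by splitting off the zero-order term $\sum_i r^{-(d-2)} \int_{\mathcal{O}_r} p_i v_i^2 \, \mathrm{d}x$ from $E(r)$, which by Lemma~\ref{lemma:poincare} combined with the boundedness of $\mathcal{N}$ is $o(H(r))$ as $r \to 0^+$. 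The main obstacle is the exact algebraic cancellation in the quotient rule: only thanks to the sharp Cauchy--Schwarz form identifying the boundary quadratic term in Lemma~\ref{lemma:E'} with $E^2/(r H)$ do the leading contributions neutralize, leaving an error controlled by $\sigma(r)/r$, which is integrable near $r = 0$ precisely under the $1$-Dini hypothesis.
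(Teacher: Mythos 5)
Your proposal is correct and follows essentially the same route as the paper: quotient rule for $\mathcal{N}'$, the expansion of $H'$ from Lemma~\ref{lemma:H'}, the Pohozaev-based lower bound on $E'$ from Lemma~\ref{lemma:E'}, the identity $r^{d-2}E(r)=\sum_i\int_{S_r}v_i A\nabla v_i\cdot\nnu\,\mathrm{d}S$ from Lemma~\ref{lemma:int_by_parts}, and the weighted Cauchy--Schwarz inequality on $S_r$ to cancel the two leading $2E^2/(rH^2)$ terms; the only (cosmetic) difference is that you apply Cauchy--Schwarz inside the bound for $E'$ before substituting, whereas the paper substitutes everything first and invokes Cauchy--Schwarz at the end. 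Your treatment of the residual term via $\mathcal{N}(r)\geq -1/2$ from coercivity and of the nonnegativity of the limit via Poincaré matches the paper's use of Corollary~\ref{cor:coerc}.
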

\begin{proof}
	First of all, $\mathcal{N}$ is well defined in $(0,R_0)$ in view of \Cref{lemma:H>0} and, since both $E$ and $H$ are in $W^{1,1}(0,R_0)$, then $\mathcal{N}\in W^{1,1}_{\textup{loc}}(0,R_0)$. We now compute its derivative. Thanks to \eqref{eq:H'_th3} we have that
	\begin{align*}
		\mathcal{N}'(r)=\frac{E'(r)H(r)-E(r)H'(r)}{H^2(r)}&\geq \frac{E'(r)H(r)-E(r)\left(\frac{2}{r}E(r)+CH(r)\frac{\sigma(r)}{r}\right)}{H^2(r)} \\
		&=\frac{E'(r)H(r)-\frac{2}{r}E^2(r)}{H^2(r)}-C\frac{\sigma(r)}{r}\mathcal{N}(r).
	\end{align*}
	Keeping in mind \Cref{lemma:sigma_nondeg}, we now combine this with \Cref{lemma:E'} and \Cref{lemma:int_by_parts} and obtain
	\begin{multline*}
		\mathcal{N}'(r)\geq \frac{1}{H^2(r)}\Bigg[H(r)\frac{2}{r^{d-2}}\sum_{i=1}^{N}\int_{S_r}\frac{1}{\mu}(A\nabla v_i\cdot\nnu)^2\ds-\frac{2}{r}\left(\sum_{i=1}^N\frac{1}{r^{d-2}}\int_{S_r}v_iA\nabla v_i\cdot\nnu\ds\right)^2 \\
		-C\frac{\sigma(r)}{r}(E(r)+H(r))H(r)\Bigg]-C\frac{\sigma(r)}{r}\mathcal{N}(r),
	\end{multline*}
	which, up to rearranging the terms, implies that
	\begin{align*}
		\mathcal{N}'(r)&\geq \frac{2r}{\sum_{i=1}^N\int_{S_r}v_i^2\mu\ds}\Bigg[\Bigg(\sum_{i=1}^N\int_{S_r}v_i^2\mu\ds\Bigg)\Bigg(\sum_{i=1}^{N}\int_{S_r}\frac{1}{\mu}(A\nabla v_i\cdot\nnu)^2\ds\Bigg) \\
		&\qquad\qquad\qquad\qquad\quad- \Bigg(\sum_{i=1}^N\int_{S_r}v_iA\nabla v_i\cdot\nnu\ds\Bigg)^2\Bigg] \\ &\quad-C\frac{\sigma(r)}{r}(1+\mathcal{N}(r))-C\frac{\sigma(r)}{r}\mathcal{N}(r).
	\end{align*}
	Now, thanks to Cauchy-Schwarz inequality, this can be bounded from below as in \eqref{eq:N'_th1}. Hence, there exists $\lim_{r\to 0}\mathcal{N}(r)$ and it is finite, which also implies that $\mathcal{N}\in W^{1,1}(0,R_0)$. The fact that the limit is non-negative is a consequence of \eqref{eq:coerc_th2} and this concludes the proof.
\end{proof}

It is well known that the almost-monotonicity property of the Almgren function immediately yields some consequences, such as boundedness of the limit as $r\to 0$ and estimates on the growth of $H$.
Before stating them, we introduce the following notation
\begin{equation}\label{def:gamma_x_0}
	\gamma(x_0):=\lim_{r\to 0}\mathcal{N}(v^{x_0},r)=\lim_{r\to 0}\mathcal{N}(r).
\end{equation}
The following result contains a trivial consequence of \Cref{thm:N'}, that is boundedness of the frequency function.
\begin{corollary}\label{cor:N_bound}
	Let $R_0>0$ be as in \eqref{def:R_0}. Then, there exists $C_{\textup{b}}>0$ depending only on $d$, $D$ and $N$ (independent of $x_0\in\partial D$) such that
	\begin{equation}\label{eq:N_bound_th1}
		\mathcal{N}(r)\leq C_{\textup{b}}\left(\mathcal{N}(R_0)+1\right)\quad\text{for a.e. }r\leq R_0
	\end{equation}
	and
	\begin{equation}\label{eq:N_bound_th2}
		\mathcal{N}(r)\leq C_{\textup{b}}\quad\text{for a.e. }r\leq R_0.
	\end{equation}
	Moreover, if $\gamma(x_0)$ is as in \eqref{def:gamma_x_0}, then $1\leq\gamma(x_0)\leq C_{\textup{b}}$  (uniformly in $x_0\in\partial D$).
\end{corollary}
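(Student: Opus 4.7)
The first bound \eqref{eq:N_bound_th1} is a straightforward integration of the almost-monotonicity \eqref{eq:N'_th2}: under \Cref{ass:domain}, the quantity $I_\sigma:=\int_0^{R_0}\sigma(t)/t\,\d t$ is finite and depends only on $d$, $D$ and $N$ (since the modulus $\sigma$ and the radius $R_0$ are chosen uniformly on $\partial D$). Integrating \eqref{eq:N'_th2} between $r$ and $R_0$ yields
\[
\mathcal{N}(r)+1 \leq e^{C_{\textup{A}}I_\sigma}\bigl(\mathcal{N}(R_0)+1\bigr),
\]
which is \eqref{eq:N_bound_th1} with $C_{\textup{b}}:=e^{C_{\textup{A}}I_\sigma}$, visibly independent of $x_0\in\partial D$.

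To derive the absolute upper bound \eqref{eq:N_bound_th2}, I would prove that $x_0\mapsto \mathcal{N}(v^{x_0},R_0)$ is uniformly bounded on $\partial D$. The numerator $E(v^{x_0},R_0)$ admits a universal upper bound via \Cref{prop:lipschitz} (global Lipschitz continuity of $u$ up to $\partial D$) combined with the bounds of \Cref{cor:A} on $A$ and $p_i$; this produces an estimate $E(v^{x_0},R_0)\leq C(d,D,N)$. For the denominator, \Cref{lemma:H>0} yields $H(v^{x_0},R_0)>0$ for each $x_0\in\partial D$, and the map $x_0\mapsto H(v^{x_0},R_0)$ is continuous by continuous dependence of the diffeomorphism $\Psi_{x_0}$ on $x_0$ (recall \eqref{def:Psi}) together with continuity of $u$. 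Compactness of $\partial D$ then yields $\inf_{x_0\in\partial D} H(v^{x_0},R_0)>0$, hence $\sup_{x_0\in\partial D}\mathcal{N}(v^{x_0},R_0)<\infty$, and \eqref{eq:N_bound_th2} follows from \eqref{eq:N_bound_th1}. Passing to $r\to 0$ gives the upper bound $\gamma(x_0)\leq C_{\textup{b}}$.

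For the lower bound $\gamma(x_0)\geq 1$ I would employ a blow-up argument based on the absolute bound just obtained. Set $w^r(x):=v(rx)/\sqrt{H(r)}$; then $\|w^r\|_{L^2(S_1^+,\mu)}^2\to 1$ and, thanks to \eqref{eq:N_bound_th2} combined with \Cref{lemma:int_by_parts} and \eqref{eq:coerc_th1}, the family $\{w^r\}$ is bounded in $H^1$ on bounded subsets uniformly in $r$. Using the convergence $\mathcal{O}_r/r\to B_1^+$ (\Cref{lemma:Omega_Psi} and the Dini decay of $\sigma$), $A(r\cdot)\to I$ and $p_i(r\cdot)\to\lambda_i$ on compact sets (\Cref{lemma:A}), one extracts a subsequence converging to a nonzero limit $U$ which, by standard arguments, lies in $\mathcal{B}_{\gamma(x_0)}$. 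The classification \Cref{lemma:frequencies} then forces $\gamma(x_0)\geq 1$.

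The main obstacle is the uniform lower bound on $H(v^{x_0},R_0)$: \Cref{lemma:H>0} furnishes only pointwise positivity through interior unique continuation, and upgrading it to a uniform positive infimum requires either a continuity-plus-compactness argument (the lightest route, sketched above) or a quantitative unique continuation estimate. I expect the continuity approach to suffice, since $\Psi_{x_0}$, $A$, $p_i$ and $u$ all depend continuously on the relevant parameters; if continuity of $H$ in $x_0$ proves delicate (because the integration domain $S_{R_0}^{x_0}$ itself moves with $x_0$), the fallback is to use a change of variables to rewrite $H$ as an integral over a fixed domain with $x_0$-dependent coefficients, after which continuity is transparent.
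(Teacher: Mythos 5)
Your treatment of \eqref{eq:N_bound_th1} is exactly the paper's: integrate \eqref{eq:N'_th2} over $(r,R_0)$ and use that $\int_0^{R_0}\sigma(t)t^{-1}\,\d t$ is finite and uniform in $x_0$ by \Cref{ass:domain} and \Cref{rmk:modulus_cont_1}. For \eqref{eq:N_bound_th2} the paper only says ``a simple contradiction argument involving the minimizer $u$''; your compactness-plus-continuity argument (Lipschitz bound for the numerator, \Cref{lemma:H>0} plus continuity of $x_0\mapsto H(v^{x_0},R_0)$ and compactness of $\partial D$ for the denominator) is a legitimate way to fill that in, and is essentially the contrapositive of the contradiction argument the authors have in mind. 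Your caveat about the moving integration domain $S_{R_0}^{x_0}$ is well placed; the change-of-variables fallback you describe is the clean way to settle it.

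Where you genuinely diverge is the lower bound $\gamma(x_0)\geq 1$. The paper's argument is elementary and avoids blow-ups entirely: since $u$ is Lipschitz up to $\partial D$ (\Cref{prop:lipschitz}) and $v(0)=0$, one has $H(r)\leq Cr^2$; if $\gamma(x_0)<1$ then $\mathcal N(r)\leq 1-\epsilon$ for small $r$, and integrating \eqref{eq:H'_th3} gives $H(r)\geq C_R r^{2(1-\epsilon)}$, which contradicts the quadratic upper bound as $r\to0$. Your route through compactness of the rescalings and \Cref{lemma:frequencies} can be made to work, but as written it has a latent circularity: you invoke membership of the blow-up limit in $\mathcal{B}_{\gamma(x_0)}$, and the local-minimality property (item (4) of \Cref{def:B_gamma}) is obtained in \Cref{prop:almgren_rescalings} via \Cref{prop:almost}, whose proof uses precisely $\gamma(x_0)\geq 1$ to get $\mathcal N(r)\geq 1/2$. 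To avoid the loop you would have to observe that the conclusion $\gamma\geq 1$ in \Cref{lemma:frequencies} uses only homogeneity, the vanishing of the limit on $\partial\R^d_+$, and harmonicity on the positivity set — properties obtainable from \eqref{eq:weak_formulation} without \Cref{prop:almost} — and also supply strong $H^1$ convergence (needed for homogeneity of the limit), which itself requires the nontrivial machinery of \Cref{lemma:delta}. The paper's two-line growth argument buys all of this for free, so I would recommend adopting it for the last step.
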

\begin{proof}
	We have that \eqref{eq:N_bound_th1} is a straightforward consequence of \Cref{thm:N'}, while \eqref{eq:N_bound_th2} can be obtained through a simple contradiction argument (which involves also the minimizer $u$). Finally, the fact that $\gamma(x_0)\geq 1$ is a consequence of Lipschitz continuity of the solution. More precisely, from \Cref{prop:lipschitz}, \Cref{lemma:A} and the fact that $v(0)=0$, one easily obtains that
	\begin{equation}\label{eq:N_bound_1}
		H(r)\leq C r^2\quad\text{for all }r\leq R_0.
	\end{equation}
	Let us now assume by contradiction that $\gamma(x_0)<1$, which means that there exists $\epsilon>0$ such that, for $r$ sufficiently small $\mathcal{N}(r)\leq 1-\epsilon$. Hence, from \eqref{eq:H'_th3} we derive that
	\begin{equation*}
		\frac{H'(r)}{H(r)}\leq \frac{2(1-\epsilon)}{r}+C\frac{\sigma(r)}{r},\quad\text{for }r\leq R
	\end{equation*}
	and $R$ sufficiently small, and by integration in $(r,R)$ this in turn implies that
	\[
	C_Rr^{2(1-\epsilon)}\leq H(r)\quad\text{for }r\leq R
	\]
	and for some $C_R>0$ depending on $R$. Since this contradicts \eqref{eq:N_bound_1} we conclude the proof.
\end{proof}

Another consequence of \Cref{thm:N'}, combined with \Cref{cor:N_bound}, is an almost-minimality condition for the pure Dirichlet energy for minimizers (and their perturbations). More precisely, we have the following.

\begin{proposition}[Almost minimality]\label{prop:almost}
	There exists $C_{\textup{am}}>0$, depending on $d$, $D$ and $N$, such that
	\begin{equation}\label{eq:almost_th1}
		\sum_{i=1}^N\int_{B_r}\abs{\nabla v_i}^2\dx\leq (1+C_{\textup{am}}\sigma(r))\sum_{i=1}^N\int_{B_r}\abs{\nabla w_i}^2\dx,
	\end{equation}
	for all $r\leq R_0$ (up to reducing $R_0$) and all $w\in H^1_{s,N}(B_r)$ such that $v_i-w_i\in H^1_{0,N}(B_r)$ for all $i=1,\dots,N$.
\end{proposition}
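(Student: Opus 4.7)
The plan is to exploit directly the minimality of $v = u^{x_0}$ for the transformed functional $J_{N,\Psi_{x_0}}$ on $\mathcal{O}$ established in \Cref{lemma:generic_diffeo}. Given a competitor $w \in H^1_{s,N}(B_r)$ with $v_i - w_i \in H^1_{0,N}(B_r)$, the first move is to produce a globally admissible variation by gluing
\[
\tilde{w}_i(x) := \begin{cases} w_i(x), & x \in B_r \cap \mathcal{O}, \\ v_i(x), & x \in \mathcal{O} \setminus B_r, \end{cases}
\]
so that the trace matching $v_i - w_i \in H^1_0(B_r)$ keeps $\tilde{w}_i$ in $H^1(\mathcal{O})$, segregation is preserved piecewise, and $\tilde{w}_i = 0$ on $\partial \mathcal{O}$ because $v_i$ vanishes there. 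Hence $\tilde{w} \in H^1_{0,N}(\mathcal{O})$ and minimality of $v$ yields $J_{N,\Psi_{x_0}}(v,\mathcal{O}) \leq J_{N,\Psi_{x_0}}(\tilde{w},\mathcal{O})$.

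The second step is to recast this inequality using the normalization $\int_\mathcal{O} p\, v_i^2 = \int_D u_i^2 = 1$. Setting
\[
\alpha_i := \int_{B_r}\bigl(A\nabla w_i\cdot\nabla w_i - A\nabla v_i\cdot\nabla v_i\bigr)\dx, \qquad \beta_i := \int_{B_r} p(w_i^2 - v_i^2)\dx,
\]
the Rayleigh quotient of $\tilde{w}_i$ equals $\lambda_i + (\alpha_i - \lambda_i \beta_i)/(1 + \beta_i)$, so the minimality inequality collapses to $\sum_i(\alpha_i - \lambda_i \beta_i)/(1 + \beta_i) \geq 0$. I would then quantify the error terms by combining the Lipschitz bound $|v_i|\leq L|x|$ from \Cref{prop:lipschitz} (using $v(0) = 0$), the Poincar\'e inequality \Cref{lemma:poincare} applied to $w_i$ with trace $v_i|_{\partial B_r}$, and the uniform bounds on $A$ and $p$ from \Cref{cor:A}. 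These produce $\int_{B_r} v_i^2 \leq C r^{d+2}$ and $\int_{B_r} w_i^2 \leq C(r^2 I_w^{(i)} + r^{d+2})$, with $I_w^{(i)} := \int_{B_r}|\nabla w_i|^2$, hence $|\beta_i|\leq C(r^2 I_w^{(i)} + r^{d+2})$ and $|\alpha_i - \lambda_i \beta_i|\leq C(I_w^{(i)} + r^d)$.

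For the conclusion, one may assume $\sum_i I_w^{(i)} \leq Cr^d$ (else the claim is immediate), so $1 + \beta_i \in [\tfrac12,2]$, and rearranging the minimality inequality together with $|\sum_i \lambda_i\beta_i|\leq\Lambda\sum_i|\beta_i|$ (\Cref{rmk:sup_inf_lambda_i}) yields
\[
\sum_i\int_{B_r} A\nabla v_i\cdot\nabla v_i\dx \leq \sum_i\int_{B_r} A\nabla w_i\cdot\nabla w_i\dx + C r^2 \sum_i I_w^{(i)} + C r^{d+2}.
\]
Passing to unweighted Dirichlet energies via $\|A - I\|\leq\kappa\sigma(r)$ (\Cref{lemma:A}) produces a multiplicative factor $(1 + O(\sigma(r)))$; the correction $C r^2 \sum_i I_w^{(i)}$ is absorbed thanks to $r^2\leq C\sigma(r)$ from \Cref{lemma:sigma_nondeg}. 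The residual additive $Cr^{d+2}$ is absorbed into $C\sigma(r)\sum_i I_w^{(i)}$ using the non-degeneracy lower bound $\sum_i I_w^{(i)}\geq c r^d$, obtained by comparing $w_i$ in $\mathcal{O}_r$ to the harmonic extension of its Dirichlet data on $\partial\mathcal{O}_r$ and invoking $H(r)\geq cr^2$, which follows from the frequency bound $\gamma(x_0)\geq 1$ of \Cref{cor:N_bound} together with \Cref{lemma:H'}. The hard part is precisely this final absorption: because $J_{N,\Psi_{x_0}}$ is a sum of Rayleigh quotients rather than a plain Dirichlet energy, the minimality inequality produces $L^2$-type correction terms that do not vanish at finite scale, and converting them into a clean multiplicative $(1 + C\sigma(r))$ factor is the only place where a genuine use of the boundary Almgren monotonicity formula from \Cref{sec:almgren} is needed.
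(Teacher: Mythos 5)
Your overall strategy — glue the competitor into $v$ outside $B_r$, invoke minimality of $v$ for $J_{N,\Psi_{x_0}}$, expand the Rayleigh quotients around the normalization $\int_{\mathcal O}p\,v_i^2=1$, and quantify the perturbation of the denominators — is the same as the paper's. The genuine gap is in how the $L^2$ remainders are controlled at the end. You bound $\int_{B_r}v_i^2\leq Cr^{d+2}$ by Lipschitz continuity, which leaves a free-floating additive error $Cr^{d+2}$ that must then be dominated by $C\sigma(r)\sum_i I_w^{(i)}$; for this you claim $\sum_i I_w^{(i)}\geq c r^d$ via ``$H(r)\geq cr^2$, which follows from $\gamma(x_0)\geq 1$''. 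This is backwards: $\gamma(x_0)\geq 1$ yields the \emph{upper} bound $H(r)\leq Cr^2$ (that is exactly how it is used in the proof of \Cref{cor:N_bound}), not a lower bound. At a point with $\gamma(x_0)=2$ one has $H(r)\sim r^{4}\ll r^2$, the harmonic-extension comparison only gives $\sum_i I_w^{(i)}\gtrsim r^{d-2}H(r)\sim r^{d+2}$, and the absorption $r^{d+2}\leq C\sigma(r)\sum_i I_w^{(i)}$ fails. The only nondegeneracy actually available, $H(r)\gtrsim r^{2\gamma(x_0)}$, is \Cref{prop:nondeg}, which is proved \emph{after} and \emph{by means of} the present proposition (through \Cref{prop:blow_up_rate}), so invoking it here would be circular. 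Since the almost-minimality is needed precisely at points of frequency $2$ to run the boundary epiperimetric inequality, this gap is fatal as written.

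The repair — which is the paper's route — is to never produce a remainder that is not proportional to one of the two sides of \eqref{eq:almost_th1}. Instead of the crude $\int_{B_r}v_i^2\leq Cr^{d+2}$, one reduces to competitors with $\|\nabla(w_i-v_i)\|_{L^2(B_r)}\leq 2\|\nabla v_i\|_{L^2(B_r)}$ (otherwise the claim is trivial for that index), applies Poincar\'e to $w_i-v_i\in H^1_0(B_r)$, and uses the frequency bound $\mathcal N(r)\geq \frac12$ (a consequence of $\gamma(x_0)\geq 1$ and the almost-monotonicity, combined with \eqref{eq:coerc_th2}) to get $\|v_i\|_{L^2(B_r)}^2\leq Cr^2\sum_j\|\nabla v_j\|_{L^2(B_r)}^2$. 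Then every error term is bounded by $Cr\sum_i\int_{B_r}|\nabla v_i|^2$, i.e.\ by the \emph{left-hand side}, and is absorbed there, yielding a purely multiplicative factor $(1+Cr)\leq(1+C\sigma(r))$ with no additive term and no lower bound on $\sum_iI_w^{(i)}$ needed. A secondary remark: your assertion that the glued competitor satisfies $\tilde w_i=0$ on $\partial\mathcal O$ ``because $v_i$ vanishes there'' is unjustified on $\Gamma_r=\partial\mathcal O\cap B_r$, where $\tilde w_i=w_i$; the paper glosses over the same point, but you should not present the admissibility as automatic.
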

\begin{proof}
	Let $w\in H^1_{s,N}(B_r)$ be such that $v_i-w_i\in H^1_{0,N}(B_r)$ for all $i=1,\dots,N$ and let $\varphi:=v-w$. We first point out that we may restrict to $i\in\{1,\dots,N\}$ such that
	\begin{equation}\label{eq:almost_5}
		\int_{B_r}\abs{\nabla(v_i+\varphi_i)}^2\dx\leq \int_{B_r}\abs{\nabla v_i}^2\dx,
	\end{equation}
	otherwise \eqref{eq:almost_th1} is trivial. In particular, this implies that
	\begin{equation}\label{eq:almost_1}
		\norm{\nabla\varphi_i}_{L^2(B_r)}\leq 2\norm{\nabla v_i}_{L^2(B_r)}.
	\end{equation}
	Since by assumption
	\begin{equation*}
		\int_{\mathcal{O}}p_i u_i^2=1\quad\text{for all }i=1,\dots,N,
	\end{equation*}
	we observe that, thanks to \Cref{cor:A}, H\"older inequality, Poincaré inequality and \eqref{eq:almost_1}, there holds
	\begin{equation}\label{eq:almost_2}
		\int_\mathcal{O} p_i(v_i+\varphi_i)^2\dx\geq 1-Cr\norm{v_i}_{L^2(B_r)}\norm{\nabla v_i}_{L^2(B_r)}.
	\end{equation}
	On the other hand, since $\gamma(x_0)\geq 1$ for all $x_0\in\partial D$, see \Cref{cor:N_bound}, we have that
	\begin{equation*}
		\mathcal{N}(r)\geq \frac{1}{2}\quad\text{for all }r\leq R_0,
	\end{equation*}
	up to restricting $R_0$. As a direct consequence, we have that
	\begin{equation*}
		\norm{v_i}_{L^2(B_r)}^2\leq Cr^2\sum_{i=1}^N\norm{\nabla v_i}_{L^2(B_r)}^2,
	\end{equation*}
	which implies that
	\begin{equation*}
		\norm{v_i}_{L^2(B_r)}\norm{\nabla v_i}_{L^2(B_r)}\leq Cr\sum_{i=1}^N\norm{\nabla v_i}_{L^2(B_r)}^2.
	\end{equation*}
	Combining this with \eqref{eq:almost_2}. we obtain that
	\begin{equation}\label{eq:almost_3}
		\int_\mathcal{O} p_i(v_i+\varphi_i)^2\dx\geq 1-Cr^2\sum_{i=1}^N\norm{\nabla v_i}_{L^2(B_r)}^2\quad\text{for all $i$ such that \eqref{eq:almost_5} holds,}
	\end{equation}
	Now, thanks to \Cref{cor:A}, up to reducing $R_0$ (still depending only on $d$, $D$ and $N$) in such a way that
	\begin{equation}\label{eq:almost_4}
		\frac{1}{\displaystyle 1-Cr^2\sum_{i=1}^N\norm{\nabla v_i}_{L^2(B_r)}^2}\leq 1+2Cr\sum_{i=1}^N\norm{\nabla v_i}_{L^2(B_r)}^2,
	\end{equation}
	being $r\leq R_0$. At this point, combining the minimality of $v$ with \eqref{eq:almost_3} and \eqref{eq:almost_4} (up to renaming the constant $C$) we have that
	\begin{equation*}
		\sum_{i=1}^N\int_{\mathcal{O}}A\nabla v_i\cdot\nabla v_i\dx\leq \left(1+Cr\sum_{i=1}^N\norm{\nabla v_i}_{L^2(B_r)}^2\right)\sum_{i=1}^N\int_{\mathcal{O}}A\nabla(v_i+\varphi_i)\cdot\nabla(v_i+\varphi_i)\dx,
	\end{equation*}
	which, exploiting again \Cref{cor:A}, easily gives that
	\begin{align*}
		\sum_{i=1}^N\int_{B_r}A\nabla v_i\cdot\nabla v_i\dx & \leq Cr\sum_{i=1}^N\norm{\nabla v_i}_{L^2(B_r)}^2 \\
		& \quad+\left(1+Cr\sum_{i=1}^N\norm{\nabla v_i}_{L^2(B_r)}^2\right)\sum_{i=1}^N\int_{B_r}A\nabla(v_i+\varphi_i)\cdot\nabla(v_i+\varphi_i)\dx\\
		&\leq Cr \sum_{i=1}^N\int_{B_r}A\nabla v_i\cdot\nabla v_i\dx \\
		&\quad  +\left(1+Cr\sum_{i=1}^N\norm{\nabla v_i}_{L^2(B_r)}^2\right)\sum_{i=1}^N\int_{B_r}A\nabla(v_i+\varphi_i)\cdot\nabla(v_i+\varphi_i)\dx.
	\end{align*}
	Hence, up to choosing $R_0$ small enough, we have that
	\begin{equation*}
		\sum_{i=1}^N\int_{B_r}A\nabla v_i\cdot\nabla v_i\dx\leq \left(1+Cr\right)\sum_{i=1}^N\int_{B_r}A\nabla(v_i+\varphi_i)\cdot\nabla(v_i+\varphi_i)\dx.
	\end{equation*}
	Making use a final time of \Cref{lemma:A}, rearranging the terms and choosing again $R_0$ small enough, we conclude the proof.
\end{proof}

Since every component $v_i$ is a non-negative subsolution in $\R^d$, by classical regularity results (namely, DeGiorgi-Nash-Moser estimates, see e.g. \cite[Theorem 4.1]{HanLin}), we have the following.
\begin{lemma}[Uniform $L^\infty$-bound]\label{lemma:L_infty}
	There exists a constant $C>0$, depending on $d$, $D$ and $N$ such that
	\[
	\norm{v_i}_{L^\infty(B_r)}^2\leq Cr^{-d}\norm{v_i}_{L^2(B_{2r})}^2
	\]
	for all $r>0$ and all $i=1,\dots,N$.
\end{lemma}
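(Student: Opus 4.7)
The plan is a direct application of the classical De Giorgi--Nash--Moser local boundedness estimate to $v_i$ viewed as a non-negative weak subsolution of a uniformly elliptic equation in divergence form, exactly as the statement advertises.

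The first step is to identify the subsolution inequality. By the extremality condition in \Cref{lemma:extermaliti}, the component $u_i\geq 0$ satisfies $-\Delta u_i\leq \lambda_i u_i$ weakly in $D$. Transferring this through the diffeomorphism $\Psi_{x_0}$ via \Cref{lemma:generic_diffeo} (and the same outer-variation argument used in \Cref{lemma:int_by_parts}), one obtains that $v_i\in H^1_0(\mathcal{O})$, $v_i\geq 0$, satisfies
\[
\int_{\mathcal{O}}\bigl(A\nabla v_i\cdot\nabla \varphi-p_i v_i\varphi\bigr)\dx\leq 0\qquad\text{for every nonnegative }\varphi\in H^1_0(\mathcal{O}),
\]
that is, $-\dive(A\nabla v_i)\leq p_i v_i$ weakly in $\mathcal{O}$. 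By \Cref{lemma:A} and \Cref{cor:A}, $A$ is symmetric and uniformly elliptic, and $p_i$ is uniformly bounded, with constants depending only on $d$, $D$, $N$.

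The second step is the standard Moser iteration. Fix $r>0$ and pick a cutoff $\eta\in C_c^\infty(B_{2r})$ with $\eta\equiv 1$ on $B_r$ and $|\nabla\eta|\leq 2/r$. For each exponent $q\geq 2$ and truncation level $M>0$, the function $\varphi=\eta^2(v_i\wedge M)^{q-1}$ belongs to $H^1_0(\mathcal{O}\cap B_{2r})$, since $v_i$ already vanishes on $\Gamma$ and $\eta$ is compactly supported in $B_{2r}$; hence it is admissible in the subsolution inequality above. Plugging it in, exploiting the uniform ellipticity of $A$, and using Young's inequality to absorb the mixed term $2\eta\nabla\eta\, v_i^{q-1}A\nabla v_i$ into the coercive contribution, and finally letting $M\to\infty$, yields the Caccioppoli-type bound
\[
\int_{B_{2r}}\bigl|\nabla(\eta\, v_i^{q/2})\bigr|^2\dx\leq C\bigl(q^2 r^{-2}+q\bigr)\int_{B_{2r}}v_i^q\dx,
\]
where $v_i$ is viewed as extended by zero outside $\mathcal{O}$. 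Combining this with the Sobolev embedding $H^1_0(B_{2r})\hookrightarrow L^{2\chi}(B_{2r})$ (with $\chi=d/(d-2)$ for $d\geq 3$, and any $\chi>1$ when $d=2$) produces the reverse H\"older iteration
\[
\|v_i\|_{L^{q\chi}(B_r)}^q\leq C\,q^2 r^{-2}\,\|v_i\|_{L^q(B_{2r})}^q,
\]
and iterating along $q_k=2\chi^k$ and a geometric sequence of nested radii yields the desired bound $\|v_i\|_{L^\infty(B_r)}^2\leq C\, r^{-d}\|v_i\|_{L^2(B_{2r})}^2$, with $C$ depending only on $d$, $D$, $N$ through the ellipticity constants of $A$ and the $L^\infty$ bound on $p_i$.

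There is no substantive obstacle: the argument is precisely a direct instance of \cite[Theorem~4.1]{HanLin} applied to the subsolution inequality identified in the first step. The only points that require minor care are the truncation ensuring admissibility of the test functions and the usual bookkeeping through the iteration, both entirely routine.
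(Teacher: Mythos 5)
Your proposal is correct and follows essentially the same route as the paper, which simply observes that each $v_i$, extended by zero, is a non-negative subsolution of a uniformly elliptic divergence-form equation with bounded zero-order term and then cites the classical De Giorgi--Nash--Moser local boundedness estimate \cite[Theorem 4.1]{HanLin}. You merely unpack the Moser iteration that the cited theorem encapsulates, handling the test-function admissibility and the extension across $\Gamma$ in the standard way.
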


In the following lemma we derive growth estimates for $H$ and for the minimizer $v$.
\begin{lemma}\label{lemma:decay}
	Let $R_0>0$ be as in \eqref{def:R_0}. Then, there exists $C_{\textup{H}}>0$ depending only on $d$, $D$ and $N$ such that
	\begin{enumerate}
		\item[(i)] $H(r)\leq C_{\textup{H}}H(R_0)\, r^{2\gamma(x_0)}$ for any $r\leq R_0$; \label{it:decay_th1}
		\item[(ii)] $\abs{v_i(x)}^2\leq C_{\textup{H}}H(R_0)\abs{x}^{2\gamma(x_0)}$ for any $x\in B_{R_0/2}$ and all $i=1,\dots,N$; \label{it:decay_th2}
	\end{enumerate}
\end{lemma}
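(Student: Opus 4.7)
Part \textit{(i)} is the standard consequence of the almost-monotonicity of $\mathcal{N}$ combined with the derivative formula \eqref{eq:H'_th3} for $H$. Setting $\Phi(r):=\int_0^r \sigma(t)/t\,dt$, which is finite by the first condition in \eqref{eq:dini_hp_2}, the monotonicity statement \eqref{eq:N'_th2} together with the uniform bound $\gamma(x_0)\leq C_{\textup{b}}$ from \Cref{cor:N_bound} gives, after letting the lower endpoint tend to $0$,
\[
\mathcal{N}(r)+1\geq e^{-C_{\textup{A}}\Phi(r)}(\gamma(x_0)+1),\qquad\text{hence}\qquad \mathcal{N}(r)\geq \gamma(x_0)-C\Phi(r)
\]
for some $C=C(d,D,N)>0$ and all $r\leq R_0$. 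Plugging this into \eqref{eq:H'_th3} and recalling \eqref{eq:sigma_nondeg_th2}, we obtain the differential inequality
\[
\frac{d}{dr}\log\!\left(\frac{H(r)}{r^{2\gamma(x_0)}}\right)=\frac{2(\mathcal{N}(r)-\gamma(x_0))}{r}+O\!\left(\frac{\sigma(r)}{r}\right)\geq -\frac{C}{r}\bigl(\Phi(r)+\sigma(r)\bigr).
\]
Integrating this inequality on $(r,R_0)$ and exponentiating, the proof of \textit{(i)} reduces to showing that $\int_0^{R_0}\Phi(s)/s\,ds<\infty$ (on top of the $1$-Dini integrability of $\sigma$), which is precisely the $2$-Dini condition built into \Cref{ass:domain} (the second integral in \eqref{eq:dini_hp_2} combined with \eqref{eq:sigma_nondeg_th3}). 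This produces a constant $C_{\textup{H}}$ that is uniform in $x_0\in\partial D$.

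For \textit{(ii)} I would first upgrade the sphere estimate of \textit{(i)} to a solid $L^2$ estimate. Extending $v$ by zero to all of $\R^d$ (recall $v\in H^1_{0,N}(\mathcal{O})$), the surface integral over $\partial B_s$ reduces to one over $S_s$, and the two-sided bound $1/2\leq\mu\leq 3/2$ from \Cref{cor:A} combined with \textit{(i)} gives
\[
\sum_{i=1}^N\int_{B_r}v_i^2\,dx=\int_0^r\sum_{i=1}^N\int_{\partial B_s}v_i^2\,dS\,ds\leq 2\int_0^r s^{d-1}H(v^{x_0},s)\,ds\leq C\,H(R_0)\,r^{d+2\gamma(x_0)}
\]
for every $r\leq R_0$. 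Each $v_i$ is a non-negative weak subsolution of a uniformly elliptic divergence-form equation with bounded coefficients (this is the transport of \Cref{lemma:extermaliti} through $\Psi_{x_0}$, together with \Cref{lemma:A} and \Cref{cor:A}), so \Cref{lemma:L_infty} applies and yields, for $x\in B_{R_0/2}$ and $r:=|x|$,
\[
|v_i(x)|^2\leq \|v_i\|_{L^\infty(B_r)}^2\leq C\,r^{-d}\|v_i\|_{L^2(B_{2r})}^2\leq C_{\textup{H}}\,H(R_0)\,|x|^{2\gamma(x_0)},
\]
which is exactly \textit{(ii)}.

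\textbf{Main obstacle.} No single step is genuinely hard: the structural ingredients (almost-monotonicity of $\mathcal{N}$, the derivative formula for $H$, the uniform ellipticity of the transformed problem, and Moser's $L^\infty$-bound) are all in place. The only delicate point is the bookkeeping of the error term when integrating the logarithmic derivative of $H(r)/r^{2\gamma(x_0)}$: since $\mathcal{N}(r)-\gamma(x_0)$ is only controlled by $\Phi(r)$, not by $\sigma(r)$ itself, a naive $1$-Dini assumption would produce a constant blowing up as $r\to 0$. The stronger integrability built into \Cref{ass:domain} is exactly what is needed to close this estimate with a universal $C_{\textup{H}}$, and once this is observed the rest of the argument is routine.
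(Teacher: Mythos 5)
Your proof is correct and follows essentially the same route as the paper: part \textit{(i)} by bounding $\mathcal{N}(r)\geq\gamma(x_0)-C\int_0^r\sigma(t)t^{-1}\,dt$ via the almost-monotonicity of $\mathcal{N}$, inserting this into \eqref{eq:H'_th3}, and integrating the logarithmic derivative of $H(r)/r^{2\gamma(x_0)}$ using the $2$-Dini integrability of $\sigma$; part \textit{(ii)} by integrating \textit{(i)} to a solid $L^2$ bound and invoking the subsolution $L^\infty$ estimate of \Cref{lemma:L_infty}. Your closing observation that the $2$-Dini condition (not merely $1$-Dini) is what makes the constant $C_{\textup{H}}$ uniform is exactly the point the paper relies on as well.
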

\begin{proof}
	In order to prove \textit{(i)}, we first observe that, thanks to \eqref{eq:H'_th2} and \Cref{thm:N'}, we have
	\begin{equation}\label{eq:decay_1}
		\frac{H'(r)}{H(r)}\geq \frac{2}{r}\mathcal{N}(r)-C\frac{\sigma(r)}{r}=\frac{2\gamma(x_0)}{r}+\frac{2}{r}\int_0^r\mathcal{N}'(t)\d t-C\frac{\sigma(r)}{r},
	\end{equation}
	for a.e. $r\leq R_0$ and some $C>0$ (depending on $d$, $D$ and $N$). Then, in view of monotonicity of $\mathcal{N}$ and \eqref{eq:N_bound_th1}, we get that 
	\begin{equation*}
		\mathcal{N}'(t)\geq -C\frac{\sigma(t)}{t}\left(\mathcal{N}(R_0)+1\right),
	\end{equation*}
	for a.e. $t\leq R_0$. Combining this with \eqref{eq:decay_1} and \Cref{lemma:sigma_nondeg} leads to
	\begin{equation}\label{eq:decay_2}
		\frac{H'(r)}{H(r)}\geq \frac{2\gamma(x_0)}{r}-C\frac{\mathcal{N}(R_0)+1}{r}\int_0^r\frac{\sigma(t)}{t}\d t
	\end{equation}
	for a.e. $r\leq R_0$ and some constant $C>0$ depending only on $d$, $D$ and $N$. Now, since by assumption
	\[
	\frac{1}{r}\int_0^r\frac{\sigma(t)}{t}\d t\in L^1(0,R_0),
	\]
	by integrating \eqref{eq:decay_2} in $(r,R_0)$, and using \eqref{eq:N_bound_th2}, we obtain \textit{(i)}. Let us pass to the proof of \textit{(ii)}. Let $x\in B_{R_0/2}$ and let $r=\abs{x}$. Thanks to \Cref{lemma:L_infty} we have that
	\begin{equation}\label{eq:decay_3}
		\abs{v_i(x)}^2\leq \norm{v_i}_{L^\infty(B_r)}^2\leq C r^{-d}\norm{v_i}_{L^2(B_{2r})}^2.
	\end{equation}
	Now, by integration of \textit{(i)}, \Cref{cor:A} and \eqref{eq:N_bound_th2} we obtain that
	\begin{equation*}
		r^{-d}\sum_{i=1}^N\int_{B_{2r}}v_i^2\dx\leq C H(R_0)r^{2\gamma(x_0)}
	\end{equation*}
	for some $C>0$ depending on $d$, $D$ and $N$. If we combine this last estimate with \eqref{eq:decay_3} we obtain \textit{(ii)}, thus concluding the proof.
\end{proof}

We now consider the following rescalings of the function $v\in H^1_{0,N}(\mathcal{O})$. For any $x_0\in\partial D$, we define
\begin{equation}\label{def:tilde_v_r}
	\tilde{v}^r(x)=\tilde{v}^{r,x_0}(x):=\frac{v^{x_0}(rx)}{\sqrt{H(r)}}=\frac{u(\textbf{Q}(\Psi(rx))+x_0)}{\sqrt{H(r)}}, \quad\text{for }x\in \frac{1}{r}\mathcal{O}
\end{equation}
and we assume $\tilde{v}^r$ to be trivially extended outside its domain. We call $\tilde{v}^r$ the \emph{Almgren rescaling} of $v$. Understanding the behavior of the Almgren rescalings as $r\to 0$ plays a crucial role in the study of the free boundary. The proof of their pre-compactness is quite standard once a monotonicity result such as \Cref{thm:N'} is available. Nevertheless, the lack of regularity of the solutions (such as in the case we are treating in the present paper) might cause non-trivial technical issues in the proof. In order to overcome these difficulties we employ the method introduced in \cite[Section 6]{FFT2012}, which turned out to be successful in other non-smooth situations (e.g. domains with cracks, see \cite{dLF}). Hence, we report here the main steps needed in order to prove compactness (up to subsequences) of Almgren rescalings.

\begin{lemma}[Boundedness of Almgren rescalings]\label{lemma:boundedness}
	There exists $C_{\textup{b}}'>0$ depending only on $d$, $D$ and $N$ such that
	\begin{equation*}
		\norm{\tilde{v}_i^r}_{H^1(B_1)}^2\leq C_{\textup{b}}'\quad\text{for all }r\leq R_0~\text{and all }i=1,\dots,N.
	\end{equation*}
\end{lemma}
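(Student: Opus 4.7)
The plan is to convert the Almgren-type bounds already established into uniform $H^1$ bounds on $\tilde v^r$ via a direct change of variables, with the $L^2$ part handled by the Poincar\'e inequality.

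First, by the substitution $y = rx$, I would rewrite
\[
\int_{B_1}|\nabla \tilde{v}^r_i|^2\,dx = \frac{1}{r^{d-2}H(r)}\int_{\mathcal{O}_r}|\nabla v_i|^2\,dx, \qquad \int_{B_1}|\tilde{v}^r_i|^2\,dx = \frac{1}{r^{d}H(r)}\int_{\mathcal{O}_r}v_i^2\,dx,
\]
where one uses that the zero extension of $v_i$ outside $\mathcal{O}$ lies in $H^1(B_r)$, since $v_i=0$ on $\Gamma_r\subset\partial\mathcal{O}$.

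Next, for the Dirichlet contribution, I would combine the uniform ellipticity of $A$ from \Cref{lemma:A} with the coercivity inequality \eqref{eq:coerc_th1} (valid for $r\le R_0$) to get
\[
\frac{\kappa}{r^{d-2}}\int_{\mathcal{O}_r}|\nabla v_i|^2\,dx \le \frac{2}{r^{d-2}}\int_{\mathcal{O}_r}A\nabla v_i\cdot\nabla v_i\,dx \le 4\bigl(E_i(r)+H_i(r)\bigr).
\]
Summing in $i$, dividing by $H(r)$, and invoking the uniform bound $\mathcal{N}(r)\le C_{\textup{b}}$ from \eqref{eq:N_bound_th2} then yields
\[
\sum_{i=1}^N\int_{B_1}|\nabla \tilde{v}^r_i|^2\,dx \le \frac{4}{\kappa}\bigl(\mathcal{N}(r)+1\bigr)\le \frac{4(C_{\textup{b}}+1)}{\kappa}.
\]

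For the $L^2$ piece, I would apply the Poincar\'e inequality of \Cref{lemma:poincare} to $v_i$ extended by zero, together with the lower bound $\mu\ge 1/2$ from \Cref{cor:A}, to obtain
\[
\int_{\mathcal{O}_r}v_i^2\,dx \le \frac{1}{d-1}\Bigl[r^2\int_{\mathcal{O}_r}|\nabla v_i|^2\,dx + 2r^d\,H_i(r)\Bigr].
\]
Dividing by $r^d H(r)$, summing in $i$, and inserting the gradient bound above gives the desired uniform $L^2$ control. Adding the two contributions proves the lemma with an explicit constant $C_{\textup{b}}'$ depending only on $d$, $D$ and $N$.

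Since the Almgren monotonicity machinery is already in place, there is no substantive obstacle. The only delicate point is to express every estimate through the scale-invariant quantity $\mathcal{N}(r)$, which is uniformly bounded in $r$ by \Cref{cor:N_bound}, rather than through $E(r)$ or $H(r)$ separately; this is what prevents the final constant from deteriorating as $r\to 0$.
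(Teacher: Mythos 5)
Your proof is correct and follows essentially the same route as the paper: rescale, control the Dirichlet part via the coercivity estimates of \Cref{cor:coerc} together with the ellipticity of $A$, control the $L^2$ part via Poincar\'e, and conclude with the uniform frequency bound of \Cref{cor:N_bound}. The paper phrases the two steps through \eqref{eq:coerc_th2} (itself a consequence of \Cref{lemma:poincare} and \Cref{cor:A}) rather than \eqref{eq:coerc_th1} and Poincar\'e directly, but this is only a cosmetic difference.
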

\begin{proof}
	For $r\leq R_0$, thanks to \Cref{cor:A} and \eqref{eq:coerc_th2} we have that
	\begin{align*}
		\int_{B_1}\abs{\nabla \tilde{v}_i^r }^2\dx=\frac{r^{2-d}}{H(r)}\int_{B_r}\abs{\nabla v_i}^2\dx &\leq C\frac{r^{2-d}}{H(r)}\int_{B_r}A\nabla v_i\cdot\nabla v_i\dx \\
		&\leq C\left[\frac{E_i(r)}{H(r)}+r^2\left(\frac{E_i(r)}{H(r)}+\frac{H_i(r)}{H(r)}\right)\right].
	\end{align*}
	Hence, summing for $i=1,\dots,N$ and exploiting \Cref{cor:N_bound}, we obtain
	\begin{equation*}
		\sum_{i=1}^N\int_{B_1}\abs{\nabla \tilde{v}_i^r}^2\dx\leq C\left[\mathcal{N}(r)+r^2(\mathcal{N}(r)+1)\right]\leq C.
	\end{equation*}
	Moreover, thanks \Cref{cor:A} and \eqref{eq:coerc_th2}, we have that
	\begin{equation*}
		\sum_{i=1}^N\int_{B_1}(\tilde{v}_i^r)^2\dx=\frac{r^{-d}}{H(r)}\sum_{i=1}^N\int_{B_r}v_i^2\dx\leq C \frac{r^{-d}}{H(r)}\sum_{i=1}^N\int_{B_r}p_i v_i^2\dx \leq C(\mathcal{N}(r)+1).
	\end{equation*}
	The proof is thereby complete in view of \Cref{cor:N_bound}.
\end{proof}

The following result is essential in the proof of compactness of Almgren rescalings in case of lack of regularity, and their proof is exactly the same as in \cite[Section 6]{FFT2012}, hence we omit it.

\begin{lemma}\label{lemma:delta}
	There exists $R_0'\leq R_0$ such that, for all $r\leq R_0'$ there exists $\delta_r\in[1,2]$ such that
	\begin{equation*}
		\int_{\partial B_1}|\nabla \tilde{v}_i^{r\delta_r}|^2\ds\leq C_{\delta},
	\end{equation*}
	where $C_\delta>0$ depends only on $d$, $D$ and $N$.
\end{lemma}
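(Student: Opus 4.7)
The approach is to first extend the $H^1$-boundedness of $\tilde{v}_i^r$ from $B_1$ to the larger ball $B_2$, then extract a good radius via a mean value argument on the annulus $B_2\setminus B_1$, and finally rescale back to obtain a bound on $\partial B_1$ at a shifted parameter.

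For the first step, one checks directly from the definition \eqref{def:tilde_v_r} that $\tilde{v}^r(x) = \sqrt{H(2r)/H(r)}\,\tilde{v}^{2r}(x/2)$, so that after a change of variable
\[
\int_{B_2}|\nabla \tilde{v}_i^r|^2\dx = 2^{d-2}\,\frac{H(2r)}{H(r)}\int_{B_1}|\nabla \tilde{v}_i^{2r}|^2\dy.
\]
The integral on the right is bounded by $C_{\textup{b}}'$ thanks to \Cref{lemma:boundedness} (applied at scale $2r\leq R_0$). To control the doubling ratio $H(2r)/H(r)$ uniformly in $r\leq R_0/2$, we combine the upper estimate \eqref{eq:H'_th3} with the bound $\mathcal{N}\leq C_{\textup{b}}$ from \Cref{cor:N_bound} to get $H'(s)/H(s)\leq 2C_{\textup{b}}/s + C\sigma(s)/s$; integration on $[r,2r]$ then yields $\log(H(2r)/H(r)) \leq 2C_{\textup{b}}\log 2 + C\int_0^{2R_0}\sigma(s)/s\,\d s$, which is finite by \Cref{ass:domain}. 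Hence $\int_{B_2}|\nabla \tilde{v}_i^r|^2\dx\leq \widetilde{C}$ uniformly in $r$ and $i$.

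For the second step, writing in polar coordinates and summing over $i$,
\[
\sum_{i=1}^N \int_1^2 \int_{\partial B_\rho} |\nabla \tilde{v}_i^r|^2\ds\,\d \rho \leq \sum_{i=1}^N \int_{B_2} |\nabla \tilde{v}_i^r|^2\dx \leq N\widetilde{C},
\]
so by Chebyshev's inequality there is a set of positive measure of radii $\delta\in[1,2]$ satisfying $\sum_{i=1}^N\int_{\partial B_\delta}|\nabla \tilde{v}_i^r|^2\ds\leq 2N\widetilde{C}$; pick any such $\delta_r$. Finally, the relation $\tilde{v}^{r\delta_r}(x) = \sqrt{H(r)/H(r\delta_r)}\,\tilde{v}^r(\delta_r x)$ together with a surface change of variable gives
\[
\int_{\partial B_1}|\nabla \tilde{v}_i^{r\delta_r}|^2\ds = \delta_r^{3-d}\,\frac{H(r)}{H(r\delta_r)}\int_{\partial B_{\delta_r}}|\nabla \tilde{v}_i^r|^2\ds.
\]
Since $\delta_r\in[1,2]$ the prefactor $\delta_r^{3-d}$ is bounded; moreover, integrating the inequality for $H'/H$ on $[r,r\delta_r]\subseteq[r,2r]$ as before gives $H(r)/H(r\delta_r)\leq C$, and the bound from the previous display then delivers the claim with $C_\delta$ depending only on $d$, $D$ and $N$.

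The main obstacle is ensuring uniformity of all constants in $r$ and in the base point $x_0\in\partial D$; this hinges on the doubling-type estimate $H(2r)/H(r)\leq C$, which in turn rests on the almost-monotonicity of $\mathcal{N}$ (\Cref{thm:N'}) and on the $1$-Dini integrability of $\sigma(t)/t$ built into \Cref{ass:domain}, both already at our disposal.
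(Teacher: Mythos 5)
Your argument is correct and is essentially the standard one the paper invokes by citing \cite{FFT2012}: uniform $H^1(B_2)$-boundedness of the rescalings via a doubling estimate on $H$, a mean-value selection of $\delta_r\in[1,2]$ on the annulus, and rescaling back to $\partial B_1$. One small precision: to bound $H(r)/H(r\delta_r)$ from above you need the \emph{lower} bound $H'(s)/H(s)\geq \tfrac{2}{s}\mathcal{N}(s)-C\tfrac{\sigma(s)}{s}\geq -\tfrac{2}{s}-C\tfrac{\sigma(s)}{s}$ (using $\mathcal{N}\geq -1$, which follows from \Cref{cor:coerc}), not the upper bound you quote; since \eqref{eq:H'_th3} is two-sided, the conclusion stands.
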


At this point, we are able to prove compactness of Almgren rescalings.

\begin{proposition}[Compactness of Almgren rescalings]\label{prop:almgren_rescalings}
	For any $\{r_n\}_{n\in\N}$ such that $r_n\to 0^+$ as $n\to\infty$, there exists $U=U^{x_0}\in \mathcal{B}_{\gamma(x_0)}$ such that
	\begin{equation*}
		\sum_{i=1}^N\int_{S_1^+}|U_i|^2\ds=1
	\end{equation*}
	and
	\begin{equation*}
		\tilde{v}^{r_n}\to U\quad\text{in }H^1(B_1,\R^N)~\text{and in }C^{0,\alpha}(B_1)~\text{for all }\alpha\in(0,1),
	\end{equation*}
	up to a subsequence, as $n\to\infty$.
\end{proposition}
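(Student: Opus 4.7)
The plan is the standard Almgren blow-up scheme adapted to the present framework. As a first step, by \Cref{lemma:boundedness} the sequence $\{\tilde v^{r_n}\}$ is bounded in $H^1(B_1,\R^N)$, so (up to extracting a subsequence, not relabelled) there exists $U\in H^1(B_1,\R^N)$ such that $\tilde v^{r_n}\weak U$ weakly in $H^1(B_1,\R^N)$ and strongly in $L^2(B_1,\R^N)$. A pointwise a.e. subsequence then preserves the segregation condition $U_iU_j=0$ a.e. The boundary condition $U(x',0)=0$ on $\partial \R^d_+$ follows from the fact that $\tilde v^{r_n}$ vanishes on $\frac{1}{r_n}\Gamma_{r_n}$, which Hausdorff-converges to $\partial\R^d_+$ thanks to the $C^1$ regularity of $\partial D$ and the estimates on $\Psi$ from \Cref{lemma:A}. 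Iterating the argument on every ball $B_R$ and using a diagonal procedure extends the limit to $\R^d_+$.

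For $C^{0,\alpha}$ convergence, I would combine the uniform $H^1$ bound with De Giorgi--Nash--Moser estimates (applicable via \Cref{lemma:extermaliti} pulled back through $\Psi$, since each $v_i$ is a non-negative subsolution of a uniformly elliptic equation with bounded coefficients in view of \Cref{lemma:A}--\Cref{cor:A}) to obtain uniform Hölder bounds on the rescalings $\tilde v^{r_n}_i$ on every compact subset of $\overline{\R^d_+}$, whence Arzelà--Ascoli yields $C^{0,\alpha}$ convergence of a subsequence. The normalization
\[
\sum_{i=1}^N\int_{S_1^+}U_i^2\,ds=1
\]
is obtained by passing to the limit in the identity $\sum_i\int_{\partial B_1\cap \frac{1}{r_n}\mathcal{O}}(\tilde v_i^{r_n})^2\mu(r_n\cdot)\,ds=1$, which is just the definition of $\tilde v^{r_n}$, using the strong trace convergence (from $H^1$ compactness on suitable shells via \Cref{lemma:delta}) together with $\mu(r_nx)\to 1$ uniformly on compacts.

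The more delicate parts are the strong $H^1$ convergence and the identification of the homogeneity. For strong convergence I would appeal to the almost-minimality \Cref{prop:almost}: comparing $\tilde v^{r_n}$ against the competitor obtained by replacing it with $U$ on $B_1$ and scaling back, and using that the almost-minimality defect $\sigma(r_n)\to 0$, one obtains $\limsup_n \int_{B_1}|\nabla \tilde v^{r_n}|^2\,dx\le \int_{B_1}|\nabla U|^2\,dx$, which together with weak lower semicontinuity yields strong $H^1$ convergence. The same comparison argument, now carried out on arbitrary $B_R^+\subset\R^d_+$ against any admissible competitor $V$ with $V=U$ on $\partial B_R^+$, shows that $U$ is a local Dirichlet minimizer in the sense of \Cref{def:B_gamma}(4).

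Finally, for the homogeneity, the key observation is that $\mathcal N(\tilde v^{r_n},\rho)=\mathcal N(v,r_n\rho)$ for every $\rho>0$ with $r_n\rho\leq R_0$, in view of the scaling invariance of $\mathcal N$ (modulo the variable coefficients, which tend uniformly to constants as $r_n\to 0$). By \Cref{thm:N'} and the definition of $\gamma(x_0)$, this quantity converges to $\gamma(x_0)$. On the other hand, by the strong $H^1$ and $C^{0,\alpha}$ convergence established above, the left-hand side converges to the analogous frequency of $U$ computed with $A=I$, $p_i=\lambda_i$, $\mu=1$, yielding $\mathcal N(U,\rho)\equiv \gamma(x_0)$ for all $\rho>0$; a standard computation then forces $U$ to be $\gamma(x_0)$-homogeneous, placing $U\in\mathcal B_{\gamma(x_0)}$. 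The main obstacles I foresee are the control of the lower-order perturbation terms (those involving $\sigma(r_n)$) when passing to the limit in the frequency identity, and the justification of strong trace convergence on $\partial B_1$, both of which should be handled by the estimates of \Cref{lemma:A}, \Cref{lemma:H'} and \Cref{lemma:delta}.
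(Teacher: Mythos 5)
Your outline matches the paper's architecture at a high level (weak compactness from \Cref{lemma:boundedness}, segregation and the boundary vanishing from a.e.\ and Hausdorff convergence, normalization from trace convergence, and identification of the homogeneity by passing to the limit in the frequency). But the crucial strong $H^1$ convergence step is where you diverge from the paper and where your argument has a genuine gap.

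You propose to use \Cref{prop:almost} with ``the competitor obtained by replacing $\tilde v^{r_n}$ with $U$ on $B_1$''. That competitor is not admissible: $U$ does not agree with $\tilde v^{r_n}$ on $\partial B_1$, so it cannot be substituted into the almost-minimality inequality. The obvious repair is to glue $U$ to $\tilde v^{r_n}$ across an annulus $B_1\setminus B_{1-\eta}$, but controlling the interpolation error requires uniform control of $\int_{\partial B_\rho}|\nabla\tilde v^{r_n}|^2$ for some $\rho$ close to $1$; this is precisely what is not available a priori in this low-regularity setting (the Lipschitz bound of \Cref{prop:lipschitz} for $u$ does \emph{not} transfer to a uniform Lipschitz bound for $\tilde v^{r}$ once $\gamma(x_0)>1$, since the normalization factor $r/\sqrt{H(r)}$ blows up). You mention \Cref{lemma:delta} in passing as handling ``strong trace convergence,'' but you do not make the connection: \Cref{lemma:delta} selects a good radius $\delta_n\in[1,2]$ for which $\int_{\partial B_1}|\nabla\tilde v^{r_n\delta_n}|^2\leq C_\delta$, and this is what the paper actually exploits — not through a competitor/almost-minimality argument, but by extracting a weak $L^2(\partial B_1)$ limit $h_i$ of $A(\rho_n\cdot)\nabla\tilde v^{\rho_n}_i\cdot\nnu$ and using the integration-by-parts formula of \Cref{lemma:int_by_parts} to identify $\int_{B_1}|\nabla U_i|^2=\int_{\partial B_1}U_ih_i$ and hence conclude energy convergence. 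Your almost-minimality route could perhaps be carried out after invoking \Cref{lemma:delta}, but as written the competitor construction is missing and the argument breaks at this point.

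A second point you omit: once \Cref{lemma:delta} forces you to work along the adjusted sequence $\rho_n=r_n\delta_n$, you must still transfer the strong convergence back to the original sequence $\tilde v^{r_n}$. The paper handles this with the scaling/normalization identity $\bar U(x)=\sqrt{\ell}\,U(\bar\delta^{-1}x)$ (with $\ell=\lim H(r_n\delta_n)/H(r_n)$, $\bar\delta=\lim\delta_n$) following \cite{FFT2012}, and concludes $U=\bar U$. Your proposal contains no analogue of this step.

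The remainder of your sketch (homogeneity from the constancy of the frequency of $U$, minimality of $U$ from \Cref{prop:almost}, $C^{0,\alpha}$ convergence) is compatible with the paper, though the paper obtains $C^{0,\alpha}$ convergence directly from \Cref{prop:lipschitz} and the already-established strong $H^1$ and $L^\infty$ bounds, whereas you invoke De Giorgi–Nash–Moser; these are essentially equivalent routes.
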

\begin{proof}
	Let $r_n\to 0$ as $n\to\infty$, let $\delta_{r_n}$ be as in \Cref{lemma:delta} and let $\rho_n:=r_n\delta_n$. Thanks to \Cref{lemma:boundedness} $\{\tilde{v}_i^{\rho_n}\}_{n\in\N}$ is bounded in $H^1(B_1)$, hence there exists a subsequence, still denoted by $\{\rho_n\}_{n\in\N}$, and a function $U\in H^1(B_1,\R^N)$ such that 
	\begin{align}
		&\tilde{v}_i^{\rho_n}\weak U_i\quad\text{weakly in }H^1(B_1), \label{eq:compact_1}\\
		&\tilde{v}_i^{\rho_n}\to U_i\quad\text{strongly in }L^2(B_1)~\text{and }L^2(\partial B_1),\label{eq:compact_2} \\
		&\tilde{v}_i^{\rho_n}\to U_i\quad\text{a.e. in }B_1 \label{eq:compact_3}
	\end{align}
	as $n\to\infty$, for any $i=1,\dots,N$. One can immediately observe that, by definition, since $\mu(\rho_n x)\to 1$ uniformly in $B_1$ and thanks to \eqref{eq:compact_2}, there holds
	\begin{equation*}
		\sum_{i=1}^N\int_{\partial B_1}U_i^2\ds=1,
	\end{equation*}
	thus implying that $(U_1,\dots,U_N)\not\equiv (0,\dots,0)$. Moreover, from \eqref{eq:compact_3} we deduce that $U_iU_j\equiv 0$ a.e. in $B_1$ for all $i,j=1,\dots,N$ such that $i\neq j$. In addition, since
	\[
	\tilde{v}_i^{\rho_n}=0\quad\text{in }B_1\setminus \frac{1}{\rho_n}\mathcal{O}_{\rho_n}
	\]	
	and since
	\[
	B_1\cap\frac{1}{\rho_n}\mathcal{O}_{\rho_n}=\left\{(x',x_d)\colon x_d>\frac{1}{\rho_n}\varphi(\rho_nx')-3\abs{x}\sigma(\rho_n\abs{x})\right\}\cap B_1
	\]
	converges to $B_1^+$ in the Hausdorff sense, as $n\to\infty$, then from \eqref{eq:compact_3} we deduce that $U_i(x',x_d)=0$ for $x_d\leq 0$. Let us now pass to the proof of strong convergence. First of all, in view of \Cref{lemma:delta} and \textit{(4)} in \Cref{lemma:A}, we have that for all $i=1,\dots,N$ there exists $h_i\in L^2(\partial B_1)$ such that, up to a subsequence
	\begin{equation}\label{eq:compact_4}
		A(\rho_n \cdot)\nabla \tilde{v}_i^{\rho_n}\cdot\nnu\weak h_i\quad\text{weakly in }L^2(\partial B_1)
	\end{equation}
	as $n\to\infty$. Now, let $\mathcal{U}_i:=\{x\in B_1\colon U_i(x)>0\}$ and let $\varphi_i\in C_c^\infty(\mathcal{U}_i\cap \overline{B_1})$. Then, in view of \eqref{eq:compact_3}, $\supp \varphi_i\sub \rho_n^{-1}\Psi_{x_0}^{-1}(\Omega_i)$  for $n$ sufficiently large. Hence, thanks to \Cref{lemma:int_by_parts} we have that
	\begin{equation*}
		\int_{B_1}(A(\rho_n x)\nabla \tilde{v}_i^{\rho_n}\cdot\nabla\varphi_i-\rho_n^2 p_i(\rho_n x)\tilde{v}_i^{\rho_n}\varphi_i)\dx=\int_{\partial B_1}\varphi_i A(\rho_n x)\nabla \tilde{v}_i^{\rho_n}\cdot\nnu\ds(x).
	\end{equation*}
	Hence, passing to the limit as $n\to\infty$, we obtain that
	\begin{equation*}
		\int_{B_1}\nabla U_i\cdot\nabla \varphi_i\dx=\int_{\partial B_1}\varphi_i h_i\ds,
	\end{equation*}
	for all $\varphi_i\in H^1_0(\mathcal{U}_i\cap \overline{B_1})$, which, after choosing $\varphi_i=U_i$ implies that
	\begin{equation}\label{eq:compact_5}
		\int_{B_1}|\nabla U_i|^2\dx=\int_{\partial B_1}U_i h_i\ds\quad\text{for all }i=1,\dots,N.
	\end{equation}
	At this point, in view of the properties of $A$ and $p_i$ stated in \Cref{lemma:A} and thanks to \Cref{lemma:int_by_parts}, \eqref{eq:compact_4}  and \eqref{eq:compact_5}, one can see that, as $n\to\infty$
	\begin{align*}
		\int_{B_1}|\nabla \tilde{v}_i^{\rho_n}|^2\dx &=\int_{B_1}(A(\rho_nx)\nabla \tilde{v}_i^{\rho_n}\cdot\nabla \tilde{v}_i^{\rho_n}-\rho_n^2 p_i(\rho_n x)|\tilde{v}_i^{\rho_n}|^2)\dx+o(1) \\
		&=\int_{\partial B_1}\tilde{v}_i^{\rho_n}A(\rho_n x)\nabla \tilde{v}_i^{\rho_n}\cdot\nnu\ds(x)+o(1) \\
		&=\int_{\partial B_1}U_i h_i\ds +o(1) =\int_{B_1}|\nabla U_i|^2\dx +o(1).
	\end{align*}
	Hence,
	\begin{equation*}
		\tilde{v}_i^{\rho_n}\to U_i\quad\text{strongly in }H^1(B_1)~\text{as }n\to\infty,
	\end{equation*}
	for all $i=1,\dots,N$. Now, by a standard procedure (essentially, passing to the limit as $n\to\infty$ in the Almgren functional), one can easily prove that $U=(U_1,\dots,U_N)$ is $\gamma(x_0)$-homogeneous. This, together with \Cref{prop:almost}, immediately implies that $U\in\mathcal{B}_{\gamma(x_0)}$. The last step is to prove strong $H^1$ convergence of $\tilde{v}^{r_n}$, up to subsequences (we recall that $r_n$ itself is a relabeled, suitably chosen subsequence). The proof is essentially contained in \cite[Proof of Lemma 6.5]{FFT2012} and we mainly refer to it. Basically, exploiting the previous step, we first prove that, up to subsequences
	\begin{equation*}
		\tilde{v}^{r_n}_i\to \bar{U}_i\quad\text{strongly in }H^1(B_1)~\text{as }n\to\infty,
	\end{equation*}
	for all $i=1,\dots,N$, for some $\bar{U}=(\bar{U}_1,\dots,\bar{U}_N)\in \mathcal{B}_{\gamma(x_0)}$ such that
	\[
	\sum_{i=1}^N\int_{\partial B_1}\bar{U}_i^2\ds=1.
	\]
	On the other hand
	\[
	\bar{U}(x)=\sqrt{\ell} U\left(\bar{\delta}^{-1}x\right),
	\]
	where
	\[
	\ell:=\lim_{n\to\infty}\frac{H(r_n\delta_n)}{H(r_n)}\quad\text{and}\quad \bar{\delta}:=\lim_{n\to\infty}\delta_n.
	\]
	Hence, from the normalization and scaling arguments (see \cite{FFT2012}), one can see that $U=\bar{U}$ and this concludes the proof of strong $H^1$ convergence. Finally, $C^{0,\alpha}$ convergence easily follows from \Cref{prop:lipschitz}. The proof is thereby complete.
\end{proof}

At this point, we observe that, combining \Cref{prop:almgren_rescalings} and \Cref{lemma:frequencies} we obtain the following.
\begin{corollary}
	One of the following holds true:
	\begin{itemize}
		\item $\gamma(x_0)=1$;
		\item $\gamma(x_0)=2$;
		\item $\gamma(x_0)\geq 2+\delta_d$,
	\end{itemize}
	where $\delta_d$ is as in \Cref{lemma:frequencies}.
\end{corollary}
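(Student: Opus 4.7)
The statement follows almost immediately from combining the two preceding results, so my plan is essentially to verify that their hypotheses align.

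First, I would fix an arbitrary boundary point $x_0 \in \partial D$ and pick any sequence $r_n \to 0^+$. Applying \Cref{prop:almgren_rescalings} to this sequence, I extract a subsequence along which the Almgren rescalings $\tilde{v}^{r_n,x_0}$ converge strongly in $H^1(B_1,\R^N)$ to some $U = U^{x_0} \in \mathcal{B}_{\gamma(x_0)}$. Crucially, the normalization provided by the proposition yields
\[
\sum_{i=1}^N \int_{S_1^+} U_i^2 \, dS = 1,
\]
so $U \not\equiv 0$. Note that $\gamma(x_0)$ is well-defined and finite thanks to \Cref{thm:N'} and \Cref{cor:N_bound}, so the blow-up lands in a single, fixed homogeneity class $\mathcal{B}_{\gamma(x_0)}$.

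Next, I would apply \Cref{lemma:frequencies} to the nonzero element $U \in \mathcal{B}_{\gamma(x_0)} \setminus \{0\}$. Exactly one of the three alternatives \ref{item:N_1}, \ref{item:N_2}, \ref{item:N_3} listed there must hold for $U$, and each alternative pins down the homogeneity: either $\gamma(x_0) = 1$, $\gamma(x_0) = 2$, or $\gamma(x_0) \geq 2 + \delta_d$, with $\delta_d$ as in that lemma. This yields the trichotomy claimed in the corollary.

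There is no real obstacle here: the only thing to make sure of is that the value $\gamma(x_0)$ appearing as the homogeneity of the blow-up coincides with the limit defined in \eqref{def:gamma_x_0}, but this identification is exactly the content of \Cref{prop:almgren_rescalings} combined with the monotonicity guaranteed by \Cref{thm:N'}. The resulting frequency gap above $2$ is the boundary analogue of the interior gap above $1$, and it is precisely what will allow, in the subsequent sections, to unambiguously split $\partial D$ into the one-phase stratum $\{\gamma = 1\}$, the regular stratum $\mathcal{R}_{\partial D}(u) = \{\gamma = 2\}$, and the singular stratum $\mathcal{S}_{\partial D}(u) = \{\gamma \geq 2 + \delta_d\}$ as in \eqref{eq:def_regular}--\eqref{eq:def_singular}.
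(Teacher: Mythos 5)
Your proposal is correct and is exactly the paper's argument: the authors state the corollary as an immediate consequence of combining \Cref{prop:almgren_rescalings} (which produces a nontrivial, $L^2(S_1^+)$-normalized blow-up limit $U^{x_0}\in\mathcal{B}_{\gamma(x_0)}\setminus\{0\}$) with the classification of homogeneities in \Cref{lemma:frequencies}. No further comment is needed.
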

Hence, it is natural to classify the points of $\partial D$ in terms of their frequencies. More precisely, for any $\gamma\ge 1$, we define
\begin{equation*}
	\mathcal{Z}_\gamma^{\partial D}(u):=\{x\in \partial D\colon \gamma(x)=\gamma\}.
\end{equation*}

Since in the final stages of the present paper it is convenient to work in the original domain $D$, we rewrite \Cref{prop:almgren_rescalings} in terms of the unperturbed minimizer $u$. In particular, for any $r>0$ and $x\in \frac{D-x_0}{r}$ we denote
\begin{equation*}
	\tilde{u}^{r,x_0}(x):=\frac{u(rx+x_0)}{\sqrt{H(v^{x_0},r)}}.
\end{equation*}
Then, thanks to \Cref{prop:almgren_rescalings}, \Cref{lemma:frequencies} and the  properties of the diffeomorphism $\Psi_{x_0}$, we have the following.

\begin{corollary}\label{cor:almgren_blowup}
	For any $x_0\in\mathcal{Z}_1^{\partial D}(u)\cup \mathcal{Z}_2^{\partial D}(u)$ and any $\{r_n\}_{n\in\N}$ such that $r_n\to 0$ as $n\to\infty$ there exists $U^{x_0}\in\mathcal{B}_{\gamma(x_0)}$ such that
	\[
	\sum_{i=1}^N \int_{\partial B_1}|U_i^{x_0}|^2=1
	\]
	and a subsequence $\{r_{n_k}\}_{k\in\N}$ such that
	\[
	\tilde{u}^{r_{n_k},x_0}\to U^{x_0}\quad\text{in }H^1(B_1,\R^N)~\text{and }C^{0,\alpha}(B_1)~\text{for all }\alpha\in(0,1), 
	\]
	as $k\to\infty$. Moreover, if we let
	\[
	\tilde{\kappa}_{d,1}:=\left(\int_{\partial B_1}x_d^+\ds\right)^{-\frac{1}{2}}\quad\text{and}\quad \tilde{\kappa}_{d,2}:=2\left(\int_{\partial B_1}x_{d-1}^+x_d^+\ds\right)^{-\frac{1}{2}},
	\]
	then the following holds. If $x_0\in\mathcal{Z}_1^{\partial D}(u)$, then there exists $j\in\{1,\dots,N\}$ such that
	\[
	U^{x_0}_j(x)=\tilde{\kappa}_{d,1}(-x\cdot\nnu(x_0))^+\quad\text{and}\quad U_i^{x_0}=0~\text{for all }i\neq j,
	\]
	while, if $x_0\in\mathcal{Z}_2^{\partial D}(u)$, then there exists $\bm{e}_{x_0}\in\partial B_1$ and $j,k\in\{1,\dots,N\}$, $j\neq k$, such that
	\[
	U^{x_0}_j=\tilde{\kappa}_{d,2}(x\cdot \bm{e}_{x_0})^-(-x\cdot\nnu(x_0))^+,\quad U^{x_0}_k(x)=\tilde{\kappa}_{d,2}(x\cdot \bm{e}_{x_0})^+(-x\cdot\nnu(x_0))^+
	\]
	and $U^{x_0}_i=0$ for all $i\neq j,k$.
\end{corollary}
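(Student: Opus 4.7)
The statement is essentially a translation of Proposition 5.13 back to the original minimizer $u$, combined with the classification in Lemma 4.2, so the strategy has three phases: extract the $v$-blow-up, transfer it via the diffeomorphism $\Psi_{x_0}$ to a $u$-blow-up, and identify the precise form by invoking Lemma 4.2.

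First, I would apply Proposition 5.13 directly to the sequence $r_n \to 0^+$ to extract a subsequence $\{r_{n_k}\}$ and a profile $U \in \mathcal B_{\gamma(x_0)}$ with $\sum_i \int_{S_1^+}|U_i|^2\,\mathrm{d}S = 1$ such that $\tilde v^{r_{n_k},x_0} \to U$ strongly in $H^1(B_1,\R^N)$ and in $C^{0,\alpha}(B_1)$ for all $\alpha\in(0,1)$. Since the elements of $\mathcal B_{\gamma(x_0)}$ vanish on $\partial \R^d_+$, we have $\int_{\partial B_1}|U_i|^2\,\mathrm{d}S = \int_{S_1^+}|U_i|^2\,\mathrm{d}S$.

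Second, I would transfer this convergence to $\tilde u^{r,x_0}$. From $v^{x_0}(x) = u(\mathbf Q_{x_0}\Psi(x) + x_0)$, inverting gives $u(rx + x_0) = v^{x_0}\!\left(\Psi^{-1}(\mathbf Q_{x_0}^{-1} r x)\right)$, whence
\[
\tilde u^{r,x_0}(x) = \tilde v^{r,x_0}(\Phi_r(x)), \qquad \Phi_r(x) := \tfrac{1}{r}\Psi^{-1}(\mathbf Q_{x_0}^{-1} r x).
\]
Since $\Psi$ is a $C^1$ diffeomorphism with $\Psi(0)=0$ and $D\Psi(0)=I$ (Lemma 5.2), one has $D\Psi^{-1}(0)=I$, and by continuity of $D\Psi^{-1}$ at the origin the maps $\Phi_r$ converge to the linear map $x \mapsto \mathbf Q_{x_0}^{-1} x$ in $C^1(\overline{B_1})$ as $r \to 0^+$. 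The $C^{0,\alpha}$-convergence of $\tilde v^{r_{n_k},x_0}$ composed with the uniform $C^1$-convergence of $\Phi_{r_{n_k}}$ yields $\tilde u^{r_{n_k},x_0} \to U^{x_0}$ in $C^{0,\alpha}(B_1)$ with $U^{x_0}(x) := U(\mathbf Q_{x_0}^{-1} x)$; the $H^1$-convergence follows via the standard change-of-variables bound together with the strong $H^1$-convergence of $\tilde v^{r_{n_k},x_0}$ and the fact that $D\Phi_r \to \mathbf Q_{x_0}^{-1}$ in $L^\infty$. The orthogonality and the description of $\partial D$ near $x_0$ force $\mathbf Q_{x_0}\mathbf e_d = -\nnu(x_0)$, so the hyperplane $\{x_d = 0\}$ transforms into the tangent plane $\{x\cdot\nnu(x_0)=0\}$ and $U^{x_0}$ is supported in $\{-x\cdot\nnu(x_0)\geq 0\}$.

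Third, I would use Lemma 4.2 to identify $U$: if $\gamma(x_0)=1$ there is $j$ with $U_j(x)=\alpha x_d^+$ and $U_i \equiv 0$ for $i\neq j$, giving
\[
U_j^{x_0}(x) = \alpha\,(\mathbf Q_{x_0}^{-1} x \cdot \mathbf e_d)^+ = \alpha\,(-x\cdot\nnu(x_0))^+;
\]
if $\gamma(x_0)=2$ there are $j\neq k$, $\alpha>0$ and $\mathbf e\in\partial S_1^+$ with $U_j = \alpha(x'\!\cdot\mathbf e)^+ x_d^+$, $U_k = \alpha(x'\!\cdot\mathbf e)^- x_d^+$, and defining $\mathbf e_{x_0}:=\mathbf Q_{x_0}\mathbf e$ one checks $\mathbf e_{x_0}\in\partial B_1$ and $\mathbf e_{x_0}\cdot\nnu(x_0)=0$ (because $\mathbf e\perp\mathbf e_d$ and $\mathbf Q_{x_0}$ is orthogonal), yielding the required factorization. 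The numerical values $\tilde\kappa_{d,1}$ and $\tilde\kappa_{d,2}$ are then fixed by the $L^2(\partial B_1)$-normalization $\sum_i\int_{\partial B_1}|U_i^{x_0}|^2\,\mathrm{d}S = 1$ together with the rotational invariance of $\partial B_1$ (so that the integrals can be computed with $\nnu(x_0)=-\mathbf e_d$ and $\mathbf e_{x_0}=\mathbf e_{d-1}$).

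The main delicate point is the $H^1$-transfer in Phase 2: one must ensure that composing the strong $H^1$-convergent sequence with a family of near-identity diffeomorphisms converges to the composition with the linear limit, which requires the uniform $C^1$-control of $\Phi_r$ near the origin (equivalently, the fact that $D\Psi(0)=I$ ensured by Lemma 5.2). Everything else is a straightforward consequence of Proposition 5.13 and Lemma 4.2.
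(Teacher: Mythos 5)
Your proposal is correct and follows exactly the route the paper intends: the paper states this corollary without proof, asserting it follows from \Cref{prop:almgren_rescalings}, \Cref{lemma:frequencies} and the properties of $\Psi_{x_0}$, which is precisely the three-phase argument (extract the $v$-blow-up, transfer through $\Phi_r=\frac1r\Psi^{-1}(\mathbf{Q}_{x_0}^{-1}r\,\cdot)\to\mathbf{Q}_{x_0}^{-1}$ using $D\Psi(0)=I$, classify via the lemma) that you carry out, and your handling of the $H^1$-transfer under the near-identity diffeomorphisms is the right way to fill in the only nontrivial detail.
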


\subsection{Monotonicity of the Weiss function}
As a consequence of the computations we made in the previous part of this section, we are able to deduce almost monotonicity of a Weiss-type functional, which will be crucial in the analysis of the free boundary regularity. We first introduce some notation. For $r>0$, $\gamma\geq 0$ and $w\in H^1_{s,N}(B_r)$, we let
\begin{align*}
	W_\gamma(w,r):&=\frac{H(w,r)}{r^{2\gamma}}\left[\mathcal{N}(w,r)-\gamma\right]=\frac{1}{r^{2\gamma}}\left[E(w,r)-\gamma H(w,r)\right] \\
	&=\frac{1}{r^{d+2\gamma-2}}\sum_{i=1}^N\int_{B_r}(A\nabla w_i\cdot\nabla w_i-p_iw_i^2)\dx-\frac{\gamma}{r^{d+2\gamma-1}}\sum_{i=1}^N\int_{\partial B_r}w_i^2\mu\ds,
\end{align*}
where $E$, $H$ and $\mathcal{N}$ are as in \Cref{sec:almgren}. Moreover, we introduce the \enquote{unperturbed} Weiss function
\begin{equation}\label{def:W_unpert}
	\widetilde{W}_\gamma(w,r):=\frac{1}{r^{d+2\gamma-2}}\sum_{i=1}^N\int_{B_r}\abs{\nabla w_i}^2\dx-\frac{\gamma}{r^{d+2\gamma-2}}\sum_{i=1}^N\int_{\partial B_r}w_i^2\ds.
\end{equation}
With a slight abuse of notation, we keep the same notation when dealing with a scalar $w\in H^1(B_r)$. 
\begin{proposition}[Monotonicity of the Weiss function]\label{prop:W'}
	Let $ \gamma \leq\gamma(x_0)$. Then, $W_\gamma(v,\cdot)\in W^{1,1}(0,R_0)$ and for all $r\leq R_0$ there holds
	\begin{multline}\label{eq:W'_th1}
		W_\gamma'(v,r)\geq (1-\kappa\sigma(r))\frac{d+2\gamma-2}{r}(\widetilde{W}_\gamma(h^{r,\gamma},1)-\widetilde{W}_\gamma(V^{r,\gamma},1)) \\
		+(1-\kappa\sigma(r))\frac{\mathcal{D}_{\gamma}(r)}{r}-CH(R_0)\frac{\sigma(r)}{r},
	\end{multline}
	where
	\begin{equation*}
		\begin{gathered}
			\mathcal{D}_{\gamma}(r):=\sum_{i=1}^N\int_{\partial B_1}(\nabla V_i^{r,\gamma}\cdot x-\gamma V_i^{r,\gamma})^2\dx, \\
			V^{r,\gamma}(x):=\frac{v(rx)}{r^{\gamma}},\qquad h^{r,\gamma}(x):=|x|^\gamma V^{r,\gamma}\left(\frac{x}{|x|}\right)
		\end{gathered}
	\end{equation*}
	and $C>0$ depends on $d$, $D$ and $N$. In addition,
	\begin{equation}\label{eq:W'_th2}
		W_\gamma'(v,r)\geq \sum_{i=1}^N\frac{2}{r^{d+2\gamma}}\int_{S_r}\left(\frac{1}{\sqrt{\mu}}A\nabla v_i\cdot x-\gamma\sqrt{\mu} v_i\right)^2\ds-C_WH(R_0)\frac{\sigma(r)}{r}
	\end{equation}
	for all $r\leq R_0$, where $C_W>0$ depends on $d$, $D$ and $N$.
\end{proposition}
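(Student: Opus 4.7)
The plan is to differentiate $W_\gamma(v,\cdot)$ directly and identify a favorable sum-of-squares structure. Since $E,H \in W^{1,1}(0,R_0)$ (Lemmas \ref{lemma:E'} and \ref{lemma:H'}) and $H(r)>0$ (Lemma \ref{lemma:H>0}), one immediately has $W_\gamma(v,\cdot) = r^{-2\gamma}(E(r)-\gamma H(r)) \in W^{1,1}(0,R_0)$ and
\[
W_\gamma'(v,r) = \frac{E'(r) - \gamma H'(r)}{r^{2\gamma}} - \frac{2\gamma}{r^{2\gamma+1}}\bigl[E(r) - \gamma H(r)\bigr].
\]

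For \eqref{eq:W'_th2}, I would substitute the Pohozaev lower bound on $E'(r)$ from Lemma \ref{lemma:E'}, the formula for $H'(r)$ from Lemma \ref{lemma:H'}, and the integration-by-parts identity $E(r) = r^{2-d}\sum_i \int_{S_r} v_i A\nabla v_i \cdot \nnu\ds$ from Lemma \ref{lemma:int_by_parts}. After regrouping, the cross-terms assemble into the perfect square $\frac{2}{r^{d+2\gamma}}\sum_i\int_{S_r}\bigl(A\nabla v_i\cdot x/\sqrt{\mu} - \gamma\sqrt{\mu}\,v_i\bigr)^2\ds$ (verified by expanding and using $x=r\nnu$ on $S_r$). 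The leftover error is of order $\sigma(r)(E(r)+H(r))/r^{2\gamma+1}$; invoking Lemma \ref{lemma:decay} for $H(r)\leq CH(R_0)r^{2\gamma(x_0)}$, Corollary \ref{cor:N_bound} for $E(r)\leq CH(r)$, and the hypothesis $\gamma\leq\gamma(x_0)$, this is controlled by $CH(R_0)\sigma(r)/r$, giving \eqref{eq:W'_th2}.

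For \eqref{eq:W'_th1}, I rescale via $y = x/r$ to convert the boundary square into $\frac{2}{r}\sum_i\int_{\partial B_1}\bigl(A(ry)\nabla V_i^{r,\gamma}\cdot y/\sqrt{\mu(ry)} - \gamma\sqrt{\mu(ry)}V_i^{r,\gamma}\bigr)^2\ds$. The expansions $A(ry)=I+O(\sigma(r))$ and $\mu(ry)=1+O(\sigma(r))$ from Lemma \ref{lemma:A} turn this into $(1-\kappa\sigma(r))\frac{2\mathcal{D}_\gamma(r)}{r}$ up to errors absorbable in $CH(R_0)\sigma(r)/r$. The remaining algebraic task is to split $2\mathcal{D}_\gamma(r) = \mathcal{D}_\gamma(r) + (d+2\gamma-2)[\widetilde W_\gamma(h^{r,\gamma},1)-\widetilde W_\gamma(V^{r,\gamma},1)]$ plus acceptable remainders. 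Setting $w := V^{r,\gamma} - h^{r,\gamma}$ (so $w|_{\partial B_1}=0$ since $h^{r,\gamma}$ is the $\gamma$-homogeneous extension of the trace of $V^{r,\gamma}$), the $\gamma$-homogeneity of $h^{r,\gamma}$ gives $\mathcal{D}_\gamma(r) = \int_{\partial B_1}(\partial_{\nnu} w)^2\ds$, and a Rellich–Ne\v{c}as-type integration by parts obtained by multiplying $\Delta w$ by the dilation $x\cdot\nabla w - \gamma w$ yields the exact identity
\[
\int_{\partial B_1}(\partial_{\nnu} w)^2\ds = (d+2\gamma-2)\bigl[\widetilde W_\gamma(h^{r,\gamma},1)-\widetilde W_\gamma(V^{r,\gamma},1)\bigr] + \text{(bulk terms in $\Delta w$ and $\Delta h^{r,\gamma}$)}.
\]

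I expect the main obstacle to be controlling these bulk remainders. Neither $\Delta w$ nor $\Delta h^{r,\gamma}$ vanishes exactly, but the extremality conditions in Lemma \ref{lemma:extermaliti} give $\Delta V_i^{r,\gamma} = O(r^2)$ on the positivity set after rescaling, and the almost-minimality in Proposition \ref{prop:almost} pins down the sign of $\widetilde W_\gamma(h^{r,\gamma},1)-\widetilde W_\gamma(V^{r,\gamma},1)$ up to $\sigma(r)$-scale errors. Fitting these residual terms into the prefactor $(1-\kappa\sigma(r))$ and the additive error $CH(R_0)\sigma(r)/r$ is the delicate part of the argument, and requires combining Lemma \ref{lemma:A} (for the coefficient expansions), Lemma \ref{lemma:decay} (for the growth of $H$), and the hypothesis $\gamma\leq\gamma(x_0)$ to ensure the required uniformity in $r$.
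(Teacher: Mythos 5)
Your argument for \eqref{eq:W'_th2} is essentially the paper's: differentiate $W_\gamma(v,r)=r^{-2\gamma}(E(r)-\gamma H(r))$, insert the Pohozaev lower bound from \Cref{lemma:E'}, the formula \eqref{eq:H'_th2}, and the identity \eqref{eq:int_by_parts_2}, assemble the perfect square on $S_r$ (using $x=r\nnu$ there), and control the remainder by $C H(R_0)\sigma(r)/r$ via \Cref{lemma:decay}, \Cref{cor:N_bound} and the hypothesis $\gamma\leq\gamma(x_0)$. That part is sound.

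The route you propose for \eqref{eq:W'_th1} has two genuine gaps. First, reducing the rescaled boundary square to $(1-\kappa\sigma(r))\tfrac{2}{r}\mathcal{D}_\gamma(r)$ ``up to errors absorbable in $CH(R_0)\sigma(r)/r$'' is not justified: replacing $A(ry)$ by $I$ and $\mu(ry)$ by $1$ inside the square produces an error of size $\tfrac{\sigma(r)}{r}\sum_i\int_{\partial B_1}|\nabla V_i^{r,\gamma}|^2\ds$, and the trace of the gradient on a \emph{given} sphere is not bounded by $CH(R_0)$ uniformly in $r$ --- only the bulk energy $\int_{B_1}|\nabla V^{r,\gamma}|^2\dx$ is; this is exactly why the paper needs the good-radii selection of \Cref{lemma:delta} elsewhere. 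For $\gamma=2$ the Lipschitz bound only gives $\int_{\partial B_1}|\nabla V^{r,\gamma}|^2\ds\leq Cr^{-2}$, so the discarded error is of order $\sigma(r)/r^{3}$. Second, the Rellich--Ne\v{c}as step for $w=V^{r,\gamma}-h^{r,\gamma}$ cannot be closed: $\Delta h^{r,\gamma}$ involves second tangential derivatives of the trace of $V^{r,\gamma}$ on $\partial B_1$ (a distribution with no smallness), and $\Delta V_i^{r,\gamma}$ carries a singular part concentrated on the free interface, so the ``bulk terms'' are neither $O(r^2)$ nor controllable by $\sigma(r)$. The paper avoids both issues by not detouring through \eqref{eq:W'_th2} at all: it computes $W_\gamma'$ directly from \eqref{eq:E'1} and \eqref{eq:H'_th2}, keeps the bulk term $-\tfrac{d+2\gamma-2}{r}\int_{B_1}|\nabla V^{r,\gamma}|^2\dx$ and the full boundary energy $\int_{\partial B_1}|\nabla V^{r,\gamma}|^2\ds$ as main terms (so they never need to be estimated), and converts the resulting bracket into $(d+2\gamma-2)\big(\widetilde{W}_\gamma(h^{r,\gamma},1)-\widetilde{W}_\gamma(V^{r,\gamma},1)\big)+\mathcal{D}_\gamma(r)$ by the two purely algebraic homogeneity identities \eqref{eq:W'_2}--\eqref{eq:W'_3}; no equation for $V^{r,\gamma}-h^{r,\gamma}$ and no Rellich-type identity are needed. (In the unperturbed harmonic model your split is consistent, since there $\mathcal{D}_\gamma=(d+2\gamma-2)(\widetilde{W}_\gamma(h^{r,\gamma},1)-\widetilde{W}_\gamma(V^{r,\gamma},1))$ holds exactly; the whole difficulty is making this survive the variable coefficients and the absence of a global PDE, which your error budget does not accomplish.)
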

\begin{proof}
	First of all, we observe that, since $H\in C^1(0,R_0)$ and $\mathcal{N}\in W^{1,1}(0,R_0)$, then $W_\gamma(v,\cdot)\in W^{1,1}_{\textup{loc}}(0,R_0)$. Thanks to \eqref{eq:H'_th2} and \eqref{eq:E'1} we have that
	\begin{align*}
		W_\gamma'(v,r)\geq\sum_{i=1}^N\Bigg[&-\frac{d+2\gamma-2}{r^{d+2\gamma-1}}\int_{B_r}(A\nabla v_i\cdot\nabla v_i-p_iv_i^2)\dx+\frac{2\gamma^2}{r^{d+2\gamma}}\int_{\partial B_r}v_i^2\mu\ds \\
		&+\frac{1}{r^{d+2\gamma-2}}\int_{\partial B_r}(A\nabla v_i\cdot\nabla v_i-p_iv_i^2)\ds-\frac{2\gamma }{r^{d+2\gamma-1}}\int_{\partial B_r}v_iA\nabla v_i\cdot\nnu\ds\Bigg] \\
		- C&\frac{H(r)}{r^{2\gamma}}\frac{\sigma(r)}{r}.
	\end{align*}
	In particular, thanks to \Cref{lemma:decay}, taking into account that $\gamma\leq \gamma(x_0)$, we have
	\begin{align*}
		W_\gamma'(v,r)\geq \sum_{i=1}^N\Bigg[&-\frac{d+2\gamma-2}{r^{d+2\gamma-1}}\int_{B_r}(A\nabla v_i\cdot\nabla v_i-p_iv_i^2)\dx+\frac{2\gamma^2}{r^{d+2\gamma}}\int_{\partial B_r}v_i^2\mu\ds \\
		&+\frac{1}{r^{d+2\gamma-2}}\int_{\partial B_r}(A\nabla v_i\cdot\nabla v_i-p_iv_i^2)\ds-\frac{2\gamma}{r^{d+2\gamma-1}}\int_{\partial B_r}v_iA\nabla v_i\cdot\nnu\ds\Bigg] \\
		&- CH(R_0)\frac{\sigma(r)}{r},
	\end{align*}
	where $C>0$ depends $d$, $D$ and $N$. Now, we observe that, in view of \Cref{lemma:A} there holds
	\[
	\int_{\partial B_r}A\nabla v_i\cdot\nabla v_i\ds\geq (1-\kappa\sigma(r))\int_{\partial B_r}|\nabla v_i|^2\ds
	\]
	and, thanks to Young's inequality, there holds
	\begin{align*}
		\int_{\partial B_r}v_iA\nabla v_i\cdot\nnu\ds&=\int_{\partial B_r}v_i\nabla v_i\cdot\nnu\ds+\int_{\partial B_r}v_i(A-I)\nabla v_i\cdot\nnu\ds \\
		&\leq \int_{\partial B_r}v_i\nabla v_i\cdot\nnu\ds+\kappa\sigma(r)\left(\int_{\partial B_r}v_i^2\ds+\int_{\partial B_r}|\nabla v_i|^2\ds\right)
	\end{align*}
	for all $r\leq R_0$ and $i=1,\dots,N$. Therefore, up to a change of variable and up to renaming the constant $\kappa$, we have that 
	\begin{equation}\label{eq:W'_1}
		\begin{aligned}
			W_\gamma'(v,r)\geq \frac{1}{r}\sum_{i=1}^N\Bigg[&-(d+2\gamma-2)\int_{B_1}|\nabla V_i^{r,\gamma}|^2\dx+(1-\kappa\sigma(r))\int_{\partial B_1}|\nabla V_i^{r,\gamma}|^2\ds \\
			&+2\gamma^2(1-\kappa\sigma(r))\int_{\partial B_1}|V_i^{r,\gamma}|^2\ds-2\gamma\int_{\partial B_1}V_i^{r,\gamma}\nabla V_i^{r,\gamma}\cdot x\ds 	\Bigg]\\
			+B(r)&-CH(R_0)\frac{\sigma(r)}{r},
		\end{aligned}
	\end{equation}
	where
	\begin{align*}
		B(r):=\frac{H(r)}{r^{2\gamma}}\sum_{i=1}^N\Bigg[&-\frac{d+2\gamma-2}{r}\int_{B_1}((A(rx)-I)\nabla \tilde{v}_i^r\cdot\nabla \tilde{v}_i^r-r^2p_i(rx)\abs{\tilde{v}_i^r}^2)\dx \\
		& +\frac{2\gamma^2}{r}\int_{\partial B_1}|\tilde{v}_i^r|^2(\mu(rx)-1)\ds -r\int_{\partial B_1}p_i(rx)|\tilde{v}_i^r|^2\ds\Bigg]
	\end{align*}
	and $\tilde{v}_i^r$ is as in \eqref{def:tilde_v_r}.	Thanks to \Cref{lemma:decay}, \Cref{lemma:A}, \Cref{lemma:int_by_parts} and \Cref{cor:N_bound} we have that
	\begin{equation}\label{eq:W'_5}
		|B(r)|\leq CH(R_0)\frac{\sigma(r)}{r},
	\end{equation}
	for some constant $C>0$ depending on $d$, $D$ and $N$. Moreover, reasoning analogously and taking into account \eqref{eq:W'_5}, from \eqref{eq:W'_1} we obtain that
	\begin{equation}\label{eq:W'_4}
		\begin{aligned}
			W_\gamma'(v,r)\geq \frac{1-\kappa\sigma(r)}{r}\sum_{i=1}^N\Bigg[&-(d+2\gamma-2)\int_{B_1}|\nabla V_i^{r,\gamma}|^2\dx \\
			&+\int_{\partial B_1}|\nabla V_i^{r,\gamma}|^2\ds +2\gamma^2\int_{\partial B_1}|V_i^{r,\gamma}|^2\ds \\
			&-2\gamma\int_{\partial B_1}V_i^{r,\gamma}\nabla V_i^{r,\gamma}\cdot x\ds 	\Bigg]-CH(R_0)\frac{\sigma(r)}{r},
		\end{aligned}
	\end{equation}
	Now, thanks to homogeneity properties, one can easily see that
	\begin{equation}\label{eq:W'_2}
		-\sum_{i=1}^N\int_{B_1}|\nabla V_i^{r,\gamma}|^2\dx=\widetilde{W}(h^{r,\gamma},1)-\widetilde{W}(V^{r,\gamma},1)-\frac{1}{d+2\gamma-2}\sum_{i=1}^N\int_{\partial B_1}|\nabla h_i^{r,\gamma}|^2\dx
	\end{equation}
	and that
	\begin{equation}\label{eq:W'_3}
		|\nabla h_i^{r,\gamma}|^2=\gamma^2|V_i^{r,\gamma}|^2+|\nabla V_i^{r,\gamma}|^2-|\nabla V_i^{r,\gamma}\cdot x|^2\quad\text{on }\partial B_1.
	\end{equation}
	At this point, plugging \eqref{eq:W'_2} and \eqref{eq:W'_3} into \eqref{eq:W'_4}, we obtain \eqref{eq:W'_th1}. We are left to prove \eqref{eq:W'_th2}. In order to do this, we compute
	\begin{equation*}
		W_\gamma'(v,r)=\frac{1}{r^{2\gamma}}\left[E'(v,r)-\gamma H'(v,r)\right]-\frac{2\gamma}{r^{2\gamma+1}}\left[E(v,r)-\gamma H(v,r)\right].
	\end{equation*}
	Now, we combine this expression with the computations we previously made for the Almgren frequency function. In particular, thanks to \Cref{lemma:E'}, \eqref{eq:H'_th2} and \Cref{lemma:int_by_parts} we obtain
	\begin{align*}
		W_\gamma'(v,r)&\geq \sum_{i=1}^N\Bigg\{\frac{1}{r^{2\gamma}}\left[\frac{2}{r^{d-2}}\int_{S_r}\frac{1}{\mu}(A\nabla v_i\cdot\nnu)^2\ds-\frac{2\gamma}{r^{d-1}}\int_{S_r}v_i A\nabla v_i\cdot\nnu\ds\right] \\
		&\quad -\frac{2\gamma}{r^{2\gamma+1}}\left[ \frac{1}{r^{d-2}}\int_{S_r}v_i A\nabla v_i\cdot\nnu-\frac{\gamma}{r^{d-1}}\int_{S_r}v_i^2\mu\ds\right]\Bigg\} \\
		&\quad-C\frac{H(v,r)}{r^{2\gamma}}\frac{\sigma(r)}{r}-C\frac{H(v,r)}{r^{2\gamma}}\frac{\sigma(r)}{r}(\mathcal{N}(r)+1).
	\end{align*}
	Rearranging the terms and applying \Cref{lemma:decay} and \Cref{cor:N_bound} to the reminder yields \eqref{eq:W'_th2}. Finally, since $W_\gamma(v,\cdot)\in W^{1,1}_{\textup{loc}}(0,R_0)$ and $W_\gamma(v,\cdot)$ admits a finite limit at $0$, we have that $W_\gamma(v,\cdot)\in W^{1,1}(0,R_0)$ and this concludes the proof.
\end{proof}

\section{Epiperimetric inequalities at points of low frequency}\label{sec:epi}

The aim of the present section is to prove an epiperimetric inequality for the Weiss energy corresponding to the low isolated frequencies of free boundary points. In particular, we are able to obtain it for both the lowest frequency of interior free boundary points and for the lowest isolated frequencies of those which are located on the boundary $\partial D$, see \Cref{thm:epi1} and \Cref{thm:epi2}, respectively. By definition, proving an epiperimetric inequality entails the construction of a competitor function which lowers the Weiss energy of the corresponding homogeneous extension by a universal multiplicative factor. We adopt a direct approach, which is devoted to build an explicit competitor and, in order to do this, we find useful to introduce the following operators. The first one is the harmonic extension of a function defined on the unit sphere. We recall that $\{\phi_n\}_n$ denotes a sequence of orthonormal eigenfunctions of the Laplacian on $\partial B_1$.
\begin{definition}[Harmonic extension]\label{def:harm_ext}
	For any $f\in H^1(\partial B_1)$ we denote by $\mathscr{H}(f)\in H^1(B_1)$ the unique function solving
	\begin{equation*}
		\begin{bvp}
			-\Delta \varphi &= 0 &&\text{in }B_1, \\
			\varphi&= f, &&\text{on }\partial B_1.
		\end{bvp}
	\end{equation*}
	We call $\mathscr{H}(f)$ the \emph{harmonic extension} of $f$ in $B_1$. In particular, if
	\begin{equation*}
		a_j:=\int_{\partial B_1}f\phi_j,\quad j\geq 0,
	\end{equation*}
	then we have
	\begin{equation*}
		\mathscr{H}(f)(r,\theta)=\sum_{j=0}^\infty a_j r^j\phi_j(\theta).
	\end{equation*}
\end{definition}
Second, we introduce the homogeneous extension operator.
\begin{definition}[Homogeneous extension]\label{def:homo_ext}
	Let $\gamma\geq 0$. For any $f\in H^1(\partial B_1)$ we denote
	\begin{equation*}
		Z_\gamma(f)(x):=|x|^\gamma f\left( \frac{x}{|x|} \right) 
	\end{equation*}
	and we call it the \emph{$\gamma$-homogeneous extension} of $f$.
\end{definition}
Third, we need the truncation operator, which homogeneously extends a function of the unit sphere, up to a certain radius.
\begin{definition}[Truncation]\label{def:truncation}
	Let $\rho\in (0,1)$ and $\tau>0$. For any $f\in H^1(\partial B_1)$ we denote
	\begin{equation*}
		T_{\rho,\tau}(f)(x):=\frac{|(|x|-\rho)^+|^\tau}{(1-\rho)^\tau}\,f\left(\frac{x}{|x|}\right)
	\end{equation*}
	and we call it the \emph{$(\rho,\tau)$ homogeneous truncation} of $f$.
\end{definition}

Finally, we introduce the rescaling operator, which shrinks a function of the unit ball into a smaller ball and fills the annulus with the homogeneous extension.	
\begin{definition}[Rescaling]\label{def:scaled}
	Let $\gamma\geq 0$, $\rho\in(0,1)$ and $Z_\gamma$ be as in \Cref{def:homo_ext}. For any $f\in H^1(\partial B_1)$ and any $w\in H^1(B_1)$ such that $w=f$ on $\partial B_1$, we denote
	\begin{equation*}
		R_{\gamma,\rho} (w)(x):=\begin{cases}
			Z_\gamma(f)(x), &\text{in }B_1\setminus B_\rho, \\
			|x|^\gamma w\left( \frac{x}{\rho} \right), &\text{in }B_\rho.
		\end{cases}
	\end{equation*}
\end{definition}

The first main tool for the proof of the epiperimetric inequality is a slicing lemma for the Weiss energy. We recall that $\widetilde{W}_\gamma(w)=\widetilde{W}_\gamma(w,1)$ is as in \eqref{def:W_unpert}. 
\begin{lemma}[Slicing Lemma]\label{lemma:slicing}
	Let $\gamma\geq 0$, let $w\in H^1(B_1)$ and let
	\[
	\varphi_r(\theta):=r^{-\gamma}w(r,\theta).
	\]
	Then, we have that
	\begin{equation*}
		\widetilde{W}_\gamma(w)=\int_0^1r^{d+2\gamma-3}\mathcal{F}_\gamma(\varphi_r)\d r+\int_0^1r^{d+2\gamma-1}\int_{\partial B_1}|\partial_r\varphi_r|^2\ds\d r,
	\end{equation*}
	where $\mathcal{F}_\gamma\colon H^1(\partial B_1)\to\R$ is defined as
	\begin{equation*}
		\mathcal{F}_\gamma(\varphi):=\int_{\partial B_1}(|\nabla_{\partial B_1}\varphi|^2-\gamma(d+\gamma-2)\varphi^2)\ds.
	\end{equation*}	
\end{lemma}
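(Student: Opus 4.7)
The plan is a direct computation in polar coordinates $(r,\theta)\in(0,1)\times\partial B_1$, combined with one integration by parts in the radial variable to absorb the boundary term $\gamma\int_{\partial B_1}w^2\ds$ into the interior energy.

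First, I will write the gradient in polar coordinates
\[
|\nabla w|^2=|\partial_r w|^2+\frac{1}{r^2}|\nabla_{\partial B_1}w|^2
\]
and substitute $w(r,\theta)=r^\gamma\varphi_r(\theta)$. This yields
\[
|\partial_r w|^2=\gamma^2 r^{2\gamma-2}\varphi_r^2+2\gamma r^{2\gamma-1}\varphi_r\,\partial_r\varphi_r+r^{2\gamma}|\partial_r\varphi_r|^2,\qquad \frac{1}{r^2}|\nabla_{\partial B_1}w|^2=r^{2\gamma-2}|\nabla_{\partial B_1}\varphi_r|^2.
\]
Multiplying by the Jacobian $r^{d-1}$ and integrating, the first volume contribution gives exactly the $|\nabla_{\partial B_1}\varphi_r|^2$ piece of $\mathcal F_\gamma$ and the $|\partial_r\varphi_r|^2$ piece of the desired formula.

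The second step is the crucial one: the cross term
\[
2\gamma\int_0^1 r^{d+2\gamma-2}\int_{\partial B_1}\varphi_r\,\partial_r\varphi_r\ds\,dr
=\gamma\int_0^1 r^{d+2\gamma-2}\partial_r\!\left(\int_{\partial B_1}\varphi_r^2\ds\right)dr.
\]
Integration by parts in $r$ (using that $r^{d+2\gamma-2}\varphi_r^2\to 0$ as $r\to 0^+$, which follows from $w\in H^1(B_1)$ and $\gamma\geq 0$, whereas at $r=1$ one has $\varphi_1=w|_{\partial B_1}$) produces the boundary contribution $\gamma\int_{\partial B_1}w^2\ds$ together with an extra radial integral of $\varphi_r^2$. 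Combining the latter with the $\gamma^2$ term coming from $|\partial_r w|^2$ gives the coefficient
\[
\gamma^2-\gamma(d+2\gamma-2)=-\gamma(\gamma+d-2),
\]
which is precisely the $-\gamma(d+\gamma-2)\varphi_r^2$ piece of $\mathcal F_\gamma$. The boundary term $\gamma\int_{\partial B_1}w^2\ds$ then cancels against $-\gamma\int_{\partial B_1}w^2\ds$ in the definition of $\widetilde W_\gamma(w)$, and the identity follows.

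The only minor subtlety is justifying the integration by parts and the vanishing at $r=0$ rigorously for $w\in H^1(B_1)$ rather than for smooth $w$. I would handle this by approximation: prove the identity first for $w\in C^\infty(\overline{B_1})$, and then extend by density. Both sides of the identity are continuous with respect to the $H^1(B_1)$ norm of $w$ (using the trace inequality to control $\int_{\partial B_1}w^2\ds$, and Fubini together with the chain rule to identify $\varphi_r$ with $r^{-\gamma}w(r,\cdot)\in H^1(\partial B_1)$ for a.e.\ $r$), so the formula passes to the limit. I do not expect genuine obstacles here beyond this technical density step; the core of the proof is the algebraic identity described above.
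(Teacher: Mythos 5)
Your proposal is correct and follows essentially the same route as the paper's proof: writing the Dirichlet energy in polar coordinates, substituting $w=r^\gamma\varphi_r$, and integrating by parts in $r$ to convert the cross term so that its boundary contribution cancels the $-\gamma\int_{\partial B_1}w^2\ds$ in $\widetilde W_\gamma$, while its interior contribution combines with the $\gamma^2$ term to produce the coefficient $-\gamma(d+\gamma-2)$. The only addition on your part is the explicit remark about approximation by smooth functions, which the paper leaves implicit.
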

\begin{proof}
	By easy computations in polar coordinates and by definition of $\varphi_r$, one can see that
	\begin{align*}
		\int_{B_1}\abs{\nabla w}^2\dx&=\int_0^1r^{d-1}\int_{\partial B_1}(|\partial_r(r^\gamma\varphi_r)|^2+r^{2\gamma-2}\abs{\nabla_{\partial B_1}\varphi_r}^2)\ds\d r \\
		&=\int_0^1r^{d-1}\int_{\partial B_1}(\gamma^2r^{2\gamma-2}\varphi_r^2+r^{2\gamma}(\partial_r\varphi_r)^2 \\
		&\qquad\qquad\qquad\quad+\gamma r^{2\gamma-1}\partial_r(\varphi_r^2)+r^{2\gamma-2}\abs{\nabla_{\partial B_1}\varphi_r}^2)\ds\d r.
	\end{align*}
	Moreover, we have that
	\begin{equation*}
		\int_0^1r^{d+2\gamma-2}\int_{\partial B_1}\partial_r(\varphi_r^2)\ds\d r=\gamma\int_{\partial B_1}w^2\ds-\gamma(d+2\gamma-2)\int_0^1r^{d+2\gamma-3}\int_{\partial B_1}\varphi_r^2\ds\d r.
	\end{equation*}
	Combining these two identities with the definition of $\widetilde{W}_\gamma$ and rearranging the terms conclude the proof.
\end{proof}

We now compute the gain in term of Weiss energy when comparing a homogeneous function with its harmonic extension.
\begin{lemma}[Energy gain of harmonic extensions]\label{lemma:gain_harmonic}
	Let $\gamma\geq 0$, $f\in H^1(\partial B_1)$ and let
	\begin{equation*}
		a_j:=\int_{\partial B_1}f\phi_j\ds,\quad j\geq 0.
	\end{equation*}
	If $H$ and $Z_\gamma$ are as in \Cref{def:harm_ext} and \Cref{def:homo_ext}, we have that
	\begin{equation}\label{eq:epi_harm_th1}
		\widetilde{W}_\gamma(\mathscr{H}(f))-(1-\epsilon)\widetilde{W}_\gamma (Z_\gamma(f))=\sum_{j=0}^\infty a_j^2\frac{j-\gamma}{(d+2\gamma-2)(d+\gamma+j-2)}\left(\epsilon-\frac{j-\gamma}{d+\gamma+j-2}\right)
	\end{equation}
	for any $\epsilon>0$. In particular, if
	\begin{equation}\label{eq:epi_harm_hp1}
		\epsilon_1=\epsilon_1(d,\gamma):=\frac{\lfloor\gamma+1\rfloor-\gamma}{d+2\gamma-1}
	\end{equation}
	then
	\begin{equation}\label{eq:epi_harm_th2}
		\widetilde{W}_\gamma(\mathscr{H}(f))\leq (1-\epsilon_1) \widetilde{W}_\gamma(Z_\gamma(f)).
	\end{equation}
\end{lemma}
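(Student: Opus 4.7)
This is a spectral calculation. Expand $f=\sum_j a_j\phi_j$ in the orthonormal basis of Laplace--Beltrami eigenfunctions on $\partial B_1$ with eigenvalues $j(j+d-2)$, so that both $Z_\gamma(f)(r,\theta)=\sum_j a_j\,r^\gamma\phi_j(\theta)$ and $\mathscr H(f)(r,\theta)=\sum_j a_j\,r^j\phi_j(\theta)$ diagonalise, and the two Weiss energies can be compared mode by mode.

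\textbf{The two energies.} Using the polar identity $|\nabla(r^\alpha\phi_j)|^2=r^{2\alpha-2}\bigl(\alpha^2\phi_j^2+|\nabla_{\partial B_1}\phi_j|^2\bigr)$, the eigenvalue relation $\int_{\partial B_1}|\nabla_{\partial B_1}\phi_j|^2=j(j+d-2)$, and orthonormality of the $\phi_j$, I would integrate in $r\in(0,1)$ and subtract $\gamma\int_{\partial B_1}(\,\cdot\,)^2$ to obtain
\[
\widetilde W_\gamma(Z_\gamma(f))=\sum_j a_j^2\,\frac{(j-\gamma)(j+\gamma+d-2)}{d+2\gamma-2},\qquad \widetilde W_\gamma(\mathscr H(f))=\sum_j a_j^2\,(j-\gamma),
\]
the second identity following from $\int_{B_1}|\nabla(r^j\phi_j)|^2=\tfrac{j^2+j(j+d-2)}{d+2j-2}=j$ and the $L^2(B_1)$-orthogonality of solid harmonics of distinct degrees. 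Subtracting $(1-\epsilon)$ times the first identity from the second and factoring $(j-\gamma)/(d+2\gamma-2)$ out of the bracket $(d+2\gamma-2)-(1-\epsilon)(j+\gamma+d-2)=\epsilon(j+\gamma+d-2)-(j-\gamma)$ then yields the identity \eqref{eq:epi_harm_th1}.

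\textbf{Inequality and main subtlety.} For \eqref{eq:epi_harm_th2} I would substitute $\epsilon=\epsilon_1$ and check the sign of each summand, rewritten as $a_j^2\,\tfrac{(j-\gamma)(j+\gamma+d-2)}{d+2\gamma-2}\bigl[\epsilon_1-\tfrac{j-\gamma}{j+\gamma+d-2}\bigr]$. For $j\le\gamma$ the first factor is $\le 0$ and the bracket is $>0$, so the summand is $\le 0$. For $j>\gamma$ the function $t\mapsto(t-\gamma)/(t+\gamma+d-2)$ is strictly increasing, so it suffices to verify $\epsilon_1\le(j-\gamma)/(j+\gamma+d-2)$ at $j=\lfloor\gamma+1\rfloor$, the smallest integer strictly above $\gamma$; and this reduces to the elementary inequality $\lfloor\gamma+1\rfloor+\gamma+d-2\le d+2\gamma-1$, i.e. $\lfloor\gamma+1\rfloor\le\gamma+1$, which is immediate. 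The whole argument is bookkeeping with Fourier coefficients; the one point that requires real care is pinning down the critical mode $j=\lfloor\gamma+1\rfloor$ and matching it to the denominator $d+2\gamma-1$ appearing in $\epsilon_1$.
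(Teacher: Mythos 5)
Your proof is correct and follows essentially the same route as the paper: a mode-by-mode spectral computation of the two Weiss energies (the paper packages the polar-coordinate integration into a Slicing Lemma, but the calculation is identical), followed by a sign check of each summand with the critical mode $j=\lfloor\gamma+1\rfloor$. One remark: your combined identity carries the factor $(j+\gamma+d-2)$ in the numerator, which is what the algebra actually produces, whereas the displayed formula \eqref{eq:epi_harm_th1} in the statement has it in the denominator — this appears to be a typo in the paper, and since that factor is positive it does not affect the sign analysis or the conclusion \eqref{eq:epi_harm_th2}.
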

\begin{proof}
	We apply \Cref{lemma:slicing} with $w=Z_\gamma(f)$ and $w=\mathscr{H}(f)$. In the former case, we have that
	\begin{equation*}
		\varphi_r(\theta)=\sum_{j=0}^\infty a_j\phi_j(\theta).
	\end{equation*}
	Hence, since
	\begin{equation*}
		\int_{\partial B_1}\nabla_{\partial B_1}\phi_j\cdot\nabla_{\partial B_1}\phi_k\ds=j(d+j-2)\delta_{jk}\quad\text{and}\quad \int_{\partial B_1}\phi_j\phi_k\ds=\delta_{jk},
	\end{equation*}
	we have that 
	\begin{equation*}
		\mathcal{F}_\gamma(\varphi_r)=\sum_{j=0}^\infty a_j^2 [j(d+j-2)-\gamma(d+\gamma-2)].
	\end{equation*}
	Therefore,
	\begin{equation}\label{eq:epi_harm_1}
		\widetilde{W}_\gamma(Z_\gamma(f))=\sum_{j=0}^\infty a_j^2\frac{j(d+j-2)-\gamma(d+\gamma-2)}{d+2\gamma-2}=\sum_{j=0}^\infty a_j^2\left[\frac{j(d+j-2)+\gamma^2}{d+2\gamma-2}-\gamma\right].
	\end{equation}
	On the other hand, if $w=\mathscr{H}(f)$, i.e. 
	\begin{equation*}
		\varphi_r(\theta)=\sum_{j=0}^\infty a_jr^{j-\gamma}\phi_j(\theta),
	\end{equation*}
	then
	\begin{equation*}
		\mathcal{F}_\gamma(\varphi_r)=\sum_{j=0}^\infty a_j^2 r^{2(j-\gamma)}[j(d+j-2)-\gamma(d+\gamma-2)]
	\end{equation*}
	and
	\begin{equation*}
		\int_{\partial B_1}|\partial_r\varphi_r|^2\ds=\sum_{j=0}^\infty a_j^2r^{2(j-\gamma-1)}(j-\gamma)^2.
	\end{equation*}
	Given these computations, we can apply \Cref{lemma:slicing} and obtain
	\begin{equation}\label{eq:epi_harm_2}
		\widetilde{W}_\gamma(\mathscr{H}(f))=\sum_{j=0}^\infty a_j^2\left[ \frac{j(d+j-2)-\gamma(d+\gamma-2)}{d+2j-2}+\frac{(j-\gamma)^2}{d+2j-2}\right]=\sum_{j=0}^\infty a_j^2(j-\gamma).
	\end{equation}
	Now, combining \eqref{eq:epi_harm_1} and \eqref{eq:epi_harm_2} we obtain \eqref{eq:epi_harm_th1}. Finally, by a direct study of the monotonicity of the function
	\begin{equation*}
		j\mapsto \frac{j-\gamma}{d+\gamma+j-2}
	\end{equation*}
	we derive \eqref{eq:epi_harm_th2} by choosing $\epsilon_1$ as in \eqref{eq:epi_harm_hp1} and this completes the proof.
\end{proof}

When the Rayleigh quotient of a homogeneous function is sufficiently high, we can quantitatively lower its Weiss energy just by truncation, and this what the following result contains.	
\begin{lemma}[Improvement of high modes]\label{lemma:high}
	Let $\gamma\geq 1$,  $f\in H^1(\partial B_1)$ and $\ell>0$ be such that
	\begin{equation}\label{eq:high_hp1}
		(\gamma(d+\gamma-2)+\ell)\int_{\partial B_1}f^2\ds\leq \int_{\partial B_1}|\nabla_{\partial B_1}f|^2\ds
	\end{equation}
	and $\mathcal{F}_\gamma(f)>0$, where $\mathcal{F}_\gamma$ is as in \Cref{lemma:slicing}. Then, there exists $\epsilon_2=\epsilon_2(d,\gamma,\ell)>0$ such that
	\begin{equation}\label{eq:high_th1}
		\widetilde{W}_\gamma(T_{\rho,\gamma+a}(f))\leq (1-\epsilon_2)\widetilde{W}_\gamma(Z_\gamma(f)),
	\end{equation}
	where $T_{\rho,\gamma+a}$ and $Z_\gamma$ are as in \Cref{def:truncation} and \Cref{def:homo_ext}, respectively,  and $\rho,a\in (0,1/2)$ depend only on $d$, $\gamma$ and $\ell$. Moreover, $\epsilon_2$ depends continuously on $\gamma$.
\end{lemma}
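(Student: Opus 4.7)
The plan is to apply the Slicing Lemma to reduce the epiperimetric estimate to a one-dimensional comparison in the radial variable. Since
\[ T_{\rho,\gamma+a}(f)(r,\theta)=\psi(r)f(\theta),\qquad \psi(r):=r^{-\gamma}\frac{(r-\rho)_+^{\gamma+a}}{(1-\rho)^{\gamma+a}}, \]
the Slicing Lemma yields, with $\mathcal F_\gamma$ as defined there,
\[ \widetilde W_\gamma(T_{\rho,\gamma+a}(f))=\mathcal F_\gamma(f)\,I_1(\rho,a)+\|f\|_{L^2(\partial B_1)}^2\,I_2(\rho,a), \]
where $I_1(\rho,a):=\int_\rho^1 r^{d+2\gamma-3}\psi(r)^2\,dr$ and $I_2(\rho,a):=\int_\rho^1 r^{d+2\gamma-1}\psi'(r)^2\,dr$. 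Applied to $Z_\gamma(f)$ (for which the radial profile is identically $1$) the same lemma gives $\widetilde W_\gamma(Z_\gamma(f))=\mathcal F_\gamma(f)/(d+2\gamma-2)$, which is strictly positive by the assumption $\mathcal F_\gamma(f)>0$.

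Using the quantitative spectral gap $\mathcal F_\gamma(f)\ge \ell\|f\|_{L^2(\partial B_1)}^2$, which is just a reformulation of the hypothesis, I would obtain
\[ \frac{\widetilde W_\gamma(T_{\rho,\gamma+a}(f))}{\widetilde W_\gamma(Z_\gamma(f))}\le(d+2\gamma-2)\bigl[I_1(\rho,a)+\ell^{-1}I_2(\rho,a)\bigr]. \]
Computing the limits explicitly gives
\[ \lim_{\rho\to 0^+}I_1(\rho,a)=\frac{1}{d+2\gamma-2+2a},\qquad \lim_{\rho\to 0^+}I_2(\rho,a)=\frac{a^2}{d+2\gamma-2+2a}, \]
so the limiting ratio equals
\[ \frac{d+2\gamma-2}{d+2\gamma-2+2a}\Bigl(1+\frac{a^2}{\ell}\Bigr)=1-\frac{2a}{d+2\gamma-2}+\frac{a^2}{\ell}+O(a^2). \]
For $a=a(d,\gamma,\ell)\in(0,1/2)$ small enough the linear gain dominates the quadratic correction, so the limiting ratio is strictly below $1-2\epsilon_2$ for some explicit $\epsilon_2=\epsilon_2(d,\gamma,\ell)>0$.

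Having fixed such an $a$, I would then use continuity of $I_1(\rho,a)$ and $I_2(\rho,a)$ in $\rho$ to select $\rho=\rho(d,\gamma,\ell)\in(0,1/2)$ small enough so that the actual (non-limiting) ratio stays below $1-\epsilon_2$, which proves the claim. Continuity of $\epsilon_2$ in $\gamma$ follows from the continuous dependence of $I_1$ and $I_2$ on $\gamma$ visible in their integral representation. The main technical point I anticipate is the uniformity in $\rho$ of the estimate for $I_2$: its integrand carries a factor $(r-\rho)^{2(\gamma+a-1)}$, which is well-behaved precisely because $\gamma\ge 1$ and $a>0$ imply $\gamma+a-1\ge a>0$; hence this factor vanishes at $r=\rho$ and dominated convergence justifies passing to the limit $\rho\to 0^+$.
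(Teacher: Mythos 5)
Your proof is correct and follows essentially the same route as the paper's: both apply the Slicing Lemma to the truncated competitor $T_{\rho,\gamma+a}(f)$, use $\mathcal F_\gamma(f)=(d+2\gamma-2)\widetilde W_\gamma(Z_\gamma(f))>0$ together with the spectral gap $\|f\|_{L^2(\partial B_1)}^2\leq \ell^{-1}\mathcal F_\gamma(f)$ to absorb the radial-derivative term, and then win from the extra factor $r^{2a}$; the only difference is that you obtain the smallness in $\rho$ by a dominated-convergence/continuity argument, whereas the paper fixes $\rho=a^{3/2}$ and tracks explicit constants. For the final assertion that $\epsilon_2$ depends continuously on $\gamma$, you should add the (easy) observation that your dominating functions, and hence the convergence $I_j(\rho,a)\to I_j(0,a)$ as $\rho\to 0^+$, are uniform for $\gamma$ in compact sets, so that $a$, $\rho$ and therefore $\epsilon_2$ can be chosen locally uniformly (indeed continuously) in $\gamma$ --- which is what is actually used when taking $\min_{\gamma\in[1,2]}\epsilon_2$ later on.
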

\begin{proof}
	We want to apply \Cref{lemma:high} with 
	\[
	w(r,\theta)=T_{\rho,\gamma+a}(f)(r,\theta)=\displaystyle\frac{|(r-\rho)^+|^{\gamma+a}}{(1-\rho)^{\gamma+a}} f(\theta)\quad\text{and}\quad \varphi_r(\theta)=\displaystyle\frac{|(r-\rho)^+|^{\gamma+a}}{(1-\rho)^{\gamma+a}}  \frac{f(\theta)}{r^\gamma}.
	\]
	On one hand, thanks to the fact that
	\begin{equation}\label{eq:high_1}
		\mathcal{F}_\gamma(f)=(d+2\gamma-2)W_\gamma(Z_\gamma(f)),
	\end{equation}
	and being $\gamma+a\geq 0$, we have that
	\begin{multline}\label{eq:high_2}
		\mathcal{F}_\gamma(\varphi_r)=\displaystyle\frac{|(r-\rho)^+|^{2(\gamma+a)}}{(1-\rho)^{2(\gamma+a)}} \frac{\mathcal{F}_\gamma(f)}{r^{2\gamma}} \\
		=(d+2\gamma-2)\displaystyle\frac{|(r-\rho)^+|^{2(\gamma+a)}}{(1-\rho)^{2(\gamma+a)}} \frac{\widetilde{W}_\gamma(Z_\gamma(f))}{r^{2\gamma}}\leq \displaystyle\frac{(d+2\gamma-2)r^{2a}}{(1-\rho)^{2(\gamma+a)}} \widetilde{W}_\gamma(Z_\gamma(f)).
	\end{multline}
	On the other hand, for $r>\rho$ one can easily compute
	\begin{equation*}
		\partial_r\varphi_r(\theta)=\frac{(r-\rho)^{\gamma+a-1}}{r^\gamma(1-\rho)^{\gamma+a}}\left( a+\frac{\gamma\rho}{r}\right).
	\end{equation*}
	Combining this fact with \eqref{eq:high_hp1} and \eqref{eq:high_1} (and using that $\gamma+a\geq 1$), we obtain that
	\begin{align}
		\int_{\partial B_1}|\partial_r\varphi_r|^2\ds&=\displaystyle\frac{(r-\rho)^{2(\gamma+a-1)}}{r^{2\gamma}(1-\rho)^{2(\gamma+a)}}\left(a+\frac{\gamma\rho}{r}\right)^2\int_{\partial B_1}f^2\ds\notag\\
		&\leq  \displaystyle\frac{(r-\rho)^{2(\gamma+a-1)}}{r^{2\gamma}(1-\rho)^{2(\gamma+a)}}\left(a+\frac{\gamma\rho}{r}\right)^2\frac{\mathcal{F}_\gamma(f)}{\ell} \notag \\
		&\leq \frac{2r^{2(a-1)}}{(1-\rho)^{2(\gamma+a)}}\left(a^2+\frac{\gamma^2\rho^2}{r^2}\right)\frac{(d+2\gamma-2)\widetilde{W}_\gamma(Z_\gamma(f))}{ \ell} . \label{eq:high_3}
	\end{align}
	Now, since $\rho,a\in(0,1/2)$, we have that
	\[
	\frac{1}{(1-\rho)^{2(\gamma+a)}}\leq 1+2(2^{2(\gamma+a)}-1)\rho
	\]
	from which we derive that
	\begin{equation}\label{eq:high_4}
		\frac{1}{(1-\rho)^{2(\gamma+a)}}\leq 1+2^{2\gamma+2}\rho\qquad\text{and}\qquad \frac{1}{(1-\rho)^{2(\gamma+a)}}\leq 2^{2\gamma+1}.
	\end{equation}
	Now, plugging the estimates from \eqref{eq:high_4} into \eqref{eq:high_2} and \eqref{eq:high_3}, respectively, and applying \Cref{lemma:slicing}, we obtain that
	\begin{align*}
		\widetilde{W}_\gamma(T_{\rho,\gamma+a}(f))&=\int_0^1r^{d+2\gamma-3}\mathcal{F}_\gamma(\varphi_r)\d r+\int_0^1r^{d+2\gamma-1}\int_{\partial B_1}|\partial_r\varphi_r|^2\ds\d r \\
		&\begin{aligned}
			\leq (d+2\gamma-2)\widetilde{W}_\gamma(Z_\gamma(f))\Bigg[& (1+2^{2\gamma+2}\rho)\int_0^1r^{d+2\gamma+2a-3}\d r \\
			&+\frac{2^{2\gamma+2}}{\ell }\int_0^2r^{d+2\gamma +2a-3}\left(a^2+\frac{\gamma^2\rho^2}{r^2}\right)\d r  \Bigg]
		\end{aligned} \\
		&\begin{aligned}
			=(d+2\gamma-2)\widetilde{W}_\gamma(Z_\gamma(f))\Bigg[ & \frac{1+2^{2\gamma+2}\rho}{d+2\gamma+2a-2}+\frac{2^{2\gamma+2}}{\ell}\Bigg(\frac{a^2}{d+2\gamma+2a-2}  \\
			&+\frac{\gamma^2\rho^2}{d+2\gamma+2a-4}\Bigg)  \Bigg].    
		\end{aligned}
	\end{align*}
	At this point, for the first term we use that
	\begin{equation*}
		\frac{(d+2\gamma-2)(1+2^{2\gamma+2}\rho)}{d+2\gamma+2a-2}\leq 1-\frac{2a}{d+2\gamma-1}+2^{2\gamma+2}\rho,
	\end{equation*}
	while for the second
	\begin{equation*}
		\frac{a^2}{d+2\gamma+2a-2}+\frac{\gamma^2\rho^2}{d+2\gamma+2a-4}\leq \frac{1}{2}\left(a^2+\frac{\gamma^2\rho^2}{a}\right).
	\end{equation*}
	Therefore
	\begin{equation*}
		\widetilde{W}_\gamma(T_{\rho,\gamma+a}(f))\leq \widetilde{W}_\gamma(Z_\gamma(f))\left[ 1-\frac{2a}{d+2\gamma-1}+2^{2\gamma+2}\rho+\frac{(d+2\gamma-2)2^{2\gamma+1}}{\ell}\left(a^2+\frac{\gamma^2\rho^2}{a}\right) \right].
	\end{equation*}
	Hence, if we let $\rho=a^{3/2}$ and use that $a^2\leq a^{3/2}$, we derive that
	\begin{equation*}
		\widetilde{W}_\gamma(T_{\rho,\gamma+a}(f))\leq \widetilde{W}_\gamma(Z_\gamma(f))\left[ 1-\frac{2a}{d+2\gamma-1}+2^{2\gamma+1}a^{3/2}\left(2+\frac{(d+2\gamma-2)(1+\gamma^2)}{\ell}\right) \right].
	\end{equation*}
	Finally, we choose $a\leq 1/2$ in such a way that
	\[
	2^{2\gamma+1}a^{3/2}\left(2+\frac{(d+2\gamma-2)(1+\gamma^2)}{\ell}\right)\leq \frac{a}{d+2\gamma-1}
	\]
	and we obtain \eqref{eq:high_th1} with
	\[
	\epsilon_2=\frac{a}{d+2\gamma-1},
	\]
	thus concluding the proof.
\end{proof}

The following results compare the energy gain of a scaled function with respect to the original one.
\begin{lemma}[Scaling]\label{lemma:scaling}
	Let $\gamma\geq 0$, $\rho\in(0,1)$, $f\in H^1(\partial B_1)$ and $w\in H^1(B_1)$ such that $w=f$ on $\partial B_1$. If $R_{\gamma,\rho}$ is as in \Cref{def:scaled} and $Z_\gamma$ is as in \Cref{def:homo_ext}, then
	\begin{equation*}
		\widetilde{W}_\gamma(R_{\gamma,\rho}(w))-\widetilde{W}_\gamma(Z_\gamma(f))=\rho^{d+2\gamma-2}\left(\widetilde{W}_\gamma(w)-\widetilde{W}_\gamma(Z_\gamma(f))\right).
	\end{equation*}
\end{lemma}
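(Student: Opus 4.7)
The plan is to exploit the explicit piecewise definition of $R_{\gamma,\rho}(w)$ and split the Weiss functional along the decomposition $B_1=B_\rho\cup(B_1\setminus B_\rho)$, treating the boundary and volume contributions separately.

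The boundary term is easy: since $R_{\gamma,\rho}(w)$, $Z_\gamma(f)$ and $w$ all coincide with $f$ on $\partial B_1$ (the first two by construction, the third by hypothesis), the boundary integrals appearing in $\widetilde{W}_\gamma$ are identical for all three functions, so they contribute equally to both sides of the claimed identity and cancel out. Therefore the whole argument reduces to comparing the Dirichlet energies $\int_{B_1}|\nabla\cdot|^2\,dx$.

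For the volume term, on the annulus $B_1\setminus B_\rho$ I would use the defining property $R_{\gamma,\rho}(w)\equiv Z_\gamma(f)$ to conclude that the gradient integrals agree there and cancel in the difference. The heart of the argument then lies in the two integrals over $B_\rho$. For $Z_\gamma(f)$, the $\gamma$-homogeneity implies that $|\nabla Z_\gamma(f)|^2$ is $(2\gamma-2)$-homogeneous, so polar coordinates yield
\[
\int_{B_\rho}|\nabla Z_\gamma(f)|^2\,dx=\rho^{d+2\gamma-2}\int_{B_1}|\nabla Z_\gamma(f)|^2\,dx.
\]
For $R_{\gamma,\rho}(w)$, I would apply the change of variables $y=x/\rho$ to the inner formula: reading the definition as the scaling $\rho^\gamma w(x/\rho)$ (which is forced by continuity with the outer piece and which is what makes the exponents work out), one obtains $\nabla R_{\gamma,\rho}(w)(x)=\rho^{\gamma-1}\nabla w(x/\rho)$ and hence
\[
\int_{B_\rho}|\nabla R_{\gamma,\rho}(w)|^2\,dx=\rho^{d+2\gamma-2}\int_{B_1}|\nabla w|^2\,dy.
\]

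Subtracting these two identities, the common factor $\rho^{d+2\gamma-2}$ pulls out in front of $\int_{B_1}|\nabla w|^2-\int_{B_1}|\nabla Z_\gamma(f)|^2$, and reinserting the (equal) boundary traces converts this difference back into $\widetilde{W}_\gamma(w)-\widetilde{W}_\gamma(Z_\gamma(f))$, completing the identity. There is essentially no obstacle here: the argument is purely bookkeeping around one elementary change of variables, the $\gamma$-homogeneity of $Z_\gamma(f)$, and the matching traces on $\partial B_1$. The only moment that requires care is interpreting the inner piece of $R_{\gamma,\rho}$ so that its gradient carries the prefactor $\rho^{\gamma-1}$, producing the same dimensional scaling $\rho^{d+2\gamma-2}$ as the homogeneous comparison function and thus the clean pull-out factor on the right-hand side.
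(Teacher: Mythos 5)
Your argument is correct and is essentially the paper's own proof: cancel the (identical) boundary traces, note that $R_{\gamma,\rho}(w)$ and $Z_\gamma(f)$ agree on the annulus $B_1\setminus B_\rho$, and compare the two Dirichlet integrals over $B_\rho$ via the $(2\gamma-2)$-homogeneity of $|\nabla Z_\gamma(f)|^2$ and the change of variables $y=x/\rho$, which produces the common factor $\rho^{d+2\gamma-2}$. Your reading of the inner piece as $\rho^\gamma w(x/\rho)$ is indeed the one under which the stated identity (and the paper's own computation) holds, though be aware that continuity across $\partial B_\rho$ alone does not force this reading --- the literal $|x|^\gamma w(x/\rho)$ is also continuous there, it just yields $\rho^{d+2\gamma-2}\int_{B_1}|\nabla(|y|^\gamma w(y))|^2\,\mathrm{d}y$ instead of $\rho^{d+2\gamma-2}\int_{B_1}|\nabla w|^2\,\mathrm{d}y$.
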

\begin{proof}
	The proof follows from a simple change of variables. Indeed, one can easily see that
	\begin{align*}
		\int_{B_1}|\nabla R_{\gamma,\rho}(w)|^2\dx&=\int_{B_1}|\nabla Z_\gamma(f)|^2\dx-\int_{B_\rho}|\nabla Z_\gamma(f)|^2\dx+\int_{B_\rho}|\nabla(|x|^\gamma w(x/\rho)|^2\dx \notag \\
		&=\int_{B_1}|\nabla Z_\gamma(f)|^2\dx-\rho^{d+2\gamma-2}\int_{B_1}|\nabla Z_\gamma(f)|^2\dx+\rho^{d+2\gamma-2}\int_{B_1}|\nabla w|^2\dx,
	\end{align*}
	which directly imply the thesis.
\end{proof}

We now have all the ingredients needed to prove the epiperimetric inequality for segregated functions at points of low frequency. 		We start by proving the epiperimetric inequality on the half-ball with homogeneous Dirichlet boundary conditions on the lower part of the boundary and for Weiss energy with homogeneity between $1$ and $2$. 

\begin{theorem}[Epiperimetric inequality at boundary points]\label{thm:epi2}
	Let $\gamma\in[1,2]$. There exists $\epsilon_{\textup{bd}}=\epsilon_{\textup{bd}}(d)>0$ such that, for all $\gamma$-homogeneous $z\in H^1_{s,N}(B_1^+)$ such that $z=0$ on $B_1'$ there exists $w\in H^1_{s,N}(B_1^+)$ satisfying
	\begin{equation*}
		w=z~\text{on }\partial B_1^+\quad\text{and}\quad \widetilde{W}_\gamma(w)\leq (1-\epsilon_{\textup{bd}})\widetilde{W}_\gamma(z).
	\end{equation*}
\end{theorem}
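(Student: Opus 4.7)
The proof proceeds by explicit construction of a competitor, using the auxiliary operators $Z_\gamma$, $T_{\rho,\tau}$, $R_{\gamma,\rho}$, and $\mathscr{H}$ introduced earlier in the section. First I would reduce to the nontrivial case $\widetilde{W}_\gamma(z) > 0$: if $\widetilde{W}_\gamma(z) \le 0$, the choice $w = z$ trivially yields $\widetilde{W}_\gamma(w) \le (1-\epsilon_{\textup{bd}})\widetilde{W}_\gamma(z)$ for any $\epsilon_{\textup{bd}} \in (0,1)$. Next, writing $f_i := z_i|_{S_1^+} \in H^1_0(S_1^+)$ non-negative with pairwise disjoint supports $\omega_i \subset S_1^+$, we have $z_i = Z_\gamma(f_i)$, and by the slicing formula of \Cref{lemma:slicing},
\[
\widetilde{W}_\gamma(z) \;=\; \frac{1}{d+2\gamma-2}\sum_{i=1}^N \mathcal{F}_\gamma(f_i).
\]

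The natural segregation-preserving competitor is the componentwise truncation $w_i := T_{\rho,\gamma+a}(f_i)$ for some $\rho, a \in (0,1/2)$ to be chosen depending only on $d$. By construction $w \in H^1_{s,N}(B_1^+)$, $w = z$ on $S_1^+$, and $w$ vanishes on $B_1'$ (since each $f_i$ vanishes on $\partial S_1^+$) and on $B_\rho$. The core of the argument is then a dichotomy applied componentwise:
\begin{itemize}
\item \emph{Case A:} $\mathcal{F}_\gamma(f_i) \ge \ell \int_{S_1^+}f_i^2$ for some universal $\ell = \ell(d) > 0$. Here \Cref{lemma:high} directly yields $\widetilde{W}_\gamma(w_i) \le (1-\epsilon_2)\widetilde{W}_\gamma(Z_\gamma(f_i))$, with $\epsilon_2 = \epsilon_2(d,\gamma,\ell)$ depending continuously on $\gamma \in [1,2]$.
\item \emph{Case B:} $\mathcal{F}_\gamma(f_i) < \ell \int_{S_1^+}f_i^2$. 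Since $f_i$ is supported in $\omega_i \subsetneq S_1^+$, the Rayleigh quotient of $f_i$ is at least $\lambda_1(\omega_i)$, and the disjointness of the $\omega_i$ combined with a Friedland--Hayman-type lower bound for first Dirichlet eigenvalues of disjoint spherical sets forces $\lambda_1(\omega_i) \ge (d-1) + \ell_0(d)$ whenever at least two components are nonzero. Choosing $\ell < \ell_0$, Case B can occur for at most one index $i_0$, and for that single index $f_{i_0} \propto \phi_1$ up to a controlled error, which makes $\widetilde{W}_\gamma(z_{i_0}) \le 0$; consequently, the non-positive contribution from Case B does not spoil the total energy gain coming from Case A, and taking $w_{i_0} = z_{i_0}$ on that component costs nothing.
\end{itemize}

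The main obstacle is uniformity of $\epsilon_{\textup{bd}}$ over the closed interval $\gamma \in [1,2]$: the harmonic-extension gain of \Cref{lemma:gain_harmonic} degenerates as $\gamma$ approaches $2$ from below (since $\epsilon_1 = (2-\gamma)/(d+2\gamma-1) \to 0$), while the truncation gain of \Cref{lemma:high} degenerates when $\mathcal{F}_\gamma(f)$ becomes small. Handling the resonant endpoint $\gamma=2$, where the eigenvalue $2d$ of $\phi_2,\dots,\phi_d$ coincides with $\gamma(d+\gamma-2)$, requires combining the truncation construction on the angular modes with eigenvalue $\ge 2d+c(d)$ with a separate treatment of the projection onto $\mathrm{span}\{\phi_2,\dots,\phi_d\}$; on the latter, the $2$-homogeneous extension is already harmonic and $\mathcal{F}_2$ vanishes, so its contribution to $\widetilde{W}_2(z)$ is zero and again requires no modification. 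Putting the two regimes together and optimizing over $\rho = a^{3/2}$ and $a$ as in \Cref{lemma:high} yields a single $\epsilon_{\textup{bd}}(d) > 0$ that works uniformly for all $\gamma \in [1,2]$.
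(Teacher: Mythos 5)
Your high-level scheme matches the paper's in several respects: the slicing and truncation lemmas are used correctly for high-frequency components, and a dichotomy based on the angular Rayleigh quotient is the right structure. But the treatment of the low-frequency Case B has a genuine gap, and it fails at two concrete points. First, the claim that Case B can occur for at most one index is false for $\gamma$ near $2$: at $\gamma=2$ the threshold $\gamma(d+\gamma-2)=2d$ equals the first Dirichlet eigenvalue of a half of $S_1^+$, so a $\gamma$-homogeneous $z$ with exactly two nonzero components supported on the half-sectors $S_1^+\cap\{x_{d-1}>0\}$ and $S_1^+\cap\{x_{d-1}<0\}$ puts \emph{both} indices into Case B. The correct uniform count (established by a three-to-two reduction exactly as in \Cref{lemma:frequencies}) is at most \emph{two}, and it is sharp. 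Second, the claim that $\widetilde W_\gamma(z_{i_0})\le 0$ on the remaining low-frequency component is wrong: for $\gamma=1$ one has $\mathcal F_1(f_{i_0})=\int_{S_1^+}|\nabla_{\partial B_1}f_{i_0}|^2\ds-(d-1)\int_{S_1^+}f_{i_0}^2\ds\ge 0$ for every $f_{i_0}\in H^1_0(S_1^+)$, with strict inequality unless $f_{i_0}$ is a scalar multiple of $\phi_1$. So taking $w_{i_0}=z_{i_0}$ gives no gain there, and for $N=1$ with $f_1=\phi_1+\epsilon'\phi_k$ ($\epsilon'$ small), which falls into your Case B yet has $\widetilde W_1(z)>0$, your construction yields no strict decrease at all.

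The missing device is the harmonic extension, which handles the (up to two) low-frequency components \emph{jointly} while preserving segregation. The paper relabels so that $f_3,\dots,f_N$ are definitively high-frequency, truncates those, and treats $f_1,f_2$ together by taking the scalar $f_1-f_2$, harmonically extending it to $\mathscr H(f_1-f_2)$ in $B_1$ (by odd reflection, this vanishes on $B_1'$), and setting $w_1:=R_{\gamma,\rho}(\mathscr H(f_1-f_2)_+)$, $w_2:=R_{\gamma,\rho}(\mathscr H(f_1-f_2)_-)$. The positive and negative parts have disjoint supports, so $(w_1,\dots,w_N)\in H^1_{s,N}(B_1^+)$, and the gain of \Cref{lemma:gain_harmonic} applied to the scalar $f_1-f_2$ transfers because $\widetilde W_\gamma(\mathscr H(f_1-f_2)_+)+\widetilde W_\gamma(\mathscr H(f_1-f_2)_-)=\widetilde W_\gamma(\mathscr H(f_1-f_2))$. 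Your proposed alternative for the $\gamma=2$ resonance (truncating high modes while leaving the $\mathrm{span}\{\phi_2,\dots,\phi_d\}$ projection of each $f_i$ fixed) does not produce an admissible competitor: that projection changes sign, so the resulting $w_i$ would violate the nonnegativity and disjoint-support constraints. A purely componentwise competitor cannot resolve the $\gamma=2$ endpoint; the two candidate low-frequency components must be coupled through a single scalar harmonic extension.
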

\begin{proof}
	Let $q_d$ be as in \eqref{eq:N_4},
	\begin{equation*}
		\ell_0:=\frac{1}{2}(1-q_d^2)(d+3)
	\end{equation*}
	and let
	\begin{equation*}
		\epsilon_{\textup{bd}}:=\min_{\gamma\in[1,2]}\min\{\epsilon_1(d,\gamma)\rho^d,\epsilon_2(d,\gamma,\ell_0)\},
	\end{equation*}
	where $\epsilon_1$  and $\rho$ are as in \Cref{lemma:gain_harmonic} and $\epsilon_2$ as in \Cref{lemma:high}. We claim that there exists at most two functions $f_1,f_2\in H^1(S_1^+)$ such that 
	\begin{equation*}
		f_1f_2\equiv 0\quad\text{and}\quad f_1=f_2=0~\text{on }\partial S_1^+
	\end{equation*}
	satisfying 
	\begin{equation*}
		(\gamma(d+\gamma-2)+\ell_0)\int_{S_1^+}f_i^2\ds\geq \int_{S_1^+}|\nabla_{\partial B_1}f_i|^2\ds\quad i=1,2.
	\end{equation*}
	Indeed, let us assume by contradiction that there exists $f_1,f_2,f_3\in H^1_0(S_1^+)$ with disjoint support such that
	\begin{equation*}
		(\gamma(d+\gamma-2)+\ell_0)\int_{S_1^+}f_i^2\ds\geq  \int_{S_1^+}|\nabla_{\partial B_1}f_i|^2\ds\quad i=1,2,3.
	\end{equation*}
	Since $\gamma\leq 2$, in particular we have
	\begin{equation}\label{eq:epi2_1}
		(2d+\ell_0)\int_{S_1^+}f_i^2\ds\geq  \int_{S_1^+}|\nabla_{\partial B_1}f_i|^2\ds\quad i=1,2,3.
	\end{equation}
	We proceed analogously to the proof of $(iii)$ in \Cref{lemma:frequencies}. It is easy to check that there exists $i,j\in\{1,2,3\}$, $i\neq j$ such that
	\begin{equation*}
		\mathcal{H}^{d-1}(\{f_i>0\})+\mathcal{H}^{d-1}(\{f_j>0\})\leq\frac{2}{3}\mathcal{H}^{d-1}(S_1^+).
	\end{equation*}
	Without loss of generality we assume $i=1$ and $j=2$. We define
	\begin{equation*}
		\title{c}_1:=\frac{f_1}{\norm{f_1}_{L^2(S_1^+)}},\quad c_2:=\frac{f_2}{\norm{f_2}_{L^2(S_1^+)}}
	\end{equation*}
	and
	\begin{equation*}
		\tilde{c}:=\frac{c_1-tc_2}{\norm{c_1-tc_2}_{L^2(S_1^+)}},\quad\text{where}\quad t:=\frac{\displaystyle\int_{S_1^+}c_1\phi_1\ds}{\displaystyle\int_{S_1^+}c_2\phi_1\ds}.
	\end{equation*}
	Reasoning as in the proof of $(iii)$ in \Cref{lemma:frequencies}, one can prove that
	\begin{equation}\label{eq:epi2_2}
		\int_{S_1^+}|\nabla_{\partial B_1} \tilde{c}|^2\ds\geq 2d+2\ell_0.
	\end{equation}
	On the other hand, since
	\begin{equation*}
		\int_{S_1^+}|\nabla_{\partial B_1} \tilde{c}|^2\ds=\frac{1}{1+t^2}\int_{S_1^+}|\nabla_{\partial B_1}{c}_1|^2\ds+\frac{t^2}{1+t^2}\int_{S_1^+}|\nabla_{\partial B_1}{c}_2|^2\ds,
	\end{equation*} 
	combining \eqref{eq:epi2_1} and \eqref{eq:epi2_2} we find a contradiction.  Then, the claim is proved. Hence, given a $2$-homogeneous function $z\in H^1_{s,N}(B_1^+)$ such that $z=0$ on $B_1'$ and letting $f=z\restr{S_1^+}$, without loss of generality, we can assume that
	\begin{equation}
		(2d+\ell_0)\int_{S_1^+}f_i^2\ds\leq \int_{S_1^+}|\nabla_{\partial B_1}f_i|^2\ds\quad\text{for all }i=3,\dots,N.
	\end{equation}
	We now define the competitor $w\in H^1_{s,N}(B_1^+)$. If $\mathcal{F}_\gamma(f)\leq 0$ we just define let $w=z$, hence let us hereafter assume $\mathcal{F}_\gamma(f)>0$. For $i=3,\dots,N$, we let
	\begin{equation*}
		w_i:=T_{\rho,2+a}(f_i),
	\end{equation*}
	where $\rho$, $a$ are as in \Cref{lemma:high} (with $\gamma=2$) while $T_{\rho,2+a}$ is as in \Cref{def:truncation}. Moreover, we let 
	\begin{equation*}
		w_1=R_{2,\rho}(\mathscr{H}(f_1-f_2)_+)\quad\text{and}\quad w_2=R_{2,\rho}(\mathscr{H}(f_1-f_2)_-),
	\end{equation*}
	where $H$ is as in \Cref{def:harm_ext} and $R_{2,\rho}$ as in \Cref{def:scaled}. One can easily see that $w\in H^1_{s,N}(B_1^+)$ and $w=z$ on $\partial B_1^+$. Now, in view of \Cref{lemma:scaling}, \Cref{lemma:gain_harmonic} and \Cref{lemma:high} we have that
	\begin{align*}
		\widetilde{W}_\gamma(w)-\widetilde{W}_\gamma(z)&=\rho^d(\widetilde{W}_\gamma(\mathscr{H}(f_1-f_2)_+)+\widetilde{W}_\gamma(\mathscr{H}(f_1-f_2)_-)-(\widetilde{W}_\gamma(z_1)+\widetilde{W}_\gamma(z_2)) \\
		& \quad +\sum_{i=3}^N(\widetilde{W}_\gamma(T_{\rho,2+a}(f_i))-\widetilde{W}_\gamma(z_i))\\
		&=\rho^d(\widetilde{W}_\gamma(\mathscr{H}(f_1-f_2))-\widetilde{W}_\gamma(z_1-z_2))+\sum_{i=3}^N(\widetilde{W}_\gamma(T_{\rho,2+a}(f_i))-\widetilde{W}_\gamma(z_i)) \\
		&\leq -\epsilon_1\rho^d \widetilde{W}_\gamma(z_1-z_2)-\epsilon_2\sum_{i=3}^{N}\widetilde{W}_\gamma(z_i)\leq -\epsilon_{\textup{bd}}\widetilde{W}_\gamma(z).
	\end{align*}
	The proof is thereby complete.   
\end{proof}

We finally prove the epiperimetric inequality at interior points of frequency $1$.
\begin{theorem}[Epiperimetric inequality at interior points]\label{thm:epi1}
	There exists $\epsilon_{\textup{int}}=\epsilon_{\textup{int}}(d)>0$ such that for all $1$-homogeneous $z\in H^1_{s,N}(B_1)$ there exists $w\in H^1_{s,N}(B_1)$ satisfying
	\begin{equation*}
		w=z~\text{on }\partial B_1\quad\text{and}\quad \widetilde{W}_1(w)\leq (1-\epsilon_{\textup{int}})\widetilde{W}_1(z).
	\end{equation*}
\end{theorem}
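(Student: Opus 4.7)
The plan is to follow the strategy of \Cref{thm:epi2}, now carried out in the full ball $B_1$ (rather than in the half-ball $B_1^+$) and at homogeneity $\gamma=1$. The operators $\mathscr{H}$, $Z_1$, $T_{\rho,1+a}$, and $R_{1,\rho}$ from \Cref{def:harm_ext}--\Cref{def:scaled}, together with \Cref{lemma:gain_harmonic}, \Cref{lemma:high}, and \Cref{lemma:scaling} (all formulated for general $\gamma$), furnish the explicit construction.

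The first step is to establish the interior analogue of the classification used in the proof of \Cref{thm:epi2}: there exists $\ell_0 = \ell_0(d) > 0$ such that at most two of the traces $f_i := z_i\restr{\partial B_1}$ satisfy the low-frequency condition
\[
\int_{\partial B_1}|\nabla_{\partial B_1} f_i|^2 \, dS \leq \bigl((d-1) + \ell_0\bigr) \int_{\partial B_1} f_i^2 \, dS.
\]
I would argue by contradiction: if three such components existed, then by a counting argument two of them (say $f_1, f_2$) would have disjoint positivity sets $E_1, E_2 \subset \partial B_1$ with $\mathcal{H}^{d-1}(E_1) + \mathcal{H}^{d-1}(E_2) \leq \tfrac{2}{3} \mathcal{H}^{d-1}(\partial B_1)$, so each $E_i$ would be strictly smaller than a hemisphere. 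The classical Friedland--Hayman inequality (with rigidity case given by two complementary hemispheres), combined with a compactness-stability argument, rules out the existence of three pairwise disjoint open subsets of $\partial B_1$ whose characteristic exponents are simultaneously close to $1$, producing the quantitative gap $\ell_0$.

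Assuming $\widetilde{W}_1(z) > 0$ (otherwise $w := z$ trivially works) and relabeling so that $f_1, f_2$ are the low-mode components, I would set
\[
w_1 := R_{1,\rho}\bigl(\mathscr{H}(f_1 - f_2)_+\bigr), \quad w_2 := R_{1,\rho}\bigl(\mathscr{H}(f_1 - f_2)_-\bigr), \quad w_i := T_{\rho,1+a}(f_i) \text{ for } i\geq 3,
\]
with $\rho, a \in (0,1/2)$ chosen as in \Cref{lemma:high} applied to each high-mode component with $\gamma = 1$ and $\ell = \ell_0$; note that $R_{f_i} \geq (d-1) + \ell_0$ for $i\geq 3$ implies $\mathcal{F}_1(f_i) \geq \ell_0 \int f_i^2 > 0$, so \Cref{lemma:high} applies. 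The segregation of $w$ is automatic (positive/negative parts of the smooth harmonic extension, plus disjointness of supports for $i\geq 3$), while $w = z$ on $\partial B_1$ holds by construction. Combining \Cref{lemma:scaling} (with scaling factor $\rho^{d+2\gamma-2} = \rho^d$), \Cref{lemma:gain_harmonic} (which yields $\epsilon_1 = \tfrac{1}{d+1}$ at $\gamma = 1$), and \Cref{lemma:high} (yielding $\epsilon_2 = \epsilon_2(d, 1, \ell_0) > 0$), and exploiting both the segregated additivity $\widetilde{W}_1(Z_1(f_1-f_2)) = \widetilde{W}_1(z_1) + \widetilde{W}_1(z_2)$ and the pointwise identity $|\nabla \mathscr{H}(g)|^2 = |\nabla \mathscr{H}(g)_+|^2 + |\nabla \mathscr{H}(g)_-|^2$ (valid since $\mathscr{H}(g)$ is smooth), one arrives at
\[
\widetilde{W}_1(w) - \widetilde{W}_1(z) \leq -\epsilon_1 \rho^d \bigl(\widetilde{W}_1(z_1) + \widetilde{W}_1(z_2)\bigr) - \epsilon_2 \sum_{i\geq 3} \widetilde{W}_1(z_i),
\]
and defining $\epsilon_{\textup{int}} := \min\{\epsilon_1 \rho^d, \epsilon_2\}$ will conclude the proof.

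The main obstacle I expect lies in the last estimate, namely handling the case $\widetilde{W}_1(z_1) + \widetilde{W}_1(z_2) < 0$. Unlike in \Cref{thm:epi2}, where the Dirichlet condition $f_i\restr{\partial S_1^+} = 0$ keeps the relevant Rayleigh quotient above $d-1$, in the interior setting a non-negative trace $f_i$ on the full sphere can have Rayleigh quotient strictly below $d-1$ (for instance any near-constant non-negative function), so the low-mode Weiss energy may be negative. To convert the inequality above into a genuine $(1-\epsilon_{\textup{int}})$-reduction of $\widetilde{W}_1(z) > 0$, the relation $\epsilon_1 \rho^d \leq \epsilon_2$ must be arranged by taking $\rho = a^{3/2}$ with $a$ sufficiently small in the application of \Cref{lemma:high}; this lets the high-mode truncation gain absorb any potentially negative low-mode contribution and yields the universal constant $\epsilon_{\textup{int}}$.
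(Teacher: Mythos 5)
Your construction of the competitor $w$ (harmonic replacement on the two low-mode components after rescaling, truncation on the high-mode components) is exactly the one in the paper, and the final estimate and the constant $\epsilon_{\textup{int}}=\min\{\epsilon_1\rho^d,\epsilon_2\}$ are the same. The one genuine divergence is in the classification step: the paper proves \emph{\enquote{at most two low-mode traces}} by repeating the Fourier-expansion argument from \Cref{lemma:frequencies} on $\partial B_1$ (subtracting the projection onto the constant mode, estimating the eigenvalue-$1$ coefficient via the measure bound as in \eqref{eq:N_4}), which is completely explicit and produces an explicit $\ell_0$. You instead observe that $f_i\in H^1_0(\{f_i>0\})$, invoke the rigidity in the Friedland--Hayman inequality, and then get the quantitative gap $\ell_0$ by a compactness-and-stability argument. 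That route is valid but less self-contained: you would need to specify in which topology the compactness is taken and why $\lambda_1$ behaves semicontinuously along it, and your remark that \enquote{each $E_i$ would be strictly smaller than a hemisphere} is not quite what the measure bound $\mathcal H^{d-1}(E_1)+\mathcal H^{d-1}(E_2)\le\frac23\mathcal H^{d-1}(\partial B_1)$ gives — it only bounds the sum. These are repairable, but the paper's Fourier argument avoids them and yields $\ell_0$ directly.

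Your discussion of the sign of $\widetilde W_1(z_1)+\widetilde W_1(z_2)$ is a genuine and correct refinement that the paper leaves implicit. On $\partial B_1$ (no boundary condition), the Rayleigh quotient of a nonzero nonnegative $f_i$ can indeed lie strictly below $d-1$, so $\widetilde W_1(z_1-z_2)$ may be negative even when $\widetilde W_1(z)>0$. The paper's final inequality \enquote{$\le -\epsilon_{\textup{int}}\widetilde W_1(z)$} then only follows because $\epsilon_1\rho^d\le\epsilon_2$ (which is automatic with $\rho=a^{3/2}$, $\epsilon_1=\tfrac1{d+1}$, $\epsilon_2=\tfrac{a}{d+1}$, since $a^{3d/2}\le a$ for $a<1$) together with $\sum_{i\ge3}\widetilde W_1(z_i)\ge 0$, so that
\begin{equation*}
-\epsilon_1\rho^d\,\widetilde W_1(z_1-z_2)-\epsilon_2\sum_{i\ge3}\widetilde W_1(z_i)\le -\epsilon_1\rho^d\Bigl(\widetilde W_1(z_1-z_2)+\sum_{i\ge3}\widetilde W_1(z_i)\Bigr)=-\epsilon_1\rho^d\,\widetilde W_1(z).
\end{equation*}
You correctly identified this as the key ordering to check, even if the phrasing \enquote{must be arranged} overstates it slightly — it holds automatically with the parameters already fixed by \Cref{lemma:high}.
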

\begin{proof}
	Let
	\begin{equation*}
		\epsilon_{\textup{int}}:=\min\{\epsilon_1(d,1)\rho^d,\epsilon_2(d,1,\ell_0)\}
	\end{equation*}
	where $\epsilon_1$  and $\rho$ are as in \Cref{lemma:gain_harmonic} and $\epsilon_2$ as in \Cref{lemma:high}, both for $\gamma=1$. First of all, we claim that there exists at most two functions $f_1,f_2\in H^1(\partial B_1)$ with disjoint supports such that
	\begin{equation*}
		(d-1+\ell_0)\int_{\partial B_1}f_i^2\ds\geq \int_{\partial B_1}|\nabla_{\partial B_1}f_i|^2\ds\quad i=1,2,
	\end{equation*}
	for some dimensional $\ell_0>0$. This fact can be proved analogously to \Cref{thm:epi2}.  
	Hence, given a $1$-homogeneous function $z\in H^1_{s,N}(B_1)$ and letting $f=z\restr{\partial B_1}$, without loss of generality, we can assume that
	\begin{equation}
		(d-2+\ell_0)\int_{\partial B_1}f_i^2\ds\leq \int_{\partial B_1}|\nabla_{\partial B_1}f_i|^2\ds\quad\text{for all }i=3,\dots,N.
	\end{equation}
	We now define the competitor $w\in H^1_{s,N}(B_1)$. If $\mathcal{F}_1(f)\leq 0$ we just define let $w=z$, hence let us hereafter assume $\mathcal{F}_1(f)>0$. For $i=3,\dots,N$, we let
	\begin{equation*}
		w_i:=T_{\rho,1+a}(f_i),
	\end{equation*}
	where $\rho$, $a$ are as in \Cref{lemma:high} (with $\gamma=1$) while $T_{\rho,1+a}$ is as in \Cref{def:truncation}. Moreover, we let 
	\begin{equation*}
		w_1=R_{1,\rho}(\mathscr{H}(f_1-f_2)_+)\quad\text{and}\quad w_2=R_{1,\rho}(\mathscr{H}(f_1-f_2)_-),
	\end{equation*}
	where $H$ is as in \Cref{def:harm_ext} and $R_{1,\rho}$ as in \Cref{def:scaled}. One can easily see that $w\in H^1_{s,N}(B_1)$ and $w=z$ on $\partial B_1$. Now, in view of \Cref{lemma:scaling}, \Cref{lemma:gain_harmonic} and \Cref{lemma:high} we have that
	\begin{align*}
		\widetilde{W}_1(w)-\widetilde{W}_1(z)&=\rho^d(\widetilde{W}_1(\mathscr{H}(f_1-f_2)_+)+\widetilde{W}_1(\mathscr{H}(f_1-f_2)_-)-(\widetilde{W}_1(z_1)+\widetilde{W}_1(z_2)) \\
		& \quad +\sum_{i=3}^N(\widetilde{W}_1(T_{\rho,1+a}(f_i))-\widetilde{W}_1(z_i))\\
		&=\rho^d(\widetilde{W}_1(\mathscr{H}(f_1-f_2))-\widetilde{W}_1(z_1-z_2))+\sum_{i=3}^N(\widetilde{W}_1(T_{\rho,1+a}(f_i))-\widetilde{W}_1(z_i)) \\
		&\leq -\epsilon_1\rho^d \widetilde{W}_1(z_1-z_2)-\epsilon_2\sum_{i=3}^{N}\widetilde{W}_1(z_i)\leq -\epsilon_{\textup{int}}\widetilde{W}_1(z).
	\end{align*}
	The proof is thereby complete.   
\end{proof}
\section{Quantitative blow-up analysis}\label{sec:blowup}

In the present section, we perform a blow-up analysis at points of low frequency, that is $\gamma(x_0)=1$ or $\gamma(x_0)=2$, in case of boundary points.
\begin{proposition}[Blow-up rate]\label{prop:blow_up_rate}
	There exists $C_{\textup{rate}}>0$ depending on $d$, $D$ and $N$ such that
	\begin{equation*}
		\sum_{i=1}^N\int_{\partial B_1}|V_i^{r_2,\gamma}-V_i^{r_1,\gamma}|^2\ds\leq C_{\textup{rate}}H(R_0)\int_{r_1}^{r_2}\frac{\sigma_0(t)}{t}\d t
	\end{equation*}
	for all $0\leq r_1\leq r_2\leq R_0$ and all $\gamma\in [1,\min\{\gamma(x_0),2\}]$, where
	\[
	V^{r,\gamma}(x)=\frac{v(rx)}{r^\gamma}.
	\]
	In particular, there exists $V^\gamma\in \mathcal{B}_{\gamma}$ such that
	\begin{align*}
		&\sum_{i=1}^N\int_{\partial B_1}|V^{r,\gamma}_i-V^\gamma_i|^2\dx\leq C_{\textup{rate}}H(R_0)\int_0^r\frac{\sigma_0(r)}{r}\dx, \\
		&\sum_{i=1}^N\int_{B_1}|V^{r,\gamma}_i-V^\gamma_i|^2\dx\leq \frac{C_{\textup{rate}}H(R_0)}{d+2\gamma}\int_0^r\frac{\sigma_0(r)}{r}\dx,
	\end{align*}
	for any $r\leq R_0$.		
\end{proposition}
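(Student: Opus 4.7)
The plan is to couple the Weiss monotonicity of \Cref{prop:W'}, the epiperimetric inequalities of \Cref{thm:epi2,thm:epi1}, and the almost-minimality of \Cref{prop:almost} into a single differential inequality for $W_\gamma(v,\cdot)$, and then to extract from it a weighted integrability bound on $\mathcal{D}_\gamma$. The starting observation is the identity
\[
\partial_r V^{r,\gamma}(x)=\frac{1}{r}\bigl(\nabla V^{r,\gamma}(x)\cdot x-\gamma V^{r,\gamma}(x)\bigr),
\]
from which $\|\partial_r V^{r,\gamma}\|_{L^2(\partial B_1)}^2=\mathcal{D}_\gamma(r)/r^2$; once the weighted quantity $\mathcal{D}_\gamma/r$ is controlled, Minkowski in the Bochner integral and a Cauchy--Schwarz split with weight $\sigma_0$ will produce the stated Cauchy bound.

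First, applying the epiperimetric inequality to the $\gamma$-homogeneous extension $h^{r,\gamma}$ of $f_r:=V^{r,\gamma}\restr{\partial B_1}$ yields a competitor with the same boundary values and $(1-\epsilon)$-contracted Weiss energy. Transplanting this competitor through $\Psi_{x_0}$ and testing against the almost-minimality of \Cref{prop:almost}, together with the $L^\infty$-smallness $|A-I|,|\mu-1|,|p_i-\lambda_i|\leq \kappa\sigma(r)$ from \Cref{lemma:A}, produces
\[
\widetilde W_\gamma(h^{r,\gamma})-\widetilde W_\gamma(V^{r,\gamma})\geq \epsilon\,W_\gamma(v,r)-CH(R_0)\sigma(r).
\]
Inserted into \Cref{prop:W'} this gives
\[
W_\gamma'(v,r)\geq \frac{c}{r}\,W_\gamma(v,r)+\frac{\mathcal{D}_\gamma(r)}{r}-CH(R_0)\,\frac{\sigma(r)}{r}
\]
for a.e.\ $r\in(0,R_0)$, with $c=\epsilon(d+2\gamma-2)(1-\kappa\sigma(R_0))$; the constant $m_d=d\epsilon_{\textup{bd}}/4$ of \Cref{ass:domain} is precisely tuned so that $c\geq m_d$ uniformly in $\gamma\in[1,2]$.

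Dividing this inequality by $\sigma_0(r)$ and integrating on $[r_1,r_2]\subset(0,R_0)$, the identity
\[
\int_{r_1}^{r_2}\frac{W_\gamma'(v,r)}{\sigma_0(r)}\,dr=\frac{W_\gamma(v,r)}{\sigma_0(r)}\bigg|_{r_1}^{r_2}+\int_{r_1}^{r_2}\frac{W_\gamma(v,r)\,\sigma_0'(r)}{\sigma_0^2(r)}\,dr,
\]
combined with $\sigma_0'(r)/\sigma_0(r)\leq m_d/r$ (from $(r^{-m_d}\sigma_0)'\leq 0$) and $c\geq m_d$, allows the $W_\gamma$-interior terms to be absorbed. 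The rough bound $|W_\gamma(v,r)|\leq CH(R_0)\int_0^r\sigma(t)/t\,dt$, obtained by integrating the weaker form $W_\gamma'\geq -CH(R_0)\sigma(r)/r$ and using $W_\gamma(v,0^+)=0$ (guaranteed by $\gamma\leq \gamma(x_0)$), together with the assumption $\int_0^{R_0}\frac{1}{r\sigma_0(r)}\int_0^r\sigma(t)/t\,dt\,dr<\infty$ of \Cref{ass:domain}, absorbs both the boundary contribution $W_\gamma/\sigma_0$ and the error $\int\sigma/(r\sigma_0)$. We obtain
\[
\int_{r_1}^{r_2}\frac{\mathcal{D}_\gamma(r)}{r\,\sigma_0(r)}\,dr\leq CH(R_0).
\]
A Cauchy--Schwarz split now yields
\[
\bigl\|V^{r_2,\gamma}-V^{r_1,\gamma}\bigr\|_{L^2(\partial B_1)}\leq \int_{r_1}^{r_2}\frac{\sqrt{\mathcal{D}_\gamma(r)}}{r}\,dr\leq\biggl(\int_{r_1}^{r_2}\frac{\mathcal{D}_\gamma(r)}{r\sigma_0(r)}\,dr\biggr)^{1/2}\biggl(\int_{r_1}^{r_2}\frac{\sigma_0(r)}{r}\,dr\biggr)^{1/2},
\]
and squaring gives the estimate on $\partial B_1$. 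The $L^2(B_1)$ bound follows from the rescaling identity $V^{r,\gamma}(\rho\theta)=\rho^\gamma V^{r\rho,\gamma}(\theta)$ and $\gamma$-homogeneity of the limit, while membership $V^\gamma\in\mathcal{B}_\gamma$ follows from \Cref{prop:almgren_rescalings}.

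The main technical obstacle is the closure of the weighted integration in the second step: the $\sigma_0$-rate is not formal, and one must carefully balance the sub-power condition $(r^{-m_d}\sigma_0)'\leq 0$ against the two Dini-type integrability assumptions to absorb simultaneously the IBP boundary term, the interior $W_\gamma$ term, and the $\sigma$-error coming from \Cref{prop:W'}.
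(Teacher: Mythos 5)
Your strategy is the same as the paper's: the epiperimetric inequality of \Cref{thm:epi2} plus the almost-minimality of \Cref{prop:almost} are fed into \Cref{prop:W'} to produce the differential inequality $W_\gamma'\geq \frac{c}{r}W_\gamma+\frac{\mathcal D_\gamma}{r}-CH(R_0)\frac{\sigma(r)}{r}$ with $c\geq m_d$, the sub-power condition $(r^{-m_d}\sigma_0)'\leq 0$ closes the weighted integration to give $\int_{r_1}^{r_2}\frac{\mathcal D_\gamma(r)}{r\sigma_0(r)}\,\d r\leq CH(R_0)$, and Cauchy--Schwarz with weight $\sigma_0$ yields the Cauchy estimate; the $L^2(B_1)$ bound and the membership $V^\gamma\in\mathcal B_\gamma$ are obtained exactly as in the paper.

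The one place where your bookkeeping is weaker than the paper's is the absorption step. The bound $|W_\gamma(v,r)|\leq CH(R_0)\int_0^r\frac{\sigma(t)}{t}\,\d t$ is only one-sided as you derive it (integrating $W_\gamma'\geq -CH(R_0)\sigma/r$ from $0$ gives a \emph{lower} bound on $W_\gamma$, not an upper bound), and your integration by parts in $r$ produces the term $\int W_\gamma\,\sigma_0'/\sigma_0^2$, in which $\sigma_0'$ is controlled from above by $m_d\sigma_0/r$ but not from below; when $W_\gamma<0$ this term has the wrong sign and is not covered by the stated Dini hypotheses. The paper sidesteps this by first setting $\bar W(r):=W_\gamma(v,r)+C_{\textup{W}}'\int_0^r\frac{\sigma(t)}{t}\,\d t\geq 0$ and then computing $(\bar W/\sigma_0)'$ directly: the dangerous contribution becomes $\frac{\bar W}{\sigma_0}\bigl(\frac{c}{r}-\frac{\sigma_0'}{\sigma_0}\bigr)\geq 0$ and is simply dropped, leaving only the error $\int \frac{G(t)}{\sigma_0(t)}\,\d t$ with $G(t)=\frac1t\int_0^t\frac{\sigma(s)}{s}\,\d s$, which is finite by \Cref{ass:domain}. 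With that one-line modification your argument closes; as written, the step relying on a two-sided bound for $W_\gamma$ and on sign control of $\sigma_0'$ does not.
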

\begin{proof}
	We denote $C_{\textup{W}}':=C_{\textup{W}}H(R_0)$ and
	\begin{equation*}
		W(r):=W_\gamma(v,r)\quad\text{and}\quad\bar{W}(r):=W_\gamma(r)+C_{\textup{W}}'\int_0^r\frac{\sigma(t)}{t}\d t,
	\end{equation*}
	where $C_{\textup{W}}'$ is as in \Cref{prop:W'}, so that, thanks to \eqref{eq:W'_th2} we have that
	\begin{equation*}
		\bar{W}(r)\geq 0\quad\text{for all }r\in (0,R_0).
	\end{equation*}
	Moreover, for sake of simplicity in this proof, we denote $V^r:=V^{r,\gamma}$, being $\gamma$ fixed.
	First of all, we claim that
	\begin{equation}\label{eq:blow_up_1}
		\bar{W}'(r)\geq \frac{\epsilon(d+2\gamma-2)}{4r}\bar{W}(r)+\frac{\mathcal{D}_\gamma(r)}{2r}-\frac{CC_{\textup{W}}'}{r}\int_0^r \frac{\sigma(t)}{t}\d t
	\end{equation}
	for all $r\in(0,R_0)$, where $\epsilon:=\epsilon_{\textup{bd}}$ is as in \Cref{thm:epi2}. In order to prove \eqref{eq:blow_up_1}, we need to apply the epiperimetric inequality \Cref{thm:epi2} with
	\[
	z(x)=h^r(x):=|x|^\gamma V^{r,\gamma}\left(\frac{x}{|x|}\right).
	\]
	This can be done since, in view of \Cref{cor:starsh}, $r^{-1}\mathcal{O}_r\cap B_1\sub B_1^+$ and so $h^r=0$ on $B_1'$. This being observed, we start from \eqref{eq:W'_1} and we apply \Cref{thm:epi2}, denoting by $w^r\in H^1_{s,N}(B_1^+)$ the competitor for $h^r$. We thus obtain that
	\begin{equation}\label{eq:blow_up_2}
		\begin{aligned}
			\bar{W}'(r)&\geq (1-\kappa\sigma(r))\left[\frac{d+2\gamma-2}{r}\left(\widetilde{W}_\gamma(h^r,1)-\widetilde{W}_\gamma(V^r,1)\right)+\frac{\mathcal{D}_\gamma(r)}{r}\right]-C_{\textup{W}}'\frac{\sigma(r)}{r} \\
			&\geq (1-\kappa\sigma(r))\left[\frac{d+2\gamma-2}{r}\left( \frac{\widetilde{W}_\gamma(w^r,1)}{1-\epsilon}-\widetilde{W}_\gamma(V^r,1)\right)+\frac{\mathcal{D}_\gamma(r)}{r}\right]-C_{\textup{W}}'\frac{\sigma(r)}{r}. 
		\end{aligned}
	\end{equation}
	Next, from the almost minimality condition \Cref{prop:almost} we easily see that
	\begin{equation*}
		\widetilde{W}_\gamma(w^r,1)\geq \widetilde{W}_\gamma(V^r,1)(1-C_{\textup{am}}\sigma(r))-C_{\textup{am}}\sigma(r),
	\end{equation*}
	which, combined with \eqref{eq:blow_up_2} implies that
	\begin{equation}\label{eq:blow_up_3}
		\bar{W}'(r)\geq (1-\kappa\sigma(r))\left[\frac{d+2\gamma-2}{r}\widetilde{W}_\gamma(V^r,1)\left(\frac{1-C_{\textup{am}}\sigma(r)}{1-\epsilon}-1\right)+\frac{\mathcal{D}_\gamma(r)}{r}\right]-CC_{\textup{W}}'\frac{\sigma(r)}{r}.
	\end{equation}
	Now, we can choose $r$ sufficiently small in such a way that
	\begin{equation*}
		\frac{1-C_{\textup{am}}\sigma(r)}{1-\epsilon}-1=\frac{\epsilon-C_{\textup{am}}\sigma(r)}{1-\epsilon}\geq \frac{\epsilon}{2}>0
	\end{equation*}
	and
	\begin{equation*}
		\abs{\widetilde{W}_\gamma(V^r,1)-W(r)}\leq C\sigma(r).
	\end{equation*}
	Combining these facts and $\Bar{W}(r)\geq 0$ with \eqref{eq:blow_up_3} and manipulating the expression, we obtain \eqref{eq:blow_up_1}, taking into account the fact that $1-\kappa\sigma(r)\geq 1/2$. Next, we observe that
	\begin{equation}\label{eq:blow_up_4}
		\left(\frac{\bar{W}(r)}{\sigma_0(r)}+CC_{\textup{W}}'\int_0^r \frac{G(t)}{\sigma_0(t)}\right)'\geq \frac{\mathcal{D}_\gamma(r)}{2r\sigma_0(r)}\geq 0,
	\end{equation}
	for some $C>0$ sufficiently large and $r$ sufficiently small, where
	\begin{equation*}
		G(r):=\frac{1}{r}\int_0^r\frac{\sigma (t)}{t}\d t.
	\end{equation*}
	Indeed, from \eqref{eq:blow_up_1}, we have that
	\begin{equation*}
		\left(\frac{\bar{W}(r)}{\sigma_0(r)}\right)'=\frac{\bar{W}'(r)}{\sigma_0(r)}-\frac{\sigma_0'(r)}{\sigma_0(r)}\frac{\bar{W}(r)}{\sigma_0(r)} \geq \frac{\bar{W}(r)}{\sigma_0(r)}\left(\frac{\epsilon(d+2\gamma-2)}{4r}-\frac{\sigma_0'(r)}{\sigma_0(r)}\right)+\frac{\mathcal{D}_\gamma(r)}{2r\sigma_0(r)}-CC_{\textup{W}}'\frac{G(r)}{\sigma_0(r)}
	\end{equation*}
	By \Cref{ass:domain} and $\gamma\geq 1$,
	\[
	\frac{\epsilon(d+2\gamma-2)}{4r}-\frac{\sigma_0'(r)}{\sigma_0(r)}\geq\frac{\epsilon d}{4r}-\frac{\sigma_0'(r)}{\sigma_0(r)} -\frac{(r^{-\frac{\epsilon d}{4}}\sigma_0(r))'}{r^{-\frac{\epsilon d}{4}}\sigma_0(r)}\geq 0
	\]
	and since $\bar{W}(r)\geq 0$, we get \eqref{eq:blow_up_4}. Now, by classical computations (see e.g. \cite[Lemma 12.14]{velichkov2023regularity}), we know that
	\begin{equation*}
		\sum_{i=1}^N\int_{\partial B_1}|V_i^{r_1}-V_i^{r_2}|^2\ds\leq\sum_{i=1}^N \int_{\partial B_1}\left(\int_{r_1}^{r_2}\frac{1}{r}|\nabla V_i^{r}(x)\cdot x- V_i^r(x)|\d r\right)^2\ds(x).
	\end{equation*}
	Moreover, by Cauchy-Schwarz inequality, we have that
	\begin{equation*}
		\sum_{i=1}^N\int_{\partial B_1}|V_i^{r_1}-V_i^{r_2}|^2\ds\leq \int_{r_1}^{r_2}\frac{\mathcal{D}_\gamma(r)}{r\sigma_0(r)}\d r \int_{r_1}^{r_2}\frac{\sigma_0(r)}{r}\d r.
	\end{equation*}
	Finally, from \eqref{eq:blow_up_4} and \Cref{cor:N_bound} we derive that 
	\begin{align*}
		\int_{r_1}^{r_2}\frac{\mathcal{D}_\gamma(r)}{r\sigma_0(r)}\d r& \leq \int_0^{R_0}\frac{\mathcal{D}_\gamma(r)}{r\sigma_0(r)}\d r \leq \frac{\bar{W}(R_0)}{\sigma_0(R_0)}+CC_{\textup{W}}'\int_0^{R_0}\frac{G(t)}{\sigma_0(t)}\d t \\
		& =\frac{H(R_0)}{R_0^{2\gamma}}(\mathcal{N}(R_0)-\gamma)+C_{\textup{W}}'\int_0^{R_0}\frac{\sigma(t)}{t}\d t+CC_{\textup{W}}'\int_0^{R_0}\frac{G(t)}{\sigma_0(t)}\d t \leq 2C_{\textup{rate}}H(R_0),
	\end{align*}
	for all $0\leq r_1\leq r_2\leq R_0$, for some $C_{\textup{rate}}>0$ depending on $d$, $D$ and $N$, and this concludes the proof of the first part. In order to conclude the proof, we observe that, by completeness of $L^2(\partial B_1)$ there exists a blow-up limit $V^\gamma$ and the fact that $V^\gamma\in \mathcal{B}_\gamma$ follows by the almost minimality conditions and the fact that
	\[
	\lim_{r\to 0}W_\gamma(v,r)=\lim_{r\to 0}W_\gamma(V^r,1)=0,
	\]
	in view of \eqref{eq:W'_th2}. Finally, the estimate in $L^2(B_1)$ follows by integrating the one in $L^2(\partial B_1)$.
\end{proof}

\begin{proposition}[Nondegeneracy]\label{prop:nondeg}
	Let $x_0\in\mathcal{Z}_1^{\partial D}(u)\cup\mathcal{Z}_2^{\partial D}(u)$. Then, there holds
	\begin{equation}\label{eq:nondeg_th1}
		H_{x_0}:=\lim_{r\to 0}\frac{H(v^{x_0},r)}{r^{2\gamma(x_0)}}\in (0,\infty).
	\end{equation}
	In particular, if $V^{\gamma(x_0)}$ is as in \Cref{prop:blow_up_rate}, then $V^{\gamma(x_0)}\not\equiv 0$. Moreover, if $r_{x_0}\in(0,R_0)$ is such that
	\[
	\gamma(x)\leq \gamma(x_0)\quad\text{for all }x\in \overline{B_{r_{x_0}}(x_0)}
	\]
	then there exists $C_{\textup{nd}}>0$ (depending on $d$, $D$, $N$ and $r_{x_0}$) such that
	\begin{equation*}
		\frac{1}{C_{\textup{nd}}}\leq H_{x}\leq C_{\textup{nd}}\quad\text{for all }x\in \overline{B_{r_{x_0}}(x_0)}\cap \{ y\in\partial D\colon \gamma(y)=\gamma(x_0)\}.
	\end{equation*}
\end{proposition}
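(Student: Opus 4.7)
The plan is to prove the three claims in sequence: identifying $H_{x_0}$ explicitly, establishing strict positivity (the main obstacle), and finally obtaining the uniform bounds by continuity and compactness.

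For the first two claims, set $\gamma:=\gamma(x_0)\in\{1,2\}$ and observe the change of variables
\[
\frac{H(v^{x_0},r)}{r^{2\gamma}}=\sum_{i=1}^N\int_{\partial B_1}\bigl(V_i^{r,\gamma}\bigr)^2\,\mu(r\cdot)\,\ds,
\]
where $V^{r,\gamma}(y)=v^{x_0}(ry)/r^\gamma$. The $L^2(\partial B_1)$ convergence $V^{r,\gamma}\to V^\gamma$ supplied by \Cref{prop:blow_up_rate}, combined with the uniform convergence $\mu(r\cdot)\to 1$ from \Cref{lemma:A} and \Cref{cor:A}, allows passing to the limit to obtain
\[
H_{x_0}=\|V^\gamma\|_{L^2(\partial B_1)}^2,
\]
while finiteness is immediate from \Cref{lemma:decay}(i). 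Hence the second claim reduces to showing $V^\gamma\not\equiv 0$.

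Proving $V^\gamma\not\equiv 0$ is the main obstacle, since the Almgren and Weiss monotonicity formulas alone do not preclude $H(v^{x_0},r)$ from decaying with an extra slowly varying factor relative to $r^{2\gamma}$. I plan to argue by contradiction: assume $V^\gamma\equiv 0$, so that $c_r:=\sqrt{H(v^{x_0},r)/r^{2\gamma}}\to 0$. By \Cref{cor:almgren_blowup}, along a subsequence $r_n\to 0^+$ the Almgren rescalings $\tilde u^{r_n,x_0}$ converge in $C^{0,\alpha}(B_1)$ to the explicit nontrivial blow-up $U^{x_0}$. A direct comparison using $\Psi_{x_0}(ry)=Q_{x_0}(ry)+x_0+O(r\sigma(r|y|))$ and the Lipschitz bound of \Cref{prop:lipschitz} gives $V^{r,\gamma}(y)=c_r\,\tilde u^{r,x_0}(Q_{x_0}y)+o(1)$; selecting an index $j$ and a point $y_*\in B_1$ with $U_j^{x_0}(Q_{x_0}y_*)>0$ and $Q_{x_0}y_*=-\nu(x_0)$, one then obtains
\[
\lim_{n\to\infty}\frac{u_j(x_0-r_n\nu(x_0))}{r_n^\gamma}=0.
\]
On the other hand, the uniform $C^{0,\alpha}$-convergence of $\tilde u^{r,x_0}$ forces the positivity set of $u_j$ to contain, for some fixed small $r_0>0$, an interior ball (if $\gamma=1$) or an interior cone of aperture determined by $U^{x_0}$ (if $\gamma=2$) touching $x_0$. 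Since $u_j\geq 0$ satisfies $-\Delta u_j=\lambda_j u_j$ in $\Omega_j$ by \Cref{lemma:extermaliti}, the classical Hopf boundary point lemma (for $\gamma=1$), respectively a cone Hopf argument comparing $u_j$ with the bilinear harmonic subsolution $(x\cdot e_{x_0})^\pm(-x\cdot\nu(x_0))^+$ (for $\gamma=2$), yields the matching strict lower bound $u_j(x_0-t\nu(x_0))\geq c\,t^\gamma$ for $t>0$ small, contradicting the previous display. Hence $V^\gamma\not\equiv 0$ and $H_{x_0}>0$. I expect the $\gamma=2$ case to be the technically most delicate, since the standard interior-ball Hopf does not apply directly and one must exploit the 2-homogeneous cone geometry via an explicit barrier.

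For the third claim, set $K:=\overline{B_{r_{x_0}}(x_0)}\cap\{y\in\partial D:\gamma(y)=\gamma\}$; by upper semicontinuity of the frequency combined with the pointwise bound $\gamma(\cdot)\leq\gamma$ on $\overline{B_{r_{x_0}}(x_0)}$, $K$ is compact. The uniform upper bound $H_x\leq C_{\textup{nd}}$ follows at once from \Cref{lemma:decay}(i) with its universal constant. For the lower bound I would show that $x\mapsto H_x$ is continuous on $K$: for each fixed $r\in(0,R_0]$ the map $x\mapsto H(v^x,r)/r^{2\gamma}$ is continuous, since $\Psi_x$ (and hence $v^x=u\circ\Psi_x$) depends continuously on $x\in\partial D$; the uniform rate
\[
\Bigl|\tfrac{H(v^x,r)}{r^{2\gamma}}-H_x\Bigr|\leq C\int_0^r\tfrac{\sigma_0(t)}{t}\,\d t
\]
supplied by \Cref{prop:blow_up_rate} (with $C$ depending only on universal quantities and the uniform Lipschitz bound from \Cref{prop:lipschitz}) then allows interchanging the limits $r\to 0$ and $x_n\to x_\infty\in K$, yielding continuity of $H_\cdot$ on $K$. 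Combining continuity, compactness, and the strict positivity from the previous step, $H_\cdot$ attains a strictly positive minimum on $K$, which gives $H_x\geq 1/C_{\textup{nd}}$ uniformly.
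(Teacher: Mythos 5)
Your identification $H_{x_0}=\|V^{\gamma}\|_{L^2(\partial B_1)}^2$ and your treatment of the uniform bounds on $\overline{B_{r_{x_0}}(x_0)}\cap\{\gamma(\cdot)=\gamma(x_0)\}$ (continuity of $x\mapsto H_x$ via the uniform rate of \Cref{prop:blow_up_rate}, then compactness) are fine and consistent with the paper, which indeed disposes of the last claim in one line. The problem is the core step, the strict positivity $H_{x_0}>0$, where your Hopf/barrier argument has a genuine gap.

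The issue is that the $C^{0,\alpha}$ convergence of the Almgren rescalings only tells you that, for every $\epsilon>0$, the positivity set $\Omega_j$ contains near $x_0$ a cone of the form $\{-(x-x_0)\cdot\nnu(x_0)>\epsilon|x-x_0|\}$ (respectively a ``quarter-cone'' strictly inside the quarter-space when $\gamma=2$). A comparison function for a lower bound must vanish on $\partial\Omega_j$, hence must be supported in such a cone; the first positive harmonic function in a cone of half-aperture $\tfrac{\pi}{2}-\arcsin\epsilon$ vanishing on its lateral boundary is homogeneous of degree $\alpha(\epsilon)>\gamma$ with $\alpha(\epsilon)\to\gamma$ only as $\epsilon\to0$, and the constant in the resulting bound $u_j(x_0-t\nnu)\geq c_\epsilon t^{\alpha(\epsilon)}$ degenerates as $\epsilon\to0$. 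This does \emph{not} contradict $u_j(x_0-t\nnu)=o(t^{\gamma})$. To get the sharp exponent $\gamma$ you would need an interior-ball (for $\gamma=1$) or exact-wedge (for $\gamma=2$) condition for $\Omega_j$ at $x_0$, i.e.\ essentially the clean-up statements of \Cref{l:clean-up-boundary-1} and \Cref{prop:clean_up_2} — but those are proved \emph{after} and \emph{from} nondegeneracy (via \Cref{cor:blowup}, where $a_{x_0,\gamma}=\kappa_{d,\gamma}\sqrt{H_{x_0}}>0$ is used to fix $\delta$), so your route is circular. Moreover, $\partial D$ is only $C^{1,\text{Dini}}$, so even granting clean-up one has to be careful that Hopf applies. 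The paper avoids all of this: assuming $H(r)/r^{2\gamma}\to0$, it applies the epiperimetric rate of \Cref{prop:blow_up_rate} \emph{to the Almgren rescaling $\tilde v^{r}$ itself} (legitimate because $H(\tilde v^{r},R_0)\le C$ uniformly in $r$), deducing that its second rescaling $w^{r,\rho}$ satisfies $\|w^{r,\rho}\|_{L^2(\partial B_1)}^2\leq \hat C\int_0^\rho \sigma_0(t)t^{-1}\,\d t$ uniformly in $r$; combined with the normalization $\|U\|_{L^2(\partial B_1)}=1$ and the $\gamma$-homogeneity of $U$, choosing $\rho$ small and then $n$ large yields $1<1$. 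You should replace your Hopf step with an argument of this type (or otherwise produce a scale-uniform lower bound that does not presuppose clean-up).
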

\begin{proof}
	First of all, we show that the function
	\[
	r\mapsto \frac{H(r)}{r^{2\gamma(x_0)}}
	\]
	admits a finite limit as $r\to 0$. Thanks to \Cref{lemma:H'} and \Cref{lemma:decay} we have that
	\begin{equation*}
		\left(\frac{H(r)}{r^{2\gamma(x_0)}}\right)'\geq \frac{2}{r}W(r)-C\frac{H(r)}{r^{2\gamma(x_0)}}\frac{\sigma(r)}{r}\geq \frac{2}{r}W(r)-CH(R_0)\frac{\sigma(r)}{r},
	\end{equation*}
	for some constant $C>0$ depending on $d$, $D$ and $N$ and $r$ sufficiently small. Now, by integrating \eqref{eq:W'_th2} we can bound $W(r)$ from below and, in view of \eqref{eq:sigma_nondeg_th2}, deduce that
	\begin{equation*}
		\left(\frac{H(r)}{r^{2\gamma(x_0)}}\right)'\geq -CH(R_0)\left(\frac{1}{r}\int_0^r\frac{\sigma(t)}{t}\d t- \frac{\sigma(r)}{r}\right)\geq -\frac{CH(R_0)}{r}\int_0^r\frac{\sigma(t)}{t}\d t .
	\end{equation*}    
	By assumption, the right-hand side is integrable near $0$, so there exists
	\begin{equation*}
		H_{x_0}:=\lim_{r\to 0}\frac{H(r)}{r^{2\gamma(x_0)}}\in [0,\infty).
	\end{equation*}
	Now, let as assume by contradiction that
	\begin{equation}\label{eq:nondeg_1}
		\frac{H(r)}{r^{2\gamma(x_0)}}\to 0\quad\text{as }r\to 0.
	\end{equation}
	In view of \Cref{prop:almgren_rescalings}, we know that for any $r_n\to 0$ there exists $U=U^{x_0}\in \mathcal{B}_{\gamma(x_0)}$ such that
	\begin{equation*}
		\sum_{i=1}^N\norm{U_i}_{L^2(\partial B_1)}^2=1
	\end{equation*}
	and, up to a subsequence,
	\[
	\tilde{v}^{r_n}\to U\quad\text{strongly in }H^1(B_1,\R^N)~\text{and }L^2(\partial B_1,\R^N)~\text{as }n\to\infty,
	\]
	where we recall that
	\[
	\tilde{v}^{r_n}(x)=\frac{v^{x_0}(r_n x)}{\sqrt{H(v^{x_0},r_n)}}.
	\]
	Let us now consider the function
	\[
	w^{r,\rho}(x):=\frac{\tilde{v}^{r}(\rho x)}{\rho^{\gamma(x_0)}}=(\tilde{v}^{r})^{\rho,\gamma(x_0)}.
	\]
	In view of \Cref{prop:blow_up_rate}, we know that there exists $W^r\in \mathcal{B}_{\gamma(x_0)}$ such that
	\begin{equation*}
		w^{r,\rho}\to W^r\quad\text{strongly in }L^2(\partial B_1,\R^N),~\text{as }\rho\to 0.
	\end{equation*}
	Moreover, thanks to \eqref{eq:nondeg_1} one can easily prove that $W^r=0$ for all $r$. 
	We now would like to apply \Cref{prop:blow_up_rate} to $w^{\rho,r}$. To be precise, \Cref{prop:blow_up_rate} is state for minimizers of $J_{N,\Psi_{x_0}}$ on $\mathcal{O}$; however, one can easily see that \Cref{prop:blow_up_rate} can be applied starting from $\tilde{v}^r$ rather than $v$ once there holds
	\begin{equation}\label{eq:nondeg1}
		H(\tilde{v}^{r},R_0)\leq C,\quad\text{uniformly for $r$ sufficiently small,}
	\end{equation}
	for some $C>0$ depending on $d$, $D$ and $N$. The proof of \eqref{eq:nondeg1} is actually straightforward: indeed, from \Cref{lemma:A} and by integration of \eqref{eq:H'_th3} we have that
	\[
	H(\tilde{v}^{r},R_0)\leq C\frac{H(v,r R_0)}{H(v,r)}\leq C(\mathcal{N}(v,R_0)+1),\quad\text{for $r$ sufficiently small,}
	\]
	and from \Cref{cor:N_bound} we obtain \eqref{eq:nondeg1}. Hence, there exists a constant $\hat{C}>0$ depending on $d$, $D$ and $N$ such that
	\begin{equation*}
		\sum_{i=1}^N\int_{\partial B_1}|w^{r,\rho}_i|^2\ds=\sum_{i=1}^N\int_{\partial B_1}|w^{r,\rho}_i-W^r_i|^2\ds\leq \hat{C}\int_{0}^{\rho}\frac{\sigma_0(t)}{t}\d t.
	\end{equation*}
	At this point, thanks to homogeneity properties and the previous estimate, we have that
	\begin{align*}
		1=\sum_{i=1}^N\int_{\partial B_1}U_i^2\ds&=\sum_{i=1}^N\frac{1}{\rho^{d+2\gamma(x_0)-1}}\int_{\partial B_\rho}U_i^2\ds \\
		&\leq \sum_{i=1}^N\frac{2}{\rho^{d+2\gamma(x_0)-1}}\int_{\partial B_\rho}|U_i-\tilde{v}^{r_n}_i|^2\ds+2\int_{\partial B_1}|w^{r_n,\rho}_i|^2\ds \\
		&\leq \sum_{i=1}^N\frac{2}{\rho^{d+2\gamma(x_0)-1}}\int_{\partial B_\rho}|U_i-\tilde{v}^{r_n}_i|^2\ds+\hat{C}\int_{0}^{\rho}\frac{\sigma_0(t)}{t}\d t.
	\end{align*}
	Now, by choosing $\rho$ sufficiently small and $n=n(\rho)$ sufficiently large, we reach a contradiction. Finally, the second part of the statement simply follows from the first part and from the continuity of $H_x$ with respect to $x$ in $\{y\in\partial D\colon \gamma(y)=\gamma(x_0)\}$.
\end{proof}

Hence, we can now sum up the main result of the present section, i.e. the complete blow-up analysis at points of frequency $\gamma(x_0)=1$ or $\gamma(x_0)=2$. The following result is stated for the original minimizer $u$, rather than $v$: passing from one to the other is possible in view of the properties of $\Psi_{x_0}$ and \eqref{eq:sigma_nondeg_th2}. For any $x_0\in\partial D$, $r>0$, $\gamma\geq 0$, we denote
\[
u_i^{r,x_0,\gamma}(x):=\frac{u_i(rx+x_0)}{r^\gamma},\quad x\in\frac{D-x_0}{r},~i=1,\dots,N.
\]
\begin{corollary}[Blow-up analysis]\label{cor:blowup}
	There exists $C_{\textup{BU}}>0$, depending only on $d$, $D$ and $N$, such that the following holds.
	For any $x_0\in\partial D$ there exists $P^{x_0,1}\in \mathcal{B}_1$ of the form
	\begin{equation*}
		P_j^{x_0,1}=a_{x_0,1}(-x\cdot\nnu(x_0))^+,\quad 		P_i^{x_0,1}=0\quad\text{for all }i\neq j
	\end{equation*}
	for some $j\in\{1,\dots,N\}$ and $a_{x_0,1}\geq 0$, such that
	\begin{equation*}
		\sum_{i=1}^N\int_{B_1}|u^{r,x_0,1}_i-P^{x_0,1}_i|^2\ds\leq C_{\textup{BU}}H(R_0)\int_0^r\frac{\sigma_0(t)}{t}\d t\quad\text{for all }r\leq R_0.
	\end{equation*}
	Moreover, there exists a dimensional constant $\kappa_{d,1}>0$ such that, if  $x_0\in\mathcal{Z}_1^{\partial D}(u)$, then $a_{x_0,1}=\kappa_{d,1} \sqrt{H_{x_0}}$, where $H_{x_0}>0$ is as in \Cref{prop:nondeg}, the Almgren blow-up limit $U^{x_0}$ in \Cref{cor:almgren_blowup} is unique and there holds
	\begin{equation*}
		P^{x_0,1}=\sqrt{H_{x_0}}U^{x_0}.
	\end{equation*}
	For any $x_0\in \partial D\setminus\mathcal{Z}_1^{\partial D}(u)$, there exists $P^{x_0,2}\in \mathcal{B}_2$ of the form
	\begin{equation}\label{eq:Px02}
		\begin{aligned}
			&P_j^{x_0,2}=a_{x_0,2}(x\cdot\bm{e}_{x_0})^-(-x\cdot\nnu(x_0))^+,\\
			&P_k^{x_0,2}=a_{x_0,2}(x\cdot\bm{e}_{x_0})^+(-x\cdot\nnu(x_0))^+,\\
			&		P_i^{x_0}=0\quad\text{for all }i\neq j,k
		\end{aligned}
	\end{equation}
	for some $j,k\in\{1,\dots,N\}$, $a_{x_0,2}\geq 0$ and $\bm{e}_{x_0}\in\partial B_1$, $\bm{e}_{x_0}\cdot\nnu(x_0)=0$, such that
	\begin{equation*}
		\sum_{i=1}^N\int_{B_1}|u^{r,x_0,2}_i-P^{x_0,2}_i|^2\ds\leq C_{\textup{BU}}H(R_0)\int_0^r\frac{\sigma_0(t)}{t}\d t\quad\text{for all }r\leq R_0.
	\end{equation*}
	Moreover, there exists a dimensional constant $\kappa_{d,2}>0$ such that, if  $x_0\in\mathcal{Z}_2^{\partial D}(u)$, then $a_{x_0,1}=\kappa_{d,2} \sqrt{H_{x_0}}$, where $H_{x_0}>0$ is as in \Cref{prop:nondeg}, the Almgren blow-up limit $U^{x_0}$ in \Cref{cor:almgren_blowup} is unique and there holds
	\begin{equation*}
		P^{x_0,2}=\sqrt{H_{x_0}}U^{x_0}.
	\end{equation*}
\end{corollary}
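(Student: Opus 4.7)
The plan is to deduce Corollary~6.3 from the quantitative rescaled-profile estimate of Proposition~6.1 applied in the reference frame $\mathcal{O}^{x_0}$, and then transfer the result back to the original domain $D$ using the regularity of the diffeomorphism $\Psi_{x_0}$. First, I would fix $x_0\in\partial D$, set $v=v^{x_0}=u\circ\Psi_{x_0}$ and, for $\gamma\in\{1,2\}$, define $V^{r,\gamma}(x)=v(rx)/r^\gamma$ as in Proposition~6.1. Since $\gamma(x_0)\geq 1$ always (Corollary~5.7), Proposition~6.1 with $\gamma=1$ produces a limit $V^1\in\mathcal{B}_1$ with
\[
\sum_{i=1}^N\int_{B_1}|V^{r,1}_i-V^1_i|^2\dx\le \frac{C_{\mathrm{rate}}H(R_0)}{d+2}\int_0^r\frac{\sigma_0(t)}{t}\,\mathrm{d}t.
\]
By Lemma~4.6, $V^1$ is either identically zero or of the form $V^1_j=\alpha x_d^+$ for a single index $j$ and some $\alpha\ge 0$. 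Setting $P^{x_0,1}(y):=V^1(\mathbf{Q}_{x_0}^{-1}(y-x_0))$ and using that $\mathbf{Q}_{x_0}\bm{e}_d=-\nnu(x_0)$ gives the stated form with $a_{x_0,1}=\alpha$.

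The transfer from $V^{r,1}$ to $u^{r,x_0,1}$ is the main technical step. By construction $\Psi_{x_0}(rx)=\mathbf{Q}_{x_0}(rx)+x_0+\mathbf{Q}_{x_0}(0,3|rx|\sigma(|rx|))$ whenever $r|x|\leq R_{\partial D}/2$, so Lipschitz continuity of $u$ (Proposition~4.2) yields
\[
\bigl|u^{r,x_0,1}(\mathbf{Q}_{x_0}y)-V^{r,1}(y)\bigr|=\frac{1}{r}\bigl|u(\Psi_{x_0}(ry))-u(\mathbf{Q}_{x_0}(ry)+x_0)\bigr|\le 3C_L|y|\,\sigma(r|y|)
\]
for $y\in B_1$, and together with Lemma~4.13 (giving $\sigma(r)\lesssim\int_0^r\sigma_0/t$) this produces an extra term compatible with the claimed rate. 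After a change of variable $y\mapsto \mathbf{Q}_{x_0}^{-1}y$ and using orthogonality of $\mathbf{Q}_{x_0}$, one combines the two estimates to obtain
\[
\sum_{i=1}^N\int_{B_1}|u^{r,x_0,1}_i-P^{x_0,1}_i|^2\,\mathrm{d}x\leq C_{\mathrm{BU}}H(R_0)\int_0^r\frac{\sigma_0(t)}{t}\,\mathrm{d}t,
\]
(where $u$ is extended by $0$ outside $D$; note that the symmetric difference of $(D-x_0)/r$ and the half space $\{-y\cdot\nnu(x_0)>0\}$ has measure $O(\sigma(r))$ inside $B_1$, contributing a term of the same order).

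Next I would identify $a_{x_0,1}$ when $x_0\in \mathcal{Z}_1^{\partial D}(u)$. Nondegeneracy (Proposition~6.2) gives $H_{x_0}\in(0,\infty)$, and since $V^{r,1}=\sqrt{H(v^{x_0},r)/r^2}\cdot(\tilde v^r\circ\,\cdot\,)$ with $H(v^{x_0},r)/r^{2}\to H_{x_0}$, strong $L^2$ convergence plus Corollary~5.9 force $V^1=\sqrt{H_{x_0}}\,U^{x_0}$ where $U^{x_0}_j=\tilde\kappa_{d,1}(-\mathbf{Q}_{x_0}^{-1}\,\cdot\,\cdot\nnu(x_0))^+$, yielding $a_{x_0,1}=\kappa_{d,1}\sqrt{H_{x_0}}$ with $\kappa_{d,1}=\tilde\kappa_{d,1}$. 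Uniqueness of the Almgren blow-up at $x_0$ follows immediately from the quantitative rate, since any two subsequential Almgren limits must coincide with the unique $V^1$ modulo the normalization $\sqrt{H_{x_0}}$.

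For the second part, observe that on $\partial D\setminus\mathcal Z_1^{\partial D}(u)$ we have $\gamma(x_0)\geq 2$, so Proposition~6.1 now applies with $\gamma=2$ and produces $V^2\in\mathcal{B}_2$ with the same quantitative rate. Lemma~4.6 leaves two possibilities: either $V^2\equiv 0$ (if $\gamma(x_0)>2$, i.e.\ $x_0\in\mathcal{S}_{\partial D}(u)$) or $V^2$ has the bilinear form stated in (6.4); in both cases defining $P^{x_0,2}$ by pull-back through $\mathbf{Q}_{x_0}$ gives the required structure with $a_{x_0,2}\ge 0$. The transfer from $V^{r,2}$ to $u^{r,x_0,2}$ repeats the Lipschitz argument verbatim, except that one now divides by $r^2$ instead of $r$; the error becomes $|y|^{-1}\sigma(r|y|)$, which is still integrable on $B_1$ with bound $\sigma(r)\lesssim\int_0^r\sigma_0/t$. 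When $x_0\in\mathcal Z_2^{\partial D}(u)$, nondegeneracy and the uniqueness-of-Almgren-limit argument from the previous paragraph identify $a_{x_0,2}=\kappa_{d,2}\sqrt{H_{x_0}}$ and $P^{x_0,2}=\sqrt{H_{x_0}}\,U^{x_0}$.

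The hardest part is the bookkeeping in the transfer step: controlling simultaneously the discrepancy $\Psi_{x_0}(rx)-\mathbf{Q}_{x_0}(rx)-x_0=O(r\sigma(r))$, the distortion of the domain $r^{-1}\mathcal O_r$ from $B_1^+$, and the extension by zero of $u$ outside $D$, in such a way that the final error is dominated by $\int_0^r\sigma_0(t)/t\,\mathrm{d}t$ rather than merely by $\sigma(r)$. This is made possible by Lemma~4.13, which precisely says that $\sigma$ is controlled by the required Dini integral of $\sigma_0$, so the Dini-type assumption in Assumption~2.1 is exactly what is needed to close the argument.
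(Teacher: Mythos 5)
Your overall route is the one the paper intends: apply \Cref{prop:blow_up_rate} in the transformed frame, classify the limit via \Cref{lemma:frequencies}, identify the coefficient through \Cref{prop:nondeg} and \Cref{cor:almgren_blowup}, and transfer back to $u$ using the properties of $\Psi_{x_0}$ together with \Cref{lemma:sigma_nondeg}. The $\gamma=1$ half of the argument is sound.

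There is, however, a genuine gap in the transfer step for frequency $2$. You estimate the discrepancy by applying the Lipschitz continuity of $u$ to the two nearby points $\Psi_{x_0}(ry)$ and $\mathbf{Q}_{x_0}(ry)+x_0$, which differ by $3r|y|\sigma(r|y|)$. After dividing by $r^2$ this gives a pointwise error of
\[
\frac{3C_L\,|y|\,\sigma(r|y|)}{r}\;\leq\;\frac{3C_L\,\sigma(r)}{r},
\]
not the $|y|^{-1}\sigma(r|y|)$ you wrote, and in any case this quantity does \emph{not} tend to zero: by \eqref{eq:sigma_nondeg_th1} one has $\sigma(r)/r\geq C_\sigma>0$, and for $\sigma(r)=r^\alpha$ with $\alpha<1$ it even blows up. Interpolating with the quadratic decay $|u|\leq C|x-x_0|^2$ from \Cref{lemma:decay} does not rescue the bound either, since no gradient estimate of the type $|\nabla u|\lesssim|x-x_0|$ is available at this stage. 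So as written, the $\gamma=2$ error is $O(\sigma(r)/r)$, far from the claimed $\int_0^r\sigma_0(t)t^{-1}\,\d t$.

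The fix is to place the perturbation on the profile rather than on $u$: change variables exactly, $x=\tfrac1r\mathbf{Q}_{x_0}\Psi(ry)$, so that $u^{r,x_0,2}(x)=V^{r,2}(y)$ with no error on the $u$ side. The integral $\int_{B_1}|u^{r,x_0,2}-P^{x_0,2}|^2\dx$ then splits into $\int|V^{r,2}-V^2|^2$ (controlled by \Cref{prop:blow_up_rate}), the term $\int\bigl|V^2(y)-V^2\bigl(\tfrac1r\Psi(ry)\bigr)\bigr|^2$, and Jacobian and domain-mismatch contributions. Since $V^2$ is an explicit $2$-homogeneous product of linear forms with coefficient $a_{x_0,2}$ bounded by \Cref{prop:nondeg} and \Cref{cor:N_bound}, it is Lipschitz on $B_2$ with a universal constant, so the middle term is $O(\sigma(r)^2)$; the Jacobian satisfies $|\det D\Psi-1|\leq C\sigma(r)$ and the symmetric difference of the domains has measure $O(\sigma(r))$ where the integrands are bounded. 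All of these close via \eqref{eq:sigma_nondeg_th3}. The same change-of-variables argument also streamlines your $\gamma=1$ case, although there your Lipschitz-on-$u$ estimate happens to suffice.
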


In view of \Cref{cor:blowup} and the Lipschitz continuity of the minimizer $u$ and of its $1$-homogeneous blow-up, we have the following.	
\begin{lemma}\label{lemma:Linfty_blowup}
	Let $x_0\in\partial D$ and let $P^{x_0,1}$ be as in \Cref{cor:blowup}. Then, there exists a constant $C_{\infty}>0$ depending only on $d$, $D$ and $N$ such that
	\begin{equation*}
		\sum_{i=1}^N\norm{u^{r,x_0,1}_i-P^{x_0,1}_i}_{L^\infty(B_1)}\leq C_{\infty}H(R_0)^{\frac{1}{d+2}}\left(\int_0^r\frac{\sigma_0(t)}{t}\d t\right)^{\frac{1}{d+2}}\quad\text{for all }r\leq R_0.
	\end{equation*}
\end{lemma}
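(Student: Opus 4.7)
The plan is to deduce this $L^\infty$ rate from the $L^2$ rate in \Cref{cor:blowup} by a standard interpolation, exploiting that the difference $w^r:=u^{r,x_0,1}-P^{x_0,1}$ is Lipschitz with a constant independent of $r$ and $x_0$.

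First I would establish a uniform Lipschitz bound on $w^r$. By \Cref{prop:lipschitz}, together with the normalization $\|u_i\|_{L^2(D)}=1$ and the bound $\sum_i \|\nabla u_i\|_{L^2(D)}^2=\sum_i \lambda_i\leq \Lambda$ from \Cref{rmk:sup_inf_lambda_i}, the minimizer $u$ is Lipschitz on $\overline D$ with a constant $L_0=L_0(d,D,N)$. Since $\nabla u^{r,x_0,1}(x)=\nabla u(rx+x_0)$, the rescaling inherits the very same Lipschitz constant. For $P^{x_0,1}$, the Lipschitz constant is just $a_{x_0,1}$. If $x_0\in\mathcal Z_1^{\partial D}(u)$, then $a_{x_0,1}=\kappa_{d,1}\sqrt{H_{x_0}}$ by \Cref{cor:blowup}, and \Cref{lemma:decay}\,\textit{(i)} gives $H_{x_0}\leq C_{\textup H} H(R_0)$, where $H(R_0)$ itself is controlled by $L_0^2$ (again via the Lipschitz bound on $u$ and $u(x_0)=0$). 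If instead $x_0\notin\mathcal Z_1^{\partial D}(u)$, then $\gamma(x_0)\geq 2$, and \Cref{lemma:decay}\,\textit{(ii)} yields $\|u^{r,x_0,1}\|_{L^\infty(B_1)}=O(r^{\gamma(x_0)-1})\to 0$, forcing $a_{x_0,1}=0$ in the $L^2$ limit. Either way $a_{x_0,1}\leq C(d,D,N)$, so $L:=\mathrm{Lip}(w^r)\leq L_0+a_{x_0,1}$ is universal.

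Second I would invoke the elementary interpolation between $L^2$ and Lipschitz: if $f:B_1\to\R$ is $L$-Lipschitz and $|f(y_0)|=M$ for some $y_0\in B_1$, then $|f|\geq M/2$ on $B_1\cap B_{M/(2L)}(y_0)$, a set of measure at least $c_d\min\{(M/L)^d,1\}$. Thus
\[
\|f\|_{L^2(B_1)}^2\;\geq\;c_d\,(M/2)^2\,\min\{(M/L)^d,1\},
\]
which upon rearranging gives $\|f\|_{L^\infty(B_1)}\leq C\,L^{d/(d+2)}\|f\|_{L^2(B_1)}^{2/(d+2)}$ (and if the right-hand side exceeds $L$ the statement to prove is trivial, since $|w^r|\leq L$ on $B_1$ automatically).

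Third I would apply this componentwise to $w_i^r$ with the universal constant $L$ from the first step, bound $\|w_i^r\|_{L^2(B_1)}^2\leq \sum_j\|w_j^r\|_{L^2(B_1)}^2\leq C_{\textup{BU}}H(R_0)\int_0^r\sigma_0(t)/t\,dt$ by \Cref{cor:blowup}, and sum over $i=1,\dots,N$, losing only a factor $N$. This yields the stated inequality with $C_\infty=C_\infty(d,D,N)$. There is no real obstacle: the result is a routine consequence of interpolation once the inputs (uniform Lipschitz control and the $L^2$ rate) are in place; the only point that requires a moment's thought is the uniform boundedness of $a_{x_0,1}$ across all boundary points, handled as above.
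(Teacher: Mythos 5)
Your proposal is correct and follows essentially the same route as the paper: the paper likewise combines the uniform Lipschitz bound from \Cref{prop:lipschitz} with the $L^2$ rate of \Cref{cor:blowup} via the elementary interpolation $\norm{f}_{L^2(B_1)}^2\gtrsim M^{d+2}/C_L^d$ for an $L$-Lipschitz function attaining the value $M$. Your treatment is in fact slightly more careful than the paper's on two minor points (centering the small ball at the point where the supremum is nearly attained, and checking that $a_{x_0,1}$ is uniformly bounded so that the Lipschitz constant of the difference is universal), but these are exactly the details the paper's one-line computation implicitly relies on.
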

\begin{proof}
	Since $u$ is Lipschitz continuous, see \Cref{prop:lipschitz}, then
	\[
	u_i^{r,x_0,1}(x)-P_i^{x_0,1}(x)\geq M_i-C_L|x|,\quad\text{for all }x\in B_1,
	\]
	and all $i=1,\dots,N$, where
	\[
	M_i:=\norm{u_i^{r,x_0,1}-P_i^{x_0,1}}_{L^\infty(B_1)}.
	\]
	Therefore, by explicit calculations, we have
	\[
	\norm{u_i^{r,x_0,1}-P_i^{x_0,1}}_{L^2(B_1)}^2\geq \int_{B_1}|(M_i-C_L|x|)^+|^2\dx=C\frac{M_i^{d+2}}{C_L^d},
	\]
	for some $C>0$ depending only on $d$. Therefore, by \Cref{cor:blowup} we conclude.		
\end{proof}

We conclude the section by stating the analogue of \Cref{cor:blowup} at interior points. In fact, even though regularity at interior points has already been extensively investigated in the literature, a quantitative blow-up analysis is missing, up to our knowledge, but still represents a key step when examining how the regular interior free boundary approaches regular points of $\partial D$. By performing the very same argument we described so far for boundary points, with the aid of the crucial result \Cref{thm:epi1}, we have the following.

\begin{proposition}\label{prop:int_blowup}
	Let $K\sub D$ be compact and let $\mathcal{F}(u)$ be as in \eqref{eq:free_boundary}. Then, there exists $\bar{C}_{\textup{BU}}>0$, $\bar{R}_0>0$ and $\bar{\alpha}\in(0,1)$ depending only on $d$ and $K$ such that the following holds. For any $x_0\in \mathcal{F}(u)\cap K$ there exists $\bar{a}_{x_0}\geq 0$, $\bar{\bm{e}}_{x_0}\in\partial B_1$ and $j,k\in\{1,\dots,N\}$ such that, letting
	\[
	\bar{P}^{x_0}_j=\bar{a}_{x_0}(x\cdot \bar{\bm{e}}_{x_0})^+,\quad \bar{P}^{x_0}_k=\bar{a}_{x_0}(x\cdot \bar{\bm{e}}_{x_0})^-\quad\text{and}\quad \bar{P}^{x_0}_i=0~\text{for all }i\neq j,k,
	\]
	we have that
	\[
	\sum_{i=1}^N \int_{ B_1}|u_i^{r,x_0,1}-\bar{P}^{x_0}_i|^2\ds\leq \bar{C}_{\textup{BU}}H(u,\bar{R}_0,x_0)\, r^{\bar{\alpha}}\quad\text{for all }r\leq \bar{R}_0,
	\]
	for some $\bar{\alpha}\in(0,1)$. Moreover, if $x_0\in\mathcal{R}(u)$, with $\mathcal{R}(u)$ being as in \Cref{thm:int_free_bound}, then $\bar{a}_{x_0}>0$.
\end{proposition}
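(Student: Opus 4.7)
My approach splits $\mathcal F(u)\cap K$ according to the interior frequency $\gamma(u,x_0):=\lim_{r\to 0}\mathcal N(u,r,x_0)$, which by the interior analogue of \Cref{lemma:frequencies} satisfies either $\gamma(u,x_0)=1$ (regular points) or $\gamma(u,x_0)\geq 1+\delta_d$ (singular points), see \cite{CL2007, TerraciniTavares2012}. Since $K\Subset D$, I set $\bar R_0:=\tfrac12\dist(K,\partial D)\wedge R_0$ so that $B_{\bar R_0}(x_0)\Subset D$ for all $x_0\in K$; in particular the Almgren frequency is genuinely monotone without perturbative error (the role of $\sigma$ collapses since the coefficients of the ambient operator are constant: $A=I$, $p_i=\lambda_i$). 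The Lipschitz bound of \Cref{prop:lipschitz} together with a simple $L^\infty$-$L^2$ comparison provides a uniform upper bound $H(u,\bar R_0,x_0)\leq C(K)$, and the monotonicity gives $\mathcal N(u,\bar R_0,x_0)\leq C(K)$; both bounds depend only on $d$ and $K$.

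\textbf{Regular points.} At a point $x_0$ with $\gamma(u,x_0)=1$, I would run the scheme of \Cref{prop:blow_up_rate} with the interior epiperimetric inequality \Cref{thm:epi1} playing the role of \Cref{thm:epi2}. Since no change of variables is needed, the full error term $\int_0^r \sigma_0(t)/t\,dt$ collapses to zero, and the Weiss formula (the $\sigma\equiv 0$ version of \Cref{prop:W'}), after inserting $z=h^{r,1}$ and its $\epsilon_{\textup{int}}$-competitor into the inequality
\[
W_1'(u,r,x_0)\geq\frac{d-1}{r}\bigl(\widetilde W_1(h^{r,1},1)-\widetilde W_1(V^{r,1},1)\bigr)+\frac{\mathcal D_1(r)}{r},
\]
absorbing the almost-minimality correction (which is $O(r)$ by \Cref{prop:almost}) into a multiplicative constant, yields the clean differential inequality $W_1(u,r,x_0)\leq W_1(u,\bar R_0,x_0)\,(r/\bar R_0)^{\bar\alpha_1}$ for some dimensional $\bar\alpha_1\in(0,1)$. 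Plugging this rate back into the Cauchy estimate via $\int \mathcal D_1(r)/r\,dr$, as in the final step of \Cref{prop:blow_up_rate}, gives
\[
\sum_{i=1}^N\int_{\partial B_1}|u_i^{r_2,x_0,1}-u_i^{r_1,x_0,1}|^2\,dS\leq C\,H(u,\bar R_0,x_0)\bigl(r_2^{\bar\alpha_1}-r_1^{\bar\alpha_1}\bigr),
\]
so the blow-up is unique with the desired polynomial $L^2(\partial B_1)$ rate, which upgrades to $L^2(B_1)$ by integration in $r$ and $1$-homogeneity. The classification in the proof of \Cref{lemma:frequencies}\ref{item:N_2} (applied on the full sphere rather than the half-sphere: the only $1$-homogeneous nontrivial element of the interior analogue of $\mathcal B_1$ with at least two components is the two-phase linear function) identifies the limit as $\bar P^{x_0}=\bar a_{x_0}\bigl((x\cdot\bar{\bm e}_{x_0})^+\bm e_j-(x\cdot\bar{\bm e}_{x_0})^-\bm e_k\bigr)$; nondegeneracy $\bar a_{x_0}>0$ follows from the interior analogue of \Cref{prop:nondeg}, whose proof transposes verbatim.

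\textbf{Singular points.} At a point $x_0$ with $\gamma(u,x_0)\geq 1+\delta_d$, I simply apply the interior analogue of \Cref{lemma:decay}(i), which in the interior setting reads $H(u,r,x_0)\leq C\,H(u,\bar R_0,x_0)\,(r/\bar R_0)^{2\gamma(u,x_0)}$ with a dimensional $C$ (no extra Dini term is needed since $\sigma$ plays no role). This gives
\[
\sum_{i=1}^N\int_{\partial B_1}|u_i^{r,x_0,1}|^2\,dS=\frac{H(u,r,x_0)}{r^2}\leq C\,H(u,\bar R_0,x_0)\,r^{2\delta_d},
\]
and, after integration in $r$, the same estimate on $B_1$. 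Hence the conclusion holds with $\bar P^{x_0}\equiv 0$, i.e. $\bar a_{x_0}=0$, and rate $r^{2\delta_d}$.

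\textbf{Conclusion and main obstacle.} Setting $\bar\alpha:=\min\{\bar\alpha_1,2\delta_d\}\in(0,1)$ and $\bar C_{\textup{BU}}$ the larger of the two constants produces a single estimate valid on $\mathcal F(u)\cap K$. The main technical hurdle is the regular-point step: carefully tracking the constants so that the epiperimetric gain survives the almost-minimality perturbation uniformly over $K$, and so that the resulting exponent $\bar\alpha_1$ is truly dimensional; this is where the interior nature (allowing $A=I$, and hence no fight with $\sigma$) is crucial, because the otherwise logarithmic/Dini error term in \Cref{prop:blow_up_rate} would not give a polynomial rate. All remaining bookkeeping — deducing $L^2(B_1)$ rates from $L^2(\partial B_1)$ rates via homogeneity, propagating the nondegeneracy bound from $\bar R_0$ to arbitrary scales, and identifying the two-phase structure of $\bar P^{x_0}$ at regular points — is standard once these pieces are in place.
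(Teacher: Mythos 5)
Your proposal is essentially correct and follows the same route as the paper, which simply transposes the boundary scheme (\Cref{prop:blow_up_rate} plus \Cref{cor:blowup}) to interior points, using the interior epiperimetric inequality \Cref{thm:epi1} in place of \Cref{thm:epi2}. The one genuine departure is your split into regular/singular points: the paper appears to run the $\gamma=1$ Weiss--epiperimetric argument uniformly over all of $\mathcal F(u)\cap K$, identifying the limit as either a two-phase linear profile or $\equiv 0$, whereas you dispatch singular points with a bare Almgren frequency-gap estimate $H(u,r,x_0)\lesssim r^{2\gamma(u,x_0)}$, which is a slight simplification; both give a uniform polynomial rate. One small imprecision in your regular-point step: the error term does not literally collapse to zero at interior points — the zeroth-order term $\lambda_iu_i^2$ and the normalisation-based almost-minimality of \Cref{prop:almost} still contribute an \emph{additive} $O(H(\bar R_0)\,r)$ error to the Weiss differential inequality (not a multiplicative perturbation), so the clean Gronwall inequality should read $\bar W_1'(r)\geq \tfrac{\bar\alpha_1}{r}\bar W_1(r)+\tfrac{\mathcal D_1(r)}{2r}-C H(\bar R_0)\,r$; since $\bar\alpha_1\sim \epsilon_{\textup{int}}\,d/4<2$, the extra term is integrable against $r^{-\bar\alpha_1}$ and the conclusion goes through exactly as you claim.
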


\section{Regularity of the free boundary and clean-up}\label{sec:regularity}

In the present section, we exploit the results obtained in \Cref{sec:blowup} in order to conclude the proof of the main theorems of the present work, that is, up to the boundary regularity and clean-up results. Them main feature of these results relies in their quantitative nature.

\subsection{The optimal partition at the boundary}
In this section we define the traces of the optimal domains $\Omega_i$, $i=1,\dots,N$, at the boundary $\partial D$. The key result is a clean-up lemma (\Cref{l:clean-up-boundary-1}), in which we show that if the solution $u=(u_1,\dots,u_N)$ is sufficiently close to a one-homogeneous solution in a (small) ball with center on $\partial D$, then all the components but one vanish in some smaller ball.

\begin{lemma}\label{l:clean-up-boundary-1}
	For any $\delta>0$, there exist $\rho_1,\epsilon_1\in (0,1)$ depending on $d$, $D$, $N$ and $\delta$ such that, if
	\[
	\sum_{i=1}^N\norm{u_i-P_i}_{L^2(B_r(x_0))}^2\leq r^{d+2}\epsilon_1,
	\]
	for some $x_0\in \partial D$ and $r\leq R_0$, where $P=(a((x_0-x)\cdot\nnu(x_0))^+,0,\dots,0)$ for some $a\geq \delta$, then $u_1\geq \dist(\cdot,\partial D)\delta/4$ and as a consequence $u_i\equiv 0$ in $B_{r\rho_1}(x_0)$ for  any $i=2,\dots,N$. 
\end{lemma}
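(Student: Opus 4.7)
The plan is a rescale-and-upgrade argument combined with a boundary comparison, followed by segregation for the clean-up.

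\textbf{Rescaling and $L^\infty$ upgrade.} Setting $\tilde u(y):=u(ry+x_0)/r$ and $\tilde P(y):=(a(-y\cdot\nnu(x_0))^+,0,\dots,0)$, the $1$-homogeneity of $P$ turns the hypothesis into $\sum_{i=1}^N\norm{\tilde u_i-\tilde P_i}_{L^2(B_1)}^2\leq\epsilon_1$, while $\partial\tilde D$ is locally the graph of $\tilde\varphi(y'):=\varphi(ry')/r$ with $\tilde\varphi(0)=|\nabla\tilde\varphi(0)|=0$ and modulus of continuity $t\mapsto\sigma(rt)$ (cf.\ \Cref{ass:domain}). Since $u$ is globally Lipschitz by \Cref{prop:lipschitz} and $\tilde u(0)=0$, $\norm{\tilde u}_{L^\infty(B_1)}$ is universally bounded; combined with the $L^2$-closeness to $\tilde P$, this forces a universal upper bound $a\leq C_*$. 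Consequently each $\tilde u_i-\tilde P_i$ is uniformly Lipschitz, and the level-set integration argument of \Cref{lemma:Linfty_blowup} yields
\[
\norm{\tilde u_i-\tilde P_i}_{L^\infty(B_{1/2})}\leq C\,\epsilon_1^{\frac{1}{d+2}} =: \eta,
\]
with $C$ depending only on $d,D,N,\delta$.

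\textbf{Non-degeneracy away from $\partial\tilde D$.} In coordinates where $\nnu(x_0)=(0,\dots,0,-1)$, one has $\tilde P_1(y)=ay_d^+$. Using the bound $|\tilde\varphi(y')|\leq\sigma(r\rho_1)\rho_1$ for $|y'|\leq\rho_1$ and the smallness of $|\nabla\tilde\varphi|$, a direct computation gives $y_d\geq\dist(y,\partial\tilde D)-\sigma(r\rho_1)\rho_1$ for every $y\in\tilde D\cap B_{\rho_1}$, hence
\[
\tilde u_1(y)\geq \tilde P_1(y)-\eta\geq a\,\dist(y,\partial\tilde D)-a\sigma(r\rho_1)\rho_1-\eta.
\]
Using $a\geq\delta$, this yields $\tilde u_1(y)\geq\tfrac{\delta}{4}\dist(y,\partial\tilde D)$ whenever $\dist(y,\partial\tilde D)\geq d_0:=\tfrac{4}{3\delta}\bigl(a\sigma(r\rho_1)\rho_1+\eta\bigr)$, and $\rho_1,\epsilon_1$ will be selected so that $d_0\ll\rho_1$.

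\textbf{Non-degeneracy in the thin strip.} The main technical obstacle is extending the bound to the set $\mathcal S:=\{y\in\tilde D\cap B_{\rho_1/2}:\dist(y,\partial\tilde D)<d_0\}$. On $\mathcal S$ the function $\tilde u_1$ is nonnegative, solves $-\Delta\tilde u_1=r^2\lambda_1\tilde u_1$ in its positivity set (\Cref{lemma:extermaliti}), vanishes on $\partial\tilde D$, and by the previous step is bounded below by $\tfrac{\delta}{4}d_0$ on the inner face $\{\dist(\cdot,\partial\tilde D)=d_0\}$. I would compare $\tilde u_1$ with the barrier $\phi(y):=\tfrac{\delta}{4}\dist(y,\partial\tilde D)$ via the weak maximum principle; the comparison is valid because the strip has width $d_0$ much smaller than $1/\sqrt{r^2\lambda_1}$, so the Poincar\'e constant on $\mathcal S$ dominates the zeroth-order coefficient, and the boundary data on the lateral part of $\partial\mathcal S$ is treated by localising on balls centred at points of $\partial\tilde D$ and applying the comparison scale by scale. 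This step is essentially a quantitative Hopf boundary lemma in $C^1$-Dini domains (cf.\ \cite{AN2022}) and constitutes the heart of the argument. Unscaling gives $u_1\geq(\delta/4)\dist(\cdot,\partial D)$ on $B_{r\rho_1}(x_0)\cap D$.

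\textbf{Clean-up via segregation.} Since $u_1>0$ on the full set $B_{r\rho_1}(x_0)\cap D$ and $u_1u_i\equiv 0$ a.e., each $u_i$ ($i\geq 2$) vanishes on $B_{r\rho_1}(x_0)\cap D$; combined with $u_i\equiv 0$ on $\R^d\setminus\overline D$ this yields $u_i\equiv 0$ in $B_{r\rho_1}(x_0)$, once $\rho_1$ and $\epsilon_1$ are chosen small enough (depending only on $d,D,N,\delta$) to simultaneously enforce $d_0\ll\rho_1$, the applicability of the barrier comparison, and the smallness hypotheses of \Cref{lemma:Linfty_blowup}.
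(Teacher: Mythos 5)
Your bulk estimate (distance to $\partial D$ of order at least $d_0\sim \eta/\delta$) is fine and is in the same spirit as the paper, but the thin-strip step — which you correctly identify as the heart of the matter — has a genuine gap. You propose to compare $\tilde u_1$ with the barrier $\phi=\tfrac{\delta}{4}\dist(\cdot,\partial\tilde D)$ via the weak maximum principle on $\mathcal S$. This cannot work as stated: by \Cref{lemma:extermaliti}, $u_1$ satisfies only $-\Delta u_1\leq \lambda_1 u_1$ in all of $D$; its distributional Laplacian carries a \emph{negative} singular measure on $\partial\Omega_1$ (the jump of the normal derivative), so $u_1$ is a subsolution, not a supersolution, on any set that is not already contained in $\Omega_1$. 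Since a priori the strip $\mathcal S$ may meet $\{u_1=0\}$ on an open set where some other component lives — which is exactly what the lemma is meant to exclude — the comparison $u_1\geq\phi$ is circular. A salvageable version would have to run the comparison for $\hat u:=u_1-\sum_{j\geq 2}u_j$, which is a genuine supersolution up to lower-order terms by the second inequality in \Cref{lemma:extermaliti}, and would additionally require a barrier adapted to a merely $C^{1,\text{Dini}}$ boundary (the distance function is not $C^2$ there) plus an argument that the dominant component at nearby boundary points is still the first one; none of this is in your sketch.

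The paper avoids the barrier entirely and uses a different mechanism, which is worth internalizing: for each $y\in B_{r\rho_1}(x_0)$ it takes the projection $z\in\partial D$ of $y$, invokes the \emph{uniform quantitative rate of convergence} of the $1$-homogeneous rescalings at $z$ (\Cref{cor:blowup} and \Cref{lemma:Linfty_blowup}, which are consequences of the epiperimetric inequality and hold at \emph{every} boundary point with constants controlled by $H(R_0)$), shows by a triangle inequality — using the $L^2$ hypothesis, the Lipschitz bound, and the rate — that the blow-up limit at $z$ is $a'(-x\cdot\nnu(z))^+$ sitting in the first component with $a'\geq\delta/2$, and then evaluates the $L^\infty$ blow-up estimate at scale $t=|y-z|=\dist(y,\partial D)$ at the single point $-\nnu(z)$. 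Because the $L^\infty$ error at scale $t$ is $o(1)$ relative to $a'$ as $t\to 0$, uniformly in $z$, this gives $u_1(y)\geq t\delta/4$ at all scales simultaneously, with no Hopf-type lemma needed. Your proposal is missing precisely this input (the scale-by-scale control near $\partial D$ that the epiperimetric inequality purchases), and the barrier you offer in its place does not close the gap.
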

\begin{proof}
	Let $y\in B_{r\rho_1}(x_0)$ for some $\rho_1>0$ to be specified later and let $z\in B_{2r\rho_1}(x_0)$ be the projection of $y$ onto $\partial D$. We now consider the scaled function
	\[
	w^t(x):=u^{t,z,1}(x)=\frac{u(tx+z)}{t}
	\]
	and we let $W:=P^{z,1}$ be as in  \Cref{cor:blowup}.
	We know that there exists $j\in\{1,\dots,N\}$ such that $W_i\equiv 0$ for all $i\neq j$ and that
	\[
	W_j(x)=(-a'x\cdot\nnu(z))^+\quad\text{for some }a'\geq 0.
	\]
	In addition, in view of \Cref{lemma:Linfty_blowup}
	\begin{equation}\label{eq:clean_up1_1}
		\sum_{i=1}^N\norm{w^t_i-W_i}_{L^\infty(B_1)}\leq C_\infty H(R_0)^{\frac{1}{d+2}}\left(\int_0^t\frac{\sigma_0(s)}{s}\d s\right)^{\frac{1}{d+2}},
	\end{equation}
	for all $t\leq R_0$. We claim that
	\begin{equation}\label{eq:clean_up1_2}
		j=1\quad\text{and}\quad a'\geq \frac{\delta}{2}.
	\end{equation}
	Once this is proved, we can take $t=|y-z|$ in \eqref{eq:clean_up1_1} and, since $\nnu(z)=\frac{z-y}{t}$, obtain that
	\begin{align*}
		|t^{-1}u_1(y)-a'|=\left|w_1^t(-\nnu(z))-W_1(-\nnu(z)) \right|&\leq \sum_{i=1}^N\norm{w^t_i-W_i}_{L^\infty(B_1)} \\
		&\leq C_\infty H(R_0)^{\frac{1}{d+2}}\left(\int_0^t\frac{\sigma_0(s)}{s}\d s\right)^{\frac{1}{d+2}}.
	\end{align*}
	As a consequence, since $t=|y-z|\leq R_0\rho_1$, we have that
	\begin{equation*}
		\frac{\delta}{2}-\frac{u_1(y)}{t}\leq |t^{-1}u_1(y)-a'|\leq C_\infty H(R_0)^{\frac{1}{d+2}}\left(\int_0^{R_0\rho_1}\frac{\sigma_0(s)}{s}\d s\right)^{\frac{1}{d+2}}
	\end{equation*}
	and this implies that $u_1(y)\geq t\delta/4$ by taking $\rho_1$ sufficiently small, and this concludes the proof. Let us now prove \eqref{eq:clean_up1_2}. Since $B_{r(1-2\rho_1)}(z)\sub B_r(x_0)$, we deduce that
	\begin{equation*}
		\sum_{i=1}^N\norm{u_i-P_i}_{L^2(B_{r(1-2\rho_1)}(z))}^2\leq r^{d+2}\epsilon_1
	\end{equation*}
	from which we obtain that
	\begin{equation*}
		\sum_{i=1}^N\norm{w_i^t-\frac1t P_i(tx+z)}_{L^2(B_1)}^2\leq \frac{\epsilon_1 r^{d+2}}{t^{d+2}},
	\end{equation*}
	for every $t\le r(1-2\rho_1)$. At this point, we estimate
	\begin{align*}
		\frac1{C}|a-a'|&\le \norm{W_1(x)-P_1(x+x_0)}_{L^2_x(B_1)}\\
		&\leq\norm{\frac1t P_1(tx+z)-P_1(x+x_0)}_{L^2_x(B_1)}+\norm{W_1-\frac1t P_1(tx+z)}_{L^2(B_1)} \\
		&\leq\norm{ a\Big(\frac{x_0-z}{t}-x\Big)\cdot\nnu(x_0)+ax\cdot\nnu(x_0)}_{L^2_x(B_1)}+\sum_{i=1}^N\norm{W_i-\frac1t P_i(tx+z)}_{L^2(B_1)}\\
		&\leq a |B_1|\frac{2\rho_1 r}{t}+\left(C H(R_0)\int_0^t\frac{\sigma_0(s)}{s}\d s\right)^{1/2}+\left(\frac{\epsilon_1 r^{d+2}}{t^{d+2}}\right)^{1/2}\\
		&\leq C_L |B_1|\frac{2\rho_1 R_0}{t}+\left(CH(R_0)\int_0^t\frac{\sigma_0(s)}{s}\d s\right)^{1/2}+\left(\frac{\epsilon_1 R_0^{d+2}}{t^{d+2}}\right)^{1/2},
	\end{align*}
	where $C_L>0$ is the Lipschitz constant of $u$ and $C>0$ depends only on $d$, $D$ and $N$. Choosing, first $t$, then $\rho_1$ and $\epsilon_1$, small enough (depending on $\delta$, and also on $d$, $D$, $N$),  we get the claim.
\end{proof}

\begin{remark}\label{rmk:clean_up_1}
	We observe that \Cref{l:clean-up-boundary-1} can be applied if we replace $u$ and $D$ with 
	$$w(x):=\frac1R\frac{u(y_0+sx)}{s}\qquad\text{and}\qquad D_{s,y_0}:=(-y_0+D)/s,$$ 
	for some $R>0$, $s>0$ and $y_0\in\partial D$. This is true under the condition that
	\[
	R_0^{1-d}\sum_{i=1}^N\norm{w_i}_{L^2(\partial B_{R_0})}^2=\frac1{s^2R^2}(sR_0)^{1-d}\sum_{i=1}^N\norm{u_i}_{L^2(\partial B_{sR_0}(y_0))}^2\leq C,
	\]
	for some $C>0$ depending only on $d$, $D$ and $N$. 
	Essentially, this is a consequence of the fact that $w$ is  a minimizer for \eqref{eq:opt_part_variat} in $D_{s,y_0}$ and of the fact that the universal constant $C_{\infty}$ appearing in \Cref{lemma:Linfty_blowup} is multiplied by $H(R_0)$.
\end{remark}

\subsection{Traces of the optimal domains}

We here define the traces $\omega_i$ of the optimal domains $\Omega_i$ on the boundary $\partial D$. The following is a direct consequence of \Cref{l:clean-up-boundary-1}.
\begin{lemma}\label{lemma:definition-omega-j}
	Let $(\Omega_1,\dots,\Omega_N)$ be the optimal partition in $D$. Let $x_0\in \partial D$ and let $j\in\{1,\dots,N\}$ be fixed. Then, the following are equivalent:
	\begin{enumerate}
		\item[\rm (1)] $\gamma(x_0)=1$ and $u^{r,x_0,1}(x):=\frac1ru(x_0+rx)$ converges to a function of the form 
		$$P(x):=\Big(0,\dots,a\big(-x\cdot\nnu(x_0)\big)^+,\dots,0\Big),$$
		where the $j$th is the only non-zero component of $P$ and $a>0$.
		\item[\rm (2)] The following Taylor expansion holds for points $x_0+x\in \overline D$:
		$$u_j(x)=a(-(x-x_0)\cdot\nnu(x_0))^++o(|x-x_0|)~\text{as }x\to x_0 $$
		for some $a>0$ and
		\[
		u_i(x)=o(|x-x_0|)~\text{as }x\to x_0\quad\text{for every}\quad i\neq j.
		\]
		\item[\rm(3)] There is a ball $B_r(x_0)$ such that
		$$B_r(x_0)\cap D=\Omega_j\cap D.$$
	\end{enumerate}
\end{lemma}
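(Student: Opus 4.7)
The plan is to prove the cyclic chain of implications $(1)\Rightarrow(3)\Rightarrow(2)\Rightarrow(1)$, since each link has a distinct natural mechanism already developed earlier in the paper.

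For $(1)\Rightarrow(3)$, I would combine the quantitative blow-up rate from \Cref{cor:blowup} with the boundary clean-up lemma \Cref{l:clean-up-boundary-1}. Under the assumption $\gamma(x_0)=1$ with the prescribed form of the blow-up $P^{x_0,1}=P$ having a positive amplitude $a>0$, \Cref{cor:blowup} supplies the unscaled $L^2$ estimate
\[
\sum_{i=1}^N\|u_i-P_i(\cdot-x_0)\|_{L^2(B_r(x_0))}^2\leq r^{d+2}\,C_{\textup{BU}}\,H(R_0)\int_0^r\frac{\sigma_0(t)}{t}\,dt,
\]
obtained from the estimate on $B_1$ by a change of variables together with the $1$-homogeneity of $P$. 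The integral on the right vanishes as $r\to 0$ by \Cref{ass:domain}, so for $r$ small enough the right-hand side drops below the threshold $r^{d+2}\epsilon_1$ of \Cref{l:clean-up-boundary-1} applied with $\delta:=a$. The clean-up lemma then produces a smaller ball $B_{r\rho_1}(x_0)$ on which $u_i\equiv 0$ for every $i\neq j$ and $u_j>0$ on $D\cap B_{r\rho_1}(x_0)$, which is exactly (3).

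For $(3)\Rightarrow(2)$, the assumption localises the problem to a single component. The vanishing of $u_i$ near $x_0$ for $i\neq j$ gives the trivial expansion for those components, while $u_j$ solves the linear eigenvalue equation $-\Delta u_j=\lambda_j u_j$ in $B_r(x_0)\cap D$ with zero Dirichlet data on $\partial D\cap B_r(x_0)$. Since $\partial D$ is at worst $C^{1,\textup{Dini}}$ by \Cref{ass:domain}, classical boundary Schauder--Dini theory for linear equations yields $u_j\in C^1(\overline{D\cap B_{r/2}(x_0)})$; combined with the vanishing of tangential derivatives of $u_j$ along $\partial D$, this forces $\nabla u_j(x_0)=-a\,\nnu(x_0)$ for some $a\geq 0$ and produces the first-order Taylor expansion with $o(|x-x_0|)$ remainder. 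The Hopf boundary point lemma, valid in $C^{1,\textup{Dini}}$ domains (which satisfy the interior tangent ball condition), applied to the nonnegative nontrivial eigenfunction $u_j$ (positive in $D\cap B_r(x_0)$ by the strong maximum principle), upgrades this to $a>0$, giving (2).

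The implication $(2)\Rightarrow(1)$ is the most direct: the Taylor expansion is precisely the statement that $u^{r,x_0,1}(x)=u(x_0+rx)/r$ converges uniformly on $B_1$ to the $1$-homogeneous function $P$ with $P_j(x)=a(-x\cdot\nnu(x_0))^+$ and $P_i\equiv 0$ for $i\neq j$, as $r\to 0$. This limit is a nontrivial blow-up profile, so by \Cref{cor:almgren_blowup} it must be \emph{the} Almgren blow-up and the homogeneity forces $\gamma(x_0)=\lim_{r\to 0}\mathcal{N}(v^{x_0},r)=1$. The main obstacle I anticipate is the $(3)\Rightarrow(2)$ step, specifically the use of Hopf's boundary point lemma and boundary $C^1$ regularity under only the Dini hypothesis on $\partial D$; one must appeal to versions of Hopf and of elliptic boundary regularity valid beyond the classical $C^{1,\alpha}$ setting. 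The remaining implications amount to invoking tools already established, namely the quantitative blow-up analysis of \Cref{sec:blowup} and the boundary clean-up of \Cref{l:clean-up-boundary-1}.
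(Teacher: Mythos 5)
Your proposal is correct and follows essentially the same route as the paper: the implication $(1)\Rightarrow(3)$ via the quantitative blow-up estimate of \Cref{cor:blowup} feeding the hypothesis of the clean-up lemma \Cref{l:clean-up-boundary-1}, the implication $(3)\Rightarrow(2)$ via the Hopf boundary point lemma, and the equivalence of (1) and (2) as a direct translation between the Taylor expansion and convergence of the $1$-homogeneous rescalings (using that $\gamma(x_0)>1$ would force $u=o(|x-x_0|)$ by \Cref{lemma:decay}, contradicting $a>0$). The only cosmetic difference is that the paper closes $(1)\Rightarrow(3)$ by invoking interior unique continuation, whereas you rely on the quantitative lower bound $u_j\geq \dist(\cdot,\partial D)\,\delta/4$ already contained in the clean-up lemma; both are valid.
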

\begin{proof}
	We first notice that clearly (1) and (2) are equivalent. Next, if assume (1), then the clean-up lemma (Lemma \ref{l:clean-up-boundary-1}) implies that  
	$$B_r(x_0)\cap \Omega_i=\emptyset\qquad\text{for every}\quad i\neq j,$$
	for some $r>0$. Then, by the interior unique continuation (see \cite{CL2007}), we get (3). Conversely, if we assume (3), then by the Hopf maximum principle at $x_0$, we get (2).
\end{proof}
In view of \Cref{lemma:definition-omega-j}, we can define the partition $\omega_j$, $j=1,\dots,N$ of $\partial D$ as follows:
$$\omega_j:=\Big\{x_0\in\partial D\ :\ B_r(x_0)\cap D=\Omega_j\cap D~\text{for some }r>0\Big\}.$$
We are now ready to prove our first main result, that is \Cref{thm:taylor}.
\begin{proof}[\bf Proof of \Cref{thm:taylor}]
	We preliminarily observe that cases \textit{1)}, \textit{2)} and \textit{3)} occur, respectively, when $\gamma(x_0)=1$, $\gamma(x_0)=2$ or $\gamma(x_0)>2$. Hence, point \textit{1)} is a direct consequence of \Cref{cor:blowup} and \Cref{lemma:definition-omega-j}, while point \textit{2)} simply follows from \Cref{cor:blowup}. Finally, if $\gamma(x_0)>2$, then combining \Cref{cor:blowup} with \Cref{cor:almgren_blowup} and \Cref{lemma:decay} point \textit{(i)}, one can easily prove that $P^{x_0,2}\equiv 0$ (with $P^{x_0,2}$ being as in \Cref{cor:blowup}), and this concludes the proof of point \textit{3)}.
\end{proof}
At this point, we define
\begin{equation*}\label{eq:def_A_i}
	A_j:=\mathrm{Int}_{\partial D}(\overline{\Omega_j}\cap\partial D)
\end{equation*}
and we prove the following structure result.
\begin{proposition}\label{prop:topology}
	The following hold true:
	\begin{enumerate}
		\item [\rm (i)] $\bigcup_{i=1}^N\omega_i=\mathcal{Z}_1^{\partial D}(u)$;
		\item [\rm (ii)] $\mathcal F_{\partial D}(u)=\partial D\setminus \mathcal Z_1^{\partial D}(u)$, where we recall that $\mathcal{F}_{\partial D}(u):=\overline{\mathcal{F}(u)}\cap \partial D$;
		\item [\rm (iii)] $\mathcal{F}_{\partial D}(u)=\bigcup_{i=1}^N\partial_{\partial D} \omega_i$ or, equivalently, $\mathrm{Int}_{\partial D}(\mathcal{F}_{\partial D}(u))=\emptyset$;
		\item[\rm (iv)] $A_i\cap \overline{\omega_j}=\emptyset$ for all $i\neq j$;
		\item[\rm (v)] $A_i=\mathrm{Int}_{\partial D}(\overline{\omega_i})$ for all $i=1,\dots,N$.
	\end{enumerate}
\end{proposition}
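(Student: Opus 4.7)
The plan for \Cref{prop:topology} is to reduce (i), (ii), (iv), (v) to short topological arguments based on \Cref{lemma:definition-omega-j} and \Cref{thm:taylor}, and to handle (iii) by a contradiction that combines the Case 2 Taylor expansion at frequency-$2$ points with a Baire--Hopf argument at frequency-$\geq 2+\delta_d$ points.

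For (i), the equivalence in \Cref{lemma:definition-omega-j} shows that $\omega_j$ is exactly the set of $x_0\in\partial D$ at which case 1 of \Cref{thm:taylor} holds with nonzero component $u_j$, so $\bigcup_j\omega_j=\mathcal{Z}_1^{\partial D}(u)$. For (ii), if $x_0\in\omega_j$ then \Cref{lemma:definition-omega-j}(3) gives a ball $B_r(x_0)$ in which only $\Omega_j$ appears, so $B_r(x_0)\cap\mathcal{F}(u)=\emptyset$ and $x_0\notin\overline{\mathcal{F}(u)}$; conversely, if $x_0\in\partial D\setminus\overline{\mathcal{F}(u)}$, the interior unique continuation of \cite{CL2007} forces a single $\Omega_j$ to fill $B_r(x_0)\cap D$ for some $r>0$, so $x_0\in\omega_j$. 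For (iv), if $y\in A_i\cap\overline{\omega_j}$ with $i\neq j$, pick a $\partial D$-neighborhood $U\subseteq\overline{\Omega_i}\cap\partial D$ of $y$ and a sequence $y_n\in\omega_j$ converging to $y$; then $y_n\in U\subseteq\overline{\Omega_i}$ for large $n$, giving $z_m\in\Omega_i$ with $z_m\to y_n$, but \Cref{lemma:definition-omega-j}(3) places all of $B_{r_n}(y_n)\cap D$ inside $\Omega_j$, so $z_m\in\Omega_i\cap\Omega_j=\emptyset$ for $m$ large, a contradiction.

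The main content of the proposition is (iii). Since each $\omega_j$ is relatively open in $\partial D$ by \Cref{lemma:definition-omega-j}(3), the inclusion $\bigcup_j\partial_{\partial D}\omega_j\subseteq \mathcal{F}_{\partial D}(u)$ is automatic from (i)--(ii), and the whole statement reduces to the density of $\bigcup_j\omega_j$ in $\partial D$, equivalently $\mathrm{Int}_{\partial D}(\mathcal{F}_{\partial D}(u))=\emptyset$. By contradiction, let $V\subseteq\mathcal{F}_{\partial D}(u)$ be a nonempty relatively open subset, so that $\gamma\geq 2$ on $V$. I first rule out any $y_0\in V$ with $\gamma(y_0)=2$: applying \Cref{thm:taylor} Case 2 at $y_0$ and denoting by $y_r$ the $\partial D$-projection of $y_0+r\bm{e}_{y_0}$ (which lies in $V$ for $r$ small by openness), one evaluates $u_j$ at $x=y_r-s\nnu(y_0)$ with $s\in(2r\sigma(r),r)$. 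The leading quadratic term in the expansion together with the $C^1$ regularity of $\partial D$ at $y_0$ yields $u_j(x)\geq c\,a_{y_0,2}\,rs$ for some dimensional $c>0$ and $r$ small, whereas $\gamma(y_r)\geq 2$ combined with \Cref{lemma:decay}(ii) applied at $y_r$ gives $u_j(x)\leq \sqrt{CH(R_0)}\,s^2$. Choosing $s=c\,a_{y_0,2}\,r/(2\sqrt{CH(R_0)})$, which exceeds $2r\sigma(r)$ once $\sigma(r)$ is small enough, yields the desired contradiction. To rule out the remaining case $V\subseteq\{\gamma\geq 2+\delta_d\}$, I combine case (3) of \Cref{lemma:frequencies} (two nonzero components in every Almgren blow-up at $y_0\in V$, via \Cref{prop:almgren_rescalings}) with a Baire extraction, from the decomposition $V=\bigcup_{j=1}^N(V\cap\overline{\Omega_j})$, of an index $j$ and a nonempty relatively open $V'\subseteq V$ with $V'\subseteq\overline{\Omega_j}$. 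The uniform bound $|u_j(x)|\leq\sqrt{CH(R_0)}\,\dist(x,\partial D)^{2+\delta_d}$ in a tubular neighborhood of $V'$, combined with the interior regularity \Cref{thm:int_free_bound} and a Hopf-type argument applied at a suitable contact point of $\partial\Omega_j$ with $V'$, contradicts $\partial_{\nnu}u_j\equiv 0$ on $V'$.

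Claim (v) is a short topological consequence of (iii)--(iv): $\omega_i\subseteq\overline{\Omega_i}\cap\partial D$ gives $\mathrm{Int}_{\partial D}(\overline{\omega_i})\subseteq A_i$ immediately, while for $y\in A_i$ a $\partial D$-neighborhood $U\subseteq\overline{\Omega_i}\cap\partial D$ of $y$ contains a dense subset of $\bigcup_k\omega_k$ by (iii), and the disjointness argument in (iv) forces any such $\omega_k$ contact to satisfy $k=i$, so $U\cap\bigcup_k\omega_k\subseteq\omega_i$ and hence $U\subseteq\overline{\omega_i}$. The hard point of the whole scheme is ruling out $\gamma\geq 2+\delta_d$ on an open subset of $\partial D$ in (iii): both the Baire extraction of the index $j$ with a genuinely touching $\Omega_j$ and the subsequent Hopf-type argument require locating a regular boundary-of-$\Omega_j$ point admitting an interior-ball contact inside $\Omega_j$, which is delicate and will likely need a careful combination of \Cref{thm:int_free_bound} with the $C^1$ regularity of $\partial D$.
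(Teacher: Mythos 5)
Parts (i), (ii), (iv) and (v) of your argument are correct and essentially coincide with the paper's proof (your (ii) and (iv) are contrapositive/sequential rephrasings of the same one-line facts, and your (v) uses (iii) and (iv) exactly as the paper does). Your first sub-case of (iii) — ruling out a frequency-$2$ point $y_0$ in a relatively open $V\subseteq\mathcal F_{\partial D}(u)$ by comparing the lower bound $u_j\gtrsim a_{y_0,2}\,rs$ coming from the Case 2 Taylor expansion at $y_0$ with the upper bound $u_j\lesssim s^2$ coming from \Cref{lemma:decay}(ii) applied at the nearby point $y_r\in V$ of frequency $\ge 2$ — is a valid (and not unpleasant) argument, although it turns out to be unnecessary in the paper's treatment.

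The genuine gap is in your second sub-case, $V\subseteq\{\gamma\ge 2+\delta_d\}$. The Baire extraction of an index $j$ with $V'\subseteq\overline{\Omega_j}$ is fine, and so is the decay $|u_j|\le C\,\mathrm{dist}(\cdot,\partial D)^{2+\delta_d}$ near $V'$, but the concluding Hopf-type step does not work: Hopf's lemma at a contact point of $\partial\Omega_j$ with $V'$ requires an interior tangent ball contained in $\Omega_j$ at that point, and no regularity of $\Omega_j$ near $\partial D$ is available there — indeed the cusp scenario (Figure \ref{fig:2}, middle picture), in which $\Omega_j$ clusters on $V'$ through arbitrarily thin tongues, is exactly what the paper only manages to exclude much later (\Cref{prop:clean_up_2}) and only at frequency-$2$ points. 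The bound $u_j\lesssim\mathrm{dist}(\cdot,\partial D)^{2+\delta_d}$ is perfectly compatible with $V'\subseteq\overline{\Omega_j}$ (a positive harmonic function in a cusp vanishes to high or infinite order at the tip), so there is no contradiction with "$\partial_{\nnu}u_j\equiv 0$ on $V'$" to be extracted from an interior-ball argument. What you are actually trying to prove here is a boundary strong unique continuation statement, and that is the idea you are missing: the paper observes that on the offending relatively open set every component satisfies $u_i=\partial_{\nnu}u_i=0$ (this follows from the blow-up analysis, since all points there have frequency $\ge 2$), so the extension of $u$ by zero across $\partial D$ satisfies the extremality differential inequalities of \Cref{lemma:extermaliti} on a full ball $B_r(x_0)$, i.e.\ it belongs to the class $\mathcal S(B_r(x_0))$ of \cite{CTV2003}; since this extension vanishes identically on the open set $B_r(x_0)\setminus\overline D$, the unique continuation theorem for the class $\mathcal S$ (\cite[Theorem 1.1]{TerraciniTavares2012}) gives the contradiction in one stroke, with no case distinction on the frequency. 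You should replace your second sub-case (and may as well drop the first) by this extension-by-zero argument.
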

\begin{proof}
	The proof of (i) is an immediate consequence of \Cref{lemma:definition-omega-j}, so we first prove (ii).\\ If $x_0\in\mathcal{F}_{\partial D}(u)$ then by definition, there is a sequence of points $x_0^{(n)}\in \mathcal{F}(u)\subset D$ such that $x_0^{(n)}\to x_0$ as $n\to\infty$. Moreover, by the known results in the interior (see \Cref{thm:int_free_bound}), in any neighborhood of $x_0^{(n)}$ there are at least two non-zero components; hence the same holds true for $x_0$, which implies that $x_0\in\partial D\setminus(\cup_i\omega_i)$, which proves that $\mathcal F_{\partial D}(u)\subset\partial D\setminus \mathcal Z_1^{\partial D}(u)$.
	On the other hand, if $x_0\in\partial D\setminus\mathcal{Z}_1^{\partial D}(u)$, then by \Cref{cor:almgren_blowup} and \Cref{lemma:frequencies} its blow-up limit $U$ must have at least two different non-zero components $U_i$ and $U_j$, thus implying that $\Omega_i\cap B_r(x_0)\neq \emptyset$ and $\Omega_j\cap B_r(x_0)\neq \emptyset$ for every $r>0$, which in turn implies that $\partial\Omega_i\cap B_r(x_0)\neq \emptyset$ for every $r>0$ and so $\mathcal F(u)\cap B_r(x_0)\neq \emptyset$ for every $r>0$. This concludes the proof of (ii).
	Let us prove (iii). Since 
	\[
	\partial_{\partial D} \omega_i\sub \partial D\setminus \mathcal{Z}_1^{\partial D}(u)\quad\text{for every }i,
	\]
	in view of (ii), we have that $\bigcup_{i=1}^N\partial_{\partial D} \omega_i\sub \mathcal{F}_{\partial D}(u)$. Let now $x_0\in \mathcal{F}_{\partial D}(u)$. Assume by contradiction that there exists $x_0\in\partial D$ and $r>0$ such that
	\[
	B_r(x_0)\cap \partial D\sub \partial D\setminus\Big(\bigcup_{i=1}^N\overline{\omega_i}\Big).
	\]
	Then, in view of \Cref{cor:blowup}, we have that $u_i$ is differentiable at any point of $B_r(x_0)\cap \partial D$ and there holds
	\begin{equation}\label{eq:main2_1}
		u_i=\partial_{\nnu} u_i=0\quad\text{on }B_r(x_0)\cap \partial D.
	\end{equation}
	Furthermore, from \Cref{lemma:extermaliti} we have that
	\begin{equation*}
		\begin{cases}
			-\Delta u_i\leq \lambda_i u_i, &\text{in }B_r(x_0)\cap D, \\
			-\Delta\left(u_i-\sum_{j\neq i}u_j\right)\geq \lambda_i u_i-\sum_{j\neq i}\lambda_j u_j,&\text{in }B_r(x_0)\cap D
		\end{cases}
	\end{equation*}
	in a distributional sense, and this, together with \eqref{eq:main2_1}, implies that this holds in the whole $B_r(x_0)$, up to extending by $0$ all the components $u_i$. Namely,
	\begin{equation*}
		\begin{cases}
			-\Delta u_i\leq \lambda_i u_i, &\text{in }B_r(x_0), \\
			-\Delta\left(u_i-\sum_{j\neq i}u_j\right)\geq \lambda_i u_i-\sum_{j\neq i}\lambda_j u_j,&\text{in }B_r(x_0)
		\end{cases}
	\end{equation*}
	in a distributional sense. In other words, $u=(u_1,\dots,u_N)$ belongs to the class $\mathcal{S}(B_r(x_0))$, which was introduced in \cite{CTV2003} and which we here recall. Given an open set $\Omega\subset\R^d$, we say that $u=(u_1,\dots,u_N)\in (H^1(\Omega))^N$ belongs to the class $\mathcal S(\Omega)$ if: 
	\begin{itemize}
		\item $u_j\ge 0$ for every $j=1,\dots,N$;
		\item $u_iu_j=0$ for every $i\neq j$; 
		\item there are $\lambda_j>0$, $j=1,\dots,N$ such that, for every $j\ge 1$, $\Delta u_j\ge -\lambda_j u_j$ in sense of distributions in $\Omega$; 
		\item for every $j=1,\dots,N$, 
		$$\Delta\Big(u_j-\sum_{i\neq j}u_i\Big)\le -\lambda_j u_j+\sum_{i\neq j}\lambda_iu_i\,$$
		in sense of distributions in $\Omega$.
	\end{itemize}
	Hence, since $u_i\equiv 0$ in $B_r(x_0)\cap (\R^d\setminus D)$, this contradicts the unique continuation theorem for this class of functions \cite[Theorem 1.1]{TerraciniTavares2012}, thus proving (iii). In order to prove (iv), we just observe that, by definition of $\omega_j$, we have that $A_i\cap \omega_j=\emptyset$ for all $i\neq j$, which implies that $A_i\cap \overline{\omega_j}=\emptyset$. Finally, let us prove (v). In order to do this, we first prove that $A_i\sub\overline{\omega_i}$ for all $i$. If $x_0\in A_i$, then from (iv) we deduce that $x_0\not \in \overline{\omega_j}$ for all $j\neq i$ and, in view of (iii) we obtain that $x_0\in \overline{\omega_i}$. Finally, since $A_i$ is open we conclude that (v) holds.
\end{proof}

\subsection{\texorpdfstring{Regularity of $\mathcal{Z}_2^{\partial D}(u)$}{Regularity near the points of frequency 2}}
We now pass, in the present section, to the proof of the regularity of the set $\mathcal{Z}_2^{\partial D}(u)$ and of a boundary clean-up result. We first introduce
\[
\Upsilon(r):=r^2\left(\int_0^r\frac{\sigma_0(t)}{t}\d t\right)^{\frac{1}{2}}.
\]
One can immediately observe that $\Upsilon$ is invertible in $[0,R_0]$, hence we can define
\[
\theta(r):=\left(\int_0^{\Upsilon^{-1}(r)}\frac{\sigma_0(t)}{t}\d t\right)^{\frac{1}{2}}\quad\text{for }r\leq R_{\theta},
\]
for some $R_{\theta}\leq R_0$. We now prove how the vector $\bm{e}_{x_0}\in\partial B_1$ as in \Cref{cor:blowup} oscillates with respect to $x_0\in\partial D$.
\begin{lemma}\label{lemma:osc}
	For any $x_0\in  \mathcal{Z}_2^{\partial D}(u)$, let $\bm{e}_{x_0}\in\partial B_1$ be as in \Cref{cor:blowup} and let $r_{x_0}\in (0,R_\theta/2)$ be such that
	\[
	\overline{B_{r_{x_0}}(x_0)}\cap \left( \partial D\setminus \mathcal{Z}_1^{\partial D}(u)\right)= \overline{B_{r_{x_0}}(x_0)}\cap \mathcal{Z}_2^{\partial D}(u).
	\]
	Then there exists $C_{\textup{osc}}>0$ depending on $d$, $D$, $N$ and $r_{x_0}$ such that
	\begin{equation}
		|\bm{e}_y-\bm{e}_z|\leq C_{\textup{osc}} \theta(|y-z|),\quad\text{for all }y,z\in \overline{B_{r_{x_0}}(x_0)}\cap \mathcal{Z}_2^{\partial D}(u).
	\end{equation}
\end{lemma}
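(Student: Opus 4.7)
The plan is to extract the desired modulus of continuity for $\bm{e}_{x_0}$ by comparing the two-homogeneous blow-up profiles $P^{y,2}$ and $P^{z,2}$ (from \Cref{cor:blowup}) at a well-chosen intermediate scale, and then reading off $|\bm{e}_y-\bm{e}_z|$ from the $L^2$-norm of their difference.

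First I would fix $y,z\in \overline{B_{r_{x_0}}(x_0)}\cap\mathcal{Z}_2^{\partial D}(u)$, set $s:=|y-z|$, and take as comparison scale
\[
r:=\Upsilon^{-1}(s),
\]
so that, by the very definition of $\Upsilon$ and $\theta$,
\[
\int_0^{r}\frac{\sigma_0(t)}{t}\d t=\theta(s)^2 \qquad\text{and}\qquad \frac{s}{r}=r\,\theta(s).
\]
Shrinking $r_{x_0}$ if necessary, I may assume $r\leq R_0$ and $|z-y|/r\leq 1/2$. Applying \Cref{cor:blowup} to $u$ at both $y$ and $z$ for $\gamma=2$, with constants uniform on $\overline{B_{r_{x_0}}(x_0)}\cap\mathcal{Z}_2^{\partial D}(u)$ thanks to the two-sided bound on $H_{x}$ from \Cref{prop:nondeg}, and using the tautological identity $u^{r,z,2}(x)=u^{r,y,2}(x+\tilde z)$ for $\tilde z:=(z-y)/r$, the triangle inequality yields
\begin{equation}\label{eq:osc_plan}
\sum_{i=1}^N\norm{P_i^{y,2}(\,\cdot\,+\tilde z)-P_i^{z,2}}_{L^2(B_{1/2})}\leq C\,\theta(s).
\end{equation}

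Next I would argue that, for $s$ small, the unordered pair of active indices in the expression \eqref{eq:Px02} at $y$ and at $z$ must coincide: otherwise the uniform bound $a_y,a_z\geq c>0$ from \Cref{prop:nondeg} would force the left-hand side of \eqref{eq:osc_plan} to be bounded below by a fixed positive constant, contradicting $\theta(s)\to 0$. After swapping $\bm{e}_z$ with $-\bm{e}_z$ if necessary, the ordered pair matches as well. I would then split
\[
P^{y,2}(x+\tilde z)-P^{z,2}(x)=\bigl(P^{y,2}(x+\tilde z)-P^{y,2}(x)\bigr)+\bigl(P^{y,2}(x)-P^{z,2}(x)\bigr),
\]
bounding the first summand by $C|\tilde z|=Cr\,\theta(s)\leq C\,\theta(s)$ in $L^\infty(B_{1/2})$, and writing the second as a smooth function of the parameters $(a,\bm{e},\bm{\nu})$. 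The perturbation coming from the normal is controlled by $|\nnu(y)-\nnu(z)|\leq\sigma(s)$, and the chain of inequalities
\[
\sigma(s)\leq\sigma(\Upsilon^{-1}(s))\leq C\,\theta(s)^2\leq C\,\theta(s),
\]
which follows from \Cref{lemma:sigma_nondeg} together with $\Upsilon^{-1}(s)\geq s$ for $s$ small (itself a consequence of $\Upsilon(s)\leq s$ for $s$ small), shows that this perturbation is itself $O(\theta(s))$.

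Finally, the conclusion would follow from an explicit $L^2(B_{1/2})$-coercivity estimate: the Gram matrix of the partial derivatives in $(a,\bm{e})$ of the profile $a(x\cdot\bm{e})^\pm(-x\cdot\bm{\nu})^+$, at fixed admissible $\bm{\nu}$, is positive definite (subject to $\bm{e}\cdot\bm{\nu}=0$, $|\bm{e}|=1$) with a lower bound depending only on $d$. Plugged into \eqref{eq:osc_plan}, together with the $O(\theta(s))$ bounds for the translation and normal perturbations and the uniform nondegeneracy of $a_y,a_z$, this would produce both $|a_y-a_z|\leq C\,\theta(s)$ and the desired
\[
|\bm{e}_y-\bm{e}_z|\leq C_{\textup{osc}}\,\theta(|y-z|).
\]
The main obstacle I anticipate is precisely this last algebraic step: certifying the non-degeneracy of the Gram matrix through explicit integration of products of positive parts of linear forms over $B_{1/2}$, and ensuring that the lower bound is stable under the normal and translation perturbations, uniformly in $y$ and $z$.
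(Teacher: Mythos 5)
Your proposal is correct and follows essentially the same route as the paper: compare $P^{y,2}$ and $P^{z,2}$ by the triangle inequality through the rescalings of $u$ at a common scale $r$, bound the blow-up error by \Cref{cor:blowup} and the recentering error by Lipschitz continuity, and optimize $r=\Upsilon^{-1}(|y-z|)$ so that both contributions are of order $\theta(|y-z|)$ (the paper estimates the recentering term as $|y-z|/r^{2}$ directly on $u$ rather than on the translated profile, which gives the same $\theta$ after balancing). The final coercivity step you flag as the main obstacle is exactly what the paper compresses into normalizing the profiles by $\sqrt{H_{x}}$ and invoking the two-sided bound of \Cref{prop:nondeg} together with a dimensional constant, so your Gram-matrix argument is a legitimate (and more explicit) way to certify the same inequality.
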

\begin{proof}
	For any $x_0\in\mathcal{Z}_2^{\partial D}(u)\cap\overline{B_{r_{x_0}}(x_0)}$, we let $P^{x_0,2}$ be as in \Cref{cor:blowup}.\\ Since $\sqrt{H_{x_0}}=\norm{P^{x_0,2}}_{L^2(\partial B_1,\R^N)}>0$, there is a dimensional constant $C>0$ such that
	\begin{align*}
		|\bm{e}_y-\bm{e}_z|&\leq C\norm{\frac{P^{y,2}}{\norm{P^{y,2}}_{L^2(\partial B_1,\R^N)}}-\frac{P^{z,2}}{\norm{P^{z,2}}_{L^2(\partial B_1,\R^N)}}}_{L^2(\partial B_1,\R^N)} \\
		&=\frac{C}{\sqrt{H_y H_z}}\norm{\sqrt{H_z}P^{y,2}-\sqrt{H_y}P^{z,2}}_{L^2(\partial B_1,\R^N)}
	\end{align*}
	for all $y,z\in\mathcal{Z}_2^{\partial D}(u)$.	At this point, \Cref{prop:nondeg} yields
	\begin{equation}\label{eq:osc1}
		|\bm{e}_y-\bm{e}_z|\leq C\norm{P^{y,2}-P^{z,2}}_{L^2(\partial B_1,\R^N)}\quad\text{for all }y,z\in\mathcal{Z}_2^{\partial D}(u)\cap\overline{B_{r_{x_0}}(x_0)},
	\end{equation}
	for some other constant $C>0$ depending on $d$, $D$, $N$ and $r_{x_0}$. By the triangular inequality
	\begin{multline}\label{eq:osc2}
		\norm{P^{y,2}-P^{z,2}}_{L^2(\partial B_1,\R^N)}\leq \norm{u^{r,y,2}-P^{y,2}}_{L^2(\partial B_1,\R^N)} \\
		+\norm{u^{r,z,2}-P^{z,2}}_{L^2(\partial B_1,\R^N)}+\norm{u^{r,y,2}-u^{r,z,2}}_{L^2(\partial B_1,\R^N)}.
	\end{multline}
	We next estimate the three terms in the right-hand side of the previous inequality. Concerning the last one, we can see that for any $y,z\in \mathcal{Z}_2^{\partial D}(u)$ and any $r\leq R_0$, there holds
	\begin{equation*}
		\sum_{i=1}^N|u^{r,y,2}_i(x)-u^{r,z,2}_i(x)|=\frac{1}{r^2}\sum_{i=1}^N\abs{u_i(rx+y)-u_i(rx+z) }\leq \frac{C_L|y-z|}{r^2}
	\end{equation*}
	for all $x\in B_1$, where $C_L>0$ is the Lipschitz constant of $u$, which implies that
	\begin{equation}\label{eq:osc3}
		\norm{u^{r,y,2}-u^{r,z,2}}_{L^2(\partial B_1,\R^N)}\leq \frac{C|y-z|}{r^2}
	\end{equation}
	for all $y,z\in\mathcal{Z}_2^{\partial D}(u)\cap\overline{B_{r_{x_0}}(x_0)}$ and all $r>0$. For what concerns the first two terms in the right-hand side of \eqref{eq:osc2}, thanks to \Cref{cor:blowup} we obtain that
	\begin{equation*}
		\norm{u^{r,y,2}-P^{y,2}}_{L^2(\partial B_1,\R^N)} \\
		+\norm{u^{r,z,2}-P^{z,2}}_{L^2(\partial B_1,\R^N)}\leq C\left(\int_0^r\frac{\sigma_0(t)}{t}\d t\right)^{\frac{1}{2}},
	\end{equation*}
	for all $y,z\in\mathcal{Z}_2^{\partial D}(u)\cap\overline{B_{r_{x_0}}(x_0)}$ and all $r\leq R_0$. Hence, by combining this last inequality with \eqref{eq:osc3}, \eqref{eq:osc2} and \eqref{eq:osc1} we obtain that
	\begin{equation}\label{eq:osc4}
		|\bm{e}_y-\bm{e}_z|\leq C\left[ \left(\int_0^r\frac{\sigma_0(t)}{t}\d t\right)^{\frac{1}{2}}+\frac{|y-z|}{r^2} \right],
	\end{equation}
	which concludes the proof by choosing $r=\Upsilon^{-1}(|y-z|)$.
\end{proof}

Next, exploiting \Cref{l:clean-up-boundary-1} and \Cref{rmk:clean_up_1}, we obtain a flatness result for $\mathcal{Z}_2^{\partial D}(u)$.

\begin{lemma}[Flatness condition]\label{lemma:flatness}
	For any $\eta,\delta,\rho\in(0,1)$ there exists $\epsilon_{\textup{flat}},R_{\textup{flat}},\rho_{\textup{flat}}\in(0,1)$ depending on $d$, $D$, $N$, $\eta$, $\delta$ and $\rho$ such that, if
	\begin{equation}\label{e:lemma-flatness-ipo}
		\sum_{i=1}^N\norm{u_i-P_i}_{L^2(B_r(x_0))}^2\leq r^{d+4}\epsilon_{\textup{flat}},
	\end{equation}
	for some $x_0\in \partial D$ and $r\leq R_{\textup{flat}}$, where
	\[P=(a((x-x_0)\cdot\bm{e}_{x_0})^+((x_0-x)\cdot\nnu(x_0))^+,a((x-x_0)\cdot\bm{e}_{x_0})^-((x_0-x)\cdot\nnu(x_0))^+,0,\dots,0)\]
	for some $a\in( \delta,\frac1{\delta})$ and $\bm{e}_{x_0}\in\partial B_1$, then
	\[
	u_1\geq \frac{1}{4}\dist(\cdot,\partial D)\delta\eta\rho r\quad\text{in }B_{r\rho_{\textup{flat}}}(z)
	\]
	for all $z\in \partial D\cap B_{\rho r}(x_0)$ such that $z\cdot\bm{e}_{x_0}\geq \eta\rho r$ and 
	\[
	u_2\geq \frac{1}{4}\dist(\cdot,\partial D)\delta\eta\rho r\quad\text{in }B_{r\rho_{\textup{flat}}}(z)
	\]
	for all $z\in \partial D\cap B_{\rho r}(x_0)$ such that $z\cdot\bm{e}_{x_0}\leq- \eta\rho r$. In particular,
	\begin{align*}
		&B_{\rho r}(x_0)\cap\{x\in\partial D\colon (x-x_0)\cdot\bm{e}_{x_0}> \eta\rho r\}\sub \omega_1 \\
		&B_{\rho r}(x_0)\cap\{x\in\partial D\colon (x-x_0)\cdot\bm{e}_{x_0}<- \eta\rho r\}\sub \omega_2
	\end{align*}
	and
	\begin{align*}
		&B_{\rho r}(x_0)\cap\{x\in D\colon (x-x_0)\cdot\bm{e}_{x_0}> \eta\rho r,~ \dist(x,\partial D)<r\rho_{\textup{flat}}\}\sub \Omega_1 \\
		&B_{\rho r}(x_0)\cap\{x\in D\colon (x-x_0)\cdot\bm{e}_{x_0}<- \eta\rho r,~ \dist(x,\partial D)<r\rho_{\textup{flat}}\}\sub \Omega_2.
	\end{align*}
\end{lemma}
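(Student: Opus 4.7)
The idea is to reduce to the one-phase clean-up \Cref{l:clean-up-boundary-1} (applied via \Cref{rmk:clean_up_1}) at each boundary point $z\in \partial D\cap B_{\rho r}(x_0)$ sufficiently far from the nodal hyperplane of $P$. By the symmetry of the hypothesis between $u_1$ and $u_2$ (swap indices and $\bm{e}_{x_0}\mapsto -\bm{e}_{x_0}$), it suffices to treat $(z-x_0)\cdot\bm{e}_{x_0}\ge \eta\rho r$. Fix $c\in(0,1/2)$ small (to be chosen) and set $s:=c\eta\rho r$ and $R:=a\bigl((z-x_0)\cdot\bm{e}_{x_0}\bigr)\in[a\eta\rho r,a\rho r]$. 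Define $w(y):=u(z+sy)/(sR)$ on $(D-z)/s$, which is a minimizer of the rescaled partition problem, and compare it with the $1$-homogeneous profile $Q(y):=\bigl((-y\cdot\nnu(z))^+,0,\dots,0\bigr)$ (coefficient normalized to $1$).

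The core computation is the $L^2(B_1)$ closeness of $w$ to $Q$. On $B_s(z)$ the factor $((x-x_0)\cdot\bm{e}_{x_0})^+$ is strictly positive (bounded below by $(1-c)\eta\rho r$), so we may drop the positive part. Freezing this linear factor at $z$ and replacing $\nnu(x_0)$ by $\nnu(z)$ gives a pointwise approximation of $P_1$ with error bounded by $Cas\,\rho r+Ca\rho^2 r^2\,\sigma(\rho r)$: the first term is the linear freezing error (of size $s$ in the $\bm{e}_{x_0}$-direction, times the linear size $\rho r$ in the $\nnu(x_0)$-direction), and the second collects both the curvature change $|\nnu(x_0)-\nnu(z)|\le\sigma(\rho r)$ and the vertical shift $|(z-x_0)\cdot\nnu(x_0)|\le C\rho r\sigma(\rho r)$ from the $C^1$ graph representation of $\partial D$ (\Cref{ass:domain}). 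Squaring, integrating on $B_s(z)$, dividing by the $L^2$-rescaling factor $s^{d+2}R^2$, and combining with the hypothesis $\|u-P\|_{L^2(B_r(x_0))}^2\le r^{d+4}\epsilon_{\textup{flat}}$ yields
\[
\sum_{i=1}^N\|w_i-Q_i\|_{L^2(B_1)}^2\le Cc^2+\frac{C\,\sigma(\rho r)^2}{c^{d+2}\eta^2}+\frac{C\,\epsilon_{\textup{flat}}}{c^{d+2}\delta^2(\eta\rho)^{d+4}}.
\]

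We now apply \Cref{l:clean-up-boundary-1} (through \Cref{rmk:clean_up_1}) to $w$ at $0$ with profile $Q$ of coefficient $1$: choose first $c=c(d,N)$ so that $Cc^2<\epsilon_1/3$, then $R_{\textup{flat}}$ so that the curvature term stays below $\epsilon_1/3$ for $r\le R_{\textup{flat}}$ (by continuity of $\sigma$), and finally $\epsilon_{\textup{flat}}$ so that the hypothesis term is also below $\epsilon_1/3$. This produces $\rho_1\in(0,1)$ such that $w_1\ge\tfrac14\dist(\cdot,\partial D_{s,z})$ and $w_i\equiv 0$ for $i\ge 2$ on $B_{\rho_1}$. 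Unrescaling yields $u_1\ge \tfrac{R}{4}\dist(\cdot,\partial D)\ge \tfrac14\delta\eta\rho r\,\dist(\cdot,\partial D)$ and $u_i\equiv 0$ for $i\ge 2$ on $B_{s\rho_1}(z)\cap D$; defining $\rho_{\textup{flat}}:=c\eta\rho\,\rho_1$ (possibly shrunk so that $\rho_{\textup{flat}}<\eta\rho/2$) gives the pointwise conclusion. The four set inclusions then follow immediately: any boundary point $z$ with $(z-x_0)\cdot\bm{e}_{x_0}>\eta\rho r$ belongs to $\omega_1$ by \Cref{lemma:definition-omega-j}, while for an interior point $x$ with $\dist(x,\partial D)<r\rho_{\textup{flat}}$ the projection $z'\in\partial D$ still satisfies $(z'-x_0)\cdot\bm{e}_{x_0}>\eta\rho r/2$, so the same argument applied with $\eta/2$ in place of $\eta$ forces $x\in\Omega_1$.

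\textbf{Main obstacle.} The delicate point is that the quantity $H(w,R_0)$, which enters \Cref{rmk:clean_up_1} and \Cref{lemma:Linfty_blowup} (and thus determines the constants of \Cref{l:clean-up-boundary-1}), is only bounded by $C_L^2R_0^2/R^2\lesssim (\delta\eta\rho r)^{-2}$ via the Lipschitz continuity of $u$, and this blows up as $r\to 0$. This degeneration is absorbed by the dependence of $\rho_{\textup{flat}},R_{\textup{flat}},\epsilon_{\textup{flat}}$ on $\eta,\delta,\rho$, provided that one also chooses $R_{\textup{flat}}$ small enough that $H(R_0)^{1/(d+2)}\bigl(\int_0^{R_0\rho_1}\sigma_0(t)/t\,dt\bigr)^{1/(d+2)}$ stays below the threshold from the proof of \Cref{l:clean-up-boundary-1} uniformly in $r\le R_{\textup{flat}}$, which is possible thanks to \Cref{ass:domain}.
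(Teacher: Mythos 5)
Your overall strategy coincides with the paper's: freeze the tangential factor of $P$ at the boundary point $z$, replace $\nnu(x_0)$ by $\nnu(z)$, rescale, and feed the resulting one-phase profile into \Cref{l:clean-up-boundary-1} via \Cref{rmk:clean_up_1}, choosing parameters in the order ``freezing scale $\to$ fixed-boundary radius $\to$ flatness threshold''. The only genuine difference from the paper is cosmetic: the paper normalizes the rescaled density by $rs$ and keeps the coefficient $az_{d-1}/r$ in the comparison profile $\widetilde P$, whereas you normalize by $sR$ so that the comparison profile has unit coefficient; both bookkeepings work.

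Two things in your write-up need fixing. First, the pointwise error you state, $Cas\rho r+Ca\rho^2r^2\sigma(\rho r)$, is not what you actually use: if the freezing error in the $\bm{e}_{x_0}$-direction were genuinely of size $s$ times $\rho r$, then after dividing by $sR\sim a\eta\rho r\cdot s$ the first term would be $\sim 1/\eta$ and not small. The correct pointwise bound is $Cas^2+Ca\,\rho r\,(s+\rho r)\,\sigma(\rho r)$, because the vertical factor $((x_0-x)\cdot\nnu(x_0))^+$ is of size $\lesssim s+\rho r\sigma(\rho r)$ for $x\in B_s(z)$, $z\in\partial D$ (not of size $\rho r$). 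Your subsequent $L^2$ bound $Cc^2+\dots$ is consistent with this corrected pointwise estimate, so the slip is only in the intermediate sentence (and the precise powers of $c$ and $\eta$ in the curvature term should be rechecked), not in the conclusion.

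Second, the ``Main obstacle'' paragraph misidentifies the issue. You claim $H(w,R_0)\lesssim(\delta\eta\rho r)^{-2}$ via Lipschitz continuity of $u$, ``which blows up as $r\to0$'', and then propose to absorb this by shrinking $R_{\textup{flat}}$. That cannot work: a quantity that truly degenerates as $r\to0$ cannot be controlled by choosing $R_{\textup{flat}}$ small, since $r\le R_{\textup{flat}}$ is free to go to zero. The resolution is that the degeneration you describe does not occur: the raw Lipschitz bound $u\lesssim |x-x_0|$ is the wrong estimate here; the flatness hypothesis \eqref{e:lemma-flatness-ipo} (together with the subsolution property $-\Delta u_i^2\le 2\lambda_iu_i^2$, used in the paper to pass from $L^2$ to $L^\infty$) gives $u\lesssim r^2$ on $B_{r/2}(x_0)$, so that $u(sx+z)/(sR)\lesssim r^2/(sR)\lesssim (c\delta\eta^2\rho^2)^{-1}$ uniformly in $r$. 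Thus $H(w,R_0)$ is bounded by a constant depending on $c,\delta,\eta,\rho$ (hence on the parameters one is allowed to depend on) and the clean-up threshold is reached without any limit argument in $r$. You should replace that paragraph by this direct estimate.
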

\begin{proof}
	Without loss of generality, we may assume that $x_0=0$, $\nnu(x_0)=-\bm{e}_d$ and $\bm{e}_{x_0}=\bm{e}_{d-1}$. Let 
	\[
	z\in B_{\rho r}\cap\{x\in\partial D\colon x_{d-1}> \eta\rho r\}
	\]
	and $s=r\rho_{\textup{flat}}\rho_1^{-1}$, where $\rho_1=\rho_1(d,D,N,\eta\delta\rho)$ is as in \Cref{l:clean-up-boundary-1}, be such that $B_s(z)\sub B_r$ and let
	\[
	\widetilde P_1(x):=a\frac{z_{d-1}}{r}(-(x-z)\cdot\nnu(z))^+,\quad \widetilde P_i\equiv 0~\text{for all }i\geq 2.
	\]
	We want to apply \Cref{l:clean-up-boundary-1} with $\frac{u(z+sx)}{rs}$ in place of $u$ (as explained in \Cref{rmk:clean_up_1}), $0$ in place of $x_0$, $\frac1s\widetilde P(sx+z)$ in place of $P(x)$, $az_{d-1}/r$ in place of $a$, $\delta \eta\rho$ in place of $\delta$, $1$ in place of $r$. In view of \Cref{rmk:clean_up_1}, we need to verify that
	\[
	\int_{\partial B_{R_0}}\abs{\frac{u(sx+z)}{rs}}^2\ds\leq C
	\]
	for some $C>0$ depending only on $d$, $D$ and $N$, and all $r,s>0$ sufficiently small. But this is easily verified in view of \eqref{e:lemma-flatness-ipo} (by choosing $\rho_{\textup{flat}}$ sufficiently small, since $s=\rho_{\textup{flat}}\rho_1^{-1}r$), and thanks to the fact that 
	\[
	-\Delta u_i^2\leq 2\lambda_i u_i^2\quad\text{in }\R^d.
	\]
	Now, since 
	\[\sum_{i=1}^Ns^{d+2} \int_{B_1}\abs{\frac{u(z+sx)}{rs}-\frac1s\widetilde P(sx+z)}^2\dx =\sum_{i=1}^N\norm{\frac{u_i}{r}-\widetilde P_i}_{L^2(B_s(z))}^2,\]
	we need to estimate
	\begin{equation}\label{eq:flatness_1}
		\begin{aligned}
			\sum_{i=1}^N\norm{\frac{u_i}{r}-\widetilde P_i}_{L^2(B_s(z))}^2
			&\le 2\sum_{i=1}^N\norm{\frac{u_i}{r}-\frac{P_i}{r}}_{L^2(B_s(z))}^2+2\sum_{i=1}^N\norm{\frac{P_i}{r}-\widetilde P_i}_{L^2(B_s(z))}^2\\
			&= \frac{2}{r^2}\sum_{i=1}^N\norm{u_i-P_i}_{L^2(B_s(z))}^2+2\norm{\frac{P_1}{r}-\widetilde P_1}_{L^2(B_s(z))}^2,
		\end{aligned}
	\end{equation}
	where in the last inequality we used that $P_i=\widetilde P_i\equiv 0$ on $B_s(z)$ for every $i\ge 2$. For what concerns the first term, in view of \eqref{e:lemma-flatness-ipo} we have
	\[
	\sum_{i=1}^N\norm{u_i-P_i}_{L^2(B_s(z))}^2\leq \sum_{i=1}^N\norm{u_i-P_i}_{L^2(B_r)}^2\leq \epsilon_{\textup{flat}}r^{d+4},
	\]
	while for the second one, since $P_1(x)=ax_{d-1}^+x_d^+$, we have that
	\begin{align*}
		\abs{\frac{P_1}{r}-\widetilde P_1}^2&\leq \frac{2a^2}{r^2}\Big((z_{d-1}-x_{d-1})^2((x-z)\cdot\nnu(z))^2+x_{d-1}^2(x_d+(x-z)\cdot\nnu(z))^2\Big) \\
		&\leq \frac{4a^2}{r^2}\Big((z_{d-1}-x_{d-1})^2((x-z)\cdot\nnu(z))^2+x_{d-1}^2((x-z)\cdot(\nnu(z)-\nnu(0))^2 +x_{d-1}^2z_d^2 \Big),
	\end{align*}
	which implies that 
	\[
	\norm{\frac{P_1}{r}-\widetilde P_1}_{L^2(B_s(z))}^2\leq \frac{Ca^2}{r^2}(s^{d+4}+s^{d+2}r^2\sigma^2(r)+s^d r^4\sigma^2(r)),
	\]
	for some constant $C>0$ depending only on $d$ and $D$, where we used that, by assumptions on $D$, $|\nnu(0)-\nnu(z)|$ and $|z_d|$ are bounded by a universal constant times $\sigma(|z|)$. Hence, plugging these estimates into \eqref{eq:flatness_1}, we obtain that
	\[\sum_{i=1}^N\norm{\frac{u_i}{r}-\widetilde P_i}_{L^2(B_s(z))}^2\leq 2\epsilon_{\textup{flat}}r^{d+2}+ 2\frac{Ca^2}{r^2}\Big(s^{d+4}+s^{d+2}r^2\sigma^2(r)+s^d r^4\sigma^2(r)\Big).\]
	Being $s=r\rho_{\textup{flat}}\rho_1^{-1}\le r\le R_{\textup{flat}}$ and $a^2\le \delta^{-2}$, this translates into
	\begin{align*}
		\sum_{i=1}^N\norm{\frac{u_i}{r}-\widetilde P_i}_{L^2(B_s(z))}^2
		&\leq Cs^{d+2}\bigg(\frac{\epsilon_{\textup{flat}}\rho_1^{d+2}}{\rho_{\textup{flat}}^{d+2}}+\frac{\rho_{\textup{flat}}^2}{\delta^2\rho_1^2}+\frac{\sigma^2(R_{\textup{flat}})\rho_1^2}{\delta^2\rho_{\textup{flat}}^2}\bigg),
	\end{align*}
	where, as above, $C=C(d,D,\eta\delta,\rho)$. Finally, choosing first $\rho_{\textup{flat}}$, then $R_{\textup{flat}}$ and $\epsilon_{\textup{flat}}$ small enough, in such a way that 
	\[C\bigg(\frac{\epsilon_{\textup{flat}}\rho_1^{d+2}}{\rho_{\textup{flat}}^{d+2}}+\frac{\rho_{\textup{flat}}^2}{\delta^2\rho_1^2}+\frac{\sigma^2(R_{\textup{flat}})\rho_1^2}{\delta^2\rho_{\textup{flat}}^2}\bigg)\le \epsilon_1,\]
	where $\epsilon_1=\epsilon_1(d,D,N,\eta\rho\delta)$ is as in \Cref{l:clean-up-boundary-1}, by \Cref{l:clean-up-boundary-1}, we conclude the proof.
\end{proof}

At this point, we are able to prove regularity of the regular part of the free boundary. Since $\mathcal{Z}^{\partial D}_2(u)$ coincides with $\mathcal{R}_{\partial D}(u)$, the following result contains the proof of \Cref{thm:fixed_free_bound}.
\begin{proposition}[Regularity of $\mathcal{Z}_2^{\partial D}(u)$]\label{prop:reg}
	For all $x_0\in\mathcal{Z}_2^{\partial D}(u)$, there exists $R=R_{x_0}>0$ (depending on $d$, $D$, $N$ and $x_0$) such that $(\partial D\setminus\mathcal{Z}_1^{\partial D}(u))\cap B_R(x_0)=\mathcal{Z}_2^{\partial D}(u)\cap B_R(x_0)\sub \partial D$ is a $(d-2)$-dimensional submanifold of class $C^1$ and there exists $j,k\in\{1,\dots,N\}$ such that
	\begin{equation*}
		\mathcal{Z}^{\partial D}_2(u)\cap B_R(x_0)=\partial \omega_j\cap\partial \omega_k\cap B_R(x_0).
	\end{equation*}
\end{proposition}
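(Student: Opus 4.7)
The plan is the following. Fix $x_0 \in \mathcal{Z}_2^{\partial D}(u)$ and let $P^{x_0,2}$, with nonzero components indexed by $j \neq k$ and direction $\bm{e}_{x_0}$, be the blow-up limit from \Cref{cor:blowup}. The upper semicontinuity of $y \mapsto \gamma(y)$ combined with the gap $\gamma \in \{1,2\} \cup [2+\delta_d, \infty)$ of \Cref{lemma:frequencies} yields an $R_1 > 0$ such that
\[
(\partial D \setminus \mathcal{Z}_1^{\partial D}(u)) \cap B_{R_1}(x_0) = \mathcal{Z}_2^{\partial D}(u) \cap B_{R_1}(x_0),
\]
which already gives the first identity. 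By \Cref{prop:nondeg} the normalization $a_y = \kappa_{d,2}\sqrt{H_y}$ stays bounded away from $0$ and $\infty$ on $\mathcal{Z}_2^{\partial D}(u) \cap \overline{B_{R_1}(x_0)}$, and by the $L^2$-rate in \Cref{cor:blowup} together with the triangle inequality (similar to the argument in \Cref{lemma:osc}) the unordered pair of indices $\{j_y,k_y\}$ of $P^{y,2}$ is locally constant. Shrinking $R_1$, I can assume $\{j_y,k_y\} = \{j,k\}$ throughout the ball.

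The hypotheses of \Cref{lemma:flatness} are then met uniformly at every $y \in \mathcal{Z}_2^{\partial D}(u) \cap B_{R_1/2}(x_0)$ for every sufficiently small radius $r$. Applied with suitable universal $\eta, \rho \in (0,1)$, the lemma yields
\[
\mathcal{Z}_2^{\partial D}(u) \cap B_{\rho r}(y) \subset \bigl\{x \in \partial D : |(x - y)\cdot \bm{e}_y| \leq \eta \rho r\bigr\},
\]
since outside the strip one recovers points of $\omega_j \cup \omega_k \subset \mathcal{Z}_1^{\partial D}(u)$. Meanwhile \Cref{lemma:osc} gives the modulus of continuity $|\bm{e}_y - \bm{e}_z| \leq C_{\textup{osc}}\,\theta(|y-z|)$ on the same set. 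The combination of uniform flatness at every scale with this directional modulus is the classical setup for Reifenberg-type flatness with vanishing oscillation.

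From this, I would promote the flatness data into a $C^1$ graph parametrization. Writing $\partial D$ locally around $x_0$ as the graph of $\varphi_{x_0}$ and choosing coordinates in which $\bm{e}_{x_0}$ is the last tangential direction, the strip containment combined with the $\theta$-continuity of $y \mapsto \bm{e}_y$ exhibits $\mathcal{Z}_2^{\partial D}(u) \cap B_R(x_0)$ as a graph over the $(d-2)$-plane $\{\bm{e}_{x_0}\}^\perp \cap T_{x_0}\partial D$, whose gradient has modulus of continuity controlled by $\theta$. The identity $\mathcal{Z}_2^{\partial D}(u) \cap B_R(x_0) = \partial \omega_j \cap \partial \omega_k \cap B_R(x_0)$ then follows from the flatness itself (points lying strictly on each side of the interfacial strip belong respectively to $\omega_j$ and $\omega_k$, placing the interface in the boundary of both) together with the observation that every point of $\partial \omega_j \cap \partial \omega_k$ lies outside $\mathcal{Z}_1^{\partial D}(u)$ (since the $\omega_i$ are open and pairwise disjoint, cf. \Cref{lemma:definition-omega-j} and \Cref{prop:topology}), hence lies in $\mathcal{Z}_2^{\partial D}(u)$ by the first step.

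The main obstacle is the passage from flatness plus directional continuity to genuine $C^1$ graph regularity, intrinsically inside $\partial D$. Since $\partial D$ itself is only $C^1$ with a Dini-type modulus $\sigma$, one cannot directly invoke a standard Reifenberg theorem designed for smooth ambient manifolds; the technical care lies in disentangling the contribution of $\sigma$ (the ambient boundary regularity) from the contribution of $\sigma_0$ and $\theta$ (the interface regularity), so that the final modulus of the normal to the interface inside $\partial D$ is genuinely the $\theta$ announced in the statement, and the implicit function theorem can be applied to the joint local description (graph of $\varphi_{x_0}$ combined with the graph of the interface).
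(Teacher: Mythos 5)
Your outline matches the paper's proof almost step for step: upper semicontinuity of the frequency plus the gap from \Cref{lemma:frequencies} for the first identity, nondegeneracy from \Cref{prop:nondeg}, the flatness/strip containment from \Cref{lemma:flatness} at every scale, and the oscillation bound on $\bm{e}_y$ from \Cref{lemma:osc}. The one place where you stop short is exactly the step you flag as the ``main obstacle,'' and it is worth recording that the paper closes it without any Reifenberg machinery, by an elementary cone argument. First, the pointwise statement from \Cref{lemma:flatness} is upgraded to a cone condition $\mathcal{C}_\epsilon^{\pm}(y,\bm{e}_y)\cap B_{R_\epsilon}(y)\cap\partial D\subset \omega_{1},\omega_{2}$ for every $y$ in $\mathcal Z_2^{\partial D}(u)\cap B_{R_\epsilon}$. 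Then \Cref{lemma:osc} converts this into a \emph{uniform} two-sided cone condition in the single fixed direction $\bm{e}_{x_0}$ (with aperture $2\epsilon$ instead of $\epsilon$), provided $r$ is small enough that $C_{\textup{osc}}\theta(\sqrt 2 r)\le\epsilon$. The uniform cone condition immediately shows that along each line parallel to $\bm{e}_{x_0}$ the traces of $\omega_1$ and $\omega_2$ are complementary rays, so the interface is the graph of a Lipschitz function $g$ over the $(d-2)$-plane $\{\bm{e}_{x_0},\nnu(x_0)\}^{\perp}$; the $C^1$ regularity then comes for free because the blow-up at each interface point identifies the normal to the graph of $g$ at $x$ as $\bm{e}_x$, which is continuous with modulus $\theta$ by \Cref{lemma:osc}. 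Your concern about disentangling the ambient modulus $\sigma$ from the interface modulus $\theta$ is also handled more simply than you anticipate: the paper first runs the argument assuming $\partial D$ is flat, and in the general case applies the flattening map $\Phi(x)=(x',x_d-\varphi(x'))$, which perturbs the $L^2$ closeness to the blow-up by an additive $\sigma(r)$; by \eqref{eq:sigma_nondeg_th3} this is dominated by $C\int_0^r\sigma_0(t)t^{-1}\,\mathrm{d}t$, i.e.\ by the same rate already present, so no separate bookkeeping is needed. With these two points supplied, your argument is complete and coincides with the paper's.
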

\begin{proof}
	The proof closely follows classical arguments, which can be found e.g. in \cite[Theorem 5]{weiss-obst}, see also \cite[Section 8.2]{velichkov2023regularity}. Let $R\leq R_{\textup{osc}}\leq R_0$ (where $R_{\textup{osc}}>0$ is as in \Cref{lemma:osc}) and let $x_0\in\mathcal{Z}_2^{\partial D}(u)$. Moreover, by upper-semicontinuity of $\gamma(\cdot)$ we can take $R$ sufficiently small (depending on $x_0$) in such a way that $(\partial D\setminus\mathcal{Z}_1^{\partial D}(u))\cap B_R(x_0)=\mathcal{Z}_2^{\partial D}(u)\cap B_R(x_0)$. It is not restrictive to assume that $x_0=0$ and $\nnu(0)=-\bm{e}_d$. We now apply \Cref{cor:blowup} and assume, without loss of generality, that $j=1$, $k=2$ and $\bm{e}_{x_0}=\bm{e}_{0}=\bm{e}_{d-1}$. Hence, if we denote $P=P^{0,2}$ and $a=a_{0,2}>0$, then we have that
	\begin{equation}\label{eq:reg1}
		\sum_{i=1}^N\norm{u_i-P_i}_{L^2(B_r)}^2\leq \frac{C_{\textup{BU}}}{d+4}r^{d+4}\int_0^r\frac{\sigma_0(t)}{t}\d t\quad\text{for all }r\leq R,
	\end{equation}
	where
	\begin{equation*}
		P_1(x)=ax_{d-1}^+x_d^+,\quad P_2(x)=ax_{d-1}^-x_d^+\quad\text{and}\quad P_i(x)\equiv 0~\text{for all }i\geq 2.
	\end{equation*}
	From this fact and \Cref{lemma:flatness}, we have that, up to restricting $R$, 
	$
	0\in\partial\omega_1\cap\partial\omega_2.
	$\\
	For any $\epsilon>0$, any $\bm{e}\in\partial B_1$ and any $y\in \mathcal{Z}^{\partial D}(u)\cap B_R$, we denote
	\[
	\mathcal{C}^{\pm}_\epsilon(y,\bm{e}):=\left\{x\in\R^d\colon \pm\frac{x-y}{\abs{x-y}}\cdot \bm{e}> \epsilon\right\}
	\]
	Let us first assume that $\partial D$ is flat in a neighborhood of $0$, that is 
	\begin{align*}
		D\cap B_R=\{x\in B_R\colon x_d>0\}\qquad\text{and}\qquad
		\partial D\cap B_R=\{x\in B_R\colon x_d=0\}.
	\end{align*}
	We observe that, for any $\epsilon>0$ there exists $R_\epsilon\leq R$ such that
	\begin{equation}\label{eq:reg2}
		\mathcal{C}_\epsilon^+(y,\bm{e}_y)\cap B_{R_\epsilon}(y)\cap\partial D\sub\omega_1\quad\text{and}\quad \mathcal{C}_\epsilon^-(y,\bm{e}_y)\cap B_{R_\epsilon}(y)\cap \partial D\sub \omega_2
	\end{equation}
	for all $y\in \mathcal{Z}_2^{\partial D}(u)\cap B_{R_\epsilon}$.
	Indeed, this easily follows from \eqref{eq:reg1} and \Cref{lemma:flatness}. For $r\leq R_\epsilon$, we denote $B_r'':=\{x\in B_r\colon x_d=x_{d-1}=0\}$ and we define, for any $x''\in B_r''$
	\begin{equation*}
		S_{x''}^+:=\{(x'',t,0)\colon t\in\R\}\cap B_{R_\epsilon}'\cap \omega_1\quad\text{and}\quad         S_{x''}^-:=\{(x'',t,0)\colon t\in\R\}\cap B_{R_\epsilon}'\cap \omega_2.
	\end{equation*}
	In view of \eqref{eq:reg2} one can easily see that $S_{x''}^+$ contains the segment $\{(x'',t,0)\colon t>\epsilon R_\epsilon\}\cap B_{R_\epsilon}$ and, respectively, $S_{x''}^-$ contains the segment $\{(x'',t,0)\colon t<-\epsilon R_\epsilon\}\cap B_{R_\epsilon}$, for any $x''\in B_r''$; this, in turn, implies that the function 
	\begin{align*}
		g\colon B_r'' &\to \R, \qquad g(x''):=\inf\{t\in\R\colon (x'',T,0)\in \omega_1~\text{for all }T\in(t,r)\}
	\end{align*}
	is well defined. We consider $y''\in B_r''$ and denote $y:=(y'',g(y''),0)$. By construction, we have that $y\in \partial\omega_1\cap B_{R_\epsilon}$ and that
	\[
	-\epsilon|y''|\leq g(y'')\leq \epsilon|y''|,
	\]
	which directly implies that $|y|\leq r\sqrt{1+\epsilon^2}\leq \sqrt{2}r$. We now claim that, for $r\leq R_\epsilon$ small enough, we have that
	\begin{equation}\label{eq:reg3}
		\mathcal{C}_{2\epsilon}^+(y,\bm{e}_{d-1})\cap B_{R_\epsilon}(y)\cap\partial D\sub\omega_1\quad\text{and}\quad \mathcal{C}_{2\epsilon}^-(y,\bm{e}_{d-1})\cap B_{R_\epsilon}(y)\cap\partial D\sub \omega_2,
	\end{equation}
	which is a uniform cone condition. Since from \eqref{eq:reg2} there holds
	\begin{equation*}
		\mathcal{C}_{\epsilon}^+(y,\bm{e}_y)\cap B_{R_\epsilon}(y)\cap\partial D\sub\omega_1\quad\text{and}\quad \mathcal{C}_{\epsilon}^-(y,\bm{e}_y)\cap B_{R_\epsilon}(y)\cap\partial D\sub \omega_2
	\end{equation*}
	then \eqref{eq:reg3} is a trivial consequence of the fact that
	\begin{equation*}
		\mathcal{C}_{2\epsilon}^+(y,\bm{e}_{d-1})\sub\mathcal{C}_{\epsilon}^+(y,\bm{e}_y)\quad\text{and}\quad         \mathcal{C}_{2\epsilon}^-(y,\bm{e}_{d-1})\sub\mathcal{C}_{\epsilon}^-(y,\bm{e}_y)
	\end{equation*}
	which, in turn, reduces to prove that
	\begin{equation}\label{eq:reg4}
		\pm(x-y)\cdot\bm{e}_y> \epsilon|x-y|\quad\text{for all }x\in \mathcal{C}_{2\epsilon}^{\pm}(y,\bm{e}_{d-1}).
	\end{equation}
	Now, let $C_{\textup{osc}}>0$ and $\theta$ be as in \Cref{lemma:osc} and let $r$ be such that
	\begin{equation*}
		C_{\textup{osc}} \theta(\sqrt{2}r)\leq \epsilon.
	\end{equation*}
	Then, in view of \Cref{lemma:osc} and the inequality above we have that
	\[
	\pm(x-y)\cdot\bm{e}_y=\pm (x-y)\cdot\bm{e}_{d-1}\pm (x-y)\cdot(\bm{e}_y-\bm{e}_{d-1})>2\epsilon|x-y|-C_{\textup{osc}} \theta(\sqrt{2}r)|x-y|\geq \epsilon|x-y|,
	\]
	for all $x\in\mathcal{C}_{2\epsilon}^{\pm}(y,\bm{e}_{d-1})$, which proves \eqref{eq:reg4}. As a consequence of \eqref{eq:reg3}, we have that the sets $S^{\pm}_{x''}$ are segments for any $x''\in B_r''$ and, in particular,
	\begin{align*}
		& B_r''\times(-r,r)\cap \omega_1=\{ (x'',t,0)\colon x''\in B_r''~\text{and }g(x'')<t<r\} \\
		& B_r''\times(-r,r)\cap \omega_2=\{ (x'',t,0)\colon x''\in B_r'',~\text{and }-r<t<g(x'')\}.
	\end{align*}
	Moreover,
	\begin{align*}
		B_r''\times(-r,r)\cap\mathcal{Z}_2^{\partial D}(u)&=B_r''\times(-r,r)\cap \partial\omega_1\cap\partial\omega_2 \\
		&=\{ (x'',t,0)\colon x''\in B_r'',~t\in(-r,r)~\text{and }t=g(x'')\}
	\end{align*}
	and $g$ is Lipschitz continuous on $B_r''$ (this is a consequence of \eqref{eq:reg3}). At this point, regularity of $g$ trivially follows from the fact that, if $x=(x'',g(x''),0)\in\mathcal{Z}_2^{\partial D}(u)$, then the normal vector to the graph of $g$ at the point $x$ is exactly $\bm{e}_x$ and this is continuous with respect to $x$, with modulus of continuity $\theta$, in view of \Cref{lemma:osc}. This concludes the proof when $\partial D$ is a plane near $0$. If $\partial D$ is not flat, we consider the diffeomorphism $\Phi(x):=(x',x_d-\varphi(x'))$ which maps $\partial D$ onto $B_R'$ in a neighborhood of the origin and we consider the transformed solution $u^\Phi$. Being $\sigma$ the modulus of continuity of $\nabla \varphi$, from \Cref{cor:blowup} one can easily obtain that
	\[
	\sum_{i=1}^N\norm{u_i^\Phi-P_i}_{L^2(B_r)}^2\leq C\left( \int_0^r\frac{\sigma_0(t)}{t}\d t+\sigma(r)\right)
	\]
	for $r$ sufficiently small and some $C>0$ depending only on $d$, $D$ and $N$. Now, thanks to \eqref{eq:sigma_nondeg_th3} the estimate above implies \eqref{eq:reg1} and we can repeat the same argument as in the flat case. 
\end{proof}

The following clean-up result for interior regular points was already known in its qualitative version, see e.g. in \cite{CL2007}. However, we need a quantitative version: the proof can be obtained by following the very same argument used to prove \Cref{l:clean-up-boundary-1}, by making use of \Cref{prop:int_blowup}.
\begin{lemma}\label{lemma:clean_up_interior}
	For any $\delta>0$ there exists $\rho_2,\epsilon_2\in(0,1/3)$ depending on $d$, $\dist(x_0,\partial D)$, $N$ and $\delta$ such that, if
	\[
	\sum_{i=1}^N\norm{u_i-P_i}_{L^2(B_r(x_0))}^2\leq r^{d+2}\epsilon_2,
	\]
	for some $x_0\in D$ and $r\leq \dist(x_0,\partial D)$, where
	\[P=(a((x-x_0)\cdot\bm{e}_{x_0})^+,a((x-x_0)\cdot\bm{e}_{x_0})^-,0,\dots,0)\]
	for some $a\in( \delta,\frac1{\delta})$ and $\bm{e}_{x_0}\in\partial B_1$, then $u_i\equiv 0$ in $B_{r\rho_2}(x_0)$ for all $i=3,\dots,N$. Moreover, the interface $\mathcal F(u)$ in the ball $B_{r\rho_2}(x_0)$ (with $\mathcal{F}(u)$ being as in \eqref{eq:free_boundary}) is a $C^{1,\alpha}$ manifold
	$$\mathcal M=\mathcal F(u)\cap B_{r\rho_2}(x_0)=\mathcal R(u)\cap B_{r\rho_2}(x_0)=\partial\Omega_1\cap B_{r\rho_2}(x_0)=\partial\Omega_2\cap B_{r\rho_2}(x_0)$$
	whose normal $\nnu_{\mathcal M}$	is a $C^{0,\alpha}$ vector such that $|\nnu_{\mathcal M}-\bm e_{x_0}|\le C\epsilon_2$ for some constant $C>0$ depending only on $\delta,d,\dist(x_0,\partial D),N$. \end{lemma}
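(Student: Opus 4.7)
The argument is an interior version of the proof of \Cref{l:clean-up-boundary-1}. Writing $\tilde P(x)=(\tilde P_1,\dots,\tilde P_N)$ with $\tilde P_1(x)=a((x-x_0)\cdot\bm{e}_{x_0})^+$, $\tilde P_2(x)=a((x-x_0)\cdot\bm{e}_{x_0})^-$, and $\tilde P_i\equiv 0$ for $i\geq 3$, one first upgrades the $L^2$-smallness in the hypothesis to an $L^\infty$-smallness via the Lipschitz-interpolation argument of \Cref{lemma:Linfty_blowup}: using the Lipschitz constants of $u$ (from \Cref{prop:lipschitz}) and of $\tilde P$ (bounded by $a\leq 1/\delta$), one obtains
\[
\sum_{i=1}^{N}\|u_i-\tilde P_i\|_{L^{\infty}(B_{r/2}(x_0))}\leq C(\delta,d,N)\,\epsilon_{2}^{1/(d+2)}\,r.
\]

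Fix $\rho_2\leq 1/2$ and $\eta=1/4$. For any $y\in B_{r\rho_2}(x_0)$ with $(y-x_0)\cdot\bm{e}_{x_0}\geq\eta\rho_2 r$, one has $\tilde P_1(y)\geq\delta\eta\rho_2 r$, hence $u_1(y)\geq\delta\eta\rho_2 r-C\epsilon_2^{1/(d+2)}r$, which is strictly positive once $\epsilon_2$ is chosen small enough in terms of $\delta$; the segregation condition $u_1(y)u_i(y)=0$ then forces $u_i(y)=0$ for every $i\neq 1$. The symmetric argument on $\{(y-x_0)\cdot\bm{e}_{x_0}\leq -\eta\rho_2 r\}$ yields $u_i\equiv 0$ for every $i\geq 3$ on the open set $\{|(y-x_0)\cdot\bm{e}_{x_0}|>\eta\rho_2 r\}\cap B_{r\rho_2}(x_0)$, and the strong unique continuation property for the class $\mathcal{S}$ (\cite[Theorem 1.1]{TerraciniTavares2012}), applied in the connected ball $B_{r\rho_2}(x_0)$, extends this identity to the whole of $B_{r\rho_2}(x_0)$.

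With only $u_1,u_2$ non-trivial in $B_{r\rho_2}(x_0)$, set $u^{\ast}:=u_1-u_2$, so that $\mathcal{F}(u)\cap B_{r\rho_2}(x_0)=\{u^{\ast}=0\}$; by \Cref{lemma:extermaliti}, $-\Delta u^{\ast}=\lambda_1 u_1-\lambda_2 u_2$ with Lipschitz right-hand side, and Schauder theory gives $u^{\ast}\in C^{2,\alpha}_{\mathrm{loc}}(B_{r\rho_2}(x_0))$. At any $y\in\{u^{\ast}=0\}\cap B_{r\rho_2}(x_0)$, \Cref{prop:int_blowup} provides a blow-up limit $\bar P^y$ which, by the clean-up already proved, must be supported on components $\{1,2\}$ and takes the form $\bar P^y_1(x)=\bar a_y(x\cdot\bar{\bm{e}}_y)^+$, $\bar P^y_2(x)=\bar a_y(x\cdot\bar{\bm{e}}_y)^-$; in particular $\nabla u^{\ast}(y)=\bar a_y\bar{\bm{e}}_y$. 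A triangular-inequality comparison
\[
\|\bar P^y-\tilde P(y+t\,\cdot)/t\|_{L^2(B_1)}\leq\|\bar P^y-u^{t,y,1}\|_{L^2(B_1)}+\|u^{t,y,1}-\tilde P(y+t\,\cdot)/t\|_{L^2(B_1)},
\]
with the first term controlled by the rate in \Cref{prop:int_blowup} and the second by the rescaled $L^2$-hypothesis, optimized over a suitable $t$, forces $|\bar a_y\bar{\bm{e}}_y-a\bm{e}_{x_0}|\leq C\epsilon_2^{\tau}$ for some $\tau>0$, and hence $|\nabla u^{\ast}(y)|\geq\delta/2$ for $\epsilon_2$ small. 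The implicit function theorem then produces the $C^{1,\alpha}$ manifold $\mathcal{M}$ with normal $\nnu_{\mathcal{M}}=\nabla u^{\ast}/|\nabla u^{\ast}|$ satisfying the stated closeness to $\bm{e}_{x_0}$.

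The main technical subtlety, and the reason the parameters $\rho_2,\epsilon_2$ depend only on $\delta,d,N$ and $\dist(x_0,\partial D)$, is the uniformity of the constants involved in \Cref{prop:lipschitz} and \Cref{prop:int_blowup} on compact subsets of $D$; this uniformity is built into the statements themselves, so the book-keeping is routine.
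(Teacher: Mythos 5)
Your opening steps are sound: the Lipschitz interpolation upgrading the $L^2$ hypothesis to $\sum_i\|u_i-\tilde P_i\|_{L^\infty(B_{r/2}(x_0))}\le C\epsilon_2^{1/(d+2)}r$, and the deduction that $u_1>0$ (resp.\ $u_2>0$) — hence $u_i=0$ for $i\ge 3$ — on the two regions $\{\pm(y-x_0)\cdot\bm{e}_{x_0}>\eta\rho_2 r\}\cap B_{r\rho_2}(x_0)$. The gap is the next step. The unique continuation theorem for the class $\mathcal S$ (\cite[Theorem 1.1]{TerraciniTavares2012}) concerns the full vector $u$: it forbids $|u|$ from vanishing on an open set. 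It gives no information about a \emph{single} component vanishing on an open set — indeed each $u_k$ vanishes on the open sets $\Omega_j$, $j\neq k$, and $\sum_{i\ge 3}u_i$ is merely a nonnegative subsolution, for which unique continuation fails trivially. What you must exclude is precisely the cusp scenario of Figure \ref{fig:2}: a connected set $\Omega_k$, $k\ge 3$, entering $B_{r\rho_2}(x_0)$ through the portion of $\partial B_{r\rho_2}(x_0)$ lying inside the slab $\{|(y-x_0)\cdot\bm{e}_{x_0}|\le\eta\rho_2 r\}$ and squeezed between $\Omega_1$ and $\Omega_2$. For every fixed $\epsilon_2>0$ that slab has positive width, and a sufficiently thin sliver $\Omega_k$ inside it is compatible with both the $L^2$ hypothesis and your $L^\infty$ bound, so no soft argument at the single scale $r$ can rule it out.

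Closing the gap requires a quantitative nondegeneracy of the extra components, which is how the paper proceeds. If $y_0\in\Omega_k\cap B_{r\rho_2}(x_0)$ with $k\ge 3$, project $y_0$ to $z_0\in\partial\Omega_k$, a point at distance $O(r\rho_2)$ from $x_0$; by the Hopf lemma $z_0$ is a frequency-one point of $\mathcal F(u)$ whose blow-up has a nonzero $k$-th component, so by the two-component classification of frequency-one blow-ups at least one of $u_1,u_2$ has identically vanishing blow-up at $z_0$ and hence, by the rate in \Cref{prop:int_blowup}, satisfies $\|u_j\|_{L^2(B_s(z_0))}^2\le Cs^{d+2+\bar\alpha}$. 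Comparing this decay at an intermediate scale $s=r\rho$, $\rho_2\ll\rho\ll 1$, with the mass $\simeq a^2(r\rho)^{d+2}\ge \delta^2(r\rho)^{d+2}$ that the hypothesis (plus the Lipschitz bound on the translation from $x_0$ to $z_0$) forces on that same component produces a contradiction after the choice $\rho=\rho_2^{2/(2+\bar\alpha)}$. Your final part — $u^*=u_1-u_2$, the equation $-\Delta u^*=\lambda_1u_1-\lambda_2u_2$ obtained from the two inequalities of \Cref{lemma:extermaliti}, Schauder, the lower bound on $|\nabla u^*|$ along the nodal set, and the implicit function theorem — is consistent with the paper and would be fine once the clean-up is actually established.
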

\begin{proof}
	By the rate of convergence of the $1$-homogeneous rescalings to the blow-up limit (\Cref{prop:int_blowup}) we have that for any point of the interior nodal set $y_0\in \mathcal F(u)$, there are indices $i\neq j$ and $\bar{P}=\bar{P}^{y_0}=(\bar{P}_1,\dots,\bar{P}_N)$ such that 
	\[
	\bar{P}_i(x)=\bar{a}_{y_0}((y_0-x)\cdot\bm{e}_{y_0})^+\ ;\quad 
	\bar{P}_j(x)=\bar{a}_{y_0}((y_0-x)\cdot\bm{e}_{y_0})^-\ ;\quad 
	\bar{P}_k(x)\equiv 0\quad\text{when}\quad k\neq i,j,
	\]
	for some $\bm{e}_{y_0}\in\partial B_1$ and some $\bar{a}_{y_0}\ge 0$, and we have 
	\[
	\sum_{i=1}^N\norm{u_i-\bar{P}_i}_{L^2(B_r(y_0))}^2\leq \bar{C}_{\textup{BU}}H(u,\bar{R}_0,y_0) r^{d+2+\bar{\alpha}},
	\]
	for every $r$ such that $r<\text{\rm dist}(y_0,\partial D)$ and $r\le \bar{R}_0$, $\bar{C}_{\textup{BU}}$ being a constant depending on $\text{\rm dist}(y_0,\partial D)$. 
	
	We now proceed with the proof of the lemma. Suppose that there is a point $y_0$ of $\Omega_k$, $k\ge 3$, in the ball $B_{r\rho}(x_0)$. Let $z_0$ be the projection of $y_0$ on the boundary of $\Omega_k$. Then, $z_0\in B_{2r\rho_2}(x_0)$ and by the Hopf maximum principle, $z_0$ is a point of frequency $1$ and the one-homogeneous blow-up at $z_0$ has a non-zero $k$th component. In particular, at least one between the first and the second component is identically vanishing. Let us assume it is the second one and let $\rho\in(\rho_2,1/3)$. Then 
	\begin{align*}
		\int_{B_{r\rho}(x_0)}|a((x_0-x)\cdot\bm{e}_{x_0})^+|^2&\le 4 \int_{B_{r}(x_0)}|a((x_0-x)\cdot\bm{e}_{x_0})^+-u_2(x)|^2\dx\\
		&+ 4\int_{B_{r\rho}(z_0)}|\bar{a}_{z_0}((z_0-x)\cdot\bm{e}_{z_0})^+-u_2(x)|^2\dx\\
		&+ 4\int_{B_{r\rho}}|u_2(x+z_0)-u_2(x+x_0)|^2\dx\\
		&\le 4r^{d+2}\epsilon_2+4\bar{C}_{\textup{BU}}H(u,\bar{R}_0,y_0) (r\rho)^{d+2+\bar{\alpha}}+4|B_{r\rho}|C_L^2|x_0-z_0|^2,
	\end{align*}
	where $C_L$ is the Lipschitz constant of $u$. Since $a\ge \delta$, we get 
	\begin{align*}
		\delta^2(r\rho)^{d+2}&\le C\Big(r^{d+2}\epsilon_2+(r\rho)^{d+2+\bar\alpha}+(r\rho)^{d}|x_0-z_0|^2\Big)\\
		&\le Cr^{d+2}\Big(\epsilon_2+\rho^{d+2+\bar\alpha}+\rho^{d}\rho_2^2\Big),
	\end{align*}
	for some $C>0$ depending only on $d$, $\dist(y_0,\partial D)$, $C_L$, $N$. Choosing $\rho:=\rho_2^{\frac{2}{2+\bar\alpha}}$, we get 
	\begin{align*}
		\delta^2\le C\Big(\epsilon_2{\rho_2^{-\frac{2d+4}{2+\bar\alpha}}}+2\rho_2^{\frac{2\bar\alpha}{2+\bar\alpha}}\Big),
	\end{align*}
	which, by taking $\rho_2$ and $\epsilon_2$ small enough, implies that $\Omega_k\cap B_{\rho_2r}(x_0)=\emptyset$ for $k\ge 3$. The same argument implies that in the interior of $B_{\rho_2r}(x_0)$ all the points have frequency $1$. The regularity and the modulus of continuity of the normal vector on the interface between $\Omega_1$ and $\Omega_2$ follow by the implicit function theorem.
\end{proof}

\begin{figure}
	\centering
			\includegraphics[scale=0.28]{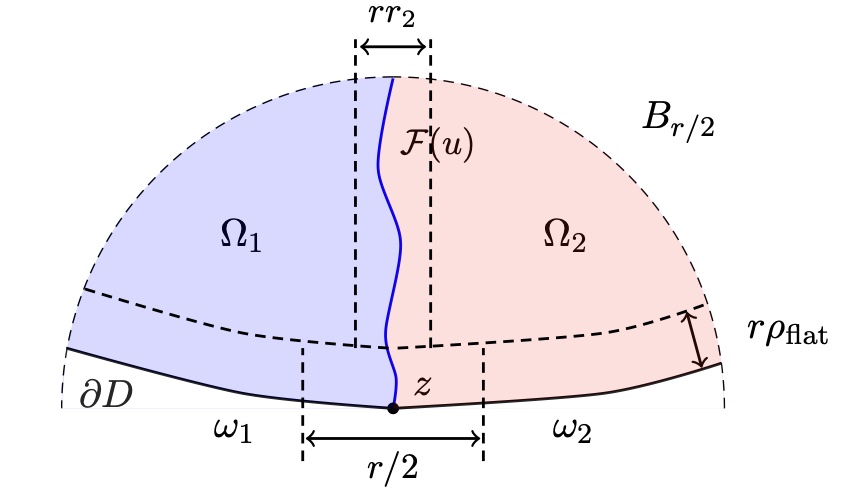}
	\caption{Clean-up at points of frequency two}
	\label{fig:clean-up}
\end{figure}

We are now ready to prove the full clean-up result for boundary points of frequency $2$. Since $\mathcal{Z}_2^{\partial D}(u)$ coincides with $\mathcal{R}_{\partial D}(u)$, the following result immediately implies \Cref{thm:up_to_the_bound}.
\begin{proposition}\label{prop:clean_up_2}
	Let $x_0\in\mathcal{Z}_2^{\partial D}(u)$ and let  $P^{x_0,2}$ be as in \Cref{cor:blowup}, i.e. 
	\begin{equation*}
		\begin{aligned}
			&P_j^{x_0,2}=a_{x_0,2}(x\cdot\bm{e}_{x_0})^-(-x\cdot\nnu(x_0))^+,\\
			&P_k^{x_0,2}=a_{x_0,2}(x\cdot\bm{e}_{x_0})^+(-x\cdot\nnu(x_0))^+,\\
			&		P_i^{x_0}=0\quad\text{for all }i\neq j,k
		\end{aligned}
	\end{equation*}
	for some $\bm e_{x_0}\in\partial B_1$ (such that $\bm e_{x_0}\cdot\nnu(x_0)=0$), some $j,k\in\{1,\dots,N\}$ and some $a_{x_0,2}>0$. Then, there exists $R>0$ depending on $d$, $D$, $N$ and $x_0$ such that:
	\begin{enumerate}
		\item $u_i\equiv 0$ in $D\cap B_R(x_0)$ for all $i\neq j,k$;
		\item in $B_R(x_0)$ we have that 
		\[\mathcal{F}(u)=\mathcal{R}(u)=\partial\Omega_j\cap D= \partial\Omega_k\cap D.\]
		Moreover, $B_R(x_0)\cap \overline{\mathcal{F}(u)}$ is a $(d-1)$-dimensional manifold (with boundary $B_R(x_0)\cap\mathcal{F}_{\partial D}(u)$) of class $C^1$.

	\end{enumerate}
\end{proposition}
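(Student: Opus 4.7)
The plan is to derive part (1) via a two-case analysis on the location of points $y\in D\cap B_R(x_0)$, and then part (2) by reducing to a scalar elliptic equation for $u^\ast:=u_j-u_k$. Throughout we rely on the quantitative Taylor expansion of \Cref{cor:blowup}: $u$ is $L^2$-close to $P^{x_0,2}$ at every scale $r\le R_0$ with error rate $\int_0^r\sigma_0(t)/t\,\d t$. Without loss of generality, assume $x_0=0$, $\nnu(x_0)=-\bm{e}_d$, $\bm{e}_{x_0}=\bm{e}_{d-1}$, $j=1$ and $k=2$.

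The main step is to show that $u_i\equiv 0$ in $D\cap B_R$ for every $i\ge 3$. Given $y\in D\cap B_R$, distinguish two cases: \emph{(a)} $y$ is close to $\partial D$, meaning $\dist(y,\partial D)\le|y_{d-1}|/4$; \emph{(b)} $y$ is deep in the interior, i.e., $\dist(y,\partial D)>|y_{d-1}|/4$. In case (a), the projection $z$ of $y$ onto $\partial D$ lies in $B_R$ with $|z_{d-1}|$ comparable to $|y|$, and we apply \Cref{lemma:flatness} at scale $r\approx|y|$ with $\delta\approx a_{x_0,2}/2$, $\rho\approx|z|/r$, $\eta\approx|z_{d-1}|/|z|$: the $L^2$-smallness hypothesis is supplied by \Cref{cor:blowup} for $R$ small enough, and the conclusion $u_1\ge \tfrac14\dist(\cdot,\partial D)\delta\eta\rho r$ holds in a ball of radius $\rho_{\textup{flat}}r$ around $z$ containing $y$ (after a further shrinking of $R$); segregation then forces $u_i(y)=0$ for every $i\ge 3$. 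In case (b), project $y$ onto the ``interface plane'' $\{x_{d-1}=0\}$ to get $y_\ast:=y-y_{d-1}\bm{e}_{d-1}$ and rescale around $y_\ast$ at scale $s:=y_d/4$: on $B_s(y_\ast)$, $P^{x_0,2}$ is, up to a relative Taylor error of order $s/y_d$, the one-homogeneous two-component profile
\[\bar{P}(x):=a_{x_0,2}y_d\bigl((x\cdot\bm{e}_{d-1})^+\bm{e}_1+(x\cdot\bm{e}_{d-1})^-\bm{e}_2\bigr).\]
Combining the $L^2$-closeness of $u$ to $P^{x_0,2}$ from \Cref{cor:blowup} with $\|P^{x_0,2}-\bar{P}\|_{L^2(B_s(y_\ast))}^2$, and rescaling so that the boundary distance becomes universally bounded from below (cf.~\Cref{rmk:clean_up_1}), we invoke \Cref{lemma:clean_up_interior} to conclude that $u_i\equiv 0$ in a neighborhood of $y_\ast$ for $i\ge 3$, and in particular $u_i(y)=0$. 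A dyadic argument along the scales $\dist(y,\partial D)\sim 2^{-n}R$ provides uniform constants.

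Once the clean-up is established, set $u^\ast:=u_j-u_k$ on $D\cap B_R$. By \Cref{lemma:extermaliti}, $u^\ast$ satisfies $-\Delta u^\ast=\lambda_j u_j-\lambda_k u_k$ with Lipschitz right-hand side and $u^\ast=0$ on $\partial D\cap B_R$, so classical boundary Schauder-type regularity on $C^{1,\alpha\text{-Dini}}$ domains yields $u^\ast\in C^1(\overline{D}\cap B_R)$. The quantitative Taylor expansion in \Cref{cor:blowup} gives that, near any $z\in\mathcal{Z}_2^{\partial D}(u)\cap B_R$,
\[u^\ast(x)=-a_{x_0,2}\bigl((x-z)\cdot\bm{e}_z\bigr)\bigl(-(x-z)\cdot\nnu(z)\bigr)^++o(|x-z|^2),\]
so that on the interior nodal set $\{u^\ast=0\}\cap D$ near $z$ the gradient of $u^\ast$ is nonzero in direction $\bm{e}_z$, while on $\partial D$ the reduced function $u^\ast/(-(x-z)\cdot\nnu(z))$ has nonzero $\bm{e}_z$-gradient at $z$. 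Combined with the quantitative interior rate (\Cref{prop:int_blowup}) and interior clean-up (\Cref{lemma:clean_up_interior}), the implicit function theorem applied to $u^\ast$ in $D$ and to $u^\ast/\dist(\cdot,\partial D)$ near $\partial D$ yields that $\overline{\mathcal{F}(u)}\cap B_R$ is a $C^1$-manifold with boundary $\mathcal{Z}_2^{\partial D}(u)\cap B_R$. Finally, the equalities $\mathcal{F}(u)=\mathcal{R}(u)=\partial\Omega_j\cap D=\partial\Omega_k\cap D$ within $B_R$ follow from the absence of other nonzero components, the interior clean-up, and the interior unique continuation.

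The main obstacle is the deep-interior case: the amplitude $a_{x_0,2}y_d$ of $\bar{P}$ degenerates as $y_d\to 0$, while the threshold $\epsilon_2$ of \Cref{lemma:clean_up_interior} depends on this amplitude; hence the smallness of the decay modulus $\int_0^r\sigma_0(t)/t\,\d t$ from \Cref{cor:blowup} and of the Taylor error $(s/y_d)^2$ must be carefully balanced against $\epsilon_2(a_{x_0,2}y_d)$ at every dyadic scale, uniformly in $y$.
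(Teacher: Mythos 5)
Your ingredients are the right ones (the boundary flatness lemma, the interior clean-up, and the quantitative blow-up of \Cref{cor:blowup}), but the way you combine them leaves a genuine gap in part (1): both branches of your dichotomy are anchored at the single point $x_0$, and both fail for points $y$ that are close to the interface $\mathcal{Z}_2^{\partial D}(u)$ but far from $x_0$ (tangentially). In case (a), your parameter $\eta\approx|z_{d-1}|/|z|$ can be arbitrarily small for such points, and the constants $\epsilon_{\textup{flat}},\rho_{\textup{flat}}$ of \Cref{lemma:flatness} degenerate as $\eta\to 0$, so the hypothesis $\sum_i\|u_i-P_i\|^2_{L^2(B_r(x_0))}\le r^{d+4}\epsilon_{\textup{flat}}$ cannot be verified at the prescribed scale $r\approx|y|$ (also, $|z_{d-1}|$ is \emph{not} comparable to $|y|$ in general). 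In case (b) there are two problems: first, the clean-up neighborhood $B_{s\rho_2}(y_*)$ has radius $\rho_2 y_d/4$, whereas $|y-y_*|=|y_{d-1}|$ is only known to be $\le 4y_d$, so $y$ need not lie in it; second, and more seriously, the $L^2$ error inherited from \Cref{cor:blowup} at scale $r\approx|y|$, measured relative to the amplitude $a_{x_0,2}y_d$ on the ball $B_s(y_*)$, is of order $(|y|/y_d)^{(d+4)/2}\big(\int_0^{|y|}\sigma_0(t)t^{-1}\d t\big)^{1/2}$, which blows up as $y_d/|y|\to 0$. No balancing of $\theta$, $\epsilon_2$ and the Dini modulus can compensate for this, because the only information you have on $\|u-P^{x_0,2}\|_{L^2(B_s(y_*))}$ is the bound on the much larger ball $B_{|y|}(x_0)$. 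So the obstacle you flag in your last paragraph is real and is not a matter of careful bookkeeping.

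The missing idea is \emph{re-centering}: one must use the quantitative expansion at the nearby interface points $z\in\mathcal{Z}_2^{\partial D}(u)$, not only at $x_0$. This is legitimate because, by \Cref{prop:reg}, $\mathcal{Z}_2^{\partial D}(u)\cap B_{R/2}(x_0)$ is a $C^1$ interface separating exactly $\omega_j$ and $\omega_k$, and by \Cref{prop:nondeg} together with \Cref{cor:blowup} the expansions $P^{z,2}$ are uniformly nondegenerate and uniformly convergent for $z$ in this set. Running \Cref{lemma:flatness} (with \emph{fixed} $\eta,\rho$) and \Cref{lemma:clean_up_interior} centered at every such $z$ and at every scale $r$ — so that the relevant relative errors are controlled by $\int_0^r\sigma_0(t)t^{-1}\d t$ with $r$ comparable to the distance from $y$ to the interface — confines $\Omega_i$ ($i\ge 3$) to the shrinking boxes $\{|x_{d-1}-z_{d-1}|<r/2\}\cap\{\dist(x,\partial D)<r\rho_{\textup{flat}}\}$; the region away from both $\partial D$ and the interface is then excluded by the nondegeneracy of $P^{z,2}$ there (a third ingredient absent from your proposal). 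This is the structure of the paper's argument. Separately, in part (2) your application of the implicit function theorem to $u^\ast/\dist(\cdot,\partial D)$ up to $\partial D$ requires $C^1$ regularity of that quotient, i.e.\ second-order boundary regularity of $u^\ast$, which is not available on $C^{1,\alpha\textup{-Dini}}$ domains; the $C^1$ structure of $\overline{\mathcal R(u)}$ up to $\partial D$ should instead be extracted from the control on the interior normal vector in \Cref{lemma:clean_up_interior} combined with the oscillation estimate of \Cref{lemma:osc}.
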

\begin{proof}
	The proof follows by applying \Cref{lemma:flatness}, \Cref{prop:reg} and \Cref{lemma:clean_up_interior} at any scale. Let us be more precise. Without loss of generality, in the proof we can assume $x_0=0$, $\nnu(0)=-\bm{e}_d$, $\bm{e}_{x_0}=\bm{e}_{d-1}$, $j=1$ and $k=2$. By \Cref{prop:reg} there exists $R\in(0,R_0)$ such that
	\[
	B_R\cap\partial D=B_R\cap\left(\overline{\omega_1}\cup        \overline{\omega_2}\right)
	\]
	and such that, in $B_R$, $\mathcal F_{\partial D}(u)=\mathcal{Z}_2^{\partial D}(u)$ is a smooth interface (of points of frequency $2$) separating $\omega_1$ and $\omega_2$. We will next extend this clean-up to the interior of $D$; we will use the geometric construction from Figure \ref{fig:clean-up}.
	Let $r\leq R/2$ and $z\in\mathcal{Z}^{\partial D}_2(u)\cap B_{R/2}$ be arbitrarily chosen.   From \Cref{lemma:flatness} (choosing $\rho=1/2$ and $\eta=1/2$) and \Cref{cor:blowup} we know that there exists $\rho_{\textup{flat}}\in(0,1)$ such that in $B_{r/2}(z)$ we have
	\begin{multline*}
		\{x\in D\colon \dist(x,\partial D)<r\rho_{\textup{flat}}\}\cap\{|x_{d-1}|>r/4\} \\ 
		= \{x\in \Omega_1\cup\Omega_2\colon \dist(x,\partial D)<r\rho_{\textup{flat}}\}\cap\{|x_{d-1}|>r/4\}.
	\end{multline*}
	From \Cref{lemma:clean_up_interior} and \Cref{cor:blowup} we know that there exists $r_2<\rho_{\textup{flat}}$ such that for any $y$ in the vertical strip 
	$$\{x:(x-z)\cdot e_{d-1}=0\}\cap\{\dist(x,\partial D)>r\rho_{\textup{flat}}\} \cap B_{R/2}(z)$$ 
	we have that 
	\begin{equation*}
		D=\overline{\Omega_1}\cup\overline{\Omega_2}\quad\text{in}\quad B_{rr_2}(y).
	\end{equation*}
	Thus, for every $k\ge 3$, we get 
	$$\begin{cases}
		\Omega_k\cap\Big(\{x:|x_{d-1}-z_{d-1}|< rr_2\}\cup\{\dist(x,\partial D)>r\rho_{\textup{flat}}\}\Big) \cap B_{r}(z)=\emptyset\\
		\Omega_k\cap\Big(\{x:|x_{d-1}-z_{d-1}|> r/2\}\cup\{\dist(x,\partial D)<r\rho_{\textup{flat}}\}\Big) \cap B_{r}(z)=\emptyset.\\
	\end{cases}$$
	Now, by the rate of convergence of the $2$-homogeneous rescalings $\{u^{t,z,2}\}_{t>0}$, we can find $r_3>0$ (universal for all $z$ in $B_{R/2}$) such that
	$$\Omega_k\cap\Big(\{x:|x_{d-1}-z_{d-1}|\ge rr_2\}\cup\{\dist(x,\partial D)\ge r\rho_{\textup{flat}}\}\Big) \cap B_{r}(z)=\emptyset,$$
	for every $r<r_3$.
	From this we obtain that for every $r<r_3$ and every $k\neq 1,2$,
	$$\Omega_k\cap\Big(B_r(z)\setminus \Big(\{x:|x_{d-1}-z_{d-1}|< r/2\}\cap\{\dist(x,\partial D)< r\rho_{\textup{flat}}\}\Big)\Big) =\emptyset.$$
	Since $r$ and $z$ are arbitrary, we get that $\Omega_k\cap B_{r_3}=\emptyset$ for $k\ge 3$, and that the interface $\partial\Omega_1\cap\partial\Omega_2$ is $C^1$ up to the fixed boundary $\partial D$. 
\end{proof}

\subsection{Structure of the free boundary in dimension two}\label{sub:proof-of-theorem-2D}
In this section we prove Theorem \ref{thm:up_to_the_bound_2D}. Suppose that $x_0$ is a point on $\mathcal F_{\partial D}(u)$ and let $\gamma\ge 1$ be the frequency at $x_0$. Then, the following holds.
\begin{itemize}
	\item Every $\gamma$-homogeneous blow-up $P=(P_1,\dots,P_N)$ of $u$ at $x_0$ belongs to the class $\mathcal S$ (defined in the proof of \Cref{prop:topology}) of the half-plane $\mathcal H=\{x\in\R^2\ :\ x\cdot\nnu(x_0)>0\}$; in particular, the nodal set $\{|P|=0\}\cap\mathcal{H}$ has zero measure.
	\item We claim that $\gamma$ is an integer greater than $1$ and the nodal set $\{|P|=0\}$ splits the half-plane $\mathcal H$ in $\gamma$ equal sectors $S_1,\dots, S_\gamma$ (we notice that two non-touching sectors may belong belong to the same positivity set $\{P_i>0\}$ and that the numeration of the sectors is not corresponding to the one of the components of the blow-up); indeed, the set $\{|P|>0\}$ is the union of disjoint open cones. If $S$ is one of these cones, then there is some $i\in\{1,\dots,N\}$ such that $\Delta P_i=0$ in $S$, $P_i>0$ in $S$, $P_i=0$ on $\partial S$. Since $P_i$ is $\gamma$-homogeneous, the opening of the cone $S$ has to be exactly $\pi/\gamma$. Now, the claim follows since the only way to fit a finite number of disjoint cones with the same opening in the half-plane $\mathcal H$ (without leaving space) is to take $\gamma$ to be an integer.
	\item The blow-up $P$ at $x_0$ is unique. 
	Indeed, suppose that there is another blow-up $Q$ such that $P\neq Q$ on one of the sectors $S_k$ (say, on $S_1$). Then, there are two distinct indices $i\neq j\in\{1,\dots,N\}$ such that $S_1\subset \{P_i>0\}$ and at the same time $S_1\subset\{Q_j>0\}$. If we pick any point $y_0\subset B_1$ in the open sector $S_1$, then we can find two sequences of radii $(r_k^P)_{k\ge 1}$ and $(r_k^Q)_{k\ge 1}$ such that $u_i(x_0+r_k^Py_0)>0$ and $u_j(x_0+r_k^Qy_0)>0$. By the continuity of $u$, we can find a third sequence $r_k\to0$ such that $u_j(x_0+r_ky_0)=0$; thus, we find a $\gamma$-homogeneous blow-up that vanishes in $y_0$, which is impossible.  
\end{itemize}
Finally, the claim follows by the clean-up results for the points of frequency $1$ in the interior (\Cref{lemma:clean_up_interior}) and on the boundary (\cref{l:clean-up-boundary-1}).  \qed

\section*{Acknowledgements}

The authors are supported by the European Research Council (ERC), through the European Union’s
Horizon 2020 project ERC VAREG - Variational approach to the regularity of the free boundaries (grant agreement No. 853404). The authors acknowledge the MIUR Excellence Department Project awarded to the Department of Mathematics, University of Pisa, CUP I57G22000700001. R.O. acknowledges support from the 2023 INdAM-GNAMPA project 2023 no. \texttt{CUP\_E53C22001930001} and from the 2024 INdAM-GNAMPA project no. \texttt{CUP\_E53C23001670001}. B.V. acknowledges support from the projects PRA\_2022\_14 GeoDom (PRA 2022 - Università di Pisa) and MUR-PRIN “NO3” (n.2022R537CS). 

\bibliographystyle{aomalpha} 
\bibliography{biblio.bib}

\end{document}